\documentclass[12pt,letterpaper,reqno]{amsart}

\usepackage{amssymb,amsmath}
\usepackage{amsthm}
\usepackage{color,graphicx}
\usepackage{eucal}

\usepackage{comment}
\usepackage{caption}
\usepackage{subcaption}

\usepackage{wrapfig}
\usepackage{floatflt}

\newcommand{\R}{\mathbb R}

\newcommand{\sech}{\textmd{sech}}

\newcommand{\ep}{\epsilon}

\newcommand{\ds}{\displaystyle}

\newtheorem{theorem}{Theorem}[section]

\newtheorem{lemma}[theorem]{Lemma}
\newtheorem{corollary}[theorem]{Corollary}
\newtheorem{definition}[theorem]{Definition}

\newtheorem*{theoremA}{Theorem A}

\theoremstyle{remark}
\newtheorem{remark}[theorem]{Remark}

\numberwithin{equation}{section}
\numberwithin{table}{section}
\numberwithin{figure}{section}
\numberwithin{theorem}{section}

\theoremstyle{remark}

\makeatletter
\newcommand{\setTheoremAlabel}{\def\@currentlabel{A}}
\makeatother

\title[ground states in biharmonic NLS]{Ground states to bi-harmonic 
nonlinear Schr\"odinger equations  with competing dispersion}

\author[C. Klein]{Christian Klein}
\address{Université Bourgogne Europe, CNRS, IMB UMR 5584, F-21000 Dijon, France\\
Institut Universitaire de France} 
\email{Christian.Klein@u-bourgogne.fr}

\author[S. Roudenko]{Svetlana Roudenko}
\address{Department of Mathematics \& Statistics\\
Florida International University,  Miami, FL 33199, USA}
\curraddr{}
\email{sroudenko@fiu.edu}

\date{}

\subjclass[2020]{35J91, 35B40, 35Q55, 65N35, 35B06.}

\keywords{Fourth-order nonlinear Schr\"odinger equation, 2d biharmonic NLS, mixed dispersion,
nonradial ground states, symmetry breaking, least-action minimizers, normalized ground states, Weinstein quotient, asymptotic behavior, branching.}

\begin{document}

\begin{abstract}
We study ground-state solutions of the fourth-order nonlinear Schr\"odinger equation with mixed dispersion via the elliptic profile equation
$$
\Delta^2 Q + 2a \Delta Q + bQ - |Q|^\alpha Q=0.
$$
The main focus of this paper is to investigate symmetry breaking of least-action ground states in 2D in the regime $a=1$, $b=1+\epsilon$, $0<\epsilon\ll1$, where nonradiality was proved by Lenzmann and Weth in \cite{LW2021}. 
We establish a quotient-to-mass relation converting small-$\ep$ asymptotics of the
Weinstein-type quotient into asymptotics of the mass, in any dimension and symmetry class.
In 1D we prove the sharp small-$\ep$ rate for the quotient for every $\alpha>0$ and
deduce the rates for the mass, action and potential norm. In two and higher dimensions the same relation, combined with the quotient asymptotics of Lenzmann-Weth \cite{LW2021} and
Mandel-Oliveira e Silva \cite{MO2023}, gives the mass rates for the Knapp-type nonradial and
the radial ground states. The resulting thresholds $\alpha_K(d)=\frac{8}{d+1}$ coincide for
$d \leq 3$ with the existence thresholds for normalized minimizers of \cite{FJMM2022}, connecting the least-action and mass-constrained formulations.

We complement these analytical results with a detailed numerical 
study. For the {\it cubic} nonlinearity we construct a nonradial branch relying on the Knapp example, compare it with radial and angular-mode branches, and verify the predicted rates. For the {\it quartic} nonlinearity the computed ground state families exhibit turning points and branching in the mass-energy diagrams, and we find two- and four-peak nonradial branches. Our computations for this nonlinearity indicate that, although the least-action ground states are nonradial for small $\ep$, the normalized ground states of the same mass are radial, a consequence of branching. For $\alpha \geq 4$ we obtain only radial ground states, consistent with the endpoint of the symmetry-breaking range and with the conjectured sharp form
of the Stein-Tomas inequality on $\mathbb S^1$, which we show is equivalent to the radial and
nonradial masses having the same leading coefficient.

\end{abstract}

\maketitle

\tableofcontents

\section{Introduction}

We consider the 4th order or bi-harmonic NLS equation: 
\begin{equation}\label{biNLS}
\text{(bi-NLS)} 
\qquad \qquad 
i\, u_t - \Delta^2 u - 2a \Delta u + |u|^{\alpha} u = 0, \qquad x \in \R^d, \quad t \in \R, \qquad
\end{equation}
where $u(t,x)$ is a complex-valued function, $a \in \mathbb R$, $\Delta$ stands for the standard Laplacian, and the nonlinearity power $\alpha>0$. While our analytical results hold in any dimension, we focus our numerical study on two dimensions ($d=2$).
This equation is a higher-dispersion generalization of the standard nonlinear Schr\"odinger equation
\begin{equation}
\label{NLS}
\text{(NLS)} \qquad \qquad
i\, u_t + \Delta u + |u|^{\alpha} u = 0, \qquad x \in \R^d, \qquad t \in \R, \qquad
\end{equation}
whose coherent structures, such as solitons, have been discovered 
over 50 years ago in the context of the cubic NLS as an integrable 
system (Zakharov \& Shabat \cite{ZS}). Their experimental 
confirmations also have a long history, for instance, first such observations in optics were done over 40 years ago, as optical solitary waves propagating in fibers, in \cite{MSG}. 
In the standard NLS equation \eqref{NLS}, the soliton, a nonlinear object, balances the second order dispersion and the self-focusing (e.g., Kerr/cubic) nonlinearity. This has been fundamental in physical applications and in mathematical studies. 

More recently, new technologies and fiber designs in modern optics led to the discovery of the photonic crystal fiber (PCF) and waveguides on a chip, which enabled the experimentalists to produce silicon PCF waveguides known now as {\it pure quartic} optical solitons on a chip \cite{Nature}, which have higher order dispersion, namely, the 4th order. Further investigation showed how Gaussian  initial data can evolve with the 4th order NLS into such quartic solitons \cite{Tam2019}, and a more general model such as \eqref{biNLS} was suggested, for example, in \cite{Tam2020} to contain both fourth and second order dispersions, since obtaining purely quartic solitons (or even purely quadratic solitons) is challenging experimentally.  

Before the recent experimental discoveries of the quartic solitons, 
the bi-harmonic NLS model was proposed in the early 90s by Karpman 
\cite{K1991} and Karpman \& Shagalov \cite{KS1991}, exactly for the 
purpose of accounting for the influence of higher order dispersion into the NLS equation to model intense laser beam propagation. 
Numerical simulations of solutions to the 4th order NLS equation were also initially done in \cite{KS1991}, later more thorough investigations were performed by Fibich, Ilan \& Papanicolaou in  \cite{Fibich2002} and their follow-up work \cite{BFM2010}, \cite{BFM2010b}, \cite{BF2011}, especially, the collapsing or blow-up solutions in critical and supercritical cases. 
Analytically, showing existence of finite time blow-up was a challenging task for some time and the breakthrough for proving the blow-up in the bi-harmonic NLS equation ($d \geq 2$, $\alpha \geq 8/d$) was by Boulenger \& Lenzmann in \cite{BL2017}, see further progress, for example, in \cite{BCGJ2019b, Dinh2021, Gou2024}. The one-dimensional mixed-dispersion model we investigated numerically in \cite{KPRS}, where profiles of solitary waves and their properties were studied as well as several dynamical features; we also noted some changes (compared to the standard NLS) in sharp thresholds and dichotomy, especially, when the second order dispersion works against the fourth order, which is influencing the long term behavior and finite time blow-up. Local well-posedness of the Cauchy problem in the energy $H^2$-space follows from the dispersive estimates established by Ben-Artzi, Koch \& Saut in \cite{BAKS2000}, which is sufficient for this paper; for references to follow-up work, for example, see \cite{PRR, KPRS}.  


During their lifespan, the solutions $u(t, x)$ to \eqref{biNLS} conserve several quantities, including mass and energy (or Hamiltonian):
\begin{equation}\label{E:M}
M(u(t))=\int_{\R^d} |u|^2(t)\, dx = M(u(0))
\end{equation}
and
\begin{equation}\label{E:E}
E(u(t))=\dfrac{1}{2}\int_{\R^d} |\Delta u(t)|^2 \, dx - a \int_{\R^d} |\nabla u(t)|^2 \; dx - \dfrac1{\alpha+2} \int_{\R^d} |u(t)|^{\alpha+2} \, dx = E(u(0)).
\end{equation}

Similar to the NLS equation, the biharmonic NLS has time, space and phase invariances, however, the usual Galilean symmetry is lost due to the presence of the fourth-order dispersion. The one symmetry, which is especially useful in the evolution equations, is {\it the scaling invariance}, which states that an appropriately rescaled version of the original solution is also a solution of the equation, though for the equation \eqref{biNLS} due to the different dispersion terms, there is no simple suitable symmetry like that, however, if one considers the higher dispersion term dominant ($a=0$), then it is given by
\begin{equation}\label{scaling}
u_\lambda(t, x)=\lambda^{\frac{4}{\alpha}} u(\lambda^4 t, \lambda x), ~~~ x\in \R^d.
\end{equation}
This symmetry makes a specific Sobolev norm $\dot{H}^s$ invariant, i.e.,
\begin{equation*}
\|u_\lambda(0,\cdot,\cdot) \|_{\dot{H}^s(\mathbb R^d)}=\lambda^{\frac{4}{\alpha}+s-\frac{d}{2}} \|u_0\|_{\dot{H}^s(\mathbb R^d)},
\end{equation*}
and the index $s$ gives rise to the critical-type classification of equations.
For the biharmonic NLS equation \eqref{biNLS} (with $a=0$) the critical index is 
$$
s=\frac{d}2-\frac4{\alpha},
$$
and when $s=0$ ($\alpha=8/d$) the equation \eqref{biNLS} is $L^2$-critical, when $s<0$ ($\alpha<8/d$) it is subcritical and $s>0$ ($\alpha>8/d$) is supercritical. When $a \neq 0$, this scaling is no longer valid, but the exponent $8/d$ is still useful as a reference point. In other regimes (such as the small-$\epsilon$ regime considered, for instance, in Section \ref{S:asym}) other effective scalings appear, for example, from the concentration on the Fourier side near the minimum set of the considered Fourier multiplier or symbol.

The biharmonic NLS equation \eqref{biNLS} has a family of (standing) solitary waves
\begin{equation}\label{Eq:SW}
u(t,x) = e^{ibt} \, Q(x), ~~~b>0,
\end{equation}
with $Q \in H^2(\mathbb R^d)$ and $Q(x) \to 0$ as $|x| \to + \infty$. 
Substituting \eqref{Eq:SW} into \eqref{biNLS} gives the
nonlinear elliptic equation
\begin{equation}\label{E:groundstate}
\Delta^2 Q + 2a\,\Delta Q +  b\, Q  - |Q|^{\alpha} Q = 0,
\end{equation}
solutions of which we investigate in this paper. In particular, we are interested in {\it ground state} solutions of \eqref{E:groundstate}, which can be defined in several closely related ways:
as least-action critical points, as Weinstein quotient minimizers, as 
minimizers of the energy functional under a mass constraint or potential energy; we recall and compare these notions in
Section~\ref{S:Def-GS}.

Because of the positivity of the associated quadratic form (see \eqref{E:q-ab-new}), variational ground states exist only when $a < \sqrt b$ (see \cite{BCSN2018}, \cite{LW2021} and \S \ref{S:positive}). 
The shape of ground states depends strongly on the sign and size of the mixed-dispersion parameter $a$. We discuss this in more detail in Section \ref{S:positive}. For now we mention that ground states can be positive (if $a \leq -\sqrt b$) or sign-changing with decaying oscillatory tails (if $-\sqrt b < a < \sqrt b$).
The case $a=0$ corresponds to the pure biharmonic NLS equation and has a simple scaling in $b$,
\begin{equation}\label{Qscaling}
Q_b(x) = b^{1/\alpha} Q(b^{1/4} x),
\end{equation}
which produces a family of solutions ${Q_b}$, provided $Q$ is a solution of \eqref{E:groundstate} (with $a=0$ and $b=1$).

The fact that the ground-state profiles in the biharmonic, or fourth-order, NLS equation,
need not be positive and may have oscillatory tails as $|x|\to\infty$, distinguishes this dispersive equation from the standard NLS equation (as well as KdV/ZK-type equations).  Even more intriguing, the fact that the loss of positivity can be accompanied by a loss of symmetry: in higher dimensions, ground states may be {\it nonradial}.  In particular, taking in equation  \eqref{E:groundstate} $a=1$ and $b=1+\epsilon$, with $0<\epsilon\ll1$, Lenzmann and Weth proved in \cite{LW2021} the following symmetry-breaking result.

\begin{theorem}[\cite{LW2021}, Theorem 1.2]\label{Thm1}
Let $d \geq 2$ and $0<\alpha < \frac4{d-1}$. Then there exists $\epsilon_0 = \epsilon_0(\alpha,d) > 0$ such that every ground state solution\footnote{in the sense of Definition \ref{D:BCo}} $u \in H^2(\mathbb R^d)$ of \eqref{E:groundstate} is a nonradial function if $0<\epsilon \leq \epsilon_0$.
\end{theorem}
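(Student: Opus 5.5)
The plan is to compare the least action, or equivalently the Weinstein-type quotient, of radial and nonradial competitors as $\epsilon\to0$. The key observation is that the symbol of the linear part,
$$
p_\epsilon(\xi)=|\xi|^4-2|\xi|^2+1+\epsilon=(|\xi|^2-1)^2+\epsilon,
$$
attains its minimum $\epsilon$ on the whole unit sphere $\mathbb S^{d-1}$. Ground states are minimizers of
$$
W_\epsilon(u)=\frac{\bigl(\int p_\epsilon(\xi)|\hat u|^2\,d\xi\bigr)^{\frac{\alpha+2}{2}}}{\|u\|_{\alpha+2}^{\alpha+2}}
$$
(Definition \ref{D:BCo}). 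Every ground state is radial only if the infimum over all $H^2$ functions equals the infimum over radial functions. It therefore suffices to show
$$
\inf_{\mathrm{rad}} W_\epsilon > \inf W_\epsilon \quad\text{for } 0<\epsilon\le\epsilon_0 .
$$
To do this I would establish asymptotics in $\epsilon$ for both infima with different powers of $\epsilon$.

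\textbf{Step 1 (nonradial upper bound via a Knapp example).} Take $\hat u$ to be a bump adapted to a cap on $\mathbb S^{d-1}$ near $e_1$, of radial thickness $\epsilon^{1/2}$ and tangential width $\epsilon^{1/4}$. On its support $(|\xi|^2-1)^2\lesssim\epsilon$, and the tangential curvature contributes only at order $\epsilon^{1/2}$ in $|\xi|$, so the quadratic form is $\lesssim\epsilon\|\hat u\|_2^2$. The function $u$ is then a modulated wave packet $e^{ix_1}\phi(\epsilon^{1/2}x_1,\epsilon^{1/4}x')$. Direct scaling computations give $\|u\|_2^2\sim\epsilon^{-\frac12-\frac{d-1}{4}}$ and $\|u\|_{\alpha+2}^{\alpha+2}\sim\epsilon^{-\frac12-\frac{d-1}{4}}$. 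Hence
$$
\inf W_\epsilon\lesssim \epsilon^{\frac{\alpha+2}{2}}\,\epsilon^{-\frac{\alpha}{2}\left(\frac12+\frac{d-1}{4}\right)}=\epsilon^{\,1+\frac{\alpha}{2}-\frac{\alpha(d+1)}{8}} .
$$

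\textbf{Step 2 (radial lower bound).} For radial $u$, $\hat u$ is radial and concentrates near the sphere, and $u$ behaves like a Bessel function $|x|^{-(d-1)/2}\cos(|x|)$ modulated on the scale $\epsilon^{-1/2}$. Decompose $\hat u=f(|\xi|)$ and use the Stein--Tomas or trace-type estimate for radial functions: a radial $u$ with $\int p_\epsilon|\hat u|^2\le\epsilon\|u\|_2^2$ essentially has $\hat u$ supported in a shell of width $\epsilon^{1/2}$. For such shell functions the sharp $L^{\alpha+2}$ bound is attained by the shell-Bessel profile. I would prove the lower bound
$$
\inf_{\mathrm{rad}}W_\epsilon\gtrsim \epsilon^{\,1+\frac{\alpha}{2}-\frac{\alpha}{4}}
$$
in the range $\alpha<\frac{4}{d-1}$, i.e. radial functions gain nothing from tangential concentration. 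Concretely, I would split $\hat u$ dyadically in the distance $||\xi|-1|$, bound each piece using the radial decay estimate $|u(x)|\lesssim |x|^{-(d-1)/2}\times$(1D norm) away from the origin, and use Bernstein near the origin. The condition $\alpha<4/(d-1)$ is exactly what keeps the tail $|x|^{-(d-1)(\alpha+2)/2}$ integrable relative to the 1D profile scaling, so the radial problem reduces to an effective one-dimensional one. I expect this to be the main obstacle, since the exponent must be sharp and the small-ball region must be controlled uniformly in $\epsilon$.

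\textbf{Step 3 (comparison).} The nonradial exponent is smaller precisely when
$$
\frac{\alpha(d+1)}{8}>\frac{\alpha}{4},
$$
i.e. $d>1$. Since both infima are positive, the ratio of the two bounds tends to $0$ as $\epsilon\to0$. For $\epsilon\le\epsilon_0$ the global infimum is then strictly below the radial infimum, so no minimizer can be radial. If Definition \ref{D:BCo} refers to the least-action formulation rather than the quotient, I would translate between the two using the standard identity: the least action equals $c_\alpha(\inf W_\epsilon)^{2/\alpha}$, which preserves strict inequalities.
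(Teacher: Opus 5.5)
Your overall strategy, a Knapp upper bound for the unrestricted quotient against a radial lower bound with a different power of $\epsilon$, is the same rate comparison the paper uses via \cite{LW2021}. Step 1 is correct, since $1+\frac{(3-d)\alpha}{8}=\frac{\alpha+2}{2}\gamma_{d,\alpha}$. Steps 2 and 3, however, fail.

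\textbf{Step 2 is too weak.} Your radial bound $\inf_{\rm rad}W_\epsilon\gtrsim\epsilon^{1+\alpha/4}$, i.e.\ $\mathcal R^{\rm rad}_\epsilon\gtrsim\epsilon^{\frac{\alpha+4}{2(\alpha+2)}}$, uses the one-dimensional exponent. It holds for \emph{every} $u\in H^2(\mathbb R^d)$ by Hausdorff--Young (Theorem~\ref{Thm:B1}), so it cannot distinguish radial from nonradial functions.

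\textbf{Step 3 runs backwards.} For $0<\epsilon<1$ a smaller exponent means a larger number. Since $1+\frac{(3-d)\alpha}{8}<1+\frac{\alpha}{4}$ when $d\ge 2$, your upper bound for $\inf W_\epsilon$ lies far \emph{above} your lower bound for $\inf_{\rm rad}W_\epsilon$. Their ratio is $\epsilon^{-(d-1)\alpha/8}\to\infty$, not $0$, so no strict inequality follows. Your final condition $d>1$, with no restriction on $\alpha$, should have been a warning: for $\alpha\ge\frac4{d-1}$ both quotients are of order $\epsilon^{1/2}$, and no rate argument can separate them.

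\textbf{What is needed.} You need a radial lower bound whose exponent is strictly \emph{below} $\gamma_{d,\alpha}$. This is \cite[Theorem 2.1]{LW2021}:
\begin{itemize}
\item[(a)] $\mathcal R^{\rm rad}_\epsilon\gtrsim\epsilon^{1/2}$ for $\frac2{d-1}<\alpha<\frac4{d-1}$;
\item[(b)] $\mathcal R^{\rm rad}_\epsilon\gtrsim\epsilon^{\beta}$ for every $\beta>r_{d,\alpha}$ when $\alpha\le\frac2{d-1}$.
\end{itemize}
Combined with $\gamma_{d,\alpha}>\frac12\iff\alpha<\frac4{d-1}$ and $\gamma_{d,\alpha}>r_{d,\alpha}$, this gives the strict inequality of infima. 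This is the comparison the paper records after \eqref{E:R-asym-rad}, and it is where the threshold $\frac4{d-1}$ genuinely enters.

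The mechanism is the opposite of your ``effective one-dimensional'' heuristic. A radial function must spread its Fourier mass over the whole sphere. It therefore decays like the Bessel profile $|x|^{-(d-1)/2}$, which penalizes $\|u\|_{L^{\alpha+2}}$: the relevant integral is $\int_1^{\epsilon^{-1/2}}r^{-(d-1)\alpha/2}\,dr$ rather than $\epsilon^{-1/2}$.

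\textbf{Proof sketch for (a).} Write $\hat u=f(|\xi|)$, so $u(x)=\int_0^\infty f(\rho)\rho^{d-1}\check\sigma(\rho x)\,d\rho$. Since $\check\sigma\in L^{\alpha+2}$ exactly when $\alpha>\frac2{d-1}$, Minkowski's inequality gives
$$\|u\|_{L^{\alpha+2}}\lesssim\int_0^\infty|f(\rho)|\,\rho^{d-1-\frac{d}{\alpha+2}}\,d\rho .$$
Apply Cauchy--Schwarz against $m_{1,1+\epsilon}$. The weight satisfies $\int_0^\infty m_{1,1+\epsilon}(\rho)^{-1}\rho^{d-1-\frac{2d}{\alpha+2}}\,d\rho\approx\epsilon^{-1/2}$; it converges at $0$ and $\infty$ in the $H^2$-subcritical range. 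This yields $\|u\|_{L^{\alpha+2}}^2\lesssim\epsilon^{-1/2}\,q_{1,1+\epsilon}(u)$.

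\textbf{Proof sketch for (b).} Interpolate between $L^2$, where $\|u\|_{L^2}^2\le q_{1,1+\epsilon}(u)/\epsilon$, and $L^{p_0}$ with $p_0\downarrow\frac{2d}{d-1}$. This produces every exponent $\beta>r_{d,\alpha}$.
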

  

A recent paper of Mandel and Oliveira e Silva \cite{MO2023} takes a step further in considering symmetries of solutions, the so-called $G_k$-symmetric groups, which are {\it block-radial}, and show that there exist radial and block-radial solutions of \eqref{E:groundstate} with action larger than the unrestricted ground states. This implies that ground states may fail to be radial, but also block-radial. In 2D, the only nontrivial block-radial class is $G_1 = O(1) \times O(1)$, which means that $G_1$-symmetry is coordinate-wise evenness:
$$
u(x,y) = u(-x,y) = u(x,-y),
$$
and thus, we have $H^2_{rad}(\R^2) \subset H^2_{G_1}(\R^2) \subset H^2(\R^2)$.
One may expect that the unrestricted space could produce the true 
ground state (the smallest minimizer in a certain sense, see \S \ref{S:Def-GS}), in 2D it is not known (see \cite[Appendix C]{MO2023}). 

\smallskip

The aim of this paper is to investigate the two-dimensional ground state solutions of \eqref{E:groundstate} in more detail. 
For that, we first derive small-$\ep$ asymptotics for the mass, with a special emphasis on one and two dimensions,  building on 

(i) our one-dimensional numerical study in \cite{KPRS} and the results in \cite{FJMM2022}, and 

(ii) the two (and higher)-dimensional asymptotic results in \cite{LW2021}, \cite{MO2023}, \cite{FJMM2022},  
and references therein. 

In particular, we establish the quotient-to-mass relation (Lemma~ \ref{L:quotient-to-mass}), which allows us to transfer rates from the minimizing quotient 
to the rates for mass. This helps us examine the mass behavior, including turning points, or minima in the mass curves, which relate to branching phenomenon that we established in \cite{KPRS}. Small-$\ep$ asymptotics allows us to distinguish the behavior of different branches of ground states, which we perform next.
Besides providing a generalization to the relation lemma (Theorem~\ref{T:quotient-mass-general}),
we derive the sharp unrestricted quotient lower bound from the non-homogeneous Gagliardo-Nirenberg inequality of \cite{FJMM2022} in any dimension, in the ranges stated below. In $d \geq 2$, this recovers the lower bound exponent of \cite{LW2021}.
We also give the reader a sense of what functions (we call them trial functions in appendix) yield such asymptotics. These examples become very important in the numerical part of the paper, which involves delicate attenuation for finding specific ground states. 

\smallskip

More precisely, our main analytical results are the following. 
Consider the $H^2$-subcritical range: 
$\alpha>0$ if $d \leq 4$ and $0 < \alpha < 8/(d-4)$ if $d \geq 5$. 
For $u \in H^2(\R^d)$ set
$q_{1,1+\ep}(u) = \int_{\R^d} ( (|\xi|^2-1)^2 + \ep )|\hat{u}(\xi)|^2 \, d\xi$, 
and for a symmetry class $\mathcal X\subset H^2(\R^d)$ (for instance all functions or the radial ones) define
$$
\mathcal R_\ep^{\mathcal X}(\alpha) = \inf_{u \in \mathcal X \setminus \{0\} } 
\frac{q_{1,1+\ep}(u)}{\| u \|_{L^{\alpha+2}(\R^d)}^2},
$$
the Weinstein quotient on $\mathcal X$ (we drop the superscript when
$\mathcal X=H^2(\R^d)$), see further details in \S \ref{S:Def-GS} and \ref{S:R-asym}. When the infimum is attained, $Q_\ep^{\mathcal X}$ denotes the
corresponding least-action ground state of \eqref{E:groundstate} in the class $\mathcal X$, with $a=1$, $b=1+\ep$. Here, $\approx$ denotes two-sided bounds with positive constants independent of $\ep$ as described in Remark \ref{R:approx}. 

\begin{theoremA}\setTheoremAlabel\label{ThmA}
Let $\alpha>0$ be in the $H^2$-subcritical range and $\ep \to 0^+$.
\begin{itemize}
\item[\underline{Part 1:}] {\rm (Quotient-to-mass relation)} 
Assume that the infimum defining $\mathcal R_\ep^{\mathcal X}(\alpha)$ is attained for every sufficiently small $\ep>0$. If
$\mathcal R_\ep^{\mathcal X}(\alpha) \approx \ep^{\gamma}L(\ep)$ for some $0< \gamma<1$ and some
positive slowly varying $L$ at zero, then
$$
M\big(Q_\ep^{\mathcal X}\big) 
\approx \frac{ (\mathcal R_\ep^{\mathcal X}(\alpha) )^{\frac{\alpha+2}{\alpha}}}{\ep}
\approx\ \ep^{\gamma\frac{\alpha+2}{\alpha}-1} 
L(\ep)^{\frac{\alpha+2}{\alpha}}.
$$
Moreover, if ~~ $\mathcal R_{\lambda\ep}^{\mathcal X}(\alpha)/\mathcal R_\ep^{\mathcal X}(\alpha)
\to\lambda^{\gamma}$~ for every $\lambda>0$, then 
$$
\ep \,M(Q_\ep^{\mathcal X}) = \gamma\, (\mathcal R_\ep^{\mathcal X}(\alpha))^{\frac{\alpha+2}{\alpha}}\,(1+o(1)).
$$

\item[\underline{Part 2:}] {\rm (Sharp unrestricted quotient and mass rates)} 
For $d=1$ and every $\alpha>0$,
\begin{equation}\label{E:rates-1D}
\mathcal R_\ep(\alpha) \approx \ep^{\frac{\alpha+4}{2(\alpha+2)}},
\qquad \mbox{and thus,} \qquad
M(Q_\ep) \approx \ep^{\frac{2}{\alpha}-\frac12}.
\end{equation}
For $d \geq 2$ and $0 < \alpha \leq 4/(d-1)$,
\begin{equation}\label{E:rates-2D+}
\mathcal R_\ep(\alpha) \approx \ep^{\frac{8+(3-d)\alpha}{4(\alpha+2)}}, 
\qquad \mbox{and thus,} \qquad M(Q_\ep) \approx \ep^{\frac2{\alpha}-\frac{d+1}{4}}.
\end{equation}
\end{itemize}
\end{theoremA}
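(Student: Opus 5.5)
Part 1 rests on a Pohozaev/Nehari-type identity together with a monotonicity (envelope) argument in $\ep$. For a minimizer $Q=Q^{\mathcal X}_\ep$, normalized to solve \eqref{E:groundstate}, the Nehari identity reads $q_{1,1+\ep}(Q)=\|Q\|_{\alpha+2}^{\alpha+2}$. Combined with $q_{1,1+\ep}(Q)=\mathcal R_\ep\|Q\|_{\alpha+2}^2$ it gives
\[
\|Q\|_{\alpha+2}^{\alpha}=\mathcal R_\ep,\qquad q_{1,1+\ep}(Q)=\mathcal R_\ep^{\frac{\alpha+2}{\alpha}}.
\]
The map $\ep\mapsto\mathcal R_\ep$ is an infimum of functions affine in $\ep$, so it is concave and increasing. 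At points of differentiability Danskin's envelope theorem gives
\[
\frac{d}{d\ep}\mathcal R_\ep=\frac{M(Q)}{\|Q\|_{\alpha+2}^2}=M(Q)\,\mathcal R_\ep^{-2/\alpha}.
\]
At other points the one-sided derivatives bracket this quantity, because $M(Q)/\|Q\|^2_{\alpha+2}$ lies in the superdifferential. Hence
\[
\ep M(Q_\ep)=\mathcal R_\ep^{\frac{\alpha+2}{\alpha}}\cdot \frac{\ep\,\partial_\ep\mathcal R_\ep}{\mathcal R_\ep}.
\]
The proof therefore reduces to showing that the logarithmic derivative $\ep\,\partial_\ep \mathcal R_\ep/\mathcal R_\ep$ is bounded above and below.

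For the upper bound, concavity together with $\mathcal R_0\ge0$ gives $\ep\,\partial^+_\ep\mathcal R_\ep\le \mathcal R_\ep-\mathcal R_0\le\mathcal R_\ep$. For the lower bound, concavity gives
\[
\partial^-_\ep\mathcal R_\ep\ \ge\ \frac{\mathcal R_{\Lambda\ep}-\mathcal R_\ep}{(\Lambda-1)\ep}
\]
for any fixed $\Lambda>1$. The hypothesis $\mathcal R\approx\ep^\gamma L$ with $L$ slowly varying gives $\mathcal R_{\Lambda\ep}/\mathcal R_\ep\ge c\Lambda^\gamma(1+o(1))$. Choosing $\Lambda$ large, this ratio exceeds $2$, which yields $\ep\,\partial\mathcal R/\mathcal R\gtrsim1$.

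Under the regular-variation hypothesis the difference quotients on both sides converge. Concavity then forces
\[
\frac{\ep\,\partial_\ep\mathcal R_\ep}{\mathcal R_\ep}\to\frac{\Lambda^\gamma-1}{\Lambda-1}\quad(\Lambda>1)
\]
from one side and the analogous expression for $\Lambda<1$ from the other. Letting $\Lambda\to1$ gives the limit $\gamma$, which is the sharp constant claimed in Part 1.

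The main delicacy in Part 1 is that the minimizer need not be unique, so $M(Q_\ep)$ is not a well-defined function of $\ep$. The superdifferential bracketing above handles this uniformly over all minimizers.

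For Part 2, the mass rates follow from Part 1 once the quotient rates are established. The quotient upper bounds come from trial functions whose Fourier transform concentrates within distance $\sim\sqrt\ep$ of the sphere $|\xi|=1$; there the symbol is $\approx\ep$.

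In 1D the trial function is $\hat u=\phi((\xi-1)/\sqrt\ep)$. This gives $q\approx\ep\cdot\ep^{1/2}$ and, via Hausdorff–Young/Plancherel scaling, $\|u\|_{\alpha+2}^2\approx\ep^{1-\frac{1}{\alpha+2}}$. The resulting exponent is $\frac32-1+\frac1{\alpha+2}=\frac{\alpha+4}{2(\alpha+2)}$, as claimed.

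In $d\ge2$ the trial function is Knapp-type: $\hat u$ is supported in a cap of angular width $\ep^{1/4}$ and radial width $\ep^{1/2}$. Its Fourier volume is $\ep^{1/2+(d-1)/4}$. This produces the exponent $\frac{8+(3-d)\alpha}{4(\alpha+2)}$.

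The matching lower bound in 1D comes from the non-homogeneous Gagliardo–Nirenberg inequality of \cite{FJMM2022}, $\|u\|_{\alpha+2}^2\lesssim q_{1,1+\ep}(u)\,\ep^{-\theta}$. I would reprove it by splitting frequencies into $||\xi|-1|<\delta$ and the complement, optimizing in $\delta$, and applying Hölder/Hausdorff–Young on each piece. In $d\ge2$ the lower bound comes from the Stein–Tomas restriction estimate on thin annuli, which is where the restriction $\alpha\le4/(d-1)$ enters. I expect this sharp lower bound to be the main obstacle, specifically the endpoint $\alpha=4/(d-1)$ and the range where $\alpha+2$ falls below the Stein–Tomas exponent. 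There I would interpolate between the restriction estimate and the trivial $L^2\to L^\infty$ bound on the annulus, as in \cite{LW2021}. Inserting $\gamma$ into Part 1 then gives $M\approx\ep^{\gamma\frac{\alpha+2}{\alpha}-1}$. This equals $\ep^{\frac2\alpha-\frac12}$ in 1D and $\ep^{\frac2\alpha-\frac{d+1}4}$ in $d\ge2$.
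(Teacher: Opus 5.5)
Your Part 1 is the paper's proof of Theorem~\ref{T:quotient-mass-general}, phrased through supergradients. Both rest on the concavity of $\ep\mapsto\mathcal R^{\mathcal X}_\ep$ as an infimum of affine functions and on the inequality $\mathcal R^{\mathcal X}_{\ep'}\le\mathcal R^{\mathcal X}_\ep+(\ep'-\ep)M(v_\ep)$, tested at $\ep'=0$, $\lambda\ep$ and $\mu\ep$. Testing that inequality directly also repairs two small slips.
\begin{itemize}
\item Since $M(v_\ep)\in[\partial^+\mathcal R_\ep,\partial^-\mathcal R_\ep]$, the upper bound must be on $\partial^-$ and the lower bound on $\partial^+$; you wrote the reverse.
\item In the regularly varying case you obtain $\liminf\ge\frac{\Lambda^\gamma-1}{\Lambda-1}$ and $\limsup\le\frac{1-\mu^\gamma}{1-\mu}$, not convergence for fixed $\Lambda$.
\end{itemize}
To use Part 1 inside Part 2 you also need the unrestricted infimum to be attained. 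You never address this; the paper cites \cite[Theorem~3.6]{FJMM2022}. Your trial functions are fine. Your 1D lower bound is also fine: splitting at $\delta=\ep^{1/2}$ reproduces the weight computation of Theorem~\ref{Thm:B1}.

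The genuine gap is the sharp lower bound for $d\ge2$. Since $\frac{2(d+1)}{d-1}-2=\frac4{d-1}$, the range $0<\alpha\le\frac4{d-1}$ is exactly $2<\alpha+2\le p_{ST}:=\frac{2(d+1)}{d-1}$. So $\alpha+2$ lies below the Stein--Tomas exponent on the whole interior of the range. The endpoint is harmless, because the Tomas--Stein bound holds at $p_{ST}$ itself.

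Interpolating the restriction estimate with the trivial $L^2\to L^\infty$ bound on the annulus only gives estimates for $p\in[p_{ST},\infty]$, that is, for $\alpha\ge\frac4{d-1}$. There they yield the rate $\ep^{1/2}$, and they say nothing for $p<p_{ST}$. So this step fails exactly where you need $\ep^{\gamma_{d,\alpha}}$.

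The missing endpoint is Plancherel. Take $g$ supported in $\{||\xi|-1|\le\delta\}$ with $\delta\le\frac12$. Stein--Tomas on each sphere, followed by Minkowski and Cauchy--Schwarz in the radius, gives $\|\mathcal F^{-1}g\|_{L^{p_{ST}}}\lesssim\delta^{1/2}\|g\|_{L^2}$. Combining this with $\|\mathcal F^{-1}g\|_{L^2}=\|g\|_{L^2}$ by H\"older in $x$ gives
\[
\|\mathcal F^{-1}g\|_{L^{\alpha+2}}\lesssim\delta^{\frac{d+1}{2}\left(\frac12-\frac1{\alpha+2}\right)}\|g\|_{L^2}=\delta^{1-\gamma_{d,\alpha}}\|g\|_{L^2},
\]
which is exactly the Knapp scaling. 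Now split $\widehat u$ into three pieces.
\begin{itemize}
\item On $\{||\xi|-1|\le\ep^{1/2}\}$ you have $\|\widehat u\|_{L^2}^2\le q_{1,1+\ep}(u)/\ep$.
\item On dyadic shells $\{||\xi|-1|\sim\delta\}$ with $\ep^{1/2}\le\delta\le\frac12$, the bound $m_{1,1+\ep}\gtrsim\delta^2$ gives $\|\widehat u\|_{L^2}^2\lesssim\delta^{-2}q_{1,1+\ep}(u)$.
\item The far region is controlled by Sobolev.
\end{itemize}
Each piece contributes $\lesssim\delta^{-\gamma_{d,\alpha}}q_{1,1+\ep}(u)^{1/2}$ to $\|u\|_{L^{\alpha+2}}$. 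The geometric sum then yields $\|u\|_{L^{\alpha+2}}^2\lesssim\ep^{-\gamma_{d,\alpha}}q_{1,1+\ep}(u)$.

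The shorter route is the paper's Theorem~\ref{Thm:GN-lower}, and it is the same device you already use in 1D. Insert $\ep\|u\|_2^2\le q_{1,1+\ep}(u)$ and $\|\Delta u+u\|_2^2\le q_{1,1+\ep}(u)$ into the inequality $\|u\|_{\alpha+2}\lesssim\|u\|_2^{\gamma_{d,\alpha}}\|\Delta u+u\|_2^{1-\gamma_{d,\alpha}}$ of \cite{FJMM2022}, which the paper applies in every dimension in this range.
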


We prove Part 1 in Lemma~\ref{L:quotient-to-mass}, and more generally, in Theorem \ref{T:quotient-mass-general}. This part converts two-sided asymptotics of the quotient into asymptotics of the mass, in any dimension and symmetry class, where the quotient infimum is attained. When the quotient admits an expansion with a leading constant, as in \cite[Theorems 1.3(i), 2.1(i)]{LW2021} for $\alpha>\frac{2}{d-1}$ in the radial class and $\alpha\ge\frac{4}{d-1}$ in general, the second statement of Part 1 relates that constant to the mass, providing sharp asymptotics with explicit constants, see Corollary \ref{C:sharp-mass} and Remark \ref{R:constants-radiality}. 
The 1D statement in Part 2 is proved in \S \ref{S:rate-consequences}, where we improve the bound $\mathcal R_\ep \lesssim \ep^{1/2}$ of \cite[Prop. 3.7]{FJMM2022}, and the matching lower bound is obtained by either a Hausdorff-Young estimate or the non-homogeneous Gagliardo-Nirenberg inequality from \cite[Theorem 1.1]{FJMM2022}. 
To our knowledge, the sharp {\it one-dimensional} rates were not previously known. 
Both approaches for obtaining lower bounds extend to higher dimensions, but only the non-homogeneous Gagliardo-Nirenberg argument gives  the sharp unrestricted lower bound of \cite{LW2021}, we prove it in Theorem~\ref{Thm:GN-lower}. 
Combining this lower bound with the matching upper bound (from \cite{LW2021}) and then applying Part 1 gives the higher-dimensional statement in Part 2. The
higher-dimensional quotient rate was already known, while the
quotient-to-mass relation gives new information about the corresponding mass rate.

The unrestricted rates in \eqref{E:rates-1D} and \eqref{E:rates-2D+} give the Knapp mass thresholds
\begin{equation}\label{E:alpha-K}
\alpha_K=4~~\mbox{(in 1D)}\quad\mbox{and}\quad \alpha_K=\frac83~~\mbox{(in 2D)},
\end{equation}
which are the cases $d=1,2$ of the threshold $\alpha_K(d)=\frac{8}{d+1}$. For $d \leq 3$ this coincides with the existence threshold for mass-constrained minimizers in \cite[Theorem 1.2]{FJMM2022} (see further discussion in Remark \ref{R:higher-dimensions}). They are consistent with our 1D numerics from
\cite{KPRS} and related to the value $p_*$ ($p=\alpha+1$) studied in \cite{SP2020,STCK2022}, see
discussion in Remarks \ref{R:normGS} (1D) and \ref{R:Knapp-threshold} (2D). 

We remark further about the 2D rates, since they become essential for our numerical study. The unrestricted rate in 2D from Part 2 of Theorem \ref{ThmA} gives 
\begin{equation}\label{E:rates-2D-nr}
\qquad M(Q_\epsilon^{\rm nr}) \approx \epsilon^{\frac{2}{\alpha} - \frac{3}{4}}
\qquad (\text{2D, nonradial}, ~0<\alpha<4).
\end{equation}
For the radial class, Part 1 of Theorem \ref{ThmA} combined with the radial quotient asymptotics of Lenzmann-Weth \cite{LW2021} and Mandel-Oliveira~e~Silva \cite{MO2023} gives 
\begin{equation}\label{E:rates-2D-r}
\qquad M(Q_\epsilon^{\rm rad})\approx \left\{
\begin{array}{ll}
\epsilon^{\frac2{\alpha}-1}, &  0<\alpha<2,\\
|\ln\epsilon|^{-1},      &  ~\alpha=2, \\
\epsilon^{\frac1{\alpha}-\frac12}, & ~ \alpha>2.
\end{array}
\right.
\quad (\text{2D, radial}, ~ \alpha>0)
\end{equation}
Note that the radial mass threshold in 2D occurs at $\alpha=2$ (unlike $\alpha_K=\frac83$ in \eqref{E:alpha-K}), with the mass still tending to zero logarithmically at that endpoint.  
We use these rates in our numerical study, to which we turn next, to distinguish different 
types of ground state solutions to \eqref{E:groundstate}, as well as to interpret possible branching in one and two dimensions.
\smallskip

We search numerically for {\it nonradial} ground state solutions in the {\it cubic} two-dimensional problem, craftily designing the initializations (e.g., the Knapp-type concentration caps) and carefully tracking various branches, then  compare them with radial and symmetry-constrained branches, and study their characteristics. We pay special attention to the conserved quantities such as mass and energy as functions of $b$, and more so, studying dependence of energy on mass (mass-energy diagrams), which later becomes essential for higher nonlinearities. 
We then examine {\it quartic} nonlinearity in 2D, investigate radial and nonradial solutions in that setting, and observe that {\it branching} behavior appears in mass-energy diagrams, similar to the ones we found in the 1D setting in \cite{KPRS}. After that, for higher nonlinearities $\alpha \geq 4$, we compute the ground state solutions, which in all our simulations turn out to be  radial (note that for $\alpha \geq 4$ in 2D the radial and unrestricted quotients have the same $\ep$-rate, \cite[Theorem 1.3]{LW2021}, thus, the rate comparison that produces symmetry breaking for $\alpha<4$ no longer distinguishes the two branches) and further examine the branching phenomenon.
\smallskip

Our detailed numerical findings are as follows:

\begin{itemize}
\item[(i)] In the cubic case ($\alpha=2$) in 2D, using a Knapp-type cap-concentrated initial guess and delicate tracing, we numerically construct a {\it nonradial ground state} branch near $b = 1.001$ (see Figure \ref{figflat1001}) 
and also find that it has lower energy than the computed radial solutions branch with the same mass. To our knowledge, this is the first numerical construction of ground states without radial symmetry, which confirms the radial symmetry-breaking result of Lenzmann \& Weth \cite[Theorem 1.2]{LW2021} (see Theorem \ref{Thm1} stated above). We note that examples that we consider in 2D have $G_1$ symmetry (even coordinate-wise), thus, our numerical findings do not address the distinction between the $G_1$-restricted vs. unrestricted ground states or minimization problem (see  Remark \ref{R:attainment} and \cite[Appendix C]{MO2023}).

\item[(ii)] We compute angular-mode branches with cosine-type symmetries $m=0,1,2,3,4,6,8$, and observe that within the angular-mode class, the radial branch ($m=0$) has the lowest energy. The higher angular modes could be considered as excited or symmetry-constrained branches.

\item[(iii)] In the quartic case ($\alpha=3$) in 2D, we find that all our computed solutions have a turning point and develop branching in mass-energy diagrams, indicating that there is a lower-energy ({`stable'}) branch for sufficiently large values of $b$ and an upper-energy ({`unstable'}) branch for $b$ decreasing down to $1$. (While we do not investigate any stability of branches in this work, for consistency, we use the terminology of `stable' and `unstable'  branches motivated by our 1D paper \cite{KPRS}.)  

\item[(iv)] In the same quartic case ($\alpha=3$) in 2D, for very small positive $\ep$, we find two-peak and four-peak nonradial branches of solutions. In this regime when $\ep \to 0^+$, the two-peak solutions have lower energy than the radial solution (that have also very small $\ep$), which lie on the {\it unstable} radial branch of solutions. This also confirms the radial symmetry-breaking result for small $\ep$ in \cite{LW2021}. However, comparing with the stable radial branch of ground state solutions, which has larger values of $b=1+\ep$, we find that the radial ground states on the stable branch with the same mass as the nonradial (but different $b$) have lower energy. Thus, in the language of mass-constrained ground states, the normalized ground state solutions appear to be radial in this case of nonlinearity (exactly due to branching). We also find other nonradial and angular mode solutions, which indicate possible nonradial excited states.

\item[(v)] For higher nonlinearities ($\alpha=4,5,6$) in 2D, we only find radial solutions with our approach considering various initializations and tracing, which is consistent with the analytical results on ground state solutions being nonradial in the range $0 < \alpha <  4$ in 2D in Theorem \ref{Thm1} (or \cite[Theorem 1.2]{LW2021}). It is also consistent with the leading constants of the radial and unrestricted mass asymptotics coinciding for $\alpha \geq 4$, which by Corollary \ref{C:sharp-mass} is equivalent to constant functions being extremizers of the Stein - Tomas inequality on $\mathbb S^1$, a well-known conjecture at $\alpha=4$ (e.g., \cite{Becker2025}), see Remark \ref{R:constants-radiality}. 
\end{itemize}


The structure of this paper is as follows: in Section~\ref{S:GS} we review the Pokhozhaev identities, the least-action, best-constant and mass-constrained notions of ground states, and the positive and oscillatory ground state regimes. 
In Section~\ref{S:asym} we discuss the 
quotient asymptotics, prove a quotient-to-mass relation Lemma~ \ref{L:quotient-to-mass} together with its generalization (Theorem \ref{T:quotient-mass-general}), and give the proofs for lower bounds via Hausdorff-Young (Theorem \ref{Thm:B1}) and Gagliardo-Nirenberg (Theorem \ref{Thm:GN-lower}) inequalities. We then combine these bounds with the trial-function constructions to prove the quotient and mass rates in Theorem \ref{ThmA}. 
In Section~\ref{S:1dGS} we review the 1D ground states and obtain their mass asymptotics, including the threshold case $\alpha=4$.
In Section~\ref{S:2dGS} we adapt these facts to two dimensions, show the resulting mass rates, and also describe the initial approximations used in our computations with the motivation for such initializations.
In Section ~\ref{S:2d-cubic} we construct and compare radial, angular-mode and nonradial branches for the {\it cubic} case. Section \ref{S:quartic} considers the {\it quartic} case,  where we obtain two-peak and four-peak nonradial branches of solutions and then investigate their relation to the radial branch; we also find angular-mode branches, which could represent excited states or a specific-symmetry constrained ground states. 
In Section ~\ref{S:higher} we discuss higher nonlinearities, for which our computations only find radial solutions, we also observe branching in mass-energy diagrams in these cases. In Section \ref{S:branching} we discuss {\it branching} and normalized minimization.  
In the appendix we give the detailed trial-function calculations for the quotient upper bounds, using the Knapp-type wave packets in 1D, Knapp caps in 2D, and a Bessel construction in the radial 2D case. We conclude with a table collecting all the small-$\ep$ rates, in the unrestricted and radial classes, in all dimensions.
\smallskip

{\bf Acknowledgments.} 
The  work of C.K. was partially supported by 
 the ANR-17-EURE-0002 EIPHI and by the ANR project 
ISAAC-ANR-23-CE40-0015-01.
The research of S.R. was partially supported by the NSF grant 
DMS-2452782. She would like to thank the hospitality of the IMB, 
Universit\'e Bourgogne Europe, where a significant part of the research for this paper was carried out.  


\section{Notions of ground states and loss of positivity}\label{S:GS}

We first discuss ground state properties in any dimension, including 
Pokhozhaev identities, equivalent definitions, and positivity vs. sign-changing regimes. 

\subsection{Pokhozhaev identities}
Recalling that for a standing wave $u(t,x)=e^{ibt}Q(x)$, $x \in \mathbb R^d$, the profile $Q$ solves the elliptic equation \eqref{E:groundstate}, we multiply \eqref{E:groundstate} by $Q$ or $x\cdot\nabla Q$ and integrate to obtain two Pokhozhaev identities,  
\begin{equation}\label{E:Pokh1-d}
\|\Delta Q\|_{L^2(\R^d)}^2
-2a\|\nabla Q\|_{L^2(\R^d)}^2 + b\|Q\|_{L^2(\R^d)}^2
-\|Q\|_{L^{\alpha+2}(\R^d)}^{\alpha+2}=0,
\end{equation}
\begin{equation}\label{E:Pokh2-d}
\frac{4-d}{2}\|\Delta Q\|_{L^2(\R^d)}^2 + a(d-2)\|\nabla Q\|_{L^2(\R^d)}^2
-\frac{d}2 b\|Q\|_{L^2(\R^d)}^2
+\frac{d}{\alpha + 2}\|Q\|_{L^{\alpha+2}(\R^d)}^{\alpha + 2}=0.
\end{equation}
Solving for $\|\Delta Q\|^2_{L^2}$  
from \eqref{E:Pokh1-d} and \eqref{E:Pokh2-d} and recalling the energy \eqref{E:E}, we obtain  
\begin{equation}\label{E:energy-grad}
E(Q)=\frac{b}{2}\,\frac{d\alpha-8}{8+(4-d)\alpha}\,
\|Q\|_{L^2(\R^d)}^2 - \frac{2a\alpha}{8+(4-d)\alpha}\,
\|\nabla Q\|_{L^2(\R^d)}^2.
\end{equation}
We note that the denominator is positive in the (scale-invariant) $H^2$-subcritical range; for $d\leq4$ this is automatic for every $\alpha>0$, while for $d \geq 5$ it corresponds to $0<\alpha < 8/(d-4)$. In the scale-invariant $L^2$-critical  case ($\alpha=8/d$) 
formula \eqref{E:energy-grad} reduces to
\begin{equation}\label{E:critical-energy}
E(Q)=-\frac a2 \|\nabla Q\|_{L^2(\R^d)}^2,
\end{equation}
and thus, the energy of a nontrivial ground state is positive if $a<0$ and negative if $a>0$; in the scaling-invariant case $a=0$, the ground state has zero energy.

From the same Pokhozhaev identities, we also have
\begin{equation}\label{EM-relation}
E(Q) =- \frac{b}2 \, M(Q) + \frac{\alpha}{2(\alpha+2)} \|Q\|_{L^{\alpha+2}(\R^d)}^{\alpha+2}. 
\end{equation}

Thus, a trivial consequence from \eqref{EM-relation} is
$$
\qquad E(Q) + \frac{b}2 M(Q) > 0, \quad \mbox{or ~~equivalently,} \quad E(Q) >  -\frac{b}2 M(Q)
$$
for any non-trivial solution of \eqref{E:groundstate}. 
This is one of the first numerical checks that can be done when searching for the ground state solutions of \eqref{E:groundstate}. 
In our computations below this holds, which can be seen from the provided figures. 

Secondly, in our computations, we also check the error $\mathcal E(Q)$ for any solution $Q$ of \eqref{E:groundstate} by showing that 
\begin{equation}\label{E:Error}
\mathcal E (Q) := E(Q)+\frac{b}{2}\|Q\|_{L^2(\R^d)}^2
-\frac{\alpha}{2(\alpha+2)}\|Q\|_{L^{\alpha+2}(\R^d)}^{\alpha+2}.
\end{equation}
In the 2D case considered here, the absolute value of the error 
\eqref{E:Error} is typically on the order of $10^{-10}$ or less for 
$\epsilon \gg 1$ (for $\epsilon \sim 0$, both the residual of the 
elliptic equation and the Pokhozhaev identities drop). Previously, in \cite{KPRS} we also computed the same error in the 1D case and confirmed a similar accuracy.

\subsection{Definitions of ground states}\label{S:Def-GS}

For $b>0$ we define an action (or Weinstein functional) 
\begin{equation}\label{E:action}
S_b(Q) =E(Q)+\frac b2\|Q\|_{L^2(\R^d)}^2
\end{equation}
or, substituting energy, 
\begin{equation*}\label{E:action-expanded}
S_b(Q) = \frac12\|\Delta Q\|_{L^2(\R^d)}^2 -a\|\nabla Q\|_{L^2(\R^d)}^2 +\frac b2\|Q\|_{L^2(\R^d)}^2
-\frac{1}{\alpha+2}\|Q\|_{L^{\alpha+2}(\R^d)}^{\alpha+2}.
\end{equation*}

Note that if $Q$ is a solution of \eqref{E:groundstate}, then using the Pokhozhaev identity \eqref{E:Pokh1-d}, the first three terms in $S_b$ could be substituted by the potential norm to get
\begin{equation}\label{E:S-Potential}
S_b(Q)= \frac{1}{2}\|Q\|_{L^{\alpha+2}(\R^d)}^{\alpha+2} -\frac{1}{\alpha+2}\|Q\|_{L^{\alpha+2}(\R^d)}^{\alpha+2} \equiv \frac{\alpha}{2(\alpha+2)}\|Q\|_{L^{\alpha+2}(\R^d)}^{\alpha+2}.
\end{equation}
This is useful in the action minimization or definition of ground states, which we discuss next. 
\smallskip

There are several definitions of ground state solutions in the literature, so we briefly review them here and make some comparison. 

\begin{definition}[{Least-action critical point}]\label{D:LA}
A nonzero solution $Q\in H^2(\R^d)$ of \eqref{E:groundstate} is called a {\rm least-action} ground state if
\begin{equation}\label{E:least-action-def}
S_b(Q) = \inf \left\{S_b(u):~ u\in H^2(\R^d)\setminus\{0\},~ S_b'(u)=0\right\},
\end{equation}
where the derivative is understood as the Fr\'echet derivative. 

Equivalently, a nonzero solution $Q\in H^2(\mathbb R^d)$ is a least-action ground state, 
if 
$$
S_b(Q)\leq S_b(\widetilde Q)
$$
for every nonzero solution $\widetilde Q\in H^2(\mathbb R^d)$ of the equation \eqref{E:groundstate}.
\end{definition}

For the next definition, we introduce some notation. 
Following \cite{LW2021}, we define the quadratic form
\begin{equation}\label{E:q-ab-new}
q_{a,b}(u):= \|\Delta u\|_{L^2(\R^d)}^2
-2a\|\nabla u\|_{L^2(\R^d)}^2 + b\|u\|_{L^2(\R^d)}^2.
\end{equation}
The corresponding Fourier multiplier, or symbol of the linear part in the equation \eqref{E:groundstate}, is 
\begin{equation}\label{E:symbol-m}
m_{a,b}(\xi)=|\xi|^4-2a|\xi|^2+b.
\end{equation}
We mention that the quadratic form $q_{a,b}$ is positive-definite when
the multiplier is positive, i.e., when  $b>0$ and $a<\sqrt b$, see further discussion on positivity in \S \ref{S:positive}.

For a given nonlinearity $\alpha$, we consider the Weinstein quotient ${q_{a,b}(u)}/{~\|u\|_{L^{\alpha+2}(\R^d)}^2}$, sometimes also referred to as the (nonlinear) Rayleigh quotient, for example, in \cite{MO2023} (the classical linear Rayleigh quotient would have an $L^2$ norm in the denominator), and define the infimum value of it as
\begin{equation}\label{E:R-ab}
R_{a,b}(\alpha):= \inf_{u \in H^2(\R^d)\setminus\{0\}} \frac{q_{a,b}(u)}{~\|u\|_{L^{\alpha+2}(\R^d)}^2}.
\end{equation}

\begin{definition}[{Best-constant optimizer}]\label{D:BCo}
A non-trivial function $v \in  H^2(\R^d)$ 
is an optimizer for \eqref{E:R-ab} if the infimum in \eqref{E:R-ab} is attained, that is, 
$$
R_{a,b}(\alpha) \equiv \frac{q_{a,b}(v)}{~\|v\|_{L^{\alpha+2}(\R^d)}^2}.
$$
\end{definition}

We show the equivalence of Definitions \ref{D:LA} and \ref{D:BCo}, whenever the infimum in \eqref{E:R-ab} is attained.  

Fix the power $\alpha$. Suppose first that $v_0$ is an optimizer for \eqref{E:R-ab} as in Definition \ref{D:BCo}, normalized by
$\|v_0\|_{L^{\alpha+2}(\R^d)}=1$. Then $R_{a,b} = q_{a,b}(v_0)$. 
Considering the constrained minimization problem 
$$
q_{a,b}(v_0) = \inf \{ q_{a,b}(v) :~ \|v\|_{L^{\alpha+2}(\mathbb R^d)}=1 \} = R_{a,b},
$$
and the first variation of the quadratic form and the constraint 
(with the Lagrange multiplier $\lambda = R_{a,b}$), 
we obtain the Euler-Lagrange equation for $v_0$,
\begin{equation}\label{E:v-Euler-new}
\Delta^2 v_0 + 2a\Delta v_0 + bv_0 =R_{a,b}|v_0|^\alpha v_0,
\end{equation}
where $v_0$ solves the equation weakly, then the standard elliptic regularity upgrades it to the required smoothness.  
Defining
\begin{equation}\label{E:Q-v}
Q_0=R_{a,b}^{1/\alpha}v_0,
\end{equation}
we obtain that $Q_0$ solves \eqref{E:groundstate}, and hence,  
by \eqref{E:S-Potential}, 
\begin{equation}\label{E:Q_0}
S_b(Q_0) = \frac{\alpha}{2(\alpha+2)} \|Q_0\|^{\alpha+2}_{L^{\alpha+2}(\mathbb R^d)} = 
\frac{\alpha}{2(\alpha+2)} \| R^{1/{\alpha}}_{a,b} v_0\|^{\alpha+2}_{L^{\alpha+2}(\mathbb R^d)}
= \frac{\alpha}{2(\alpha+2)} \,R^{(\alpha+2)/{\alpha}}_{a,b}.
\end{equation}

On the other hand, every nonzero critical point $Q$ of $S_b$ is a solution of \eqref{E:groundstate}, and by the first Pokhozhaev identity \eqref{E:Pokh1-d}, we have
\begin{equation}\label{E:q-equals-Lp-new}
q_{a,b}(Q)=\|Q\|_{L^{\alpha+2}(\R^d)}^{\alpha+2},
\end{equation}
and hence, 
$$
R_{a,b} \leq \frac{q_{a,b}(Q)} {\|Q\|_{L^{\alpha+2}(\mathbb R^d)}^2}
= \|Q\|_{L^{\alpha+2}(\mathbb R^d)}^\alpha.
$$
Raising to the power $\frac{\alpha+2}{\alpha}$, we get
\begin{equation}\label{E:Lp-R}
R_{a,b}^{(\alpha+2)/\alpha} \leq \|Q\|_{L^{\alpha+2}(\mathbb R^d)}^{\alpha+2}.
\end{equation}
Thus, combining with \eqref{E:Q_0} and \eqref{E:S-Potential}, we obtain
\begin{equation}\label{E:S_b-lower}
S_b(Q_0) \leq S_b(Q),
\end{equation}
implying that $Q_0$ is a least-action critical point. 

To the opposite, if $Q$ is a least-action critical point, then $S_b(Q) \leq S_b(Q_0)$, where $Q_0$ is defined by \eqref{E:Q-v} with $v_0$ being the optimizer of the quotient \eqref{E:R-ab}. Then the previous lower bound \eqref{E:S_b-lower} holds also, and we obtain equality $S_b(Q) = S_b(Q_0)$, which also implies that  
\begin{equation}\label{E:potential-R}
\|Q\|_{L^{\alpha+2}(\R^d)}^{\alpha+2}=R_{a,b}^{(\alpha+2)/\alpha}.
\end{equation}
It follows that
$$
\frac{q_{a,b}(Q)} {\|Q\|_{L^{\alpha+2}(\R^d)}^2}
= R_{a,b},
$$
so the normalized function
$$
v =\frac{Q}{\|Q\|_{L^{\alpha+2}(\R^d)}} = R_{a,b}^{-1/\alpha}Q
$$
is an optimizer for \eqref{E:R-ab}. 
Therefore, we conclude that the least-action and best-constant optimizer (on which the quotient infimum is attained) definitions are equivalent.
(In a similar fashion one can also show that $R_{a,b}(\alpha)^{(\alpha+2)/\alpha} = C_\alpha \, \inf_{u \in H^{2}(\mathbb R^d), \|u\|_{L^{\alpha+2}}=1} \sup_{t \geq 0} S_b(tu)$, $C_\alpha = 2(\alpha+2)/\alpha$, with $t = R_{a,b}^{1/\alpha}$ taken at the quotient optimizer $v_0$ (also $L^{\alpha+2}$ normalized), see \cite{LW2021}.)

We record that for ground states defined as above, the action and the best constant are connected as 
\begin{equation}\label{E:least-action-value}
S_b(Q) =\frac{\alpha}{2(\alpha+2)}R_{a,b}(\alpha)^{(\alpha+2)/\alpha}.
\end{equation}

A third common definition of a ground state is that of a {\it normalized} ground state.

\begin{definition}[{Normalized or mass-constrained minimizer}]\label{D:norm-GS}
For a prescribed mass $m>0$, consider
\begin{equation}\label{E:mass-constrained-problem}
I(m) = \inf\left\{E(u):\; u\in H^2(\R^d),\; \|u\|_{L^2(\R^d)}^2=m\right\}.
\end{equation}
If, for a given $m$, the infimum is attained at $Q_m \in H^2(\R^d)$  
for some Lagrange multiplier $b_m\in\R$, then $Q_m$ is called a {\rm normalized} minimizer and 
$Q_m$ satisfies an Euler-Lagrange equation of the form
\begin{equation}\label{E:normalized-EL}
\Delta^2Q_m+2a\Delta Q_m+b_mQ_m-|Q_m|^\alpha Q_m=0.
\end{equation}
\end{definition}

The relation between \eqref{E:least-action-def} and \eqref{E:mass-constrained-problem} is more delicate than the equivalence with the best-constant and least-action optimizers.  In the mass-subcritical range 
$0<\alpha<\frac8{d}$,
the energy is bounded from below on fixed-mass sets $S_\mu = \{u \in H^2(\R^d), ~ \|u\|_{L^2(\R^d)}^2=\mu \}$. 
Indeed, by interpolation,
$$
\|\nabla u\|_{L^2(\mathbb R^d)}^2 \leq
\|u\|_{L^2(\mathbb R^d)} \|\Delta u\|_{L^2(\mathbb R^d)}
= \mu^{1/2}\|\Delta u\|_{L^2(\mathbb R^d)}.
$$
Using the Gagliardo-Nirenberg inequality (for the Laplacian) 
$$
\|u\|_{L^{\alpha+2}(\mathbb R^d)}^{\alpha+2}
\leq C_{GN} \|\Delta u\|_{L^2(\mathbb R^d)}^{d\alpha/4}
\|u\|_{L^2(\mathbb R^d)}^{\alpha+2-d\alpha/4},
$$
we get 
$$
\|u\|_{L^{\alpha+2}(\mathbb R^d)}^{\alpha+2}
\leq C_{GN} \mu^{(\alpha+2-d\alpha/4)/2} \|\Delta u\|_{L^2(\mathbb R^d)}^{d\alpha/4}.
$$
Then 
\begin{equation}\label{E:Ebelow}
E(u) \geq \frac12\|\Delta u\|_{L^2(\mathbb R^d)}^2
-|a|\mu^{1/2}\|\Delta u\|_{L^2(\mathbb R^d)}
-\frac{C_{GN}}{\alpha+2} \mu^{(\alpha+ 2 - d\alpha/4)/2}
\|\Delta u\|_{L^2(\mathbb R^d)}^{d\alpha/4},
\end{equation}
and since $d\alpha/4<2$, we obtain
$$
\inf_{u\in S_\mu}E(u)>-\infty.
$$

To show that the infimum is attained, one typically applies the concentration-compactness method:
starting with a minimizing sequence, one rules out vanishing and dichotomy, and then recovers compactness up to translations.  This is the classical strategy of Lions, see also the exposition in
\cite{Caz-book2003}. In the fourth-order and mixed-dispersion setting, this compactness scheme has been used in several related normalized variational problems; see, for instance,
\cite{BCSN2018,BCGJ2019,FJMM2022,SP2020,STCK2022}. 
Thus, whenever the fixed-mass minimization problem is compact, one obtains a normalized minimizer as in Definition \ref{D:norm-GS}. When compactness fails or when the mass lies below the threshold appearing in the normalized theory, such a minimizer may not exist.

At the $L^2$-critical exponent $\alpha=8/d$, a similar reasoning to \eqref{E:Ebelow} shows that there is a threshold for the mass, 
while for $\alpha>8/d$ the fixed-mass energy as in \eqref{E:Ebelow} can be unbounded from below.

Reiterating, we note that it might happen that $Q_m$ may not exist 
for a given $m$, since all solutions may have higher mass than the prescribed value $m$. However, if it does exist, then the normalized minimizer specifies the value of $b=b_m$, for which the energy is minimized. This becomes important in view of our results in 1D in \cite{KPRS} (see also below similar behavior in 2D), where we observed branching of ground states, using the first definition. The normalized minimizers would be located only on the lower energy branch, if at all (see further discussion on this in Section \ref{S:branching}) and for sufficiently large values of $b$.

\subsection{Positive and sign-changing regimes}\label{S:positive}
We next discuss the positivity or sign-changing and symmetry properties of ground states, which depend strongly on the parameters $a$ and $b$ in equation \eqref{E:groundstate}. Recalling the multiplier $m_{a,b}$ from \eqref{E:symbol-m}, and writing it as 
\begin{equation}\label{E:m-ab-factor}
m_{a,b}(\xi) = (|\xi|^2-a)^2 +(b-a^2),
\end{equation}
ensures its positivity when $b>0$ and $a<\sqrt b$ as well as the 
coercivity of the quadratic form $q_{a,b}$. We note that positivity 
of the symbol does not guarantee the positivity-preserving property of the corresponding Green's function, which is responsible for the positive ground state solutions. We review that next. 

$\bullet$ \underline{Positive ground states} (or the positivity-preserving regime)
Let
\begin{equation}\label{E:positive-regime}
a\leq -\sqrt b < 0.
\end{equation}
Then we have the following factorization
\begin{equation*}
\qquad \Delta^2+2a\Delta+b = (-\Delta+\lambda_+)(-\Delta+\lambda_-),
\quad
\lambda_\pm=-a \pm\sqrt{a^2-b}.
\end{equation*}
Note that for $a< -\sqrt b$, both roots $\lambda_+$ and $\lambda_-$ are strictly positive (since $a<0$) and at $a=-\sqrt b$ the two roots coincide. Therefore, if $f \geq 0$ and not identically zero ($f \not\equiv0$), then $(-\Delta+\lambda_\pm)^{-1}f > 0$,  
and so their composition is positivity preserving: $(\Delta^2+2a\Delta+b)^{-1}f>0$.  
In a sense, this gives a replacement for the maximum principle, which is not generally available for fourth-order equations (see, e.g., \cite{BerchioGazzolaMitidieri2006,GazzolaGrunauSweers2010}). 
This positivity argument can also be viewed through the cooperative-system reformulation used by Bonheure-Nascimento \cite{BN2015}, see their later work \cite{BCSN2018}. Thus, in this regime, variational ground state could be chosen to be positive (up to a constant phase), and standard elliptic arguments give smoothness and exponential decay. Symmetry, uniqueness, and non-degeneracy require additional hypotheses and are treated in several related works; see, for example,
\cite{BCSN2018, LS2021}. 

Since the symmetry is the main theme of this paper, we mention the result of Lenzmann-Sok \cite{LS2021} on the Fourier rearrangement, which gives a
useful symmetry statement for least-action (or best-constant) ground states in a class of higher-order NLS equations.  In particular, \cite[Theorem~3]{LS2021}
shows that, for operators with {\it radial} and {\it increasing} Fourier multipliers and for {\it even} nonlinear powers, every least-action ground state is equal to its Fourier rearrangement (up to translation and multiplication by a constant phase).
Hence, such ground states are radial and real-valued, up to these symmetries. Note that this includes a larger class of Fourier multipliers than described in \eqref{E:positive-regime}, it would include any $a \leq 0$ (and $b>0$), since $m_{a,b}$ is monotone and increasing in that regime. It is only when $a>0$, the function $m_{a,b}$ loses its single minimum, or monotonicity in $|\xi|$.  

A related notion about symmetries is {\it evenness} of ground states, which can hold not only for the positive ground states, but for sign-changing as well. Before addressing that, we discuss the regime where the ground states are sign-changing. 
\smallskip

$\bullet$ \underline{Sign-changing ground states} (oscillatory regime). ~~ In the range
\begin{equation}\label{E:oscillatory-regime}
-\sqrt b<a<\sqrt b,
\end{equation}
the quadratic form $q_{a,b}$ remains coercive, since
$m_{a,b}(\xi)=(|\xi|^2-a)^2+(b-a^2)>0$, however, the Green's function 
is no longer positivity preserving and will produce oscillatory 
sign-changing behavior. This can be seen when $|x|$ becomes large and goes to infinity, and hence, $Q(x) \approx 0 $, thus, the nonlinearity does not influence as much as the linear part of the equation, and we can consider $Q^{(4)}+2a \Delta Q + bQ \approx 0$. In one dimension, for simplicity, substituting $Q(x) \sim e^{\lambda x}$, we obtain a characteristic polynomial in $\lambda$, with roots $\lambda^2=-a\pm i\sqrt{b-a^2}$, or more precisely, 
$$
\qquad \lambda=\pm \kappa\pm i\nu, \qquad 
\kappa=\left(\tfrac{\sqrt b-a}{2}\right)^{1/2},
\quad
\nu=\left(\tfrac{\sqrt b+a}{2}\right)^{1/2}.
$$
Therefore, the ground state solutions have exponentially decaying but oscillatory tails of the form $e^{-\kappa |x|} \cos(\nu 
|x|+\theta)$, and the corresponding Green's function changes sign. In higher dimensions, the same complex characteristic roots produce the exponential and oscillatory factors, with an additional dimension-dependent algebraic multiple. Hence, the variational ground states would be oscillatory and sign-changing (even though the quadratic form is positive and coercive).

We now examine the regime $0<a<\sqrt b$ more closely, since the symbol $m_{a,b}$ changes its single minimum shape and becomes non-monotone for $|\xi |$. From the representation $m_{a,b}(\xi) = (|\xi|^2-a)^2+(b-a^2)$, it is easily seen that the {\it minimum} of this symbol occurs on the sphere
\begin{equation*}\label{E:minimizing-sphere}
|\xi|^2=a.
\end{equation*}
Thus, the least contributing Fourier modes will be the nonzero-frequency modes located at $|\xi|=\sqrt a$.  
The distribution of locations of modes in dimension $d \geq 2$ gives a possibility of symmetry breaking via cap-concentrating modes (i.e., they concentrate only around certain parts of the circle $|\xi|=\sqrt a$, which is more favorable for the least action than uniform radial representation) as in the famous Knapp example used to prove the existence of non-radial ground states in \cite{LW2021}. 

One useful renormalization for $Q$ and the ground state equation \eqref{E:groundstate} when $a>0$ (see also \cite[Remark 2.1]{KPRS}) is  
\begin{equation}\label{E:ab-rescale}
Q(x) = a^{\frac2{\alpha}} \tilde Q(\sqrt a \, x).
\end{equation}
Then $\tilde Q$ solves 
$$
\Delta^2 \tilde Q + 2 \Delta \tilde Q + \frac{b}{a^2} \tilde Q - |\tilde Q|^{\alpha} \tilde Q = 0.
$$
Thus, the relevant parameter is ${b}/{a^2}$. For simplicity and following the notation from Lenzmann and Weth \cite{LW2021}, we set 
\begin{equation}\label{E:LW-regime}
a=1, \qquad b=1+\epsilon, \quad \epsilon>0. 
\end{equation}
Then the Fourier multiplier for the quadratic term becomes
\begin{equation}\label{E:LW-symbol}
m_{1,1+\epsilon} = |\xi|^4-2|\xi|^2+1+\epsilon =(|\xi|^2-1)^2+\epsilon. 
\end{equation}

Thus, at least intuitively, to minimize the action, the Fourier modes should concentrate near the unit sphere: 
$$
|\xi|=1.
$$
When $\epsilon>0$ is small, a ground state profile 
should have most of its Fourier contribution or Fourier mass 
in a thin neighborhood of this unit sphere.  There are, however, different ways to do this. A {\it radial} profile distributes its Fourier mass around the full annulus $|\xi|\approx1$, so then in physical space this produces an oscillatory Bessel-type profile (and can have different symmetry modes as we show below in \S \ref{S:angular-modes}).  A {\it non-radial} Knapp-type profile instead concentrates its Fourier mass on a small cap of the unit sphere, together with the opposite cap in order to produce a real-valued function (e.g., $e^{i x}+e^{-ix} = 2 \cos (x)$). So, in two dimensions, caps near $(1,0)$ and $(-1,0)$ correspond to a physical-space wave packet of the form
\begin{equation}\label{E:Knapp-1}
Q_\epsilon(x,y) \sim \epsilon^{1/\alpha} W(\epsilon^{1/2} x,\epsilon^{1/4} y )\cos x,
\end{equation}
where (see details in the appendix \ref{A:sub-formal-2d-nr}) the scaling amplitude $\epsilon^{1/\alpha}$ comes from the nonlinear terms, the slowly varying localized profile is denoted by $W$, with different scaling in $x$ and $y$ to accommodate Knapp cap geometry: the normal width of the cap is of order $\epsilon^{1/2}$, while the tangential width is of order $\epsilon^{1/4}$, then in the physical space it becomes a long tube along the $x$-axis with the corresponding scaling.   
This cap concentration is the classical Knapp example in harmonic analysis (Fourier restriction theory), originating from unpublished work of A. W. Knapp and presented (with the permission) by Strichartz in his famous 1977 paper \cite{Strichartz1977}, see also \cite[Chapter~7]{Wolff2003}. This is exactly what Lenzmann and Weth use to construct nonradial optimizers for the quotient \eqref{E:R-ab} in \cite[Proposition~3.2]{LW2021}. 
In \cite[Theorem~1.2]{LW2021} (and what we restated in the introduction in Theorem \ref{Thm1}), they prove that for $d \geq 2$ and nonlinearities with 
$$
0< \alpha< \frac{4}{d-1},
$$
all ground states are nonradial for $\epsilon>0$ sufficiently small. In particular, in two dimensions this range is $0<\alpha<4$, which includes the cubic case $\alpha=2$. We also use the quotient asymptotics from \cite[Theorem~1.3]{LW2021} and refinements from \cite{MO2023} to further confirm the non-radial or radial branches of the profiles to the equation \eqref{E:groundstate} and derive the mass asymptotics.

\section{Asymptotics for small $\ep$}
\label{S:asym}
To distinguish the different types (or branches) of ground states such as non-radial, radial, $G_k$-symmetric, we need information about the behavior of certain quantities (such as mass or action) for small-$\ep$ asymptotics.  
For that, we first discuss the known asymptotic rates for the Weinstein quotient 
\eqref{E:R-ab}, for the corresponding action $S_b$ \eqref{E:least-action-def}, as well as the potential $L^{\alpha+2}$ norm. We then prove the quotient-to-mass relation in Section 
\ref{S:M-to-R}, establish the lower bounds by Hausdorff-Young and Gagliardo-Nirenberg in
Sections \ref{S:HY-lower} and \ref{S:GN-lower}, and combine the bounds in
Section \ref{S:rate-consequences} to obtain the mass rates in
Theorem \ref{ThmA}. 
(The examples of trial-function which provide the upper-bounds, with detailed calculations, are in Appendix \ref{S:appendix-2}.)

\subsection{Quotient asymptotics}\label{S:R-asym}
Recalling $a=1$ and $b=1+\ep$, we set
\begin{equation}\label{E:R-epsilon}
\mathcal R_\epsilon(\alpha):=R_{1,1+\epsilon}(\alpha)
= \inf_{u\in H^2(\mathbb R^d)\setminus \{0\}}
\frac{\int_{\mathbb R^d}
((|\xi|^2 - 1)^2 + \ep )|\widehat u(\xi)|^2\,d\xi}{\|u\|_{L^{\alpha+2}(\mathbb R^d)}^2}.
\end{equation}

We denote by $\mathcal R_\epsilon^{\rm rad}(\alpha)$ the same quotient restricted to {\it radial} functions, 
$$
u \in H^2_{\rm rad}(\mathbb R^d)\setminus\{0\}.
$$

Similarly, we write $\mathcal R_\ep^{\mathcal X}(\alpha)$ for the quotient restricted to a block symmetry subspace $\mathcal X$ such as $G_1$-symmetric subspace mentioned in the introduction.
\smallskip


\begin{remark}\label{R:approx}
In what follows, the sign $\approx$ in $A_\ep \approx B_\ep$ means that there exist lower and upper bounds with the same rate $\gamma$, i.e., there exist constants $0< c < C <\infty$, independent of $\ep$, such that $cB_\ep \leq A_\ep \leq C B_\ep$ for all sufficiently small $\ep>0$. The constants $c,C$ may depend on the dimension, the nonlinearity power $\alpha$, and the symmetry class. 
\end{remark}

{\bf The one-dimensional quotient.}
The result of Fern\'andez et al. in \cite[Proposition~3.7]{FJMM2022} gives the following upper bound in 1D:  
\begin{equation}\label{E:FJMM-upper}
\mathcal R_\epsilon(\alpha)
\leq C_\alpha \epsilon^{1/2} \quad \mbox{as } \ep \to 0^+.
\end{equation}
Then for the corresponding least-action ground states by \eqref{E:least-action-value} we have 
\begin{equation}\label{E:S-Q-FJMM}
S_{1 + \epsilon}(Q_\epsilon)
\leq C_\alpha \epsilon^{\frac{\alpha+2}{2\alpha}} 
\quad \mbox{and} \quad 
\|Q_\epsilon\|_{L^{\alpha+2}(\mathbb R)}^{\alpha+2}
\leq C_\alpha \epsilon^{\frac{\alpha+2}{2\alpha}}.
\end{equation}

We further refine these asymptotics, obtaining a sharper upper bound than \eqref{E:FJMM-upper}, which explains the mass dependence on $b$ (equivalently, on $\ep$) that we obtained numerically in 1D in \cite{KPRS}. 
For that recall that in dimension one, the minimum set of the symbol $m_{1,1+\ep}$ consists of the two points $\xi=\pm1$. Incorporating this into a wave-packet similarly as in \eqref{E:Knapp-1} (see also appendix \ref{A:sub-formal-1d} for the construction), we write
$$
\tilde Q_\ep(x)= \ep^{1/\alpha} W(\ep^{1/2}x)\cos x.
$$
This trial function gives the upper bound $\mathcal R_\ep(\alpha)\lesssim\ep^{\frac{\alpha+4}{2(\alpha+2)}}$, and a matching lower bound follows either from the Hausdorff-Young inequality (see Section \ref{S:HY-lower}) or, alternatively, from the non-homogeneous Gagliardo-Nirenberg inequality of \cite[Theorem~1.1]{FJMM2022} (see Section \ref{S:GN-lower}). Together they give
\begin{equation}\label{E:R_sharp}
\mathcal R_\ep(\alpha) \approx  
\ep^{\frac{\alpha+4}{2(\alpha + 2)}}
\quad \mbox{as } \ep \to 0^+.
\end{equation}
Since $\ep^{\frac{\alpha+4}{2(\alpha+2)}} = \epsilon^{\frac12+\frac1{\alpha+2}}  \ll \epsilon^{1/2}$ as $\epsilon \to 0^+$,  
the estimate \eqref{E:R_sharp} gives a sharper one-dimensional bound than \eqref{E:FJMM-upper}, which helps us to explain the results we obtained for the behavior of the mass in the 1D bi-NLS in \cite[Section 2.4.2]{KPRS}, see discussion in Section \ref{S:M-1D}. 
From \eqref{E:R_sharp} and \eqref{E:least-action-value}, we get refinements for \eqref{E:S-Q-FJMM} as
$$
S_{1+\epsilon}(Q_\epsilon)
=\frac{\alpha}{2(\alpha+2)} \|Q_\epsilon\|_{L^{\alpha+2}(\mathbb R)}^{\alpha+2}
\approx \epsilon^{\frac{\alpha+4}{2\alpha}}.
$$

{\bf The quotient in dimensions $d \geq 2$.}
From \cite[Theorem 1.3]{LW2021}, it follows that for $d \geq 2$
the asymptotic behavior of $\mathcal R_\epsilon (\alpha)$ as $\epsilon \to 0^+$ is
\begin{equation}\label{E:R-asym}
\hspace{2cm} \mathcal R_\epsilon(\alpha) \approx \epsilon^{\gamma_{d,\alpha}},
\quad \gamma_{d,\alpha}
:= \frac{8+(3-d)\alpha}{4(\alpha+2)}, 
\quad 0<\alpha \leq \frac{4}{d-1}.
\end{equation}

If we restrict the quotient only to radial functions, then \cite[Theorem 2.1]{LW2021} gives
\begin{equation}\label{E:R-asym-rad}
\mathcal R_\epsilon^{\rm rad}(\alpha)
\left\{
\begin{array}{ll}
\gtrsim \epsilon^{\beta_{d,\alpha}} \quad \mbox{for any} ~~
\beta_{d,\alpha}> r_{d,\alpha}, 
& \mbox{if} \quad 0< \alpha \leq \frac{2}{d-1}, \\
\approx \epsilon^{1/2}, 
& \mbox{if} \quad \frac{2}{d-1}< \alpha < \frac{4}{d-1},
\end{array}
\right. ~~
r_{d,\alpha} = \frac{4+(2-d)\alpha}{2(\alpha+2)}
\end{equation}
Here, ``for any $\beta_{d,\alpha}$'' means that there is a constant $C = C(\alpha, \beta)$ that for any $\beta$ greater than a given value $r_{d,\alpha}$ in \eqref{E:R-asym-rad}, there is a lower bound on $\mathcal R_\ep^{\rm rad} \geq C \ep^\beta$ as $\ep \to 0^+$. Thus, for nonlinearities $\alpha \in (0,\frac{2}{d-1}]$ the result in \cite{LW2021} provided only the lower bound. While the follow-up work in \cite{MO2023} gives the upper bound, which we discuss further below, nevertheless, this lower bound provides us the idea of rate comparison, so we could classify different branches of ground states in our numerical simulations, for example, as described in Remark \ref{R:rates}.

We note that the rates in \eqref{E:R-asym-rad} compare as 
\begin{align*}
r_{d,\alpha} & > \tfrac12
\quad \mbox{when} \quad 0<\alpha < \tfrac{2}{d-1},\\
r_{d,\alpha} & = \tfrac12
\quad \mbox{when} \quad \alpha = \tfrac{2}{d-1}.
\end{align*}

Moreover, comparing the unrestricted power in \eqref{E:R-asym} with the radial one in \eqref{E:R-asym-rad}, we have
\begin{align*}
\qquad \gamma_{d,\alpha} & > r_{d,\alpha} 
\quad  \mbox{if} \quad 0<\alpha\leq \tfrac{2}{d-1},\\
\qquad \gamma_{d,\alpha} & >\tfrac12
\qquad  \mbox{if} \quad \tfrac{2}{d-1}<\alpha<\tfrac{4}{d-1}, \quad \mbox{and}\\
\qquad \gamma_{d,\alpha} & = \tfrac12
\qquad  \mbox{if} \quad \alpha = \tfrac{4}{d-1},
\end{align*}
thus, the radial and unrestricted rates match at the endpoint $4/(d-1)$, and hence, the endpoint is not included in Theorem \ref{Thm1} or \cite[Theorem  1.2]{LW2021}.
Regarding the top line of \eqref{E:R-asym-rad}, we can always find a rate $\beta_{d,\alpha}$ such that 
$$
\gamma_{d,\alpha} > \beta_{d,\alpha} > r_{d,\alpha},
$$ 
and hence, for $\ep \to 0^+$ one has $\ep^{\gamma_{d,\alpha}} < \ep^{\beta_{d,\alpha}}$ implying that 
the best-constant quotient attains a lower value at nonradial functions for sufficiently small $\epsilon$ than at radial functions. 

The bottom line of  \eqref{E:R-asym-rad} from the result of \cite[Theorem 2.1 (i)]{LW2021} is actually stronger, it gives an explicit constant,
\begin{equation}\label{E:R-rad-sharp}
\mathcal R_\ep^{\rm rad}(\alpha) = \frac{2\,\mathsf C^{\rm rad}_{ST}(\alpha+2)}{\pi}\,\sqrt\ep \,(1 + o(1)),
\quad \alpha > \frac{2}{d-1} ~~ (H^2\mbox{-subcritical}),
\end{equation}
where the constant $\mathsf C^{\rm rad}_{ST}(\alpha+2)=|\mathbb S^{d-1}|/\|\check{\mathbf 1}_{\mathbb S^{d-1}}\|_{L^{\alpha+2}(\R^d)}^2$
is the radial Stein - Tomas constant from \cite[(2.3)]{LW2021}. In 2D, 
$\check{\mathbf 1}_{\mathbb S^{1}}(x)=J_0(|x|)$ (see Appendix \ref{A:sub-formal-rad} for a Bessel trial-function \eqref{E:Bessel}). 
For the unrestricted class, the result from \cite[Theorem 1.3(i)]{LW2021} gives
\begin{equation}\label{E:R-unrestr-sharp}
\mathcal R_\ep(\alpha)=\frac{2\,\mathsf C_{ST}(\alpha+2)}{\pi}\sqrt\ep \,(1+o(1)),
\quad \alpha \geq \frac{4}{d-1},
\end{equation} 
with the Stein - Tomas constant $\mathsf C_{ST}$ from \cite[(1.13)]{LW2021}. We use these in Corollary \ref{C:sharp-mass} below.

In the subsequent work \cite{MO2023}, Mandel and Oliveira e Silva prove in Theorem 1.3 matching upper and lower bounds for the radial class in dimension $d \geq 2$, including a refinement with the logarithmic correction in the threshold case. They show that
\begin{equation}\label{E:R-radial-d}
\mathcal R_\ep^{\rm rad} \approx 
\left\{
\begin{array}{ll}
\ep^{r_{d,\alpha}} & \mbox{if} \quad 0<\alpha < \frac2{d-1}, \quad r_{d,\alpha}~\mbox{as ~in~~} \eqref{E:R-asym-rad},\\
\ep^{\frac{1}{2}}\,|\ln\ep|^{-\frac{d-1}{d}} & \mbox{if} \quad \alpha=\frac{2}{d-1}, \\
\ep^{\frac12} & \mbox{if} \quad \alpha > \frac{2}{d-1},
\end{array}
\right.
\end{equation} 
where in the last case $\alpha$ is restricted to the $H^2$-subcritical range, i.e., $\alpha<\frac{8}{d-4}$ when $d \geq 5$. Note that the last case is exactly the same as in \eqref{E:R-asym-rad} (or \cite[Theorem 2.1]{LW2021}). We also mention  Appendix \S \ref{A:sub-formal-rad} for the Bessel trial-function computation that shows how the logarithmic correction appears in the threshold. 

In dimension two, the main purpose of this paper, it states
\begin{equation}\label{E:R-asym-rad-MO}
\mathcal R_\epsilon^{\rm rad}(\alpha)\approx
\left\{
\begin{array}{ll}
\epsilon^{\frac{2}{\alpha+2}}, & 0<\alpha<2,\\
\epsilon^{1/2}|\ln\epsilon|^{-1/2}, & ~\alpha=2,\\
\epsilon^{1/2}, & ~\alpha>2.
\end{array}
\right.
\end{equation}

In terms of the action $S_b$ from \eqref{E:least-action-def}, the properties \eqref{E:least-action-value} and \eqref{E:S-Potential} imply
$$
S_{1+\epsilon}(Q_\epsilon) 
=\frac{\alpha}{2(\alpha+2)} \left(\mathcal R_\epsilon(\alpha)\right)^{\frac{\alpha+2}{\alpha}} \quad \mbox{and} \quad S_{1+\epsilon}(Q_\epsilon)= \frac{\alpha}{2(\alpha+2)}
\|Q_\epsilon\|_{L^{\alpha+2}(\mathbb R^d)}^{\alpha+2},
$$
and thus, for the unrestricted minimizers, we deduce
\begin{equation}\label{E:S-asym-general}
S_{1+\epsilon}(Q_\epsilon)
\approx \|Q_\epsilon\|_{L^{\alpha+2}(\R^d)}^{\alpha+2}
\approx \epsilon^{\sigma_{d,\alpha}},
\qquad \sigma_{d,\alpha} = \frac{8+ (3-d)\alpha}{4\alpha},
\end{equation}
and for radial minimizers $Q_\epsilon^{\rm rad}$ in two dimensions, we get
\begin{equation}\label{E:S-asym-rad-2d}
S_{1+\epsilon}(Q_\epsilon^{\rm rad})
\approx \|Q_\epsilon^{\rm rad}\|_{L^{\alpha+2}(\R^2)}^{\alpha+2}
\approx \left\{
\begin{array}{ll}
\epsilon^{\frac2\alpha}, & 0<\alpha<2,\\
\epsilon|\ln\epsilon|^{-1}, & ~\alpha=2,\\
\epsilon^{\frac{\alpha+2}{2\alpha}}, & ~\alpha>2.
\end{array}
\right.
\end{equation}

To obtain the respective mass dependence on $\ep$, we derive a quotient-to-mass relation in Lemma \ref{L:quotient-to-mass} and its generalization Theorem \ref{T:quotient-mass-general}, this proves Part 1 of Theorem \ref{ThmA}.


\subsection{Quotient-to-mass relation}\label{S:M-to-R}
We now relate the asymptotics of the best-constant quotient with that of mass of the corresponding least-action ground states. For that we rewrite the quadratic form $q_{1,1+\ep}$ 
as 
\begin{equation}\label{E:q-split}
q_{1,1+\epsilon}(u)= \int_{\mathbb R^d}(|\xi|^2-1)^2 |\widehat u(\xi)|^2\,d\xi
+\ep \int_{\R^d} |\widehat u(\xi)|^2\,d\xi =
q_{1,1}(u)+\epsilon M(u). 
\end{equation}

The next simple lemma shows how to obtain the asymptotics on the mass for small $\ep$, provided the bounds for $\mathcal R_\ep$  are known.

\begin{lemma}\label{L:quotient-to-mass}
Assume that the infimum in \eqref{E:R-epsilon} is attained by $v_\epsilon \in H^2(\mathbb R^d)$, which is normalized as $\|v_\epsilon\|_{L^{\alpha+2}(\mathbb R^d)}=1$. 

Suppose that the asymptotic rate for $\mathcal R_\ep(\alpha)$ is known, that is, for some $0<\gamma<1$, there exist positive constants $c$ and $C$ (with $c<C$) such that
$$
c\,\epsilon^\gamma \leq \mathcal R_\epsilon(\alpha)
\leq C\epsilon^\gamma
$$
for all sufficiently small $\epsilon >0$.  
If
$$
Q_\epsilon = \mathcal R_\epsilon(\alpha)^{1/\alpha}v_\epsilon,
$$
then the lower and upper bounds for mass of $Q_\ep$ coincide up to a constant, namely, 
\begin{equation}\label{E:quotient-mass}
M(Q_\epsilon) \approx \epsilon^{\gamma\frac{\alpha+2}{\alpha}-1}.
\end{equation}

\end{lemma}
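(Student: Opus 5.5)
The plan is to exploit the splitting \eqref{E:q-split}, $q_{1,1+\ep}(u)=q_{1,1}(u)+\ep M(u)$. It shows that, as a function of $\ep$, the quotient $\mathcal R_\ep(\alpha)$ is an infimum of affine functions. Indeed, over the normalized set $\{\|u\|_{L^{\alpha+2}}=1\}$ we have $\mathcal R_\ep(\alpha)=\inf_u\,(q_{1,1}(u)+\ep M(u))$. Hence $\ep\mapsto\mathcal R_\ep$ is concave, and the mass $M(v_\ep)$ of the normalized optimizer is a supergradient at $\ep$. Since $M(Q_\ep)=\mathcal R_\ep^{2/\alpha}M(v_\ep)$, it suffices to show
\[
\ep\,M(v_\ep)\approx \ep^{\gamma}.
\]
Then $M(Q_\ep)\approx \ep^{\frac{2\gamma}{\alpha}}\ep^{\gamma-1}=\ep^{\gamma\frac{\alpha+2}{\alpha}-1}$, which is \eqref{E:quotient-mass}.

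\emph{Upper bound.} This is immediate. Since $q_{1,1}(v_\ep)\ge 0$, we have
\[
\ep M(v_\ep)\le q_{1,1}(v_\ep)+\ep M(v_\ep)=q_{1,1+\ep}(v_\ep)=\mathcal R_\ep(\alpha)\le C\ep^\gamma .
\]
Equivalently, $M(Q_\ep)\le \mathcal R_\ep^{(\alpha+2)/\alpha}/\ep$.

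\emph{Lower bound.} This is the main step. I would test the quotient at a larger parameter $\lambda\ep$, with $\lambda>1$, using the fixed function $v_\ep$:
\[
\mathcal R_{\lambda\ep}(\alpha)\le q_{1,1}(v_\ep)+\lambda\ep M(v_\ep)=\mathcal R_\ep(\alpha)+(\lambda-1)\ep M(v_\ep).
\]
Applying the two-sided bounds at $\lambda\ep$ and at $\ep$ (valid once $\lambda\ep$ is small enough) gives
\[
(\lambda-1)\,\ep M(v_\ep)\ \ge\ c\lambda^\gamma\ep^\gamma-C\ep^\gamma .
\]
Because $\gamma>0$, we can fix $\lambda=\lambda(c,C,\gamma)$ large enough that $c\lambda^\gamma>2C$. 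This yields $\ep M(v_\ep)\ge \frac{C}{\lambda-1}\ep^\gamma$ for all sufficiently small $\ep$. Note that this argument does not require the infimum to be attained at $\lambda\ep$, only the upper estimate there.

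The only delicate point is this choice of $\lambda$ independent of $\ep$, which is what makes the ratio $C/c$ harmless. The condition $\gamma<1$ is not needed for these inequalities; it only ensures the resulting mass exponent is consistent with the regime of interest.
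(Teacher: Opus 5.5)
Your proposal is correct and follows the paper's proof essentially verbatim. The upper bound comes from dropping $q_{1,1}(v_\ep)\ge 0$, and the lower bound from testing $\mathcal R_{\lambda\ep}(\alpha)$ with $v_\ep$ and fixing $\lambda>1$ so that $c\lambda^\gamma$ dominates $C$; the paper takes $c\lambda^\gamma>C$ where you take $c\lambda^\gamma>2C$, which is immaterial. Your concavity/supergradient framing is the same viewpoint the paper uses in its generalization, Theorem~\ref{T:quotient-mass-general}(i).
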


\begin{proof}
Since $q_{1,1+\ep}(v_\epsilon)=\mathcal R_\epsilon(\alpha)$, and $q_{1,1}(v_\ep)$ is nonnegative, we have the following upper bound
$$
\epsilon\|v_\epsilon\|_{L^2(\R^d)}^2 \leq
\mathcal R_\epsilon(\alpha) \leq C \epsilon^\gamma,
$$
and therefore, 
$$
\|v_\epsilon\|_{L^2(\R^d)}^2 \leq C \epsilon^{\gamma-1}.
$$ 
On the other hand, for a fixed $\lambda>1$, we can dilate $\ep$ by $\lambda \ep$ and consider $\mathcal R_{\lambda \epsilon} (\alpha)$, yielding
\begin{align}
\mathcal R_{\lambda\epsilon}(\alpha) 
& \leq q_{1,1+\lambda\epsilon}(v_\epsilon)
= q_{1,1}(v_\ep)+\lambda\epsilon\|v_\ep\|_{L^2(\mathbb R^d)}^2 \notag \\
& = q_{1,1}(v_\ep)+\epsilon\|v_\ep\|_{L^2(\mathbb R^d)}^2 + (\lambda - 1)\ep \|v_\ep\|_{L^2(\mathbb R^d)}^2 \notag \\
&=
\mathcal R_\epsilon(\alpha)
+(\lambda-1)\epsilon\|v_\epsilon\|_{L^2}^2. \label{E:lower-bound-1}
\end{align}
Since $\lambda > 1$, we can choose $\lambda$ so that $c\lambda^\gamma>C$.  Then, for all sufficiently small $\epsilon$, we have a lower bound
$$
\mathcal R_{\lambda \ep} (\alpha) \geq c (\lambda \ep)^\gamma > C \ep^\gamma,
$$
implying together with \eqref{E:lower-bound-1}
$$
\|v_\epsilon\|_{L^2}^2 \geq \tilde C \epsilon^{\gamma-1}.
$$
Thus, combining the lower and upper bounds on  $\|v_\epsilon\|_{L^2}^2$, we deduce
$$
\|v_\epsilon\|_{L^2}^2 \approx \epsilon^{\gamma-1}.
$$  
Since $Q_\epsilon=\mathcal R_\epsilon(\alpha)^{1/\alpha}v_\epsilon$, we have
\begin{equation}\label{E:quotient-mass-lemma}
M(Q_\epsilon)=\|Q_\epsilon\|_{L^2(\mathbb R^d)}^2  =  
\mathcal R_\epsilon(\alpha)^{2/\alpha} \|v_\epsilon\|_{L^2}^2
\approx
\epsilon^{\gamma(\frac2{\alpha} +1)-1},
\end{equation}
and hence, \eqref{E:quotient-mass} follows.
\end{proof}

\begin{remark}\label{R:quotient-to-mass-log}
The same argument applies when the quotient contains a logarithmic factor, for example, as in the two-dimensional radial endpoint $\alpha=2$, 
$$
\mathcal R_\epsilon^{\rm rad}(2)
\approx \epsilon^{1/2}|\ln\epsilon|^{-1/2}.
$$
Then for every fixed $\lambda>1$, we have
$$
\frac{|\ln(\lambda\epsilon)|^{-1/2}} {|\ln\epsilon|^{-1/2}} \rightarrow 1
\quad \mbox{as } \ep \to 0^+.
$$
Therefore, comparing $\mathcal R_{\lambda\epsilon}^{\rm rad}(2)$ and
$\mathcal R_\epsilon^{\rm rad}(2)$ as in the proof of Lemma~ \ref{L:quotient-to-mass}, we obtain
$$
\|v_\epsilon^{\rm rad}\|_{L^2(\R^2)}^2
\approx
\epsilon^{-1/2}|\ln\epsilon|^{-1/2}.
$$
Since $Q_\epsilon^{\rm rad}=\left(\mathcal R_\epsilon^{\rm rad}(2)\right)^{1/2}
v_\epsilon^{\rm rad}$,
we obtain
$$
M(Q_\epsilon^{\rm rad}) \approx
\frac{\left(\mathcal R_\epsilon^{\rm rad}(2)\right)^2}{\epsilon} 
\approx 
|\ln\epsilon|^{-1},
$$
thus showing that the quotient-to-mass relation applies when a logarithmic correction is present. A more general correction is also allowed, which we show in the next theorem.
\end{remark}


For reader's convenience we show the summary of rates in Table \ref{T:rates-summary} in Appendix \ref{A:table}.
\smallskip

We next state a generalization of the previous lemma, which includes the above remark about the logarithmic factor. As in Section \ref{S:asym} (after \eqref{E:R-epsilon}), let $\mathcal X \subset H^2(\R^d)$ be either a whole space, or a block-symmetry subspace such as radial functions $H^2_{rad}(\R^d)$ or $G_k$-symmetric space, and recall the definition of $\mathcal R_\ep^{\mathcal X}(\alpha)$ as the quotient restricted to functions on $\mathcal X$, namely, 
\begin{equation}\label{E:R-class}
\mathcal R_\ep^{\mathcal X}(\alpha) = \inf_{u \in \mathcal X \setminus \{0\} } \frac{q_{1,1}(u)+\ep M(u)}{\|u\|^2_{L^{\alpha+2}(\R^d)}} \equiv \inf_{u \in \mathcal X \setminus \{0\} } \Big( \frac{q_{1,1}(u)}{\|u\|^2_{L^{\alpha+2}(\R^d)}} + \ep \frac{M(u)}{\|u\|^2_{L^{\alpha+2}(\R^d)}} \Big). 
\end{equation}

\begin{theorem}\label{T:quotient-mass-general}
Assume that the infimum in \eqref{E:R-class} is attained for any $\ep\in(0,\ep_0)$, for some $\ep_0>0$.  
For each such $\ep$, let $v_\ep \in \mathcal X$ be an optimizer normalized
by $\|v_\ep\|_{L^{\alpha+2}(\R^d)} = 1$, and define 
$$
Q_\ep^{\mathcal X} = \big(\mathcal R_\ep^{\mathcal X}(\alpha)\big)^{1/\alpha}v_\ep.
$$ 
Then the following properties hold:
\begin{itemize}
\item[(i)]  The quotient $\mathcal R_\ep^{\mathcal X}(\alpha)$, viewed as a
function of $\ep$ on $(0,\ep_0)$, is increasing and concave, and for
$0<\ep_1<\ep_2<\ep_0$ we have bounds on the difference quotient
\begin{equation}\label{E:secant-mass}
M(v_{\ep_2}) \leq \frac{\mathcal R_{\ep_2}^{\mathcal X}(\alpha)-\mathcal R_{\ep_1}^{\mathcal X}(\alpha)}
{\ep_2-\ep_1}  \leq M(v_{\ep_1}).
\end{equation}
In particular, $M(v_\ep)$ is nonincreasing in $\ep$, and (at every point where
$\mathcal R_\ep^{\mathcal X}(\alpha)$ is differentiable), we have
\begin{equation}\label{E:slope-identity}
\frac{d}{d\ep}\,\mathcal R_\ep^{\mathcal X}(\alpha)=M(v_\ep),
\quad \mbox{and} \quad
\frac{d}{d\ep}\,S_{1+\ep}\big(Q_\ep^{\mathcal X}\big)
=\frac12\,M\big(Q_\ep^{\mathcal X}\big).
\end{equation}

\item[(ii)]  If for some $0<\gamma<1$, 
\begin{equation}\label{E:R-rate-general}
\mathcal R_\ep^{\mathcal X}(\alpha)\approx\ep^{\gamma}L(\ep),
\end{equation}
where $L>0$ is slowly varying at $0$, i.e.,  $L(\lambda\ep)/L(\ep)\to1$ as
$\ep\to0^+$ for each fixed $\lambda>0$, then
\begin{equation}\label{E:M-transfer-general}
M\big(Q_\ep^{\mathcal X}\big)
\approx \frac{\big(\mathcal R_\ep^{\mathcal X}(\alpha)\big)^{\frac{\alpha+2}{\alpha}}}{\ep}
\approx \ep^{\gamma\frac{\alpha+2}{\alpha}-1}L(\ep)^{\frac{\alpha+2}{\alpha}} .
\end{equation}

\item[(iii)]  
If, in addition,
$\mathcal R_{\lambda\ep}^{\mathcal X}(\alpha)/\mathcal R_\ep^{\mathcal X}(\alpha)
\to\lambda^{\gamma}$ as $\ep\to0^+$ for each fixed $\lambda>0$, 
then the relation \eqref{E:M-transfer-general} holds with the explicit
constant $\gamma$, namely, as $\ep\to0^+$,
\begin{equation}\label{E:M-sharp-constant}
\ep\,M\big(Q_\ep^{\mathcal X}\big)
= \gamma\,\big(\mathcal R_\ep^{\mathcal X}(\alpha)\big)^{\frac{\alpha+2}{\alpha}}\big(1+o(1)\big),
\end{equation}
and each part of $q_{1,1+\ep}$ as written in 
\eqref{E:q-split} gets a fixed proportion, 
\begin{equation}\label{E:q-splitting}
\quad \ep\,M\big(Q_\ep^{\mathcal X}\big)
= \gamma\,\big\|Q_\ep^{\mathcal X}\big\|_{L^{\alpha+2}}^{\alpha+2}\,\big(1+o(1)\big),
\quad 
q_{1,1}\big(Q_\ep^{\mathcal X}\big)
=(1-\gamma)\,\big\|Q_\ep^{\mathcal X}\big\|_{L^{\alpha+2}}^{\alpha+2}\,\big(1+o(1)\big).
\end{equation}

\end{itemize}

\end{theorem}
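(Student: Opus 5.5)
The whole argument rests on one structural fact. By \eqref{E:R-class}, $\mathcal R_\ep^{\mathcal X}(\alpha)$ is the infimum over $u$ (normalized by $\|u\|_{L^{\alpha+2}}=1$) of the affine functions $\ep\mapsto q_{1,1}(u)+\ep M(u)$. Each of these has nonnegative slope, so the infimum is nondecreasing and concave in $\ep$.

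\textbf{Part (i).} For $\ep_1<\ep_2$, use $v_{\ep_1}$ as a competitor at $\ep_2$ and $v_{\ep_2}$ as a competitor at $\ep_1$. This gives
$$\mathcal R_{\ep_2}\le \mathcal R_{\ep_1}+(\ep_2-\ep_1)M(v_{\ep_1}),\qquad \mathcal R_{\ep_1}\le \mathcal R_{\ep_2}-(\ep_2-\ep_1)M(v_{\ep_2}).$$
Together these are exactly \eqref{E:secant-mass}, which is the same computation as \eqref{E:lower-bound-1}. Monotonicity of $M(v_\ep)$ follows by comparing the two bounds, and strict increase follows from $M(v)>0$. At a point of differentiability, letting $\ep_2\downarrow\ep$ and $\ep_1\uparrow\ep$ pinches the derivative between the one-sided mass values, giving $\frac{d}{d\ep}\mathcal R_\ep=M(v_\ep)$. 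For the action, \eqref{E:least-action-value} gives $S=\frac{\alpha}{2(\alpha+2)}\mathcal R^{(\alpha+2)/\alpha}$. The chain rule then yields $\frac{d}{d\ep}S=\frac12\mathcal R^{2/\alpha}M(v_\ep)=\frac12 M(Q_\ep^{\mathcal X})$.

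\textbf{Part (ii).} First note $M(Q_\ep)=\mathcal R_\ep^{2/\alpha}M(v_\ep)$, so it suffices to show $\ep M(v_\ep)\approx \mathcal R_\ep$. The upper bound is immediate from $\ep M(v_\ep)\le q_{1,1+\ep}(v_\ep)=\mathcal R_\ep$. For the lower bound, apply the left inequality of \eqref{E:secant-mass} with $\ep_1=\ep/\lambda$, $\ep_2=\ep$, for a fixed $\lambda>1$:
$$\ep M(v_\ep)\ \ge\ \frac{\mathcal R_\ep-\mathcal R_{\ep/\lambda}}{1-\lambda^{-1}}.$$
Using \eqref{E:R-rate-general} with constants $c\le C$ and slow variation, $\mathcal R_{\ep/\lambda}\le C\lambda^{-\gamma}\ep^\gamma L(\ep)(1+o(1))$. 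Choose $\lambda$ so large that $C\lambda^{-\gamma}<c/2$; then $\mathcal R_\ep-\mathcal R_{\ep/\lambda}\gtrsim \ep^\gamma L(\ep)$. Either direction of the secant can be used here, since concavity makes both available. This step, choosing $\lambda$ to beat the ratio $C/c$, is the only real obstacle; slow variation is needed precisely so that $L$ does not spoil the comparison. Substituting $\ep M(v_\ep)\approx\mathcal R_\ep$ gives \eqref{E:M-transfer-general}.

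\textbf{Part (iii).} Now use both secant bounds with $\ep_1=\ep$, $\ep_2=\lambda\ep$, and also with $\ep_1=\ep/\lambda$, $\ep_2=\ep$. Monotonicity of $M(v_\ep)$ sandwiches $\ep M(v_\ep)/\mathcal R_\ep$ between the two limiting ratios:
$$\frac{1-\lambda^{-\gamma}}{1-\lambda^{-1}}\ \le\ \liminf\ \le\ \limsup\ \le\ \frac{\lambda^{\gamma}-1}{\lambda-1}.$$
Letting $\lambda\to1$, both sides tend to $\gamma$. This gives $\ep M(v_\ep)=\gamma\mathcal R_\ep(1+o(1))$, and multiplying by $\mathcal R_\ep^{2/\alpha}$ gives \eqref{E:M-sharp-constant}. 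Finally, \eqref{E:potential-R} gives $\|Q\|_{L^{\alpha+2}}^{\alpha+2}=\mathcal R^{(\alpha+2)/\alpha}$, and Pokhozhaev \eqref{E:q-equals-Lp-new} gives $q_{1,1}(Q)+\ep M(Q)=\|Q\|_{L^{\alpha+2}}^{\alpha+2}$. Together these yield the proportions in \eqref{E:q-splitting}. Both identities hold within the class $\mathcal X$, because the symmetric optimizer solves \eqref{E:groundstate} by the principle of symmetric criticality.
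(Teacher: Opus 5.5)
Your framework (affine-in-$\ep$ competitors, concavity, secant bounds) is the paper's, and part (i) is fine. In (ii) and (iii), however, you reverse the direction of the backward secant inequality, so the lower bound in (ii) fails as written. With $\ep_1=\ep/\lambda$, $\ep_2=\ep$, the left inequality of \eqref{E:secant-mass} is exactly your own bound $\mathcal R_{\ep_1}\le \mathcal R_{\ep_2}-(\ep_2-\ep_1)M(v_{\ep_2})$ from (i). It gives
\[
\ep\, M(v_\ep)\ \le\ \frac{\mathcal R_\ep-\mathcal R_{\ep/\lambda}}{1-\lambda^{-1}},
\]
which is an \emph{upper} bound on $M(v_\ep)$, not a lower one. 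Your claim that ``either direction of the secant can be used'' is false. Since $\mathcal R_{\ep'}\le\mathcal R_\ep+(\ep'-\ep)M(v_\ep)$ for all $\ep'$, the number $M(v_\ep)$ is a supergradient of the concave function $\ep\mapsto\mathcal R^{\mathcal X}_\ep(\alpha)$. It is therefore bounded below only by forward secants and above only by backward ones. For the model $\mathcal R_\ep=\ep^\gamma$, with $M(v_\ep)$ its derivative $\gamma\ep^{\gamma-1}$, your inequality would read $\gamma\ge(1-\lambda^{-\gamma})/(1-\lambda^{-1})$, which is false for every $\lambda>1$.

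Your estimate $\mathcal R_\ep-\mathcal R_{\ep/\lambda}\gtrsim\ep^\gamma L(\ep)$ is correct, but it bounds $M(v_{\ep/\lambda})$ from below, not $M(v_\ep)$. The fix is what the paper does: use $v_\ep$ as a competitor at $\lambda\ep$, i.e. the right inequality with $\ep_1=\ep$, $\ep_2=\lambda\ep$. This gives $\ep M(v_\ep)\ge(\mathcal R_{\lambda\ep}-\mathcal R_\ep)/(\lambda-1)$. Then choose $\lambda>1$ with $c\lambda^\gamma>2C$, so that slow variation of $L$ makes the numerator $\gtrsim\ep^\gamma L(\ep)$.

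The same reversal affects (iii). The two correct bounds are:
\begin{itemize}
\item the forward secant on $[\ep,\lambda\ep]$ gives $\liminf_{\ep\to0^+}\ep M(v_\ep)/\mathcal R_\ep\ge(\lambda^\gamma-1)/(\lambda-1)$;
\item the backward secant on $[\ep/\lambda,\ep]$ gives $\limsup_{\ep\to0^+}\ep M(v_\ep)/\mathcal R_\ep\le(1-\lambda^{-\gamma})/(1-\lambda^{-1})$.
\end{itemize}
Your chain has these swapped, and as written it is self-contradictory. Indeed, $(1-\lambda^{-\gamma})/(1-\lambda^{-1})=\lambda^{1-\gamma}(\lambda^\gamma-1)/(\lambda-1)$ strictly exceeds $(\lambda^\gamma-1)/(\lambda-1)$ for $\lambda>1$ and $\gamma<1$.

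Once the directions are corrected, letting $\lambda\to1$ gives $\gamma$ on both sides as you say, and the rest of (iii) goes through. Monotonicity of $M(v_\ep)$ plays no role here; the two secant inequalities apply directly at $\ep$. Also, $q_{1,1+\ep}(Q^{\mathcal X}_\ep)=\|Q^{\mathcal X}_\ep\|_{L^{\alpha+2}}^{\alpha+2}=(\mathcal R^{\mathcal X}_\ep(\alpha))^{(\alpha+2)/\alpha}$ follows from $q_{1,1+\ep}(v_\ep)=\mathcal R^{\mathcal X}_\ep(\alpha)$, $\|v_\ep\|_{L^{\alpha+2}}=1$ and homogeneity alone, so you do not need symmetric criticality.
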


\begin{remark}
Taking $L \equiv 1$ in part (ii) gives Lemma~\ref{L:quotient-to-mass}, and taking
$L(\ep)=|\ln\ep|^{-1/2}$, $\gamma=\frac12$, $\alpha=2$, recovers
Remark~\ref{R:quotient-to-mass-log}.

\end{remark}

\begin{proof}
(i) For each $u \in \mathcal X\setminus\{0\}$, the expression in parentheses in \eqref{E:R-class}, $
\frac{q_{1,1}(u)}{\|u\|_{L^{\alpha+2}}^2} + \ep\,\frac{M(u)}{\|u\|_{L^{\alpha+2}}^2}$, 
as a function of $\ep$, is a straight line with a positive slope
$M(u)/\|u\|_{L^{\alpha+2}}^2$, thus, the quotient
$\mathcal R_\ep^{\mathcal X}(\alpha)$ is increasing and concave in $\ep$.

Substituting $v_{\ep_1}$ into \eqref{E:R-class} with $\ep_2$ gives
$$
\mathcal R_{\ep_2}^{\mathcal X}(\alpha)
\leq q_{1,1}(v_{\ep_1})+\ep_2 M(v_{\ep_1})
=\mathcal R_{\ep_1}^{\mathcal X}(\alpha)+(\ep_2-\ep_1)M(v_{\ep_1}),
$$
which is the right inequality in \eqref{E:secant-mass}. Exchanging $\ep_1$ with $\ep_2$ gives the left   inequality in \eqref{E:secant-mass}. Now, take $\ep_2 \to \ep_1$ or $\ep_1 \to \ep_2$ in \eqref{E:secant-mass}, then we get the first identity in \eqref{E:slope-identity}. 
We obtain the second identity by differentiating
$S_{1+\ep}(Q_\ep^{\mathcal X})=\frac{\alpha}{2(\alpha+2)}\big(\mathcal R_\ep^{\mathcal X}(\alpha)\big)^{\frac{\alpha+2}{\alpha}}$,
see \eqref{E:least-action-value}, and using
$M(Q_\ep^{\mathcal X})=\big(\mathcal R_\ep^{\mathcal X}(\alpha)\big)^{2/\alpha}M(v_\ep)$.
\smallskip

(ii) The same argument as in Lemma~\ref{L:quotient-to-mass} yields $\ep M(v_\ep)\leq\mathcal R_\ep^{\mathcal X}(\alpha)$, which gives the upper bound. For the lower bound, we rewrite \eqref{E:R-rate-general} as
$c\,\ep^\gamma L(\ep)\leq\mathcal R_\ep^{\mathcal X}(\alpha)\leq C\,\ep^\gamma L(\ep)$
and apply \eqref{E:secant-mass} with $\ep_1=\ep$ and $\ep_2=\lambda\ep$ to obtain
$$
M(v_\ep)\geq \frac{\mathcal R_{\lambda\ep}^{\mathcal X}(\alpha)-\mathcal R_\ep^{\mathcal X}(\alpha)}
{(\lambda-1)\ep}.
$$
With a similar argument as in the lemma, we fix $\lambda>1$ with $c\lambda^\gamma>2C$.  Since $L(\lambda\ep)/L(\ep)\to1$,
for all sufficiently small $\ep$ the numerator is at least
$\frac12(c\lambda^\gamma-C)\ep^\gamma L(\ep)$, so
$M(v_\ep)\gtrsim\ep^{\gamma-1}L(\ep)$, and hence,
$M(v_\ep)\approx\mathcal R_\ep^{\mathcal X}(\alpha)/\ep$. Multiplying by
$\big(\mathcal R_\ep^{\mathcal X}(\alpha)\big)^{2/\alpha}$ gives
\eqref{E:M-transfer-general}.
\smallskip

(iii)  Fix  $\lambda>1$. Using \eqref{E:secant-mass} with
$\ep_1=\ep$, $\ep_2=\lambda\ep$ and dividing by
$\mathcal R_\ep^{\mathcal X}(\alpha)/\ep$, we obtain
$$
\frac{\ep\,M(v_\ep)}{\mathcal R_\ep^{\mathcal X}(\alpha)} \geq 
\frac{1}{\lambda-1} \Big(\frac{\mathcal R_{\lambda\ep}^{\mathcal X}(\alpha)}{\mathcal R_\ep^{\mathcal X}(\alpha)}-1\Big) \rightarrow \frac{\lambda^\gamma-1}{\lambda-1}
\quad \mbox{as } ~ \ep \to 0^+.
$$
Similarly, for $0<\mu<1$, using \eqref{E:secant-mass} with $\ep_1=\mu\ep$,
$\ep_2=\ep$ gives
$$
\frac{\ep\,M(v_\ep)}{\mathcal R_\ep^{\mathcal X}(\alpha)} \leq \frac{1}{1-\mu}
\Big(1-\frac{\mathcal R_{\mu\ep}^{\mathcal X}(\alpha)}{\mathcal R_\ep^{\mathcal X}(\alpha)} \Big) \to  \frac{1-\mu^\gamma}{1-\mu} \quad \mbox{as } ~ \ep \to 0^+ .
$$
Thus, for every $\lambda>1$ and every $0<\mu<1$,
$$
\frac{\lambda^\gamma-1}{\lambda-1} 
\leq \liminf_{\ep \to 0^+} \frac{\ep\,M(v_\ep)}{\mathcal R_\ep^{\mathcal X}(\alpha)}
\leq \limsup_{\ep \to 0^+}\frac{\ep\,M(v_\ep)}{\mathcal R_\ep^{\mathcal X}(\alpha)} \leq \frac{1-\mu^\gamma}{1-\mu}.
$$

Both of the sides are difference quotients of the function $f(t)= t^\gamma$ at $t=1$, so they tend to $\gamma$ as
$\lambda \to 1$ and $\mu \to 1$, and hence, we get
$$
\frac{\ep\,M(v_\ep)}{\mathcal R_\ep^{\mathcal X}(\alpha)} \to \gamma.
$$
Since $Q_\ep^{\mathcal X} = \left(\mathcal R_\ep^{\mathcal X}(\alpha)\right)^{1/\alpha}v_\ep$,
we have
$$
\frac{\ep\,M(Q_\ep^{\mathcal X})}
{\left(\mathcal R_\ep^{\mathcal X}(\alpha)\right)^{\frac{\alpha+2}{\alpha}}}
= \frac{\ep\,M(v_\ep)}{\mathcal R_\ep^{\mathcal X}(\alpha)}
\rightarrow \gamma,
$$
which gives \eqref{E:M-sharp-constant}. 
Moreover, since $\|v_\ep\|_{L^{\alpha+2}}=1$, we have 
$\|Q_\ep^{\mathcal X}\|_{L^{\alpha+2}}^{\alpha+2}
= (\mathcal R_\ep^{\mathcal X}(\alpha))^{(\alpha+2)/\alpha}$.
By the first Pokhozhaev identity, we have 
$q_{1,1} (Q_\ep^{\mathcal X} ) + \ep\, M (Q_\ep^{\mathcal X} )
= \|Q_\ep^{\mathcal X} \|_{L^{\alpha+2}}^{\alpha+2},$
which together with \eqref{E:M-sharp-constant} yields \eqref{E:q-splitting}. 
\end{proof}
The hypothesis of part (iii) holds whenever the quotient admits an expansion with a leading constant, for example as in \eqref{E:R-rad-sharp} and \eqref{E:R-unrestr-sharp}, see Corollary \ref{C:sharp-mass} at the end of this section.

\begin{remark}\label{R:concavity-branching}
Note that by \eqref{E:secant-mass} the mass $M(v_\ep)$ is nonincreasing in $\ep$. Since the mass of the ground state has a factor with $\mathcal R_\ep$, i.e., 
$M(Q_\ep^{\mathcal X}) = (\mathcal R_\ep^{\mathcal X}(\alpha))^{2/\alpha}M(v_\ep)$, which increases with $\ep$, the mass $M(Q_\ep^{\mathcal X})$ does not need to be monotone and may have a turning point.  By the second identity in \eqref{E:slope-identity}, such a turning point would be exactly an inflection point of the least-action  $S_{1+\ep}(Q_\ep^{\mathcal X})$ (provided a sufficiently smooth optimizer branch and an actual change of sign of $M'(Q_\ep^{\mathcal X})$), which can be thought of as a variational interpretation of branching that we observe in the mass-energy diagrams in \S\ref{S:branching}.
\end{remark}


\subsection{Lower bounds for the quotient}\label{A:sub-rigorous}

In 1D we give two independent proofs of the lower bound on $\mathcal R_\ep$: 
one via Hausdorff-Young and a Fourier multiplier estimate, 
and one via the non-homogeneous Gagliardo-Nirenberg inequality of \cite{FJMM2022}. We show that the Hausdorff-Young argument extends to the $H^2$-subcritical range in higher dimensions 
but it is sharp only in 1D. 
On the other hand, with the Gagliardo-Nirenberg argument not only do we give an alternative proof in 1D, but we also recover the sharp unrestricted exponent $\gamma_{d,\alpha}$ from \eqref{E:R-asym} (i.e., from \cite[Theorem 1.3]{LW2021})
for $d \geq 2$ in the range $0<\alpha \leq 4/(d-1)$. Note that at the endpoint $\alpha = 4/(d-1)$ the rate agrees with the radial, and thus, no longer yields the symmetry-breaking, so the range in Theorem \ref{Thm1} does not include the endpoint).

\subsubsection{Lower bound on $\mathcal R_\ep$ via Hausdorff-Young}\label{S:HY-lower}

\begin{theorem}\label{Thm:B1}
Let $\alpha>0$ if $d \leq 4$ and $0<\alpha < \frac8{d-4}$ if $d>4$ (i.e., $H^2$-subcritical).  Then as $\ep \to 0^+$
\begin{equation}\label{E:Rep-lower-bd}
\mathcal R_\epsilon(\alpha) \gtrsim
\epsilon^{\frac{\alpha+4}{2(\alpha+2)}}.
\end{equation}
\end{theorem}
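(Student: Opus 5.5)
Since $\alpha+2\geq 2$, the plan is to bound $\|u\|_{L^{\alpha+2}}$ by an $L^{p'}$ norm of $\widehat u$, where $p=\alpha+2$ and $p'=\frac{\alpha+2}{\alpha+1}\in(1,2]$. The Hausdorff--Young inequality gives
$$
\|u\|_{L^{p}(\R^d)} \lesssim \|\widehat u\|_{L^{p'}(\R^d)}.
$$
We then split $\widehat u = m^{-1/2}\cdot m^{1/2}\widehat u$, where $m(\xi)=(|\xi|^2-1)^2+\ep$ is the symbol \eqref{E:LW-symbol}. Applying H\"older with exponents $2$ and $r$, where $\frac1{p'}=\frac12+\frac1r$, i.e. $r=\frac{2(\alpha+2)}{\alpha}$, yields
$$
\|\widehat u\|_{L^{p'}} \leq \|m^{1/2}\widehat u\|_{L^2}\,\|m^{-1/2}\|_{L^{r}} = q_{1,1+\ep}(u)^{1/2}\Big(\int_{\R^d} m(\xi)^{-r/2}\,d\xi\Big)^{1/r}.
$$
Hence $\mathcal R_\ep(\alpha)\gtrsim \big(\int m^{-s}\big)^{-2/r}$ with $s=r/2=\frac{\alpha+2}{\alpha}$, and everything reduces to the Fourier multiplier estimate
$$
I_\ep:=\int_{\R^d}\big((|\xi|^2-1)^2+\ep\big)^{-s}\,d\xi \lesssim \ep^{\frac12-s}.
$$
Granting this, $\mathcal R_\ep\gtrsim \ep^{-(\frac12-s)\frac{\alpha}{\alpha+2}} = \ep^{1-\frac{\alpha}{2(\alpha+2)}} = \ep^{\frac{\alpha+4}{2(\alpha+2)}}$, which is exactly \eqref{E:Rep-lower-bd}.

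To estimate $I_\ep$ we pass to polar coordinates, $I_\ep=|\mathbb S^{d-1}|\int_0^\infty ((\rho^2-1)^2+\ep)^{-s}\rho^{d-1}d\rho$, and split the range of $\rho$ into three regions.
\begin{itemize}
\item Near $\rho=1$, say $|\rho-1|<1/2$: here $(\rho^2-1)^2\approx(\rho-1)^2$ and $\rho^{d-1}\approx1$. Substituting $t=(\rho-1)/\sqrt\ep$ gives a contribution $\approx \ep^{\frac12-s}\int_{\R}(t^2+1)^{-s}dt$, and this integral is finite because $2s>1$ (indeed $s>1$).
\item On the bounded remainder $\rho\leq 2$ away from $\rho=1$: the integrand is bounded uniformly in $\ep$, so the contribution is $O(1)$.
\item At infinity, $\rho\geq2$: the integrand is $\lesssim \rho^{-4s+d-1}$, which is integrable iff $4s>d$, i.e. $\frac{4(\alpha+2)}{\alpha}>d$, i.e. $\alpha(d-4)<8$. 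This is precisely the $H^2$-subcritical hypothesis, and it is automatic for $d\leq4$.
\end{itemize}
Since $\frac12-s<0$, the $O(1)$ terms are dominated by $\ep^{\frac12-s}$ as $\ep\to0^+$, which gives $I_\ep\lesssim\ep^{\frac12-s}$.

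The proof is essentially routine. The only points needing care are the choice of exponents (matching $p'$ with Hausdorff--Young) and checking that the tail integral converges exactly in the stated $\alpha$-range. That tail is the one place the hypothesis enters, so I expect it to be the step to state carefully. I would also remark that the bound is sharp only for $d=1$: the near-sphere piece gains only a $\sqrt\ep$ factor from the radial width and cannot see cap concentration, so in $d\geq2$ it does not reproduce $\gamma_{d,\alpha}$.
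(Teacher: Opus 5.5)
Your proposal is correct and follows the paper's proof essentially line by line. Both arguments apply Hausdorff--Young with Fourier-side exponent $\frac{\alpha+2}{\alpha+1}$, split $\widehat u = m^{-1/2}\cdot m^{1/2}\widehat u$ and apply H\"older, and then bound $\|m^{-1/2}\|_{L^{2(\alpha+2)/\alpha}}$ by a near-sphere term of order $\epsilon^{\frac12-s}$ plus a tail that converges exactly under the $H^2$-subcritical condition. The only differences are cosmetic: you swap the names of $p$ and $p'$, you put $s=\frac{\alpha+2}{\alpha}$ where the paper's $s$ is twice that, and you treat the bounded region away from the sphere separately, whereas the paper folds it into its ``complement'' region.
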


\begin{proof}
Let $u \in H^2(\R^d)\setminus\{0\}$. We show that
\begin{equation}\label{E:HY-key}
\|u\|_{L^{\alpha+2}(\R^d)}^2 \leq C(\alpha,d) \, \epsilon^{-\frac{\alpha+4}{2(\alpha+2)}} \, q_{1,1+\epsilon}(u),
\end{equation}
and then the conclusion $q_{1,1+\epsilon}(u)/\|u\|_{L^{\alpha+2}}^2 \geq c \,\epsilon^{\frac{\alpha+4}{2(\alpha+2)}}$ follows by taking the infimum.

First, we set $p = (\alpha+2)/(\alpha+1) 
\in [1, 2]$, so $p' = \alpha+2$. By the Hausdorff-Young inequality (e.g., \cite[Theorem 5.7]{Wolff2003} or \cite[Section 1.2]{Grafakos2014})), we obtain
$$
\|u\|_{L^{\alpha+2}(\R^d)} =  \|u\|_{L^{p'}(\R^d)} \leq  C_d \, \|\widehat u\|_{L^p(\R^d)}.
$$

For brevity, we write $m=m_{1,1+\ep}$. We split $\widehat u = m^{-1/2} (m^{1/2} \widehat u)$~ and apply H\"older's inequality with  
$\frac{1}{p} = \frac{1}{s} + \frac{1}{2}$.
This implies that $s = \frac{2(\alpha+2)}{\alpha}$, and thus, 
$$
\|\widehat u\|_{L^p(\mathbb R^d)} \leq  \|m^{-1/2}\|_{L^s(\R^d)} \, \|m^{1/2}\widehat u\|_{L^2(\R^d)},
$$
where
$$
\|m^{1/2}\widehat u\|_{L^2(\R^d)}^2 = \int_{\R^d} m(\xi) |\widehat u(\xi)|^2 d\xi = q_{1,1+\epsilon}(u).
$$
Thus, we are left with estimating $\|m^{-1/2}\|_{L^s}$.
We decompose the integral into two, splitting the domain into 
$$
\big| |\xi|-1 \big| \leq \tfrac12
$$
and its complement.  In the first region, polar coordinates and $t=|\xi|-1$ give, with constants independent of $\epsilon$,
$$
\int_{||\xi|-1| \leq 1/2} m (\xi)^{-s/2}\,d\xi
\approx
\int_{-1/2}^{1/2}(t^2+\epsilon)^{-s/2}\,dt
\approx
\epsilon^{\frac{1-s}{2}}.
$$

On the complement, the integral converges provided $2s>d$ (or $\frac{4(\alpha+2)}{\alpha} >d)$, because $m(\xi)^{-s/2} \lesssim (|\xi|^4)^{-s/2} = |\xi|^{-2s}$ as $|\xi| \to \infty$ (that's where the fourth power shows up).  Since $s>1$, the contribution near the sphere dominates, and therefore,
$$
\|m^{-1/2}\|_{L^s(\R^d)}
\lesssim \ep^{\frac{1-s}{2s}}
= \epsilon^{-\frac{\alpha+4}{4(\alpha+2)}}.
$$
Squaring, we deduce
$$
\|u\|_{L^{\alpha+2}(\R^d)}^2
\lesssim \epsilon^{-\frac{\alpha+4}{2(\alpha+2)}}q_{1,1+\ep}(u),
$$
which gives \eqref{E:HY-key} and finishes the proof.  
\end{proof}

\begin{remark}
We emphasize that dimension $d$ enters the proof only at one point, via the convergence of $\int m_{1,1+\ep}^{-s/2}$ away from the sphere (and thus, the requirement $2s>d$). 
\end{remark}

\begin{remark}(non-sharpness in $d \geq 2$)\label{R:d>2}
Comparing $\frac{\alpha+4}{2(\alpha+2)}$ with the exponent 
$\gamma_{d,\alpha}=\frac{8+(3-d)\alpha}{4(\alpha+2)}$ 
from \eqref{E:R-asym} (or \cite[Theorem 1.3]{LW2021}), we deduce 
$$
\frac{\alpha+4}{2(\alpha+2)}-\gamma_{d,\alpha} = \frac{(d-1)\alpha}{4(\alpha+2)}\geq 0,
$$
with equality in $d=1$. Thus, the Hausdorff-Young lower bound is sharp only in $d=1$. 
This is because in one dimension the minimum set consists of the two points $\{\pm 1\}$ and there is no tangential direction, so Hausdorff-Young captures the sharp scale.  In $d\geq 2$, the above argument treats the full singular neighborhood of the sphere only through the scalar weight $m_{1,1+\ep}^{-1/2}$, it does not use Fourier-extension cancellation or cap geometry along the $(d-1)$ tangential directions.  To get the sharp estimates, it is precisely that information needed.
\end{remark}

\subsubsection{Lower bound via a non-homogeneous Gagliardo-Nirenberg}\label{S:GN-lower}

Here we use the non-homogeneous biharmonic Gagliardo-Nirenberg inequality of Fern\'andez-Jeanjean-Mandel-Mari\c{s} \cite[Theorem~1.1]{FJMM2022} to obtain a lower bound on a quotient in any dimension. This gives an alternative proof of the lower bound in 1D on the quotient, that is the same as in Theorem \ref{Thm:B1}, but in dimensions $d \geq 2$ this approach gives a sharper lower bound.

\begin{theorem}\label{Thm:GN-lower}
For $d=1$ let $\alpha>0$, and for $d\geq2$ let $ 0<\alpha\leq\frac4{d-1}$.
Then
\begin{equation}\label{E:R-GN-lower}
\mathcal R_\ep(\alpha) \gtrsim \ep^{\gamma_{d,\alpha}},
\qquad \gamma_{d,\alpha}
= \frac{8+(3-d)\alpha}{4(\alpha+2)}.
\end{equation}
In particular, for $d=1$,
$$
\gamma_{1,\alpha} = \frac{\alpha+4}{2(\alpha+2)},
$$
while for $d\geq2$ and
$0<\alpha \leq 4/(d-1)$ this is the sharp unrestricted exponent
in \eqref{E:R-asym}.
\end{theorem}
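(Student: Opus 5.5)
The plan is to reduce \eqref{E:R-GN-lower} to a weighted AM--GM inequality, once the non-homogeneous Gagliardo--Nirenberg inequality of \cite[Theorem~1.1]{FJMM2022} is written in terms of the two pieces of the splitting \eqref{E:q-split}. Set $A(u)=q_{1,1}(u)=\|\Delta u+u\|_{L^2(\R^d)}^2$ and $B(u)=M(u)$, so that $q_{1,1+\ep}(u)=A(u)+\ep B(u)$. The form of the inequality I will invoke is
$$
\|u\|_{L^{\alpha+2}(\R^d)}^{\alpha+2}\leq C\,\|\Delta u+u\|_{L^2(\R^d)}^{\frac{(d+1)\alpha}{4}}\,\|u\|_{L^2(\R^d)}^{\alpha+2-\frac{(d+1)\alpha}{4}} .
$$
I expect this to hold for every $\alpha>0$ when $d=1$, and for $0<\alpha\leq 4/(d-1)$ when $d\geq2$; this is exactly the range in the statement.

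The first and main step is to check that this is indeed what \cite[Theorem~1.1]{FJMM2022} gives. Their normalization of the operator (possibly $\Delta+1$ versus our $(|\xi|^2-1)^2$ symbol) and their indexing of the exponent ($\sigma$ or $p$ instead of $\alpha$) may differ from ours, so a translation is needed. I expect this bookkeeping to be the main obstacle. The exponent on $\|\Delta u+u\|_{L^2}$ is forced by Knapp scaling. On the wave packet $\ep^{1/\alpha}W(\ep^{1/2}x_1,\ep^{1/4}x')\cos x_1$ one has $A\sim\ep B$, and the $L^{\alpha+2}$ norm scales so that only the power $\frac{(d+1)\alpha}{4}$ is consistent. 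So if their statement looks different, I would re-derive the exponents by testing on that packet.

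Raising the inequality to the power $\frac{2}{\alpha+2}$ gives
$$
\|u\|_{L^{\alpha+2}}^2\leq C\,A^{\theta}B^{1-\theta},\qquad \theta=\frac{(d+1)\alpha}{4(\alpha+2)} .
$$
In the stated range $0<\theta<1$: for $d\geq2$ we have $\theta\leq\frac{(d+1)}{(d+1)\cdot 2}\cdot\frac{\alpha}{\alpha}$-type bounds, namely $\theta\leq \tfrac12$ at $\alpha=4/(d-1)$, and for $d=1$, $\theta=\frac{\alpha}{2(\alpha+2)}<\tfrac12$. Weighted AM--GM then gives
$$
A+\ep B=\theta\frac{A}{\theta}+(1-\theta)\frac{\ep B}{1-\theta}\geq \theta^{\theta}(1-\theta)^{1-\theta}\,\Big(\frac{A}{\theta}\Big)^{\theta}\Big(\frac{\ep B}{1-\theta}\Big)^{1-\theta}=A^\theta(\ep B)^{1-\theta}.
$$
Combining the two displays,
$$
\frac{q_{1,1+\ep}(u)}{\|u\|_{L^{\alpha+2}}^2}\geq C^{-1}\ep^{1-\theta}
$$
for every $u\in H^2(\R^d)\setminus\{0\}$. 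Taking the infimum gives $\mathcal R_\ep(\alpha)\gtrsim\ep^{1-\theta}$.

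A short computation finishes the argument:
$$
1-\theta=\frac{4(\alpha+2)-(d+1)\alpha}{4(\alpha+2)}=\frac{8+(3-d)\alpha}{4(\alpha+2)}=\gamma_{d,\alpha},
$$
which for $d=1$ reduces to $\frac{\alpha+4}{2(\alpha+2)}$. This reproduces Theorem~\ref{Thm:B1} in 1D and, for $d\geq2$, matches the exponent in \eqref{E:R-asym}, so the bound is sharp there by the Knapp upper bound of \cite{LW2021}.
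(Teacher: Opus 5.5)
Your proposal is correct and follows the paper's proof. Both invoke the non-homogeneous Gagliardo--Nirenberg inequality of \cite[Theorem~1.1]{FJMM2022} with exponent $\theta=1-\gamma_{d,\alpha}=\frac{(d+1)\alpha}{4(\alpha+2)}$ on $\|\Delta u+u\|_{L^2}$, and then bound $A^\theta(\ep B)^{1-\theta}$ by $q_{1,1+\ep}(u)$. Your weighted AM--GM step is equivalent to the paper's simpler observation that $q_{1,1+\ep}(u)\geq\max(A,\ep B)\geq A^\theta(\ep B)^{1-\theta}$; the factor $\theta^\theta(1-\theta)^{1-\theta}$ you insert only weakens the sharp AM--GM bound, so your chain of inequalities remains valid.
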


\begin{proof}
We write the quadratic form as
\begin{equation}\label{E:q-two}
q_{1,1+\ep}(u) = \|\Delta u + u \|_{L^2(\R^d)}^2 + \ep\|u\|_{L^2(\R^d)}^2.
\end{equation}

The non-homogeneous Gagliardo-Nirenberg inequality
from \cite[Theorem 1.1]{FJMM2022} states that, for $\alpha>0$ (in the range stated in the theorem),
\begin{equation}\label{E:GN-double}
\|u\|_{L^{\alpha+2}(\R^d)} \leq C_{\alpha,d} \|u\|_{L^2(\R^d)}^{\gamma_{d,\alpha}}
\|\Delta u + u\|_{L^2(\R^d)}^{1-\gamma_{d,\alpha}}.
\end{equation}
Note that $1-\gamma_{d,\alpha} = \frac{d+1}{2} (\frac12 - \frac1{\alpha+2})$ and $\gamma_{d,\alpha} \geq \frac12$ precisely when $\alpha \leq 4/(d-1)$ for $d \geq 2$.
Dropping either of the terms in \eqref{E:q-two}, we get trivial bounds
$$
\ep\|u\|_{L^2(\R^d)}^2 \leq q_{1,1+\ep}(u)\quad \mbox{and} \quad
\|\Delta u + u\|_{L^2(\R^d)}^2 \leq q_{1,1+\ep}(u).
$$
Substituting them into \eqref{E:GN-double}, we obtain
\begin{align*}
\|u\|_{L^{\alpha+2}(\R^d)}^{2}
\leq C_{\alpha,d} \left(q_{1,1+\ep}(u)/\ep \right)^{\gamma_{d, \alpha}}
\left(q_{1,1+\ep}(u)\right)^{1-\gamma_{d, \alpha} } =
C_{\alpha,d}\,  q_{1,1+\ep}(u)/ \ep^{\gamma_{d, \alpha} }.
\end{align*}
Therefore,
$$
\frac{q_{1,1+\ep}(u)}
{\|u\|_{L^{\alpha+2}(\R^d)}^2}
\gtrsim \ep^{\gamma_{d, \alpha}},
$$
and taking the infimum over all nonzero $u\in H^2(\R^d)$, we obtain the lower bound
$$
\mathcal R_\ep(\alpha) \gtrsim \ep^{\gamma_{d, \alpha} }.
$$
Note that for $d=1$, $\gamma_{1,\alpha} = \frac{\alpha+4}{2(\alpha+2)}$,
which recovers the lower bound of Theorem \ref{Thm:B1} in 1D. 
\end{proof}

\begin{remark}[Comparison for $d\geq 2$ bounds]\label{R:GN-duality}
The reason that in $d \geq 2$ the two arguments part company is because the Hausdorff-Young estimate  controls the neighborhood of the sphere via the scalar weight $m_\ep^{-1/2}$ alone and it can not sense the tangential width of $\ep^{1/4}$ from Knapp cap (and thus, the gap $\frac{(d-1)\alpha}{4(\alpha+2)}$ that we pointed out in Remark \ref{R:d>2}). The Gagliardo-Nirenberg route, on the other hand, does give sharp bounds, since for $d \geq 2$ the GN inequality from \cite[Theorem 2.6]{FJMM2022} relies on the Stein-Tomas restriction estimate instead of Hausdorff-Young, the borderline case of its admissible exponents gives precisely $\gamma_{d,\alpha}$, so it recovers the sharp lower bound in \eqref{E:R-asym}) from \cite{LW2021}.
\end{remark}

\subsection{From two-sided quotient bounds to the rates}\label{S:rate-consequences}

We now prove Part 2 of Theorem \ref{ThmA} by combining the trial-function upper bounds from \S\ref{S:appendix-2} with the matching lower bounds above and the quotient-to-mass results of Section \ref{S:M-to-R}. In 1D the sharp lower bound follows either from the 
Hausdorff-Young argument in Theorem\ref{Thm:B1} 
or from the Gagliardo-Nirenberg argument in Theorem \ref{Thm:GN-lower}. 
In 2D the sharp lower bound follows from Theorem \ref{Thm:GN-lower} 
(and agrees with \cite{LW2021}), while the sharp radial lower bounds are those from
\cite{LW2021, MO2023}. We then use Lemma \ref{L:quotient-to-mass}
to obtain the corresponding mass rates. 
\smallskip

{\it One dimension.} The trial function of \S\ref{A:sub-formal-1d} gives the upper bound \eqref{E:R-upper-1d}, while Theorem  \ref{Thm:B1} (or, alternatively, the Gagliardo-Nirenberg argument of \S\ref{S:GN-lower}) gives the matching lower bound. Thus, we obtain
\begin{equation}\label{E:R-1d-approx}
\mathcal R_\epsilon(\alpha) \approx \epsilon^{\frac{\alpha+4}{2(\alpha+2)}},
\qquad d=1, \alpha>0,
\end{equation}
which proves the quotient rate in \eqref{E:rates-1D}. The infimum defining $\mathcal R_\ep(\alpha)$ is attained in one dimension by \cite[Theorem 3.6]{FJMM2022}, so Lemma \ref{L:quotient-to-mass} applies with $\gamma=\frac{\alpha+4}{2(\alpha+2)}$ and yields
$$
M(Q_\ep) \approx \epsilon^{\frac2{\alpha}-\frac12},
\qquad
S_{1+\epsilon}(Q_\epsilon) \approx \|Q_\epsilon\|_{L^{\alpha+2}(\R)}^{\alpha+2}
 \approx  \ep^{\frac{\alpha+4}{2\alpha}},
$$
where the second relation follows from \eqref{E:least-action-value}. 
This proves the right part of \eqref{E:rates-1D}.

\smallskip

{\it Two dimensions.}\label{A:sub-rig-2d}
Here, Theorem \ref{Thm:B1} is no longer sharp, so for the lower bound we take  for  the unrestricted class the bound \eqref{E:R-asym}, proved in \cite{LW2021}, and for the radial class the bound \eqref{E:R-asym-rad-MO}, proved in \cite{MO2023}. 
The matching upper bounds are given by the Knapp and Bessel trial functions of \S\ref{A:sub-formal-2d-nr} and \S\ref{A:sub-formal-rad}. 
We obtain the two-sided quotient rates. 
Applying them into Lemma~\ref{L:quotient-to-mass}, and into Theorem~\ref{T:quotient-mass-general}(ii) at the endpoint $\alpha=2$, where a slowly varying factor is present, gives the mass rates collected in Table~\ref{T:rates-summary}.

\begin{remark}[Attainment]\label{R:attainment}
Lemma~\ref{L:quotient-to-mass} assumes that the quotient infimum is attained. For the unrestricted problem this follows from \cite[Theorem~3.6]{FJMM2022} in any dimension (in the $H^2$-subcritical range). For the problem restricted to radial functions, attainment in every dimension $d\geq2$ in
the $H^2$-subcritical range follows from \cite[Appendix A, Prop. 1]{MO2023} by coercitivy and compact embedding of radial functions. 
For the block-radial class $G_1$ in $d=2$ neither attainment nor the strict inequality $\mathcal R_\ep^\circ<\mathcal R_\ep^{G_1}$ is known, see \cite[Appendix~C]{MO2023}.
\end{remark}

\begin{remark}[Higher-dimensions]\label{R:higher-dimensions}
The quotient-to-mass  relation from Lemma \ref{L:quotient-to-mass} 
also gives mass rates for $d \geq 3$. 
For the unrestricted case, combining Theorem \ref{Thm:GN-lower} with the matching 
upper bound of \cite{LW2021}, we obtain for $d \geq 2$ and
$0<\alpha \leq 4/(d-1)$
$$
\qquad \mathcal R_\ep(\alpha) \approx \ep^{\gamma_{d,\alpha}}, \qquad \gamma_{d,\alpha} =\frac{8+(3-d)\alpha}{4(\alpha+2)}.
$$
Therefore, Lemma \ref{L:quotient-to-mass} yields
$$
M(Q_\ep) \approx \ep^{\frac2{\alpha} - \frac{d+1}{4}}.
$$
The corresponding mass-scaling threshold (zero of the exponent) is
$$
\alpha_K(d)=\frac8{d+1}.
$$
In $d=3$,
$$
\qquad M(Q_\ep)\approx\ep^{\frac2\alpha-1}, \quad 0<\alpha<2,
$$
so that $\alpha_K=2$ coincides with the endpoint
of the interval $0 < \alpha < 2$, the symmetry-breaking range from Theorem \ref{Thm1} (or \cite[Theorem 1.2]{LW2021}). 
For $d \geq 4$, the value of $\alpha_K(d)$ lies outside the range of $\alpha$, i.e., $\frac8{d+1}> \frac4{d-1}$, which in particular, gives for $\alpha \in (0,4/(d-1)]$ that $M(Q_\ep) \to 0^+$ as $\ep$ decreases down to 0. 
\end{remark}
We can also extend the unrestricted mass rate beyond the endpoint, namely, for $d\geq2$ and $\alpha\geq4/(d-1)$ in the
$H^2$-subcritical range. Combining \cite[Theorem 1.3(i)]{LW2021} with
Theorem \ref{T:quotient-mass-general}(iii) gives
$$
M(Q_\ep)=\frac12 C(\alpha)^{\frac{\alpha+2}{\alpha}}
\ep^{\frac1{\alpha}-\frac12}(1+o(1)),
$$
where $C(\alpha)>0$ is the constant in 
$\mathcal R_\ep(\alpha)=C(\alpha)\ep^{1/2}(1+o(1))$, see \eqref{E:R-unrestr-sharp}.
\smallskip

The same principle applies to the radial class for $d \geq 2$, and since the radial infimum is also attained (Remark \ref{R:attainment}), Lemma \ref{L:quotient-to-mass} and Theorem \ref{T:quotient-mass-general}(ii) applied to \eqref{E:R-radial-d} gives 
\begin{equation}\label{E:M-rad-d}
M(Q_\ep^{\rm rad}) \approx \left\{ 
\begin{array}{ll}
\ep^{\frac2{\alpha}-\frac{d}{2}} & 0<\alpha < \frac2{d-1},\\
\ep^{\frac{d-2}{2}}\,|\ln\ep|^{-(d-1)}, & \alpha=\frac{2}{d-1},\\
\ep^{\frac1{\alpha} - \frac12} & \alpha > \frac2{d-1}.
\end{array}
\right. 
\end{equation}

\begin{corollary}\label{C:sharp-mass}
Let $d \geq 2$ and $\alpha > \frac{2}{d-1}$ be $H^2$-subcritical. 
The constant in the last line of \eqref{E:M-rad-d} is 
\begin{equation}\label{E:M-rad-sharp}
M(Q_\ep^{\rm rad}) = \frac12 \Big(\frac{2\,\mathsf C^{\rm rad}_{ST}(\alpha+2)}{\pi}\Big)^{\frac{\alpha+2}{\alpha}}
\ep^{\frac1{\alpha}-\frac12}\,(1+o(1)),\quad \ep \to 0^+.
\end{equation}
For ~$\alpha \geq \frac{4}{d-1}$~ the same holds for the unrestricted mass $M(Q_\ep)$, with ~$\mathsf C^{\rm rad}_{ST}(\alpha+2)$~ replaced by
~$\mathsf C_{ST}(\alpha+2)$. 
\end{corollary}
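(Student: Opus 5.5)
The corollary follows by feeding the sharp quotient asymptotics \eqref{E:R-rad-sharp} and \eqref{E:R-unrestr-sharp} into Theorem~\ref{T:quotient-mass-general}(iii) with $\gamma=\frac12$. We first check the hypotheses. In the radial class $\mathcal X=H^2_{\rm rad}(\R^d)$, the infimum is attained for all small $\ep>0$ by Remark~\ref{R:attainment}. In the unrestricted class it is attained by \cite[Theorem~3.6]{FJMM2022}.

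For the ratio hypothesis, write $\mathcal R_\ep^{\mathcal X}(\alpha)=K\sqrt\ep\,(1+o(1))$ with $K=\frac{2}{\pi}\mathsf C^{\rm rad}_{ST}(\alpha+2)$ in the radial case ($\alpha>\frac2{d-1}$). In the unrestricted case ($\alpha\geq\frac4{d-1}$) take $K=\frac2\pi\mathsf C_{ST}(\alpha+2)$. Then for each fixed $\lambda>0$,
\[
\frac{\mathcal R_{\lambda\ep}^{\mathcal X}(\alpha)}{\mathcal R_\ep^{\mathcal X}(\alpha)}=\frac{K\sqrt{\lambda\ep}\,(1+o(1))}{K\sqrt\ep\,(1+o(1))}\to\lambda^{1/2},
\]
since both $o(1)$ terms vanish as $\ep\to0^+$ ($\lambda$ is fixed, so $\lambda\ep\to0$ as well). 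The two-sided bound needed in part (ii) is also immediate, with $L\equiv1$ and $\gamma=\frac12\in(0,1)$.

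Theorem~\ref{T:quotient-mass-general}(iii) then gives
\[
\ep\,M(Q_\ep^{\mathcal X})=\tfrac12\big(\mathcal R_\ep^{\mathcal X}(\alpha)\big)^{\frac{\alpha+2}{\alpha}}(1+o(1)).
\]
Substituting the expansion of the quotient,
\[
\big(\mathcal R_\ep^{\mathcal X}\big)^{\frac{\alpha+2}{\alpha}}=K^{\frac{\alpha+2}{\alpha}}\,\ep^{\frac{\alpha+2}{2\alpha}}\,(1+o(1))^{\frac{\alpha+2}{\alpha}}=K^{\frac{\alpha+2}{\alpha}}\,\ep^{\frac{\alpha+2}{2\alpha}}\,(1+o(1)),
\]
because a fixed power of $1+o(1)$ is again $1+o(1)$. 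Dividing by $\ep$ and using $\frac{\alpha+2}{2\alpha}-1=\frac1\alpha-\frac12$ yields \eqref{E:M-rad-sharp}, and the same computation with $\mathsf C_{ST}$ gives the unrestricted statement.

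The only point needing care is the attainment hypothesis in the unrestricted class for the larger range $\alpha\geq\frac4{d-1}$. This should be covered by \cite[Theorem~3.6]{FJMM2022} throughout the $H^2$-subcritical range, and I would cite it explicitly there. Apart from that, the argument is routine bookkeeping of $o(1)$ terms.
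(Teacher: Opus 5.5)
Your proposal is correct and follows the paper's proof. Like the paper, you get $\mathcal R_{\lambda\ep}^{\mathcal X}(\alpha)/\mathcal R_\ep^{\mathcal X}(\alpha)\to\lambda^{1/2}$ from the sharp expansions \eqref{E:R-rad-sharp} and \eqref{E:R-unrestr-sharp}, apply Theorem~\ref{T:quotient-mass-general}(iii) with $\gamma=\frac12$, substitute, and divide by $\ep$. Your explicit check of attainment, which the paper leaves implicit, is indeed covered by Remark~\ref{R:attainment} and \cite[Theorem~3.6]{FJMM2022} throughout the $H^2$-subcritical range, including $\alpha\ge\frac{4}{d-1}$.
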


\begin{proof}
By \eqref{E:R-rad-sharp}, $\mathcal R^{\rm rad}_{\lambda\ep}(\alpha)/\mathcal R^{\rm rad}_\ep(\alpha)\to\lambda^{1/2}$ for $\lambda>0$, 
so part (iii) of Theorem \ref{T:quotient-mass-general} applies to the
radial class with $\gamma = \frac12$.
Substituting \eqref{E:R-rad-sharp} into \eqref{E:M-sharp-constant} and dividing by $\ep$ gives \eqref{E:M-rad-sharp}. Similarly, by \eqref{E:R-unrestr-sharp} for the unrestricted mass. 
\end{proof}

\begin{remark}\label{R:constants-radiality}
The question of sharp constant is connected with the question  whether ground states are radial for $\alpha \geq \frac{4}{d-1}$, which is raised by our numerics in \S \ref{S:higher}.
In this  range both $\mathcal R_\ep$ and $\mathcal R_\ep^{\rm rad}$ are of order $\sqrt \ep$, and since
$\mathsf C_{ST}(\alpha+2) \leq \mathsf C^{\rm rad}_{ST}(\alpha+2)$, one has $M(Q_\ep) \leq M(Q_\ep^{\rm rad})$ from the leading constants in Corollary \ref{C:sharp-mass}. 
The equality holds if and only if constants are extremizers of the adjoint Stein-Tomas inequality on $\mathbb S^{d-1}$ at the power $p=\alpha+2$. 
If the inequality is strict, then it would prove that every ground state is nonradial for small $\ep$, which would extend Theorem \ref{Thm1} beyond its range of $\alpha$. 
We mention that in 3D for $\alpha=2$ (the endpoint) constants are the global extremizers by Foschi \cite{Foschi2015}, and thus, the two leading constants coincide. In 2D for $\alpha=4$ (also the endpoint) this is known only locally by Carneiro–Foschi–Oliveira e Silva–Thiele \cite{CFOT2017} and conjectured globally. Thus, the radial ground states that
we compute for $\alpha \geq 4$ in 2D in \S \ref{S:higher} are consistent with the
conjecture that constant functions optimize the Stein-Tomas inequality on $\mathbb S^1$ (see further details, e.g., in \cite{Becker2025}).
\end{remark}


Before we embark onto more thorough study in two dimensions, we review the one-dimensional ground states. 

\section{Ground states in 1D}\label{S:1dGS}
In 1D the equation \eqref{biNLS} becomes  
\begin{equation}\label{E:explicit1}
i \partial_{t} u -\partial_{x}^{4}u - 2a \partial_{x}^{2}u + |u|^{\alpha}u=0.
\end{equation}
Letting $u(x,t) = e^{i bt} Q(x)$, $b>0$, with $Q$ real, it follows that $Q$ satisfies
\begin{equation}\label{E:1dGS}
Q^{(4)} +2a\,Q^{\prime\prime}+b\,Q-|Q|^{\alpha} Q = 0.
\end{equation}

\subsection{Properties of 1D ground states}\label{S:1D-properties}
In \cite{KPRS} we studied this case in various regimes, here we point out a few other properties of ground states. 
Fixing $a=1$ and $b=1+\epsilon$ with $0< \epsilon \ll 1$, 
we look at the small range of $\epsilon$, or  as $\epsilon \to 0$. 

In Figure \ref{F:1d-zeros} we consider the 1D cubic equation \eqref{E:explicit1} 
and show how the profiles of ground states change as $\epsilon$ \scalebox{0.75}{ $\searrow$} $0$ (i.e., $b=1+\epsilon$ \scalebox{0.75}{$\searrow$} $1$). Note that the height (or the $L^\infty$ norm) of the profiles is decreasing, the oscillations going out are increasing in their height (compare $b=1.01$ tail with $b=1.1$ tail), but the zeros of the profile stay the same (such behavior would be similar for other values of positive $a$ via scaling \eqref{E:ab-rescale}). 
\begin{figure}[!htb]
\includegraphics[width=0.49\hsize,height=.35\hsize]{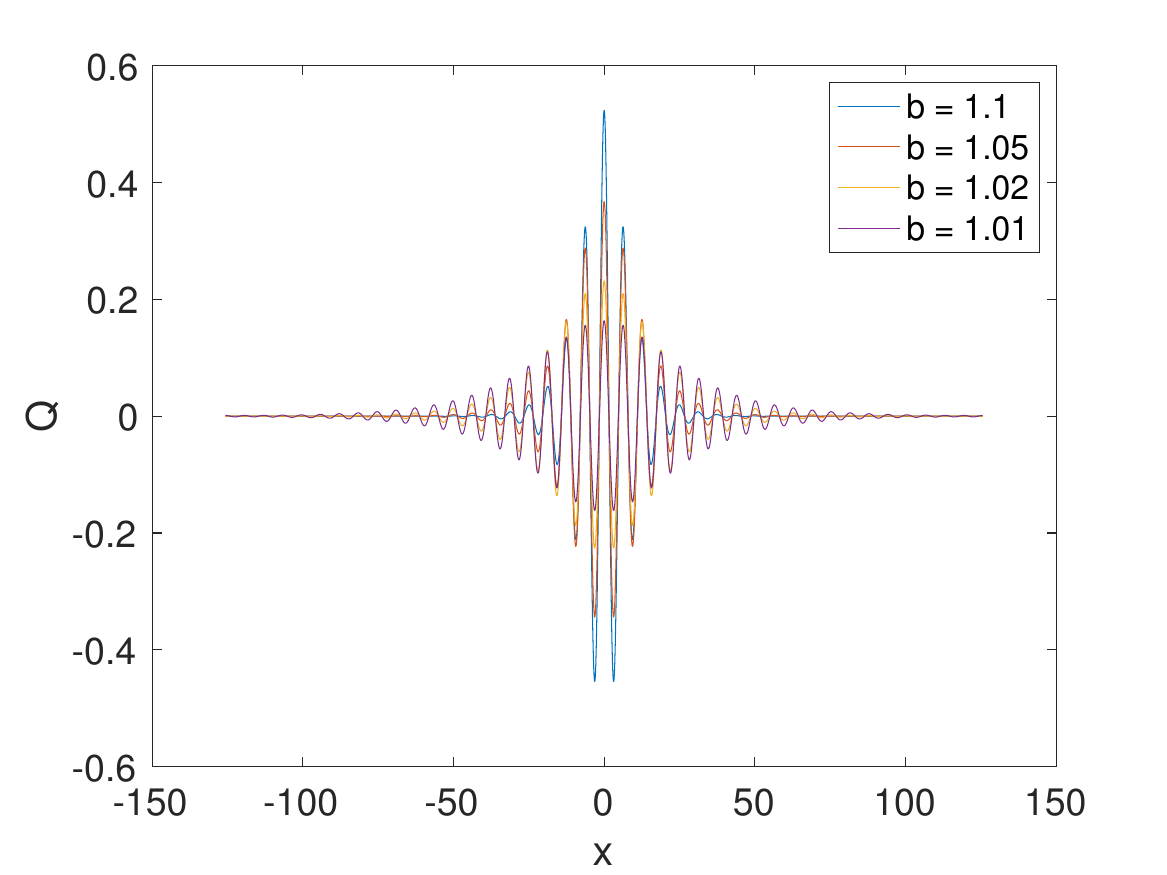}
\includegraphics[width=0.5\hsize,height=.35\hsize]{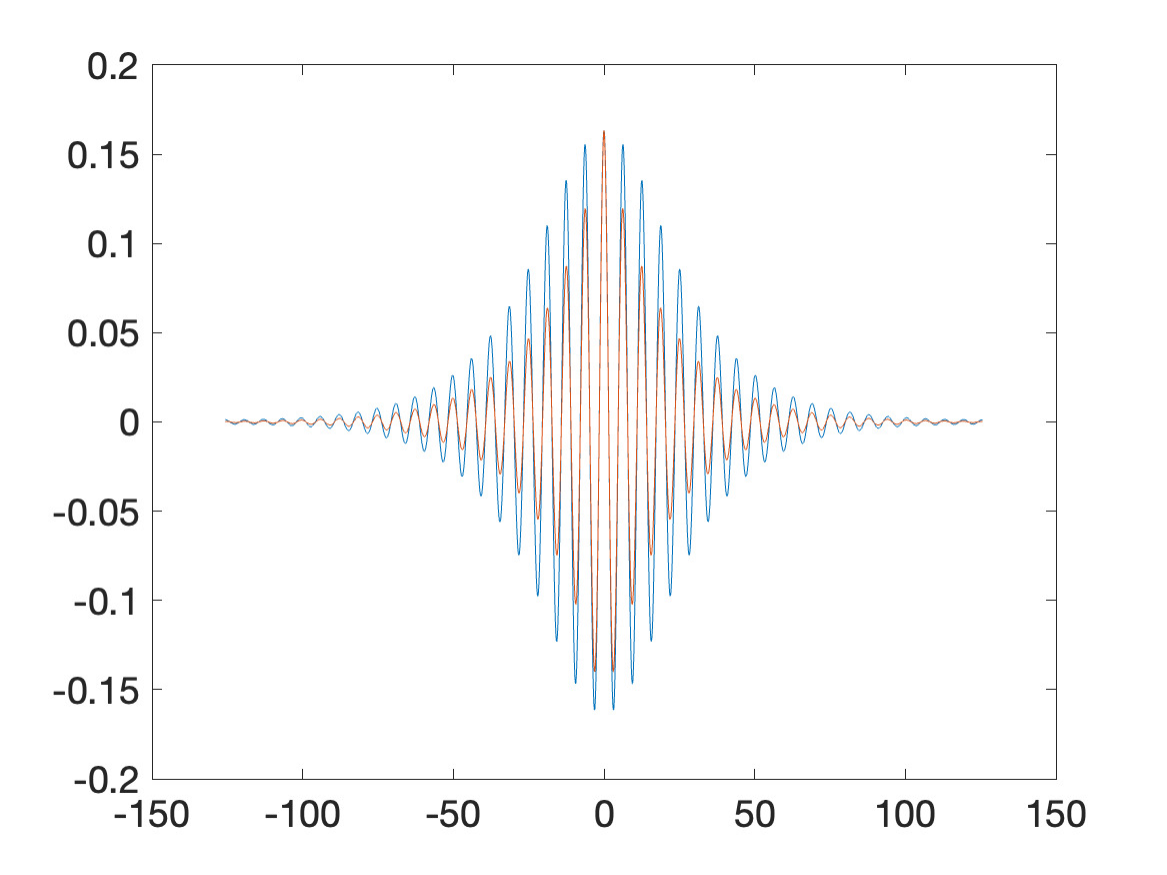}
\caption{\footnotesize Ground state profiles in the 1d cubic bi-NLS 
\eqref{biNLS} with $a=1$ and varying $b$ \scalebox{0.75}{$\searrow$} 
$1$ (left) and comparison of the ground state for $b=1.01$ (blue) with the Green's function (red) from \eqref{E:Green} (normalized as $\lambda G$ so the maximum heights coincide).}
\label{F:1d-zeros}
\end{figure}

As discussed in \S \ref{S:positive}, the Green's function in this regime would be oscillatory, and in 1D for $a=1$, $b=1+\epsilon$ it is given by (up to a constant and  phase shift, thus, dropping sine)
\begin{equation}\label{E:Green}
G(x) \sim \frac{e^{-k_2 |x|}}{4 k_2 \sqrt{1+\epsilon}} \cos(k_1 x), \qquad k_1 = (\tfrac{\sqrt{1+\epsilon}+1}{2})^{\frac12}, ~~k_2 = (\tfrac{\sqrt{1+\epsilon}-1}{2})^{\frac12}.
\end{equation}
To further confirm the Green's function influence on the solutions, we 
plot a normalized (in amplitude) version of \eqref{E:Green} on the 
right of Figure~\ref{F:1d-zeros} together with the ground state with 
$b=1.01$, showing that the zeros of both coincide and that there is similarity in the oscillatory decay. 

\noindent
\begin{minipage}[t]{0.48\textwidth}
\vspace{0pt}
In \cite{KPRS} we discussed, following \cite{LW2021} and \cite{FJMM2022} that 1D ground states (or energy minimizers) are even (with respect to a fixed point, which without loss of generality can be taken to be the origin). To confirm this numerically, we performed a search for non-even (odd) ground state solutions of \eqref{E:explicit1}, restricting to the odd functions space. After a meticulous search we were able to find an {\it odd} symmetry-constrained minimizer and obtain its energy, see red dot around mass $4$ in Figure \ref{F:1D-odd}, which is higher than that for the even minimizer - the blue line shows the even energy minimizers (ground states). 
\end{minipage}
\hfill
\begin{minipage}[t]{0.48\textwidth}
\vspace{0pt}
\centering
\includegraphics[width=\linewidth, height=0.66\linewidth]{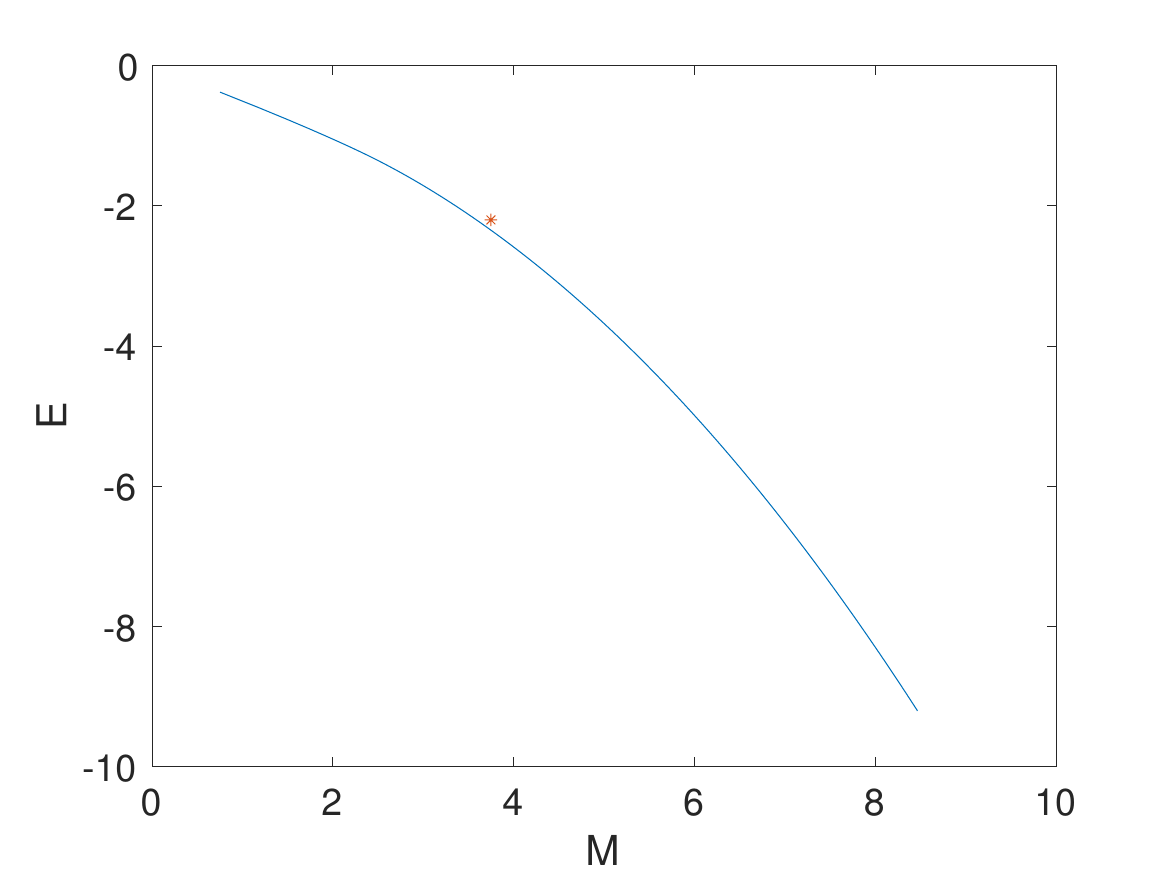}
\captionsetup{width=\linewidth}
\captionof{figure}{\small Energy minimizers of \eqref{E:1dGS}, $\alpha=2$: even (blue line) and odd (red dot).} 
\label{F:1D-odd}
\end{minipage}

\subsubsection{Mass asymptotics in 1D}\label{S:M-1D}

Using the decay rate of $\mathcal R_\ep$ in \eqref{E:R_sharp} and the relation with mass asymptotics (see \eqref{E:quotient-mass} in Lemma~\ref{L:quotient-to-mass}), the corresponding asymptotics for the ground state mass is
\begin{equation}\label{E:1d-rates}
M(Q_\epsilon) \approx 
\epsilon^{\frac{2}{\alpha}-{\frac12}} \quad \mbox{as} \quad \epsilon \to 0^+.
\end{equation}
Thus, in this regime the asymptotic behavior is 
\begin{equation}\label{E:alpha*}
\left\{
\begin{array}{ll}
\alpha<4 \quad & \quad
M(Q_\epsilon) \to 0,\\
\alpha=4 \quad & \quad
M(Q_\epsilon) \approx C <\infty,\\
\alpha>4 \quad & \quad
M(Q_\epsilon) \to +\infty,
\end{array}
\right.
\end{equation}
which explains the behavior of the ground state mass in dependence of $b$ in Figures 8 (A,D,G) and 9 (A, D) in \cite{KPRS}.

\begin{figure}[!htb]
\includegraphics[width=0.49\hsize,height=0.33\hsize]{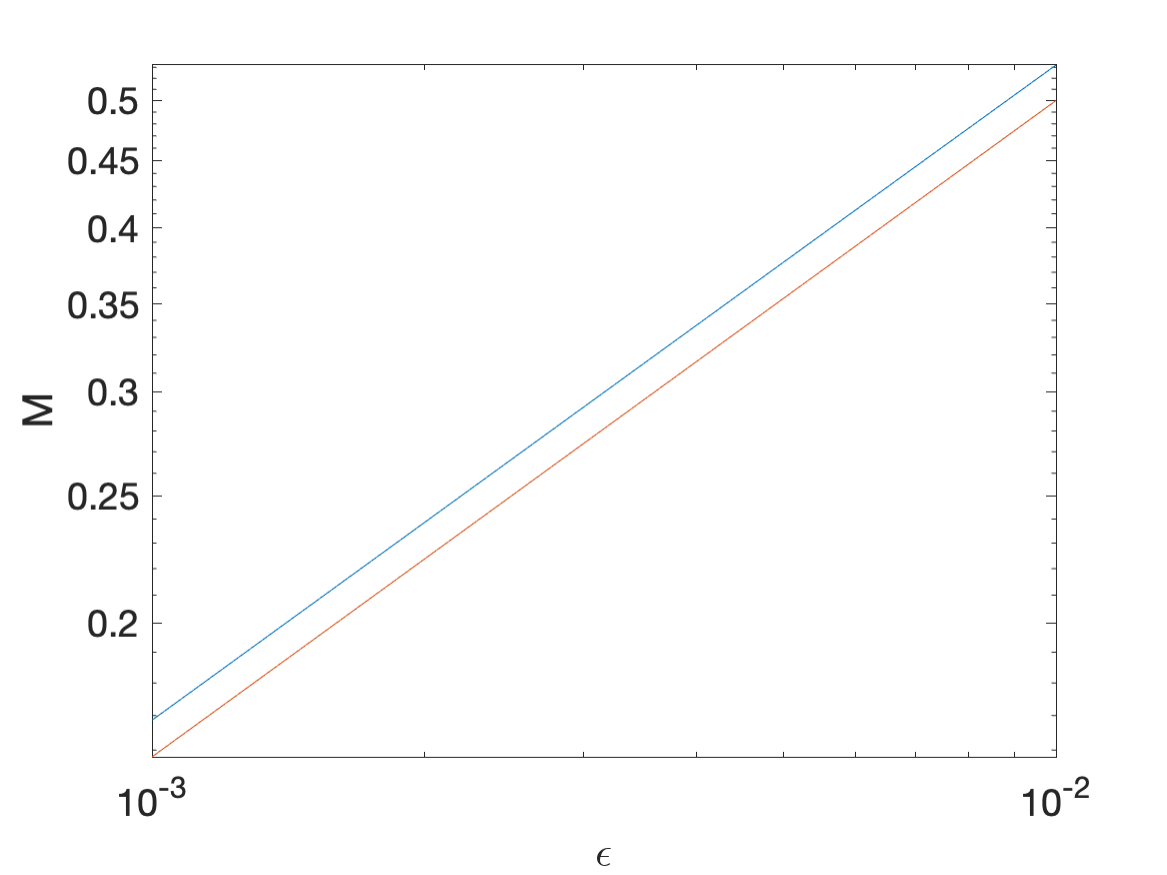}
\includegraphics[width=0.49\hsize,height=0.33\hsize]{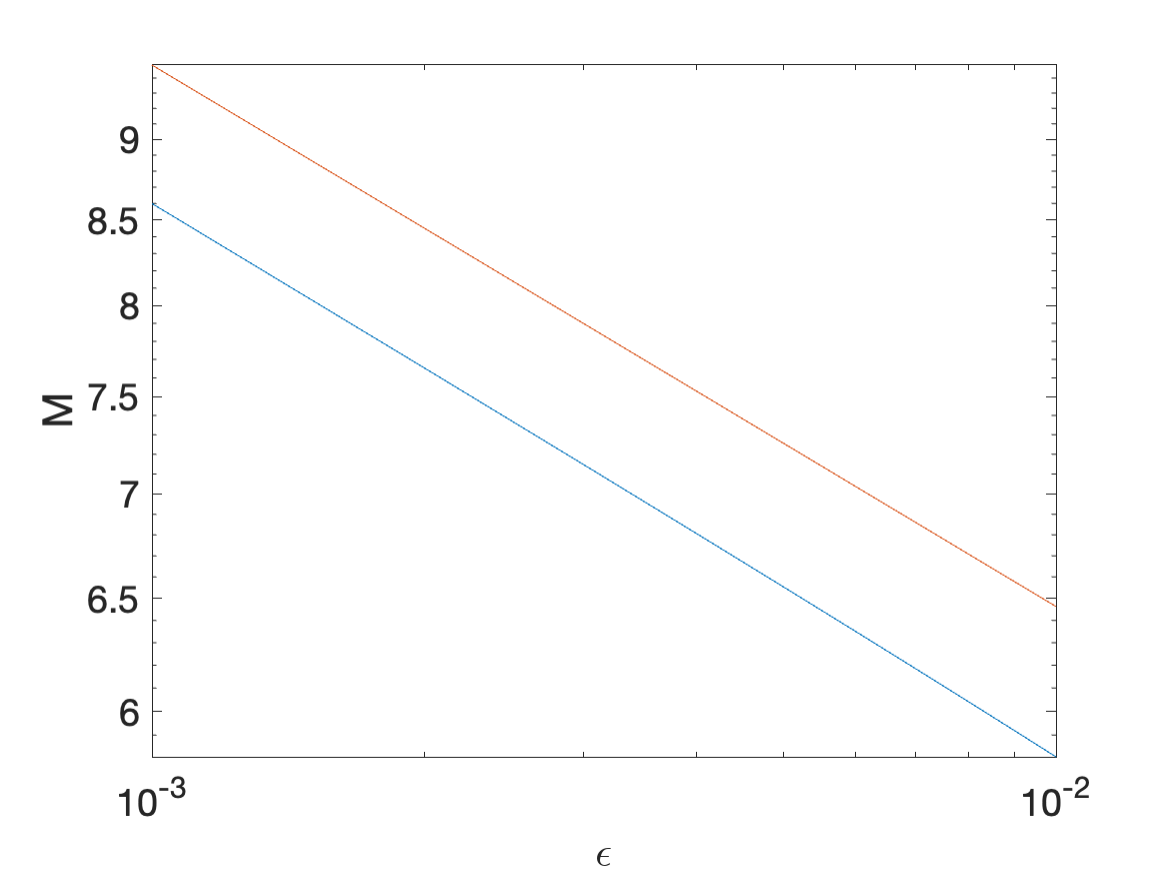}
\caption{\footnotesize Logarithmic scale dependence of the mass $M(Q_\ep)$ (blue) on $C\,\ep^{\frac2{\alpha}-\frac12}$ (red) as in \eqref{E:1d-rates}-\eqref{E:rates-sample}: $\alpha=2$ (left) and $\alpha=6$ (right), {$C=20$ is used only for visualization}.} 
\label{F:1d-rates}
\end{figure}
To confirm this further, we plot the logarithmic dependence of the mass $M(Q_\ep)$ on $\epsilon$ versus the rate given in \eqref{E:1d-rates} for cases $\alpha=2$ and $\alpha=6$ in Figure~\ref{F:1d-rates}, and for the borderline case $\alpha=4$ in Figure~\ref{F:1d-4}. In the first two cases, from the formula \eqref{E:1d-rates} we have 
\begin{equation}\label{E:rates-sample}
M(Q_\ep) \approx \ep^{\frac12} \quad \mbox{and} \quad M(Q_\ep) \approx \ep^{-\frac16},
\end{equation}
respectively, which is confirmed by parallel-type lines in each plot with respective slopes $\frac12$ and $-\frac16$.

\noindent
\begin{minipage}[t]{0.56\textwidth}
\vspace{0pt}
~~~In the case of $\alpha=4$, we trace a very slow change, note the scale on the $y$-axis in Figure~\ref{F:1d-4}, which provides some numerical evidence for stabilization of the $M(Q_\ep)$ towards a constant, as indicated in \eqref{E:alpha*}.  

~~We make an estimate of this constant via the 1D wave-packet similar to the 2D \eqref{E:Knapp-1}, namely, the example \eqref{E:ansatz-1d} (which is also the trial function in Appendix \S \ref{A:sub-formal-1d}) with the amplitude given in \eqref{E:1d-A}. With that we have 
$$
M(\tilde Q_\ep)=\frac{A^2}{2}\,\ep^{-1/2}\|W\|_{L^2(\R)}^2
$$
and
$$
A^\alpha=\frac{4\|W'\|_{L^2}^2+\|W\|_{L^2}^2}{2c_\alpha\,\|W\|_{L^{\alpha+2}(\R)}^{\alpha+2}} \, \ep,
$$
where $c_\alpha$ is given in \eqref{E:c-alpha}.
\end{minipage}%
\hfill
\begin{minipage}[t]{0.4\textwidth}
\vspace{0pt}
\centering
\includegraphics[width=\linewidth, height=0.72\linewidth]{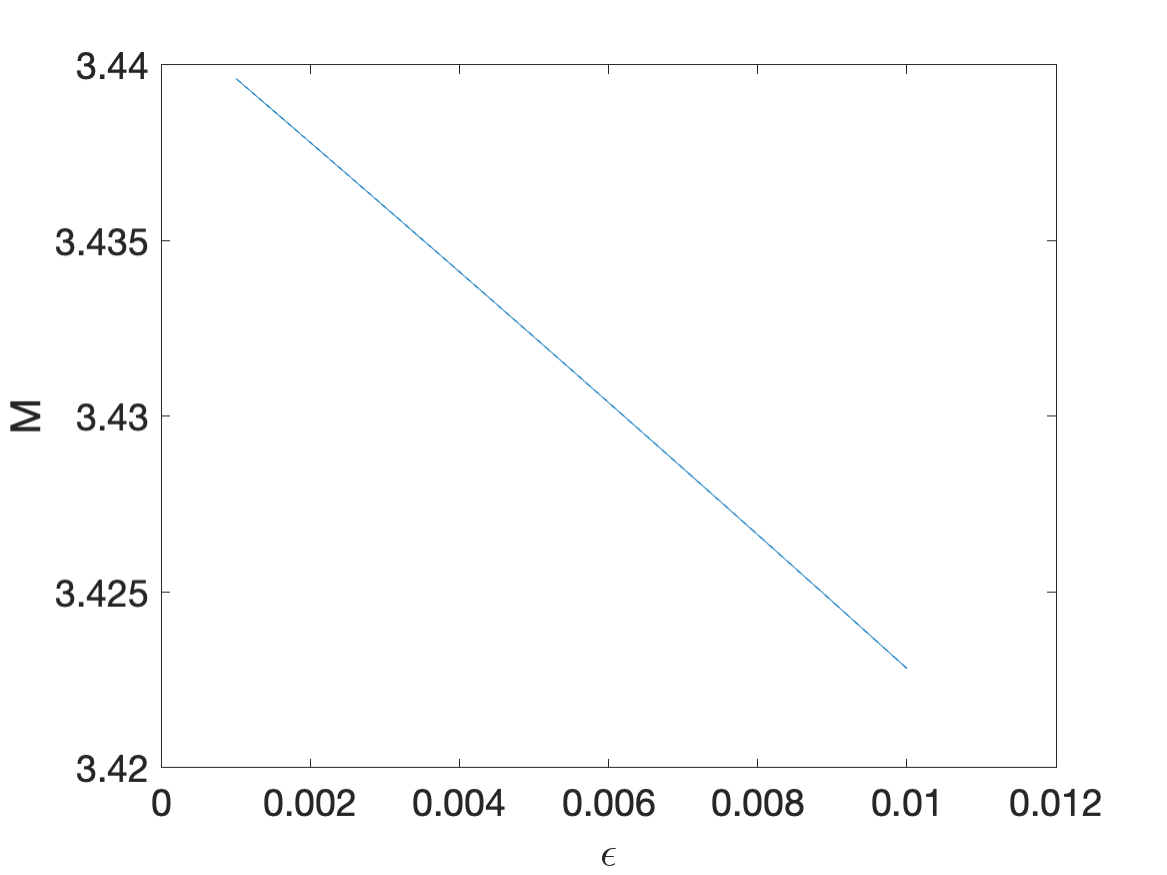}
\captionsetup{width=\linewidth}
\captionof{figure}{\footnotesize Dependence of the mass $M(Q_\ep)$ on $\ep$ as in \eqref{E:alpha*} for the threshold case $\alpha=4$, note that $3.44 \approx \sqrt{6/5}\, \pi$, the predicted value of $C(4)$ in \eqref{E:C4}.}
\label{F:1d-4}
\end{minipage}

For $\alpha=4$ this gives $A^2=\big( (4\|W'\|_{L^2}^2+\|W\|_{L^2}^2)/(2 c_4\|W\|_{L^6}^6)\big)^{1/2}\ep^{1/2}$, so the powers of $\ep$ cancel, and we obtain
\begin{equation}\label{E:C4-constant}
M(\tilde Q_\ep) \longrightarrow C(4)=\frac{\|W\|_{L^2(\R)}^2}{2}
\left(\frac{4\|W'\|_{L^2}^2+\|W\|_{L^2}^2}{2c_4\,\|W\|_{L^6(\R)}^6}\right)^{1/2}
\end{equation}
(which is invariant under $W\mapsto\lambda W$). 
The optimal profile is $W(x)=\sech^{1/2}(x)$ (as discussed in Appendix at the end of \S \ref{A:sub-formal-1d}), for which
$$
\|W\|_{L^2}^2=\int_\R\sech x\,dx=\pi,~~~~
\|W\|_{L^6}^6=\int_\R\sech^3x\,dx=\frac{\pi}2, ~~~~
\|W'\|_{L^2}^2=\frac14\int_\R \sech x\tanh^2x\,dx=\frac{\pi}8.
$$
Using \eqref{E:c-alpha} to compute $c_4=\Gamma(\frac72)/(\sqrt\pi\,\Gamma(4))=\frac5{16}$ and the above norm values, substituted into \eqref{E:C4-constant}, we obtain a prediction for the constant
\begin{equation}\label{E:C4}
C(4) = \sqrt{\tfrac65}\,\pi \approx 3.44,
\end{equation}
which is the value appearing in Figure \ref{F:1d-4}.

\begin{remark}\label{R:normGS}
The asymptotic behavior in \eqref{E:alpha*} can also be connected with the normalized ground states.
Assuming that the mass depends continuously on $b>1$ and as $b$ grows larger, it goes to infinity, top line of \eqref{E:alpha*} implies that for 
nonlinearities $0<\alpha <4$ the mass $M(Q_\epsilon)$ goes to zero as $\ep$ decreases to zero (e.g., see \cite[Fig. 8(A)]{KPRS}). In other words, values of $M(Q_{\ep})$ cover the entire interval $(0,\infty)$, so this branch contains a stationary solution of every prescribed mass $m>0$ (and thus, the energy minimizer could exist for any value of the constraint $m$ in \eqref{E:mass-constrained-problem}). On the other hand, for $4 < \alpha <8$, the mass diverges as $\ep \to 0^+$, and thus, a mass curve has a minimum (e.g., see the curves in \cite[Fig. 8 (D,G)]{KPRS}), and thus, a minimizer could exist only with a constraint above or equal to that minimum value. Such a change in the mass behavior corresponds to the value $\alpha_K=4$  in \eqref{E:alpha*}. (Note that in the $\alpha=4$ case, following our plots in \cite[Figure 8 (D)]{KPRS}, there is also a minimum value in the mass curve.)

The corresponding existence and nonexistence results for normalized minimizers are established in \cite[Theorem~1.2]{FJMM2022}: in 1D a minimizer of \eqref{E:mass-constrained-problem} exists for every $m>0$ when $\alpha<4$, whereas for $4 \leq \alpha < 8$ there is $m_0>0$ such that no minimizer exists for $m\in(0,m_0)$. The threshold from \cite{FJMM2022} is $\alpha < \max\{2,\frac{8}{d+1}\}$, which agrees
with our threshold $\alpha_K(d)=\frac{8}{d+1}$ when $d \leq 3$, 
in particular, in both dimensions considered here (for 2D see Remark~\ref{R:Knapp-threshold}). We mention that the corresponding 1D value $p_*=\alpha_K+1=5$ is also studied in \cite{SP2020}.
\end{remark}

\section{Ground states in 2D}\label{S:2dGS}

\subsection{Basic properties}
We re-write the equation \eqref{biNLS} in 2D with $\Delta = \partial_x^2 + \partial_y^2$, 
\begin{equation}
\Delta^2 = (\partial_x^2 + \partial_y^2)^2
\equiv  \partial_{x}^{4} + 2\partial_{x}^{2} \partial_y^2  + \partial_{y}^{4},
\end{equation}
and thus, 
\begin{equation}\label{E:2dbiNLS}
i \partial_{t} u -(\partial_{x}^{2} + \partial_y^2)^2 u  - 2a (\partial_{x}^{2} + \partial_{y}^{2})u +|u|^{\alpha}u=0,
\end{equation}
Substituting a standing solitary wave profile  
$$
u(x,y,t) = e^{i bt} Q(x,y), \quad b>0,
$$ 
with $Q$ real, vanishing at infinity, into \eqref{E:2dbiNLS}, we obtain \begin{equation}\label{E:2dGS}
(\partial_{x}^{2} + \partial_y^2)^2 Q + 2a (\partial_{x}^{2} + \partial_y^2) Q + b\,Q - |Q|^{\alpha} Q=0.
\end{equation}
We explicitly write the energy from \eqref{E:E}, 
$$
E(u) =  \int_{\mathbb R^2} \Big(\frac12 |u_{xx} + u_{yy}|^2 -a(|u_x|^2 +|u_y|^2) - \frac1{\alpha+2}|u|^{\alpha+2}  \Big) dx dy,
$$
and the two Pokhozhaev identities in 2D, 
\begin{equation}\label{E:P1}
\|\Delta Q\|^2_{L^2(\R^2)} -2a \|\nabla Q\|^2_{L^2(\R^2)} + b\|Q\|^2_{L^2(\R^2)} - \|Q\|_{L^{\alpha+2}(\R^2)}^{\alpha+2} = 0,
\end{equation}
\begin{equation}\label{E:P2}
\|\Delta Q\|^2_{L^2(\R^2)} - b\|Q\|^2_{L^2(\R^2)} + \frac{2}{\alpha+2} \|Q\|_{L^{\alpha+2}(\R^2)}^{\alpha+2} = 0.
\end{equation}
Solving for  $\|\Delta Q\|_{L^2}^2$ or $\|\nabla Q\|^2_{L^2}$ from \eqref{E:P1} and \eqref{E:P2}, 
we express the energy via mass and the $L^2$-norm of the gradient
\begin{equation}\label{E:E-viaM}
E(Q) = \frac{b(\alpha-4)}{2(\alpha+4)} \, M(Q) - \frac{\alpha \, a}{\alpha+4} \| \nabla Q\|_{L^{2}(\R^2)}^{2}, 
\end{equation}
which, for instance, shows that in the $L^2$-critical case ($\alpha=4$) the energy of the ground state is positive if $a<0$ and negative if $a>0$. Note that in the scaling-invariant case $a=0$, the ground state has zero energy. 
Another useful equality from Pokhozhaev identities is the connection of energy and mass via the potential norm, which we obtained in \eqref{EM-relation} and 
for $\alpha=2$ it becomes 
\begin{equation}\label{E:action-cubic}
E(Q)+\frac b2 M(Q) = \frac14\|Q\|_{L^4(\R^2)}^4.
\end{equation}
This identity is especially useful for verifying numerical computations.
One of the first numerical checks we do is to confirm the bound coming from \eqref{E:action-cubic}:
$$
E(Q) + \frac{b}2 M(Q) > 0, \quad \mbox{or} \quad E(Q) >  -\frac{b}2 M(Q).
$$
This holds in all our computations of solutions to \eqref{E:2dGS}.

Secondly, in our computations, we also check the error $\mathcal E(Q)$ for any solution $Q$ of \eqref{E:2dGS} by showing that, e.g., in the case of $\alpha=2$, 
\begin{equation}\label{E:Err}
\mathcal{E}(Q): = E(Q) + \frac{b}2 \, M(Q) - \frac{1}{4} 
\|Q\|_{L^{4}(\R^2)}^{4}.
\end{equation}

Finally, the values of $E(Q)+\frac b2 M(Q)$ and $\|Q\|_{L^4(\R^2)}^4$ 
play an important role in distinguishing radial and nonradial branches, which we discuss next.

\subsection{Remarks on 2D action asymptotics}
\label{S:2d-asym}
Recalling the asymptotics for the action (and hence for the potential norm) \eqref{E:S-asym-general} and \eqref{E:S-asym-rad-2d} in the case of the cubic nonlinearity $\alpha=2$, the nonradial ground states satisfy
\begin{equation}\label{E:nr-3}
S_{1+\epsilon}(Q_\epsilon^{\rm nr})
\approx \|Q_\epsilon^{\rm nr}\|_{L^4(\R^2)}^4
\approx \epsilon^{5/4},
\end{equation}
whereas the radial branch has
\begin{equation}\label{E:r-3}
S_{1+\epsilon}(Q_\epsilon^{\rm rad})
\approx \|Q_\epsilon^{\rm rad}\|_{L^4(\R^2)}^4
\approx \ep \,|\ln\epsilon|^{-1}.
\end{equation}
Coming back for a moment to the lower bound from \cite{LW2021}, we have the corresponding lower-bound exponent
\begin{equation}\label{E:r-3b}
S_{1+\epsilon}(Q_\epsilon^{\rm rad}) \gtrsim \epsilon^\sigma,
\quad \mbox{for every} \quad \sigma>1,
\end{equation}
and thus, showing the asymptotic behavior of the potential norm behaving as $\ep^\sigma$ with $\sigma$ just slightly greater than $1$ but smaller than $\frac54$, 
would let us separate the nonradial branch of ground states from the radial one 
(this would also follow from \eqref{E:r-3} bound $\epsilon^{5/4} \ll \ep |\ln \epsilon|^{-1}$ as $\epsilon\to 0^+$, see remark below).

\begin{remark}\label{R:rates}
In Figure~\ref{F:loglog-1} we show that indeed the nonradial branch confirms the asymptotic behavior rate with power $\frac54$, while the radial branch has power $\sigma$, which on the computed domain of $\ep$ is approximated by $1+\frac19$. Note that computationally this is consistent with the asymptotic rate \eqref{E:r-3b} or more precise \eqref{E:r-3}, which contains a logarithmic correction. To investigate it more accurately can be a future endeavour, for the current work, however, first of all, log corrections are computationally challenging to confirm, and secondly, obtaining ground state solutions for much smaller $\ep$ than $10^{-3}$ is a computationally arduous task. 
\end{remark}


\subsection{Initial approximations in 2D}\label{S:InAp}

In order to search for the nonradial ground states, one should have 
an appropriate initial guess. We discuss two such possibilities, based on the structure of the Laplacian and the Knapp example.

\subsubsection{Angular modes of the Laplacian}\label{S:InLap}

We recall the standard harmonic modes on a disk.  Let $B_R$ be the disk of radius $R$, and consider the Dirichlet problem
$$
\Delta u=0 \quad \mbox{in } B_R, \qquad  u(R,\theta) = f(\theta).
$$
In polar coordinates, the functions
\begin{equation}\label{E:cos}
u_m^c(r,\theta) = r^m \cos(m \theta), \qquad u_m^s(r,\theta) = r^m \sin(m\theta),
\end{equation}
solve
$$
\Delta u=0 \quad \mbox{in } B_R
$$
for $m=0,1,2,\ldots$ in the cosine case and $m=1,2,\ldots$ in the sine case.  Expanding the boundary data into a Fourier series, we have
$$
f(\theta)=\frac{a_0}{2} +\sum_{m=1}^{\infty}\left(a_m\cos(m\theta)+b_m\sin(m\theta)\right),
$$
where
$$
a_m=\frac1{\pi}\int_0^{2\pi} f(\theta) \cos(m\theta)\,d\theta,
\quad m\geq0,
\quad \mbox{and} \quad
b_m=\frac1{\pi}\int_0^{2\pi} f(\theta) \sin(m\theta)\,d\theta,
\quad m\geq1.
$$
Then the harmonic extension is given by
$$
u(r,\theta)=\frac{a_0}{2} +\sum_{m=1}^{\infty}
\left(\frac{r}{R}\right)^m
\left(a_m \cos(m \theta) + b_m \sin(m \theta)\right).
$$
Note that the case $m=0$ gives the radially symmetric harmonic mode, while in the cases when $m\geq1$ one obtains the $m$-th angular mode. We also point out that the angular function $\cos(m\theta)$ has $2m$ nodal rays (which we see below in computations).  Moreover, under the reflection $(x,y)\mapsto(-x,-y)$,
which corresponds to $\theta \mapsto \theta + \pi$, we have
$$
\cos(m(\theta+\pi))=(-1)^m \cos(m \theta).
$$
Thus, the modes with even $m$ are compatible with the property of ground states 
being even $u(x,y)=u(-x,-y)$, 
discussed earlier in \S \ref{S:positive}. For this reason, in some of our numerical simulations, we use angular expressions of type $r^m \cos(m\theta)$, typically with even $m$, for 
$Q^{(0)}$ (we also show comparison of energies for both odd and even $m$ in Figure~\ref{figME}).
For instance, to construct angular solutions we typically choose initial iterates of the 
form 
\begin{equation}\label{init}
Q^{(0)} = \lambda r^{m}e^{-r^{2}}\cos(m \theta),
\end{equation}
where $\lambda$ is a positive constant that helps fine-tune the amplitude of the solution and
the envelope function being a radial Gaussian to ensure sufficient decay at the boundary. 
(We choose the Gaussian $e^{-r^2}$ for the decay at infinity 
and smoothness at the origin, however, we have also tried other 
functions such as the super-Gaussian, $e^{-r^4}$, or $\sech~ {x} \sim 
e^{-|x|}$, as well as uncorrelated powers of $r$ and $\cos$ such as 
$r^{m_1} \cos(m_2 \theta)e^{-r^{2}}$, which did not make much difference in computing the $m$-th node ground state solutions.)

\subsubsection{Cap concentration}

To utilize the Knapp geometry \eqref{E:Knapp-1} of being concentrated on the opposite poles, we consider the following initialization ($b=1+\epsilon$):
$$
Q^{(0)}_\epsilon (x,y) \sim  \epsilon^{1/2} \,G(\epsilon^{1/2} x, \epsilon^{1/4}y) \cos x 
$$
with the envelope function $G$ being of Gaussian-type,
$$
G_\epsilon(x,y)=e^{ -\frac12 
\left( \frac{(\epsilon^{1/2} x)^{2}}{\sigma_x^{2}} 
+\frac{(\epsilon^{1/4}y)^{2}}{\sigma_y^{2}}
\right)}. 
$$
Explicitly, giving a parameter $\lambda$ to fine-tune the initial amplitude, we have 
\begin{equation}\label{E:cap1}
Q^{(0)}_\epsilon (x,y) =\lambda \, \epsilon^{1/2} 
e^{ -\frac12 \left( \frac{(\epsilon^{1/2} x)^{2}}{\sigma_x^{2}} 
+\frac{(\epsilon^{1/4}y)^{2}}{\sigma_y^{2}}
\right)} 
\cos x,
\end{equation}
with parameters $\lambda \in [0.5,5]$ and $\sigma_x, \sigma_y \in [0.5, 1.5]$.

Due to symmetry of cosine and Gaussian, our initial guess is even, $Q_\ep^{(0)} (-x,-y) = Q_\ep^{(0)} (x,y)$. Furthermore, it is even in each variable, so we expect that the solution has coordinate-wise symmetry $Q(-x,y) = Q(x,y)$ and similar in $y$. As we show in Section \ref{S:nonradial} this is indeed the case.

\section{Numerical construction of ground state solutions: cubic case}\label{S:2d-cubic}

We now numerically construct ground states candidates to the 
2D bi-harmonic cubic NLS equation \eqref{biNLS}, 
i.e., localized solutions as critical points of the elliptic equation \eqref{E:groundstate}. 
As in the previous sections we take $a=1$ and write $b=1+\epsilon$. In this 
section we consider the cubic nonlinearity $\alpha=2$. 

The numerical approach is a 2D version of the 1D approach in 
\cite{KPRS}, a 2D discrete Fourier transform (DFT) with a 
Newton-Krylov method where the action of the inverse of the Jacobian 
is computed with GMRES \cite{SS86}.

{\it Parameters.} We initially work with the following numerical parameters: $L_{x}=L_{y}=40$, 
$N_{x}=N_{y}=2^{11}$. Note that the energy or action identity \eqref{EM-relation}, coming from Pokhozhaev, is typically satisfied by the solutions constructed in this section to 
the order of the stopping criteria for the Newton iteration (the 
iterations stopped once the residual of the discretized equation 
\eqref{E:2dGS} has an $L^{\infty}$ norm smaller than a certain threshold, 
typically $10^{-6}$). 

{\it Branch tracing.} We typically start with a larger value for $b$, 
and then apply a tracing technique to reach values of $b \sim 1$ and smaller. This means we consider, for example, $100$ values $b_{n}$, $n=1,\ldots,100$, between the 
minimal and maximal considered values for $b$, and we always use the result for a previous 
value $b_{n+1}$ as the initial iterate for a slightly smaller $b_{n}$.

\subsection{Initializations with angular modes}\label{S:angular-modes}
We first utilize the properties of the Laplacian on a disk as discussed in \S \ref{S:InLap} as they provide a principal structure of the equation \eqref{E:groundstate} and we take \eqref{init} as the initial guess.

\subsubsection{Radially symmetric case, $m=0$}
We start with the case $m=0$ in \eqref{init}, and for $b=4$ we choose $\lambda=3$. The radial ground state solution in this case is shown in Figure~\ref{figm0}. By continuous tracing from $b=4$ we obtain the solution for $b=1.1$. Note that solutions become more oscillatory as $b \to 1$, similar to the 1D case (e.g., Figure 5 in \cite{KPRS}). 

To reach even smaller values of $b$, we introduce another 100 values 
for $b\in[1.01,1.1]$ and obtain the solution with the above tracing 
method. A finer discretization becomes necessary, since the solution 
becomes more and more oscillatory, and the peaks (maximum values) change strongly with the value of $b$ similar to the 1D case shown in Figure \ref{F:1d-zeros}. 
\begin{figure}[!htb]
\begin{subfigure}{.32\textwidth}
\includegraphics[width=1\linewidth,height=0.75\linewidth]{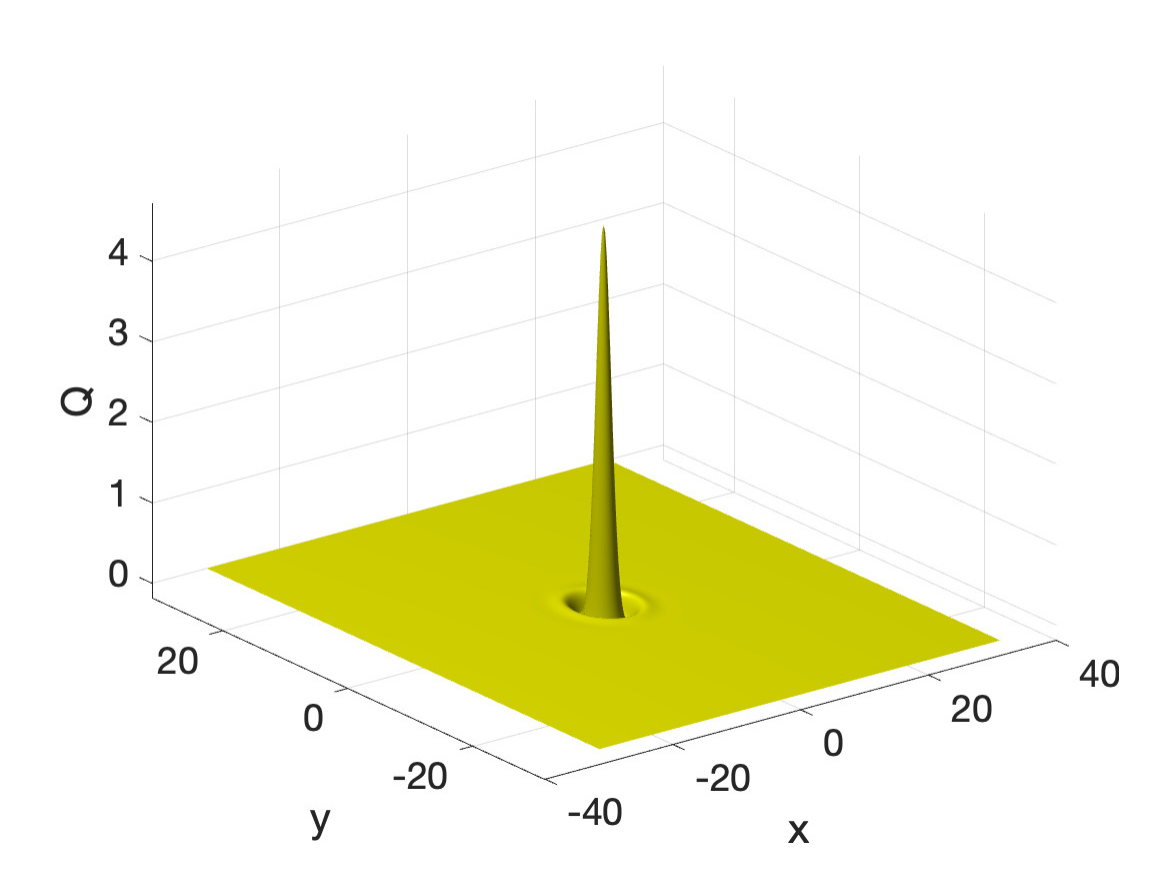}
\subcaption[]{{\footnotesize $b=4$}}
\end{subfigure}
\begin{subfigure}{.32\textwidth}
\includegraphics[width=1\linewidth,height=0.75\linewidth]{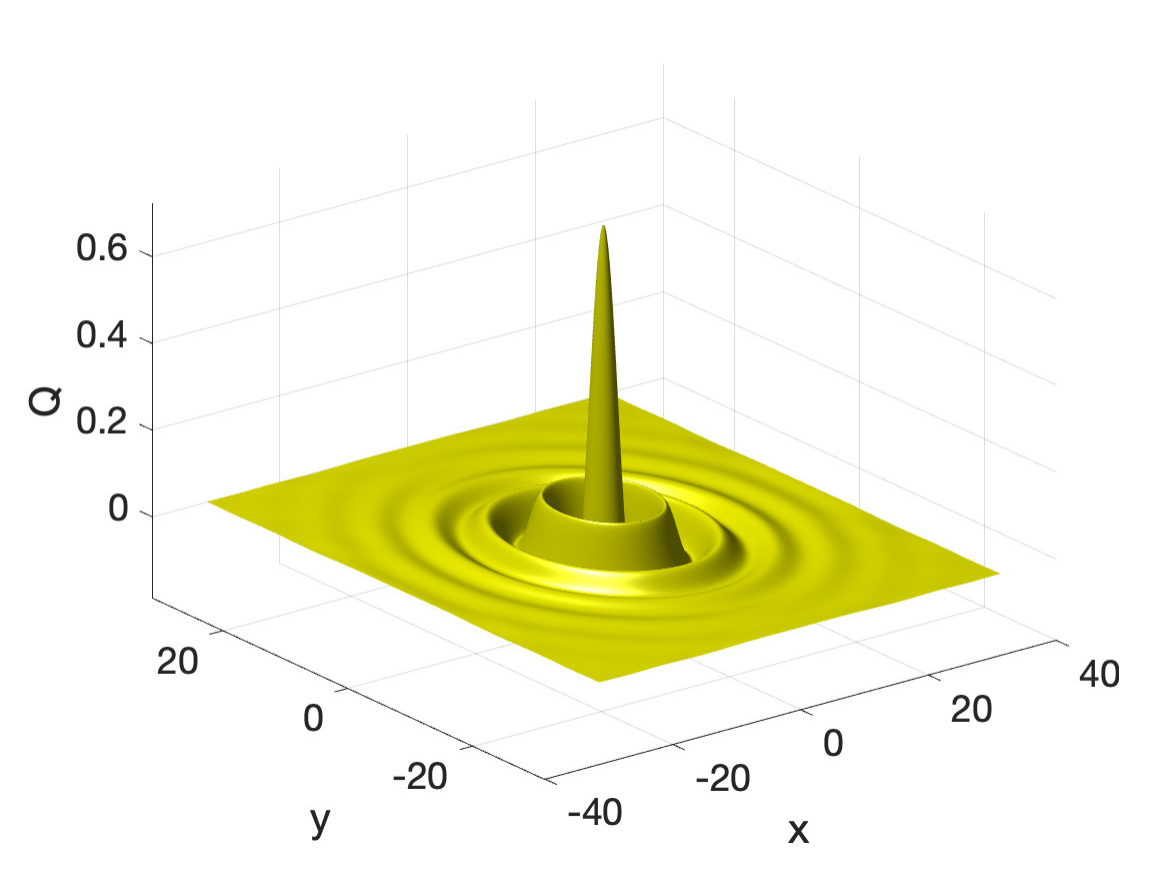}
\subcaption[]{{\footnotesize $b=1.1$}}
\end{subfigure}
\begin{subfigure}{.32\textwidth}
\includegraphics[width=1\linewidth,height=0.75\linewidth]{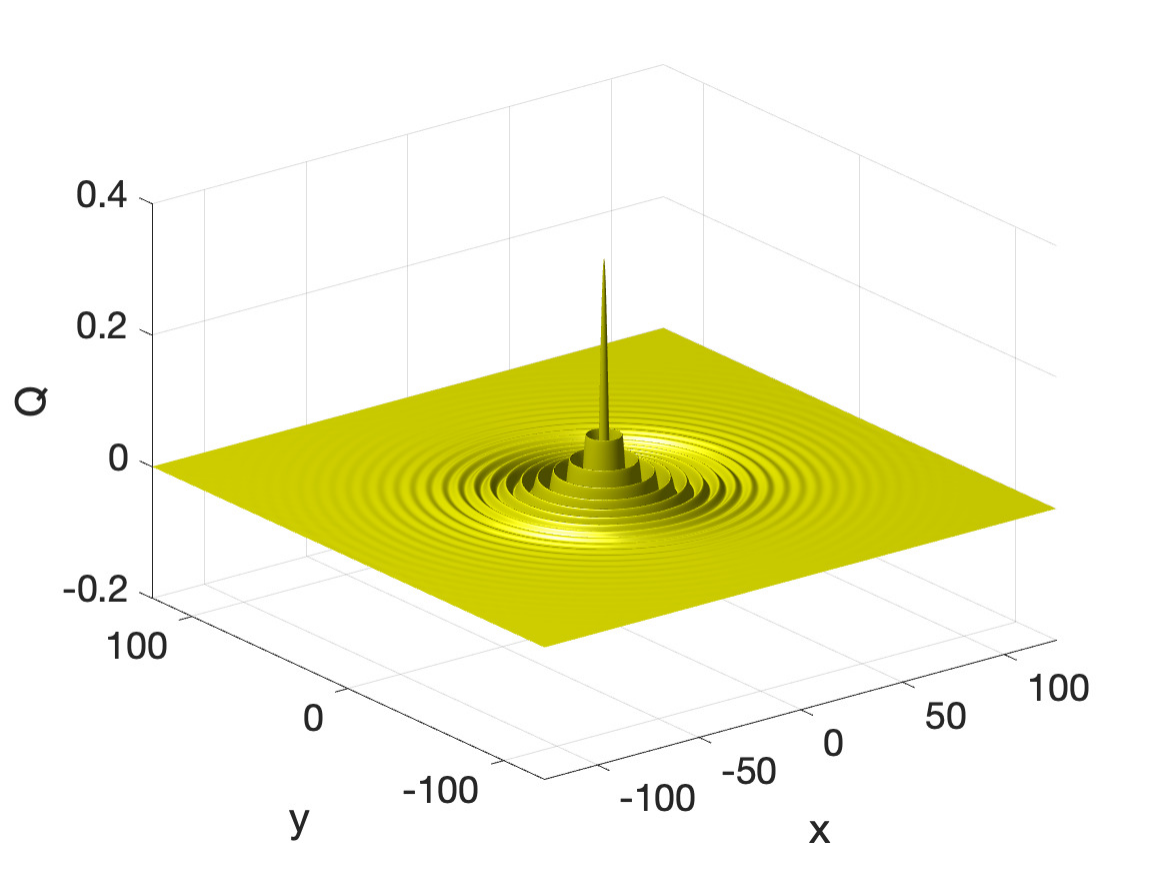}
\subcaption[]{{\footnotesize $b=1.01$}}
\end{subfigure}\\
\begin{subfigure}{.32\textwidth}
\includegraphics[width=1\linewidth,height=0.65\linewidth]{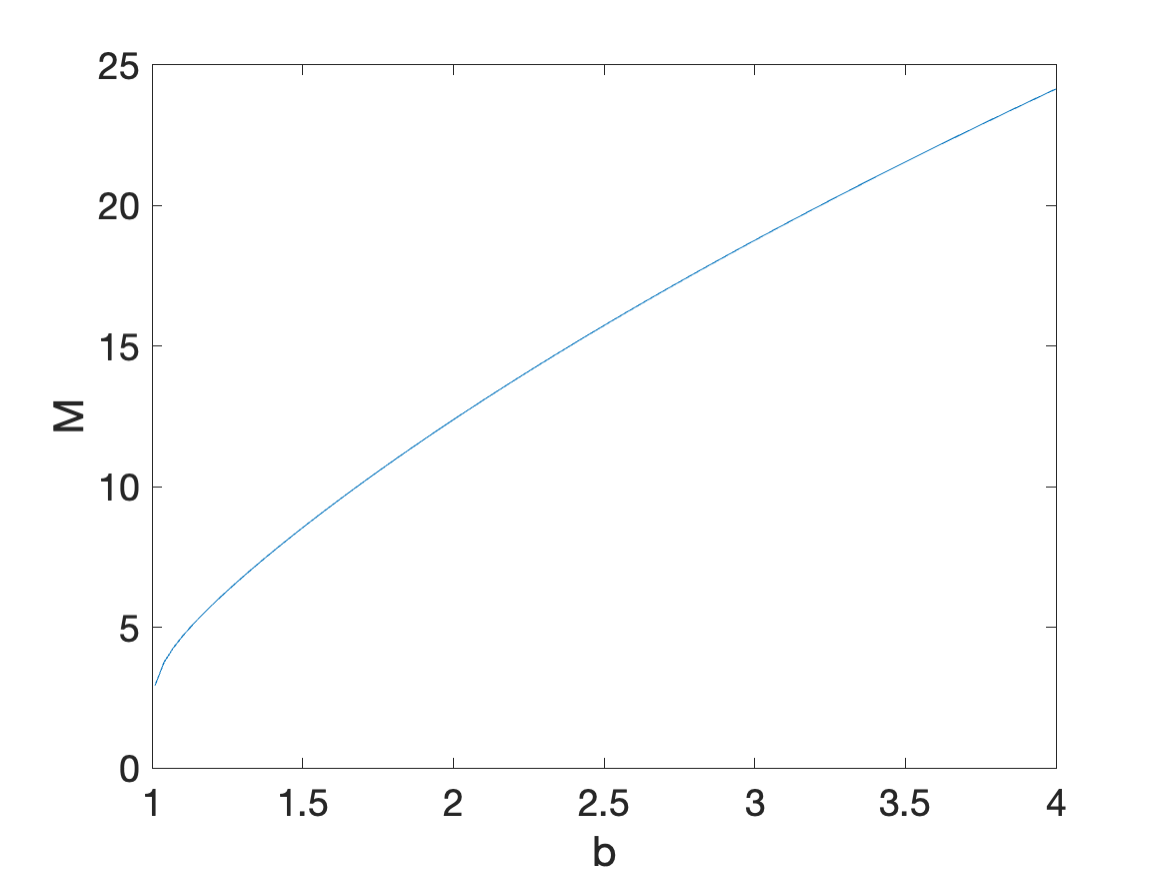}
\subcaption[]{{\footnotesize $M(Q) = M(b)$}}
\end{subfigure}
\begin{subfigure}{.32\textwidth}
\includegraphics[width=1\linewidth,height=0.65\linewidth]{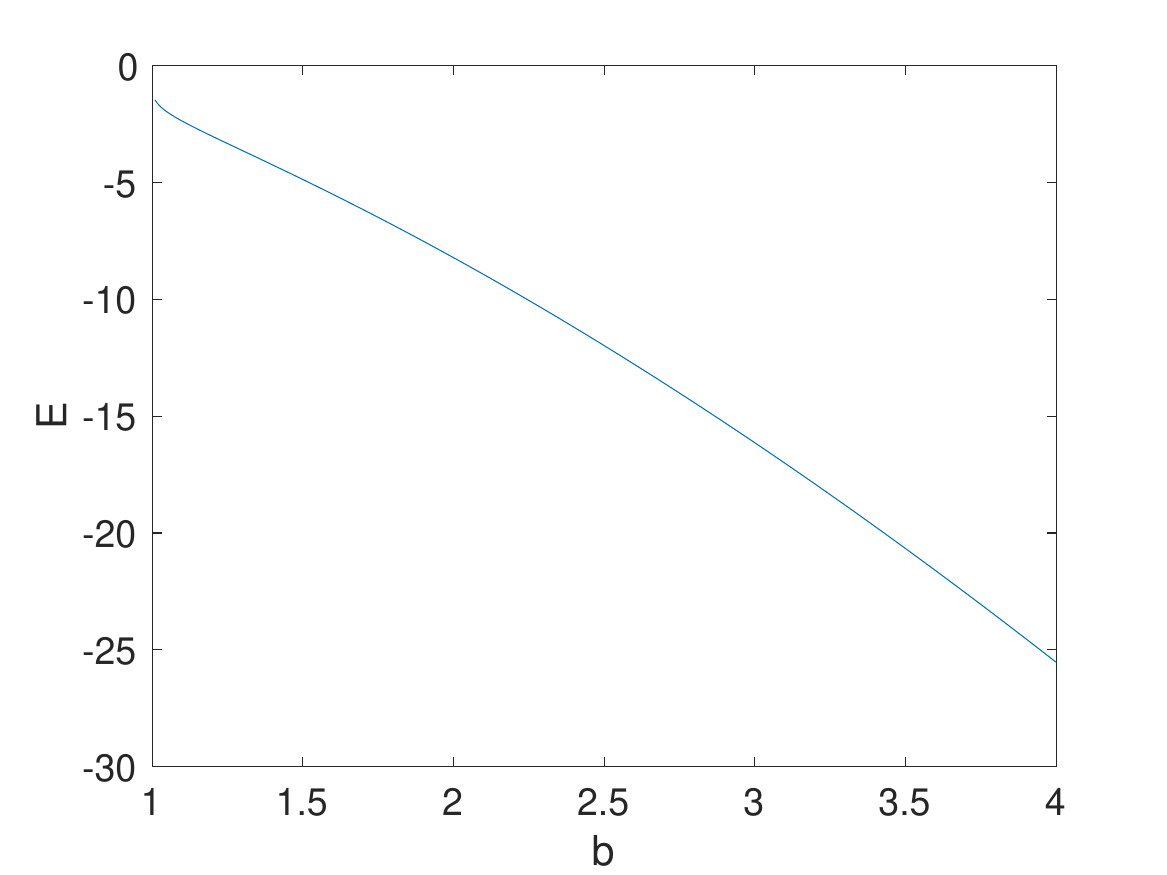}
\subcaption[]{{\footnotesize $E(Q)$ as function of $b$}}
\end{subfigure}
\begin{subfigure}{.32\textwidth}
\includegraphics[width=1\linewidth,height=0.65\linewidth]{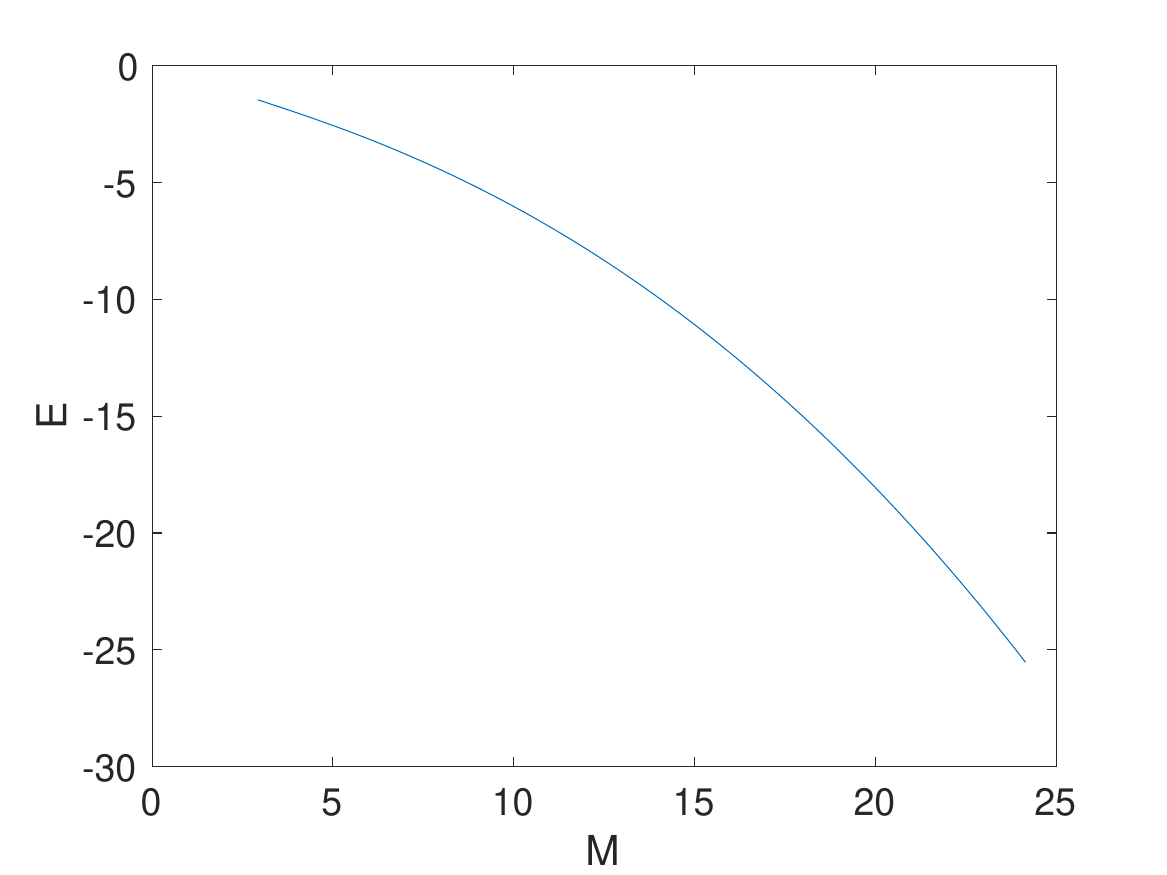}
\subcaption[]{{\footnotesize $E = E(M)$}}
\end{subfigure}
\caption{\footnotesize Radially symmetric ($m=0$) solutions to the cubic equation \eqref{E:2dGS}, $\alpha=2$, for different values of $b$ (top row). Dependence of mass $M(Q)$ and energy $E(Q)$ on $b$ (bottom left, middle), and energy as a function of mass $E=E(M)$ (bottom right).}
\label{figm0}
\end{figure}
\begin{figure}[!htb]
\begin{subfigure}{.32\textwidth}
\includegraphics[width=1\linewidth,height=0.75\linewidth]{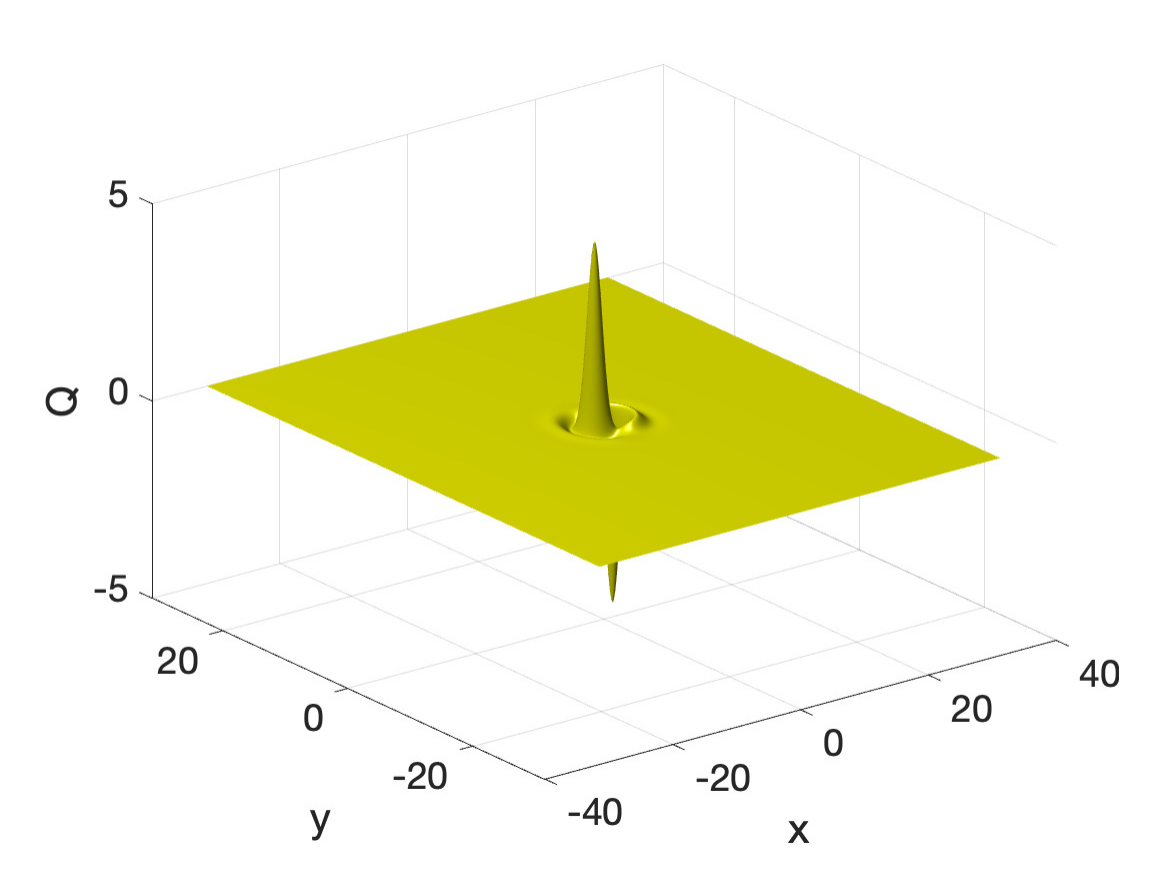}
\subcaption[]{{\footnotesize $b=10$}}
\end{subfigure}
\begin{subfigure}{.32\textwidth}
\includegraphics[width=1\linewidth,height=0.75\linewidth]{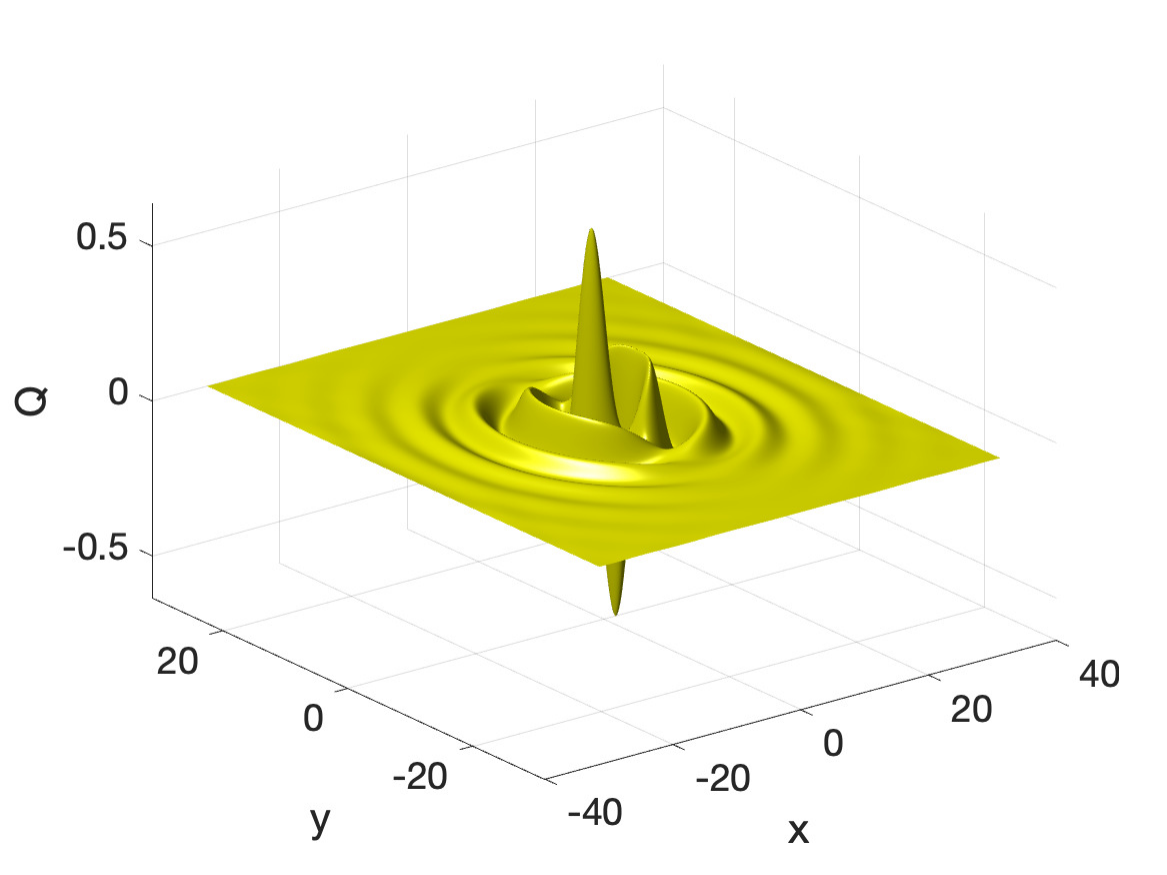}
\subcaption[]{{\footnotesize $b=1.1$}}
\end{subfigure}
\begin{subfigure}{.32\textwidth}
\includegraphics[width=1\linewidth,height=0.75\linewidth]{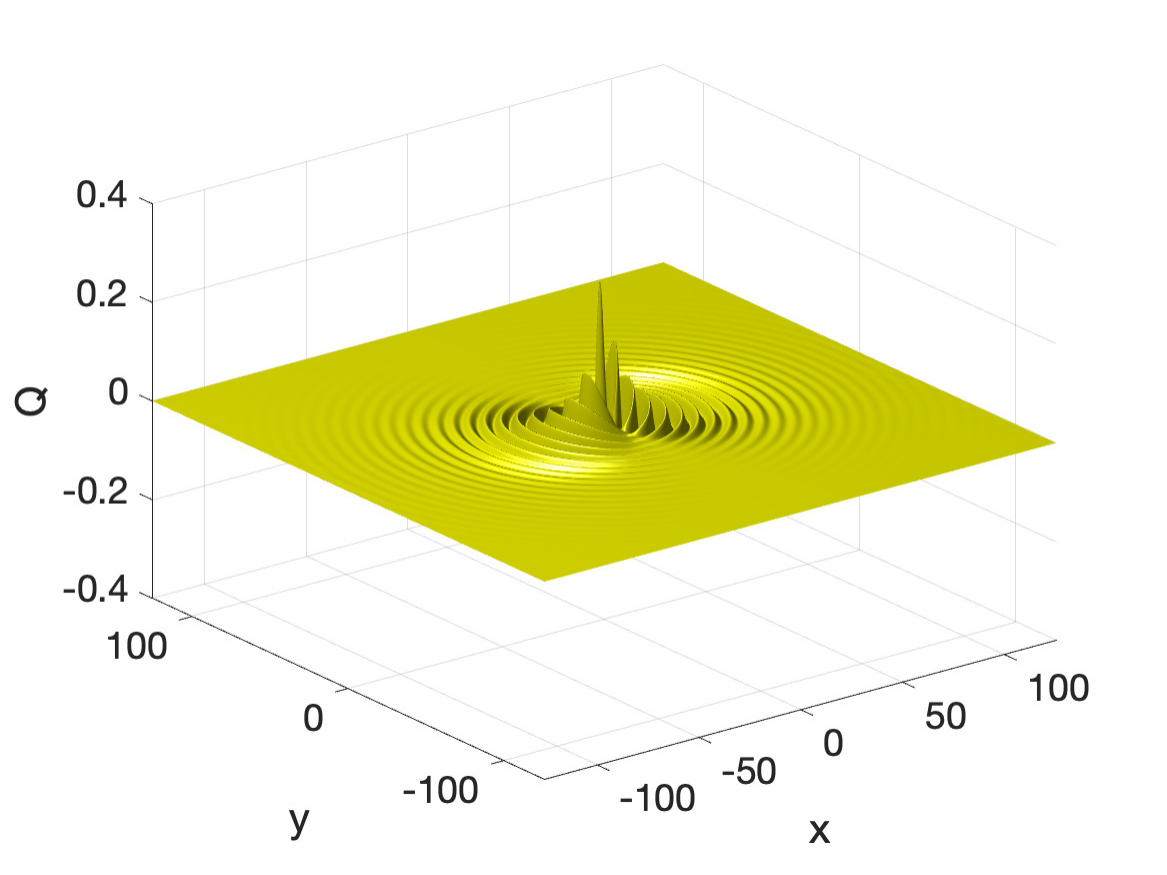}
\subcaption[]{{\footnotesize $b=1.01$}}
\end{subfigure}\\
\begin{subfigure}{.32\textwidth}
\includegraphics[width=1\linewidth,height=0.65\linewidth]{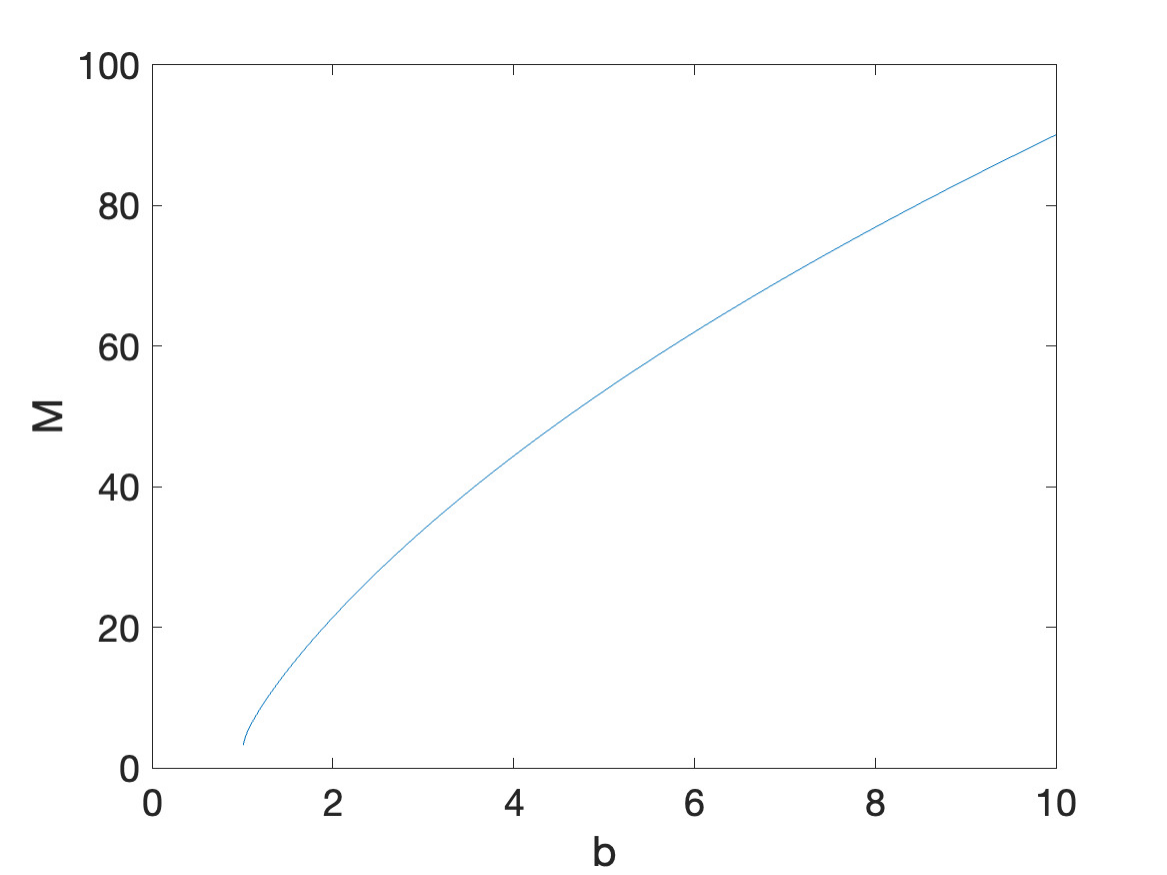}
\subcaption[]{{\footnotesize $M(Q) = M(b)$}}
\end{subfigure}
\begin{subfigure}{.32\textwidth}
\includegraphics[width=1\linewidth,height=0.65\linewidth]{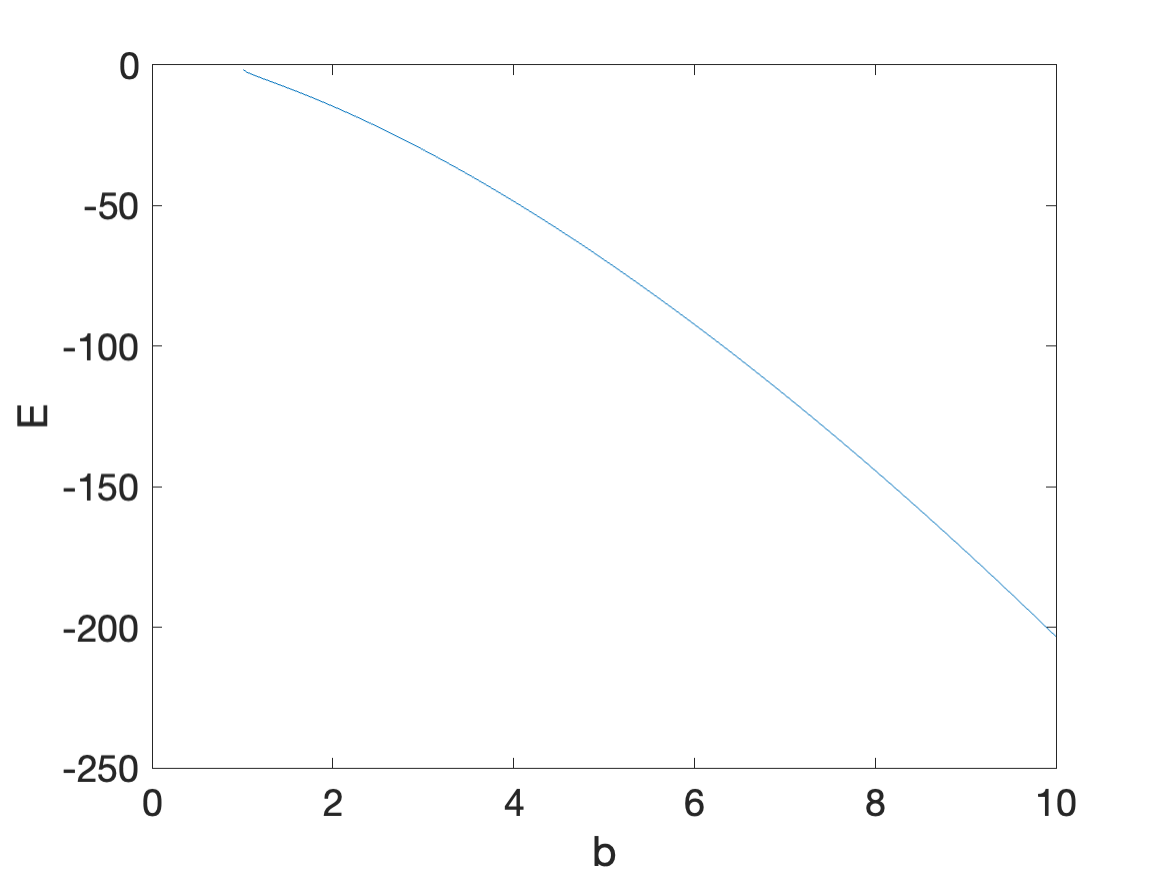}
\subcaption[]{{\footnotesize $E(Q)$ as function of $b$}}
\end{subfigure}
\begin{subfigure}{.32\textwidth}
\includegraphics[width=1\linewidth,height=0.65\linewidth]{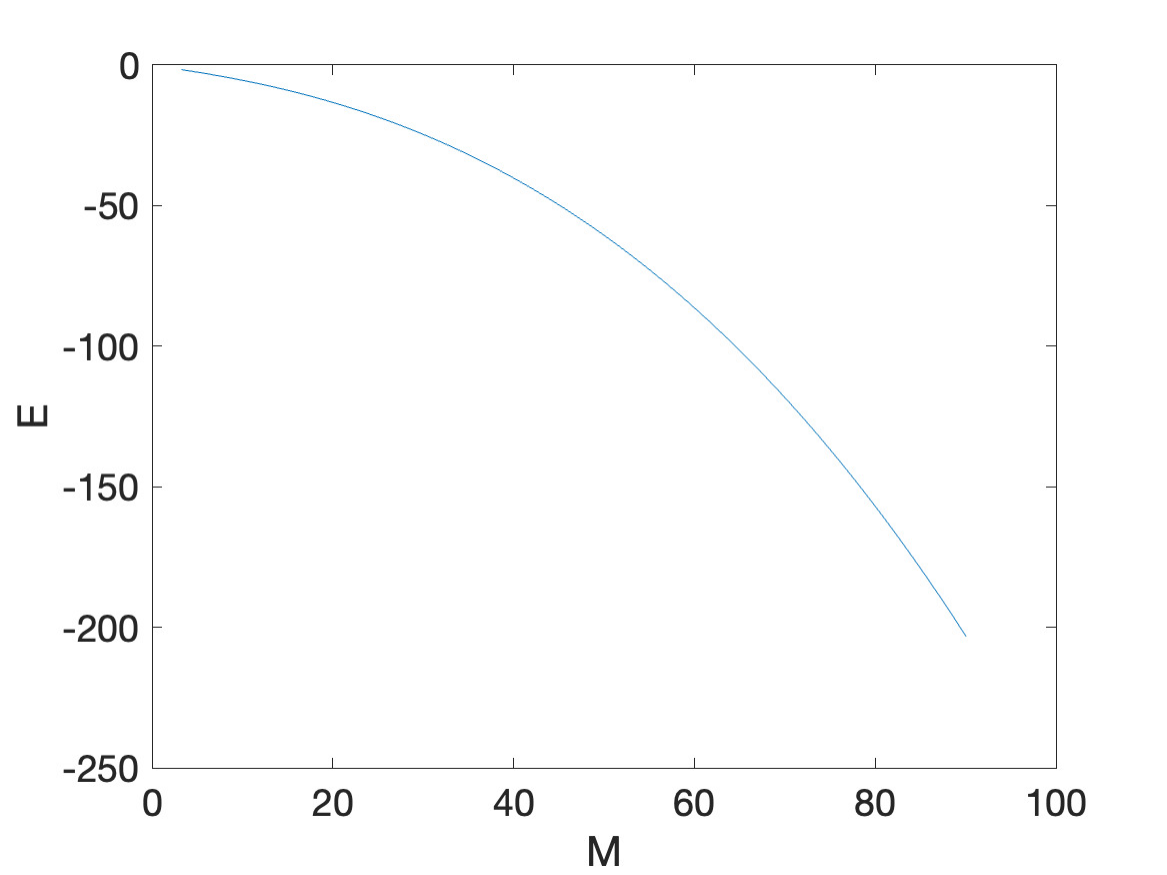}
\subcaption[]{{\footnotesize $E = E(M)$}}
\end{subfigure}
\caption{\footnotesize Solutions to the cubic equation \eqref{E:2dGS} with angular mode $m=1$ (odd symmetry): profiles for different $b$ (top row). Dependence of mass $M(Q)$ and energy $E(Q)$ on $b$ (bottom left, middle), and energy as a function of mass $E=E(M)$ (bottom right).}
\label{figm1}
\end{figure}

We show the mass and energy dependence on $b$ and the energy as the function of mass for these solutions in the bottom of Figure~\ref{figm0}. It can be seen that the mass is monotonically increasing with $b$, whereas the energy is decreasing. 
This implies that the energy in dependence of $M$ is a smooth 
monotone graph. We check these dependencies in each case of angular mode initialization to make sure that no turning points appear in any of these graphs, as this will be essential for other nonlinearities. 
Below, we also compare the rate of $M(Q_{\ep})$ in dependence with $\ep$, as derived in \S \ref{A:sub-rig-2d}.

\subsubsection{The case $m=1$} While this is not a ground state solution, it can be thought of the solution to \eqref{E:2dbiNLS} restricted to the odd symmetry subspace (similar to the 1D example we discussed at the end of \S \ref{S:1D-properties} and Figure~\ref{F:1D-odd}).
Thus, we take $m=1$, and use $\lambda=7$ in \eqref{init} for $b=10$. The 
solutions for $b=10$ and $b=1.1$ are shown in the top row of Figure~\ref{figm1}. 
To reach even smaller values of $b$, for instance, $b=1.01$, we proceed as in the radially symmetric case, refining and tracing the interval for smaller $b$ values.  
Similar to the case with radial symmetry, the solution becomes much more oscillatory as 
$b$ \scalebox{0.75}{ $\searrow$} $1$.

In the bottom row of Figure~\ref{figm1}, we show the mass and energy dependence on $b$ and between each other. It can be seen that the mass is again monotonically increasing with $b$ and that the energy is decreasing either depending on $b$ or on the mass $M(Q)$.

\subsubsection{The case $m=2$}
For $m=2$, we use $\lambda=9$ in \eqref{init} for $b=10$. The 
solution for $b=10$, $b=1.1$ and $b=1.01$ can be seen in the top row of Figure~\ref{figm2}. Once more, the solution becomes very oscillatory 
for $b\to1$, but it preserves the symmetry enforced by the initial 
guess. There are always two global maxima and two global minima in 
this case solution. 
\begin{figure}[!htb]
\begin{subfigure}{.32\textwidth}
\includegraphics[width=1\linewidth,height=0.75\linewidth]{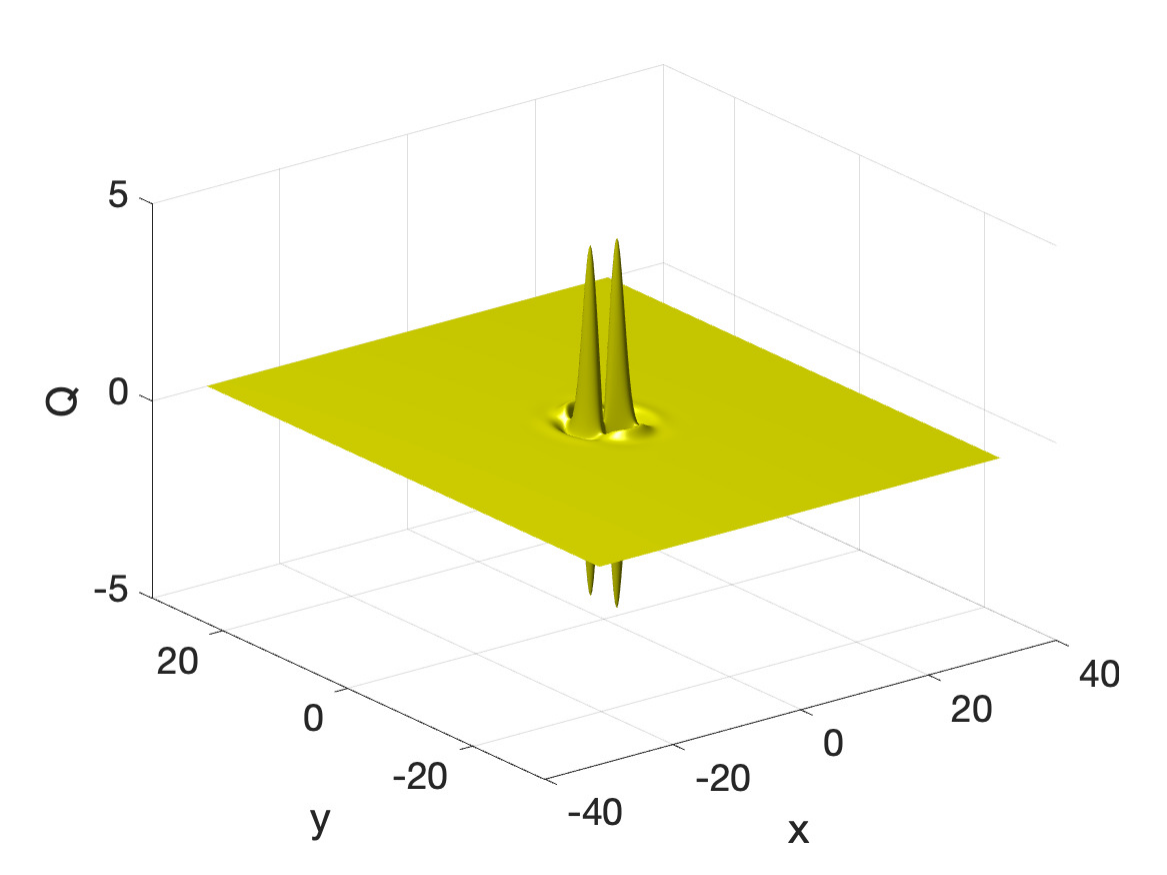}
\subcaption[]{{\footnotesize $b=10$.}}
\end{subfigure}
\begin{subfigure}{.32\textwidth}
\includegraphics[width=1\linewidth,height=0.75\linewidth]{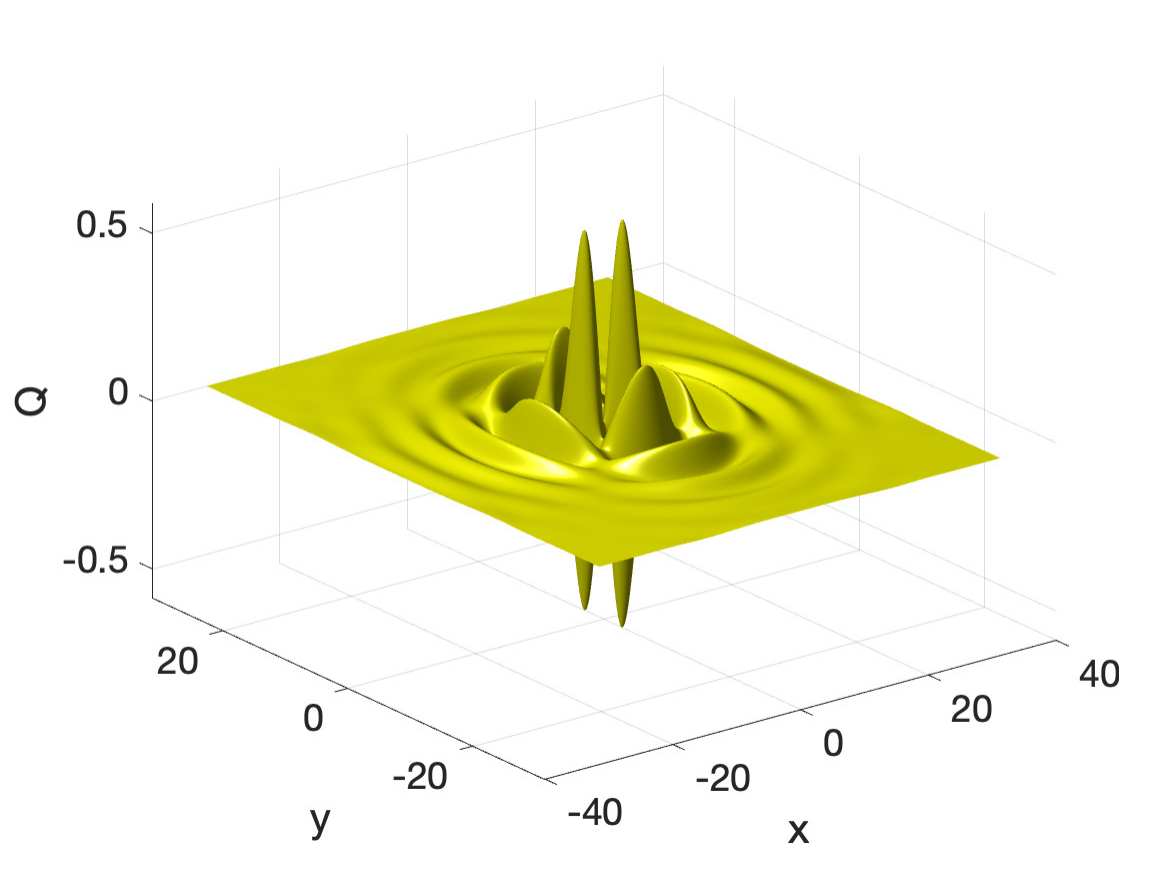}
\subcaption[]{{\footnotesize $b=1.1$.}}
\end{subfigure}
\begin{subfigure}{.32\textwidth}
\includegraphics[width=1\linewidth,height=0.75\linewidth]{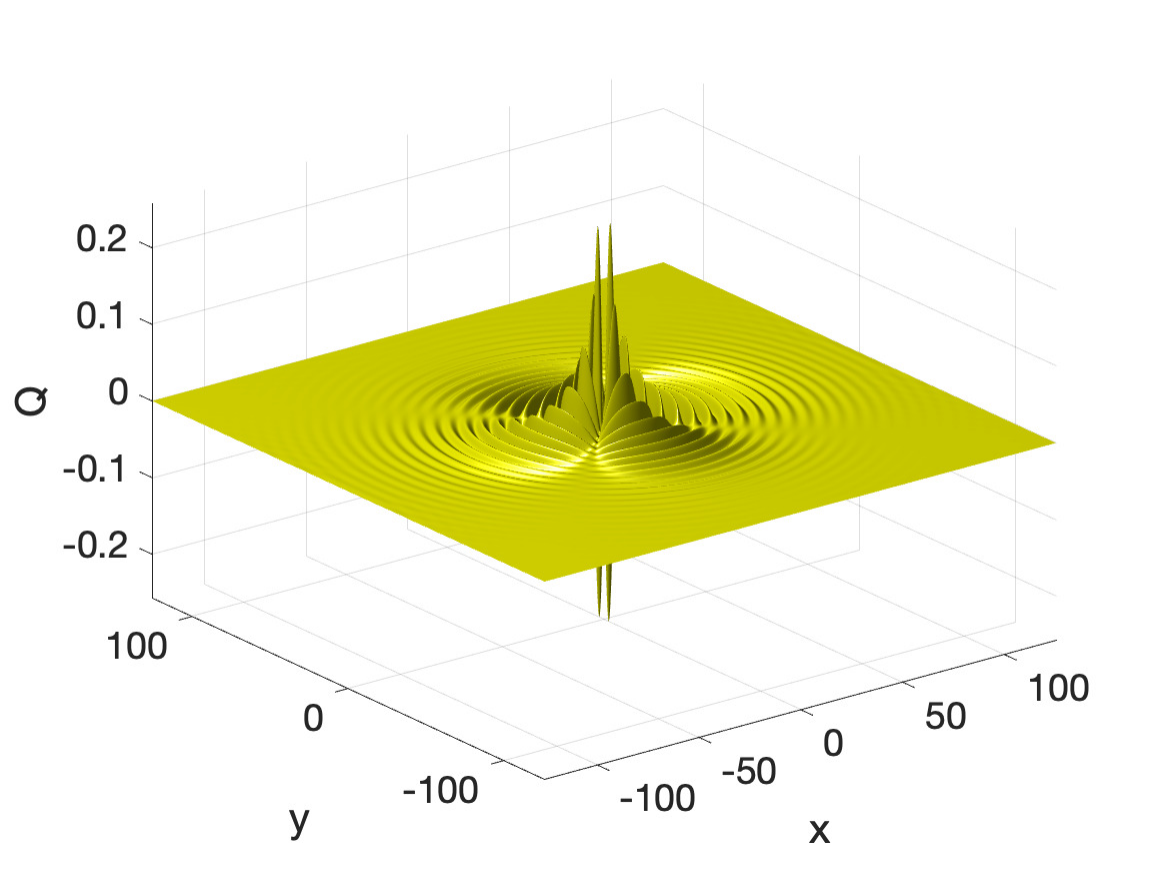}
\subcaption[]{{\footnotesize $b=1.01$.}}
\end{subfigure}\\
\begin{subfigure}{.32\textwidth}
\includegraphics[width=1\linewidth,height=0.65\linewidth]{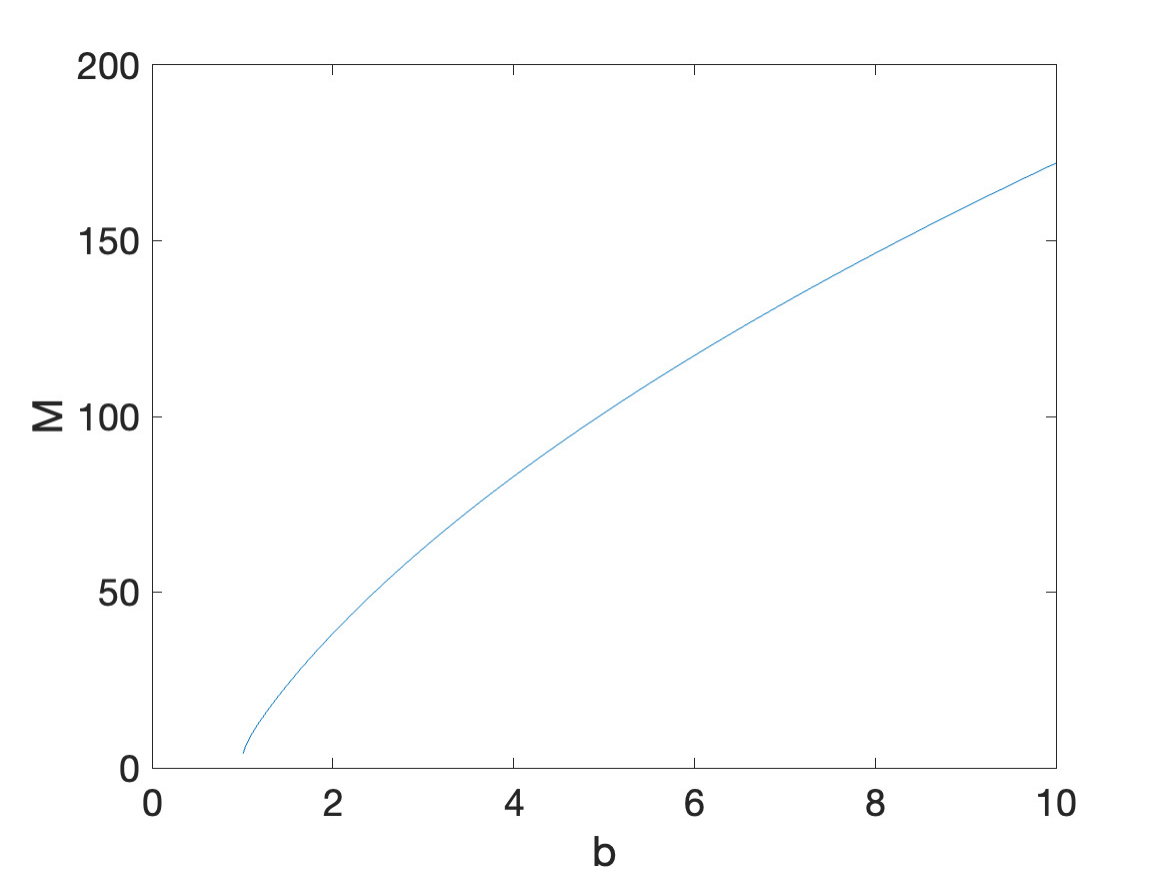}
\subcaption[]{{\footnotesize $M(Q) = M(b)$.}}
\end{subfigure}
\begin{subfigure}{.32\textwidth}
\includegraphics[width=1\linewidth,height=0.65\linewidth]{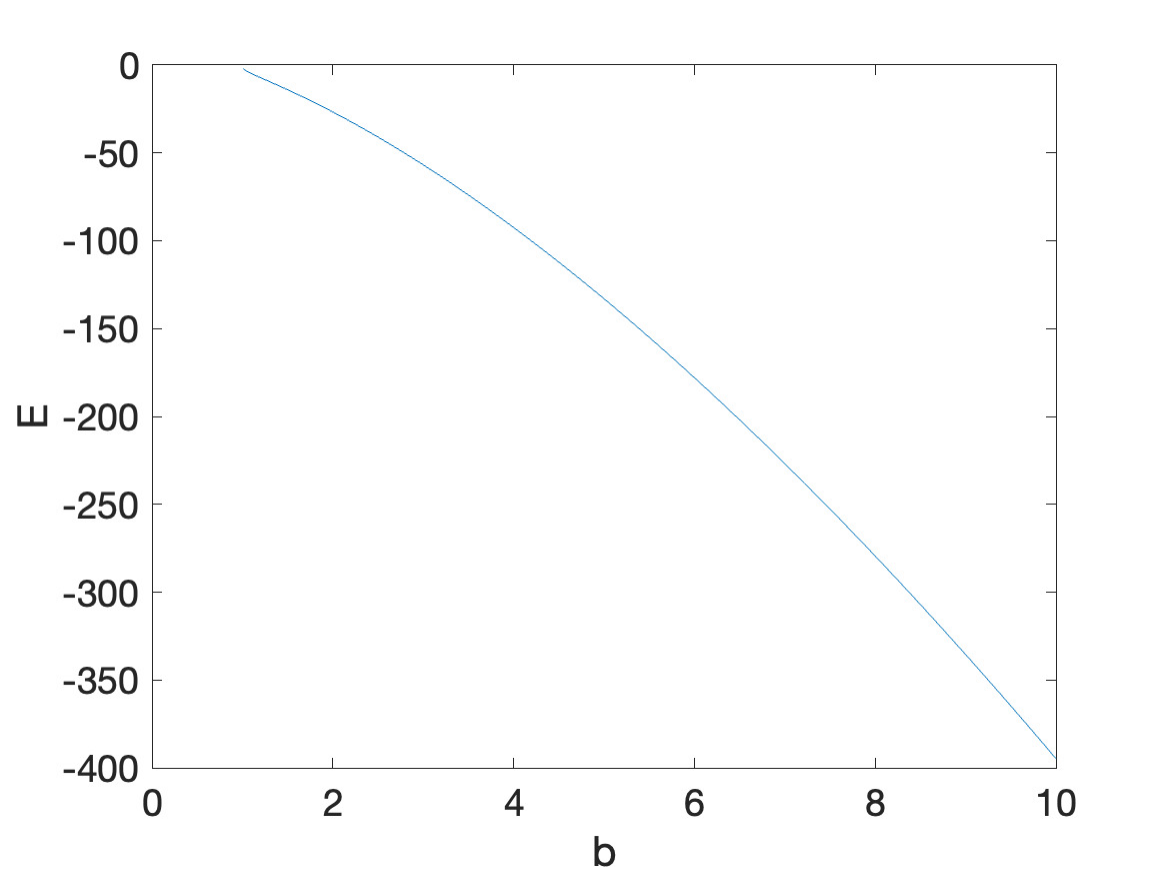}
\subcaption[]{{\footnotesize $E(Q)$ as function of $b$.}}
\end{subfigure}
\begin{subfigure}{.32\textwidth}
\includegraphics[width=1\linewidth,height=0.65\linewidth]{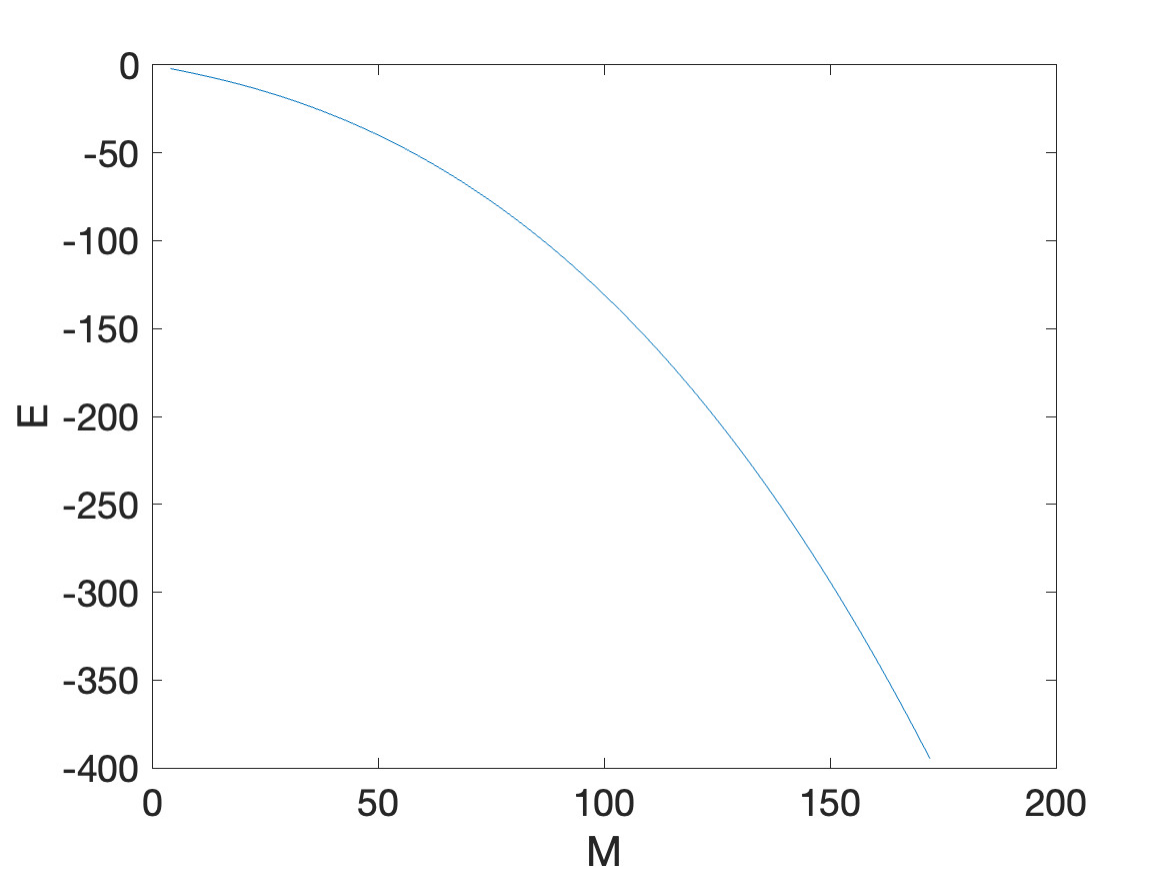}
\subcaption[]{{\footnotesize $E = E(M)$.}}
\end{subfigure}
\caption{\footnotesize Solutions to the cubic equation \eqref{E:2dGS} with angular mode $m=2$: profiles for different $b$ (top row). Dependence of mass $M(Q)$ and energy $E(Q)$ on $b$ (bottom left, middle), and energy as a function of mass $E=E(M)$ (bottom right).}
\label{figm2}
\end{figure}
In the bottom row of Figure~\ref{figm2}, we show the mass and energy dependence on $b$ and the energy as the function of mass; similarly, noting that mass is 
monotonically increasing with $b$ whereas the energy is decreasing.

\subsubsection{The case $m=3$}
For $m=3$, we use $\lambda=10.5$ in \eqref{init} for $b=10$. In this 
case we were not able to reach values of $b$ as small as in the 
previous cases. We start with values of  
$b \in [1.5,10]$. The solution for $b=10$ and $b=1.5$ can be seen in the top row of Figure~\ref{figm3}. The solutions now have a threefold degenerated global maximum and minimum. 

\begin{figure}[!htb]
\begin{subfigure}{.32\textwidth}
\includegraphics[width=1\linewidth,height=0.75\linewidth]{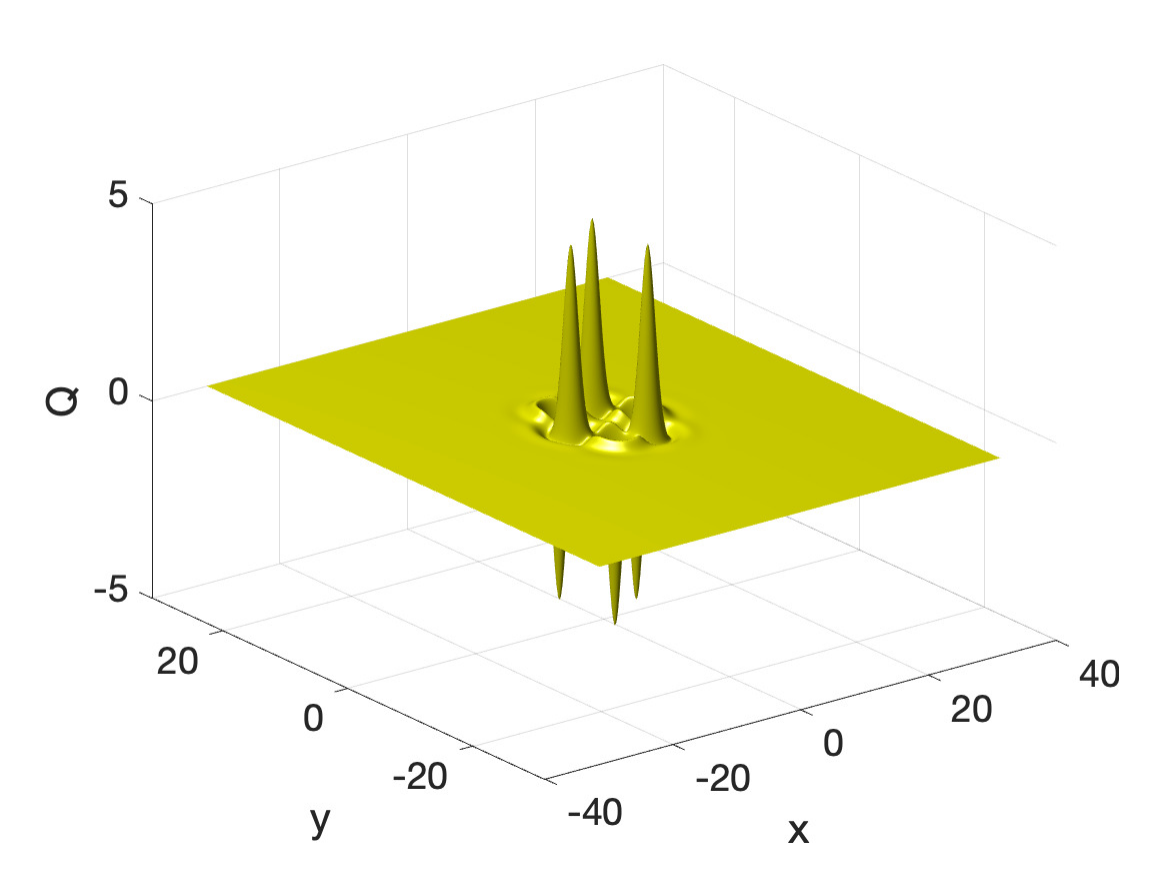}
\subcaption[]{{\footnotesize $b=10$.}}
\end{subfigure}
\begin{subfigure}{.32\textwidth}
\includegraphics[width=1\linewidth,height=0.75\linewidth]{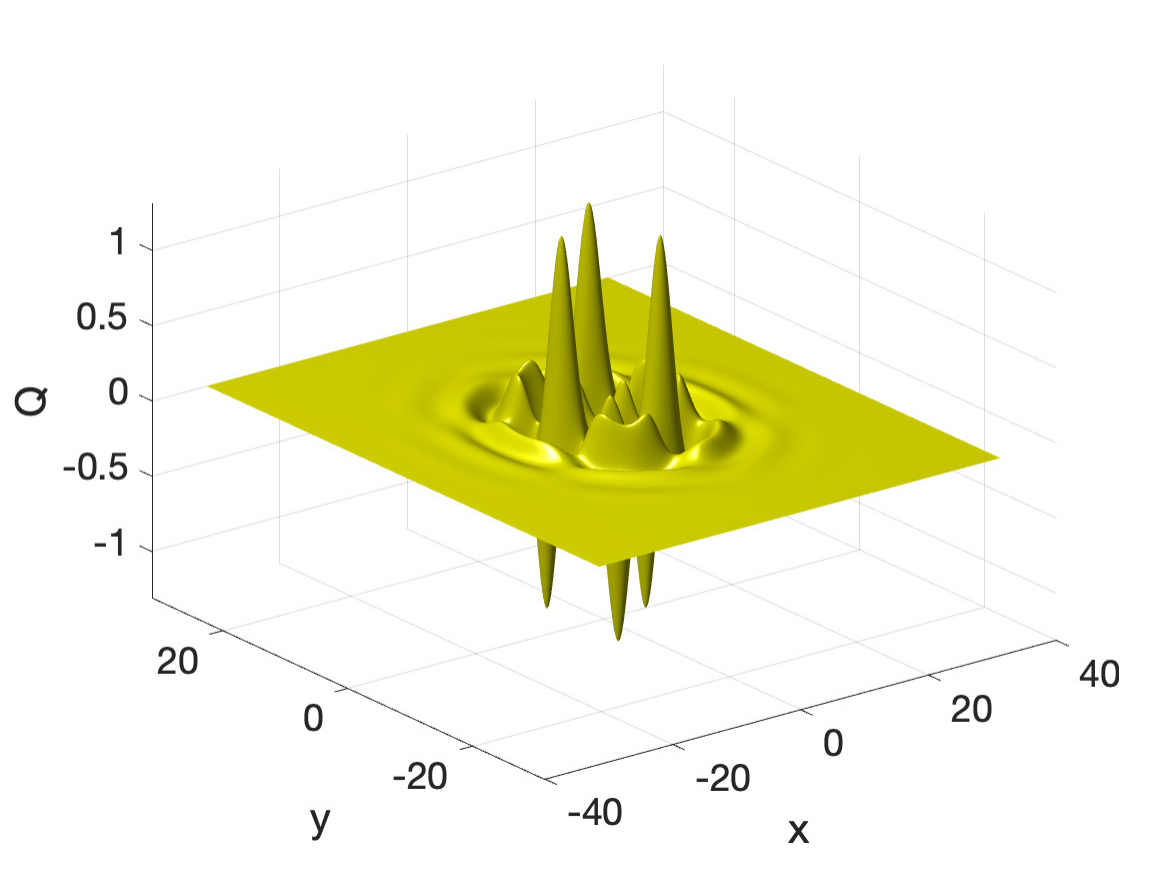}
\subcaption[]{{\footnotesize $b=1.5$.}}
\end{subfigure}
\begin{subfigure}{.32\textwidth}
\includegraphics[width=1\linewidth,height=0.75\linewidth]{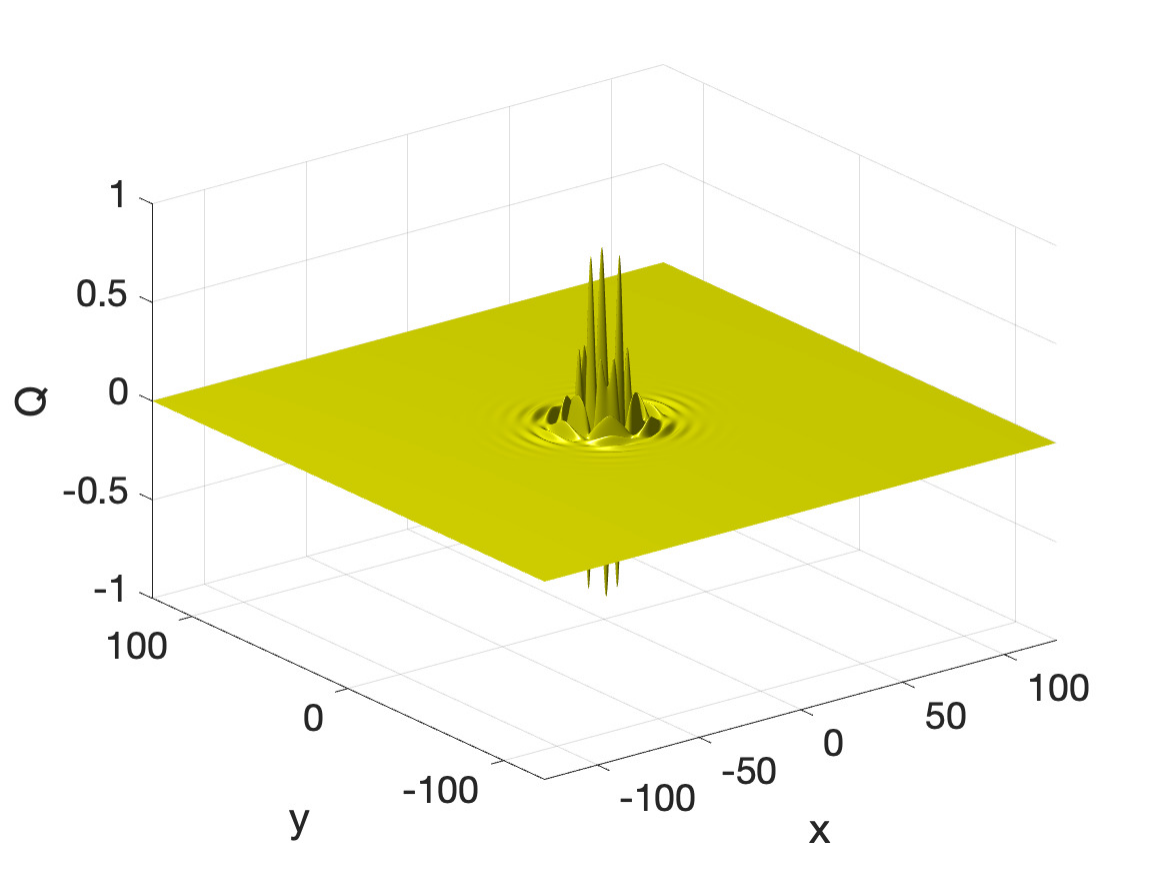}
\subcaption[]{{\footnotesize $b=1.12$.}}
\end{subfigure}\\
\begin{subfigure}{.32\textwidth}
\includegraphics[width=1\linewidth,height=0.65\linewidth]{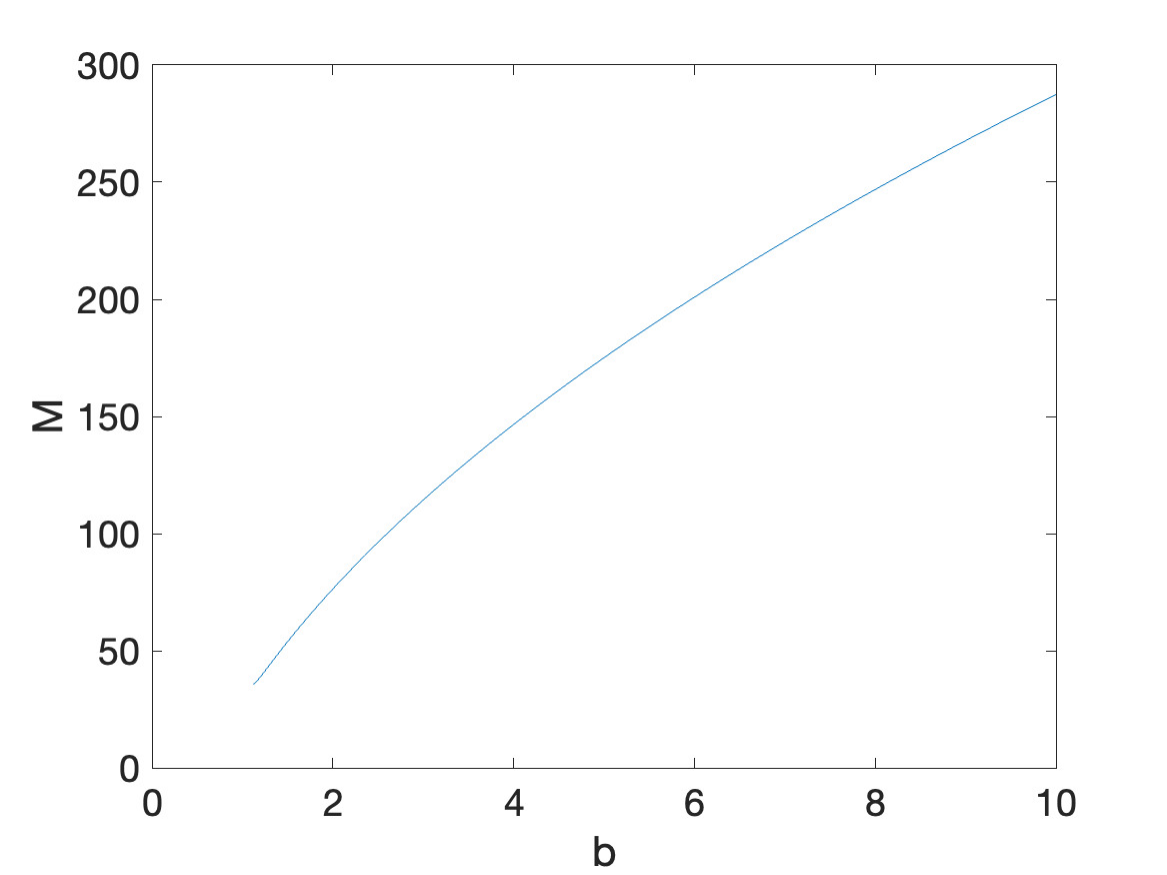}
\subcaption[]{{\footnotesize $M(Q) = M(b)$.}}
\end{subfigure}
\begin{subfigure}{.32\textwidth}
\includegraphics[width=1\linewidth,height=0.65\linewidth]{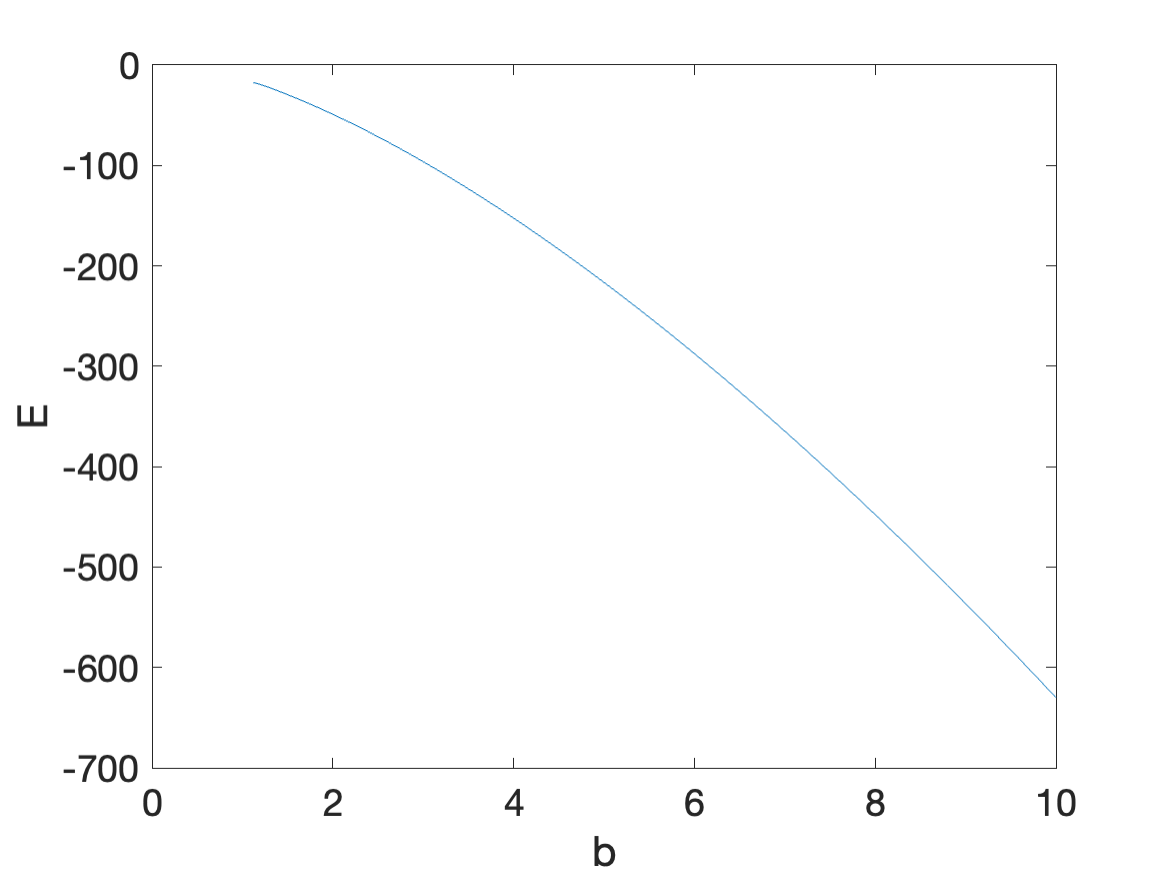}
\subcaption[]{{\footnotesize $E(Q)$ as function of $b$.}}
\end{subfigure}
\begin{subfigure}{.32\textwidth}
\includegraphics[width=1\linewidth,height=0.65\linewidth]{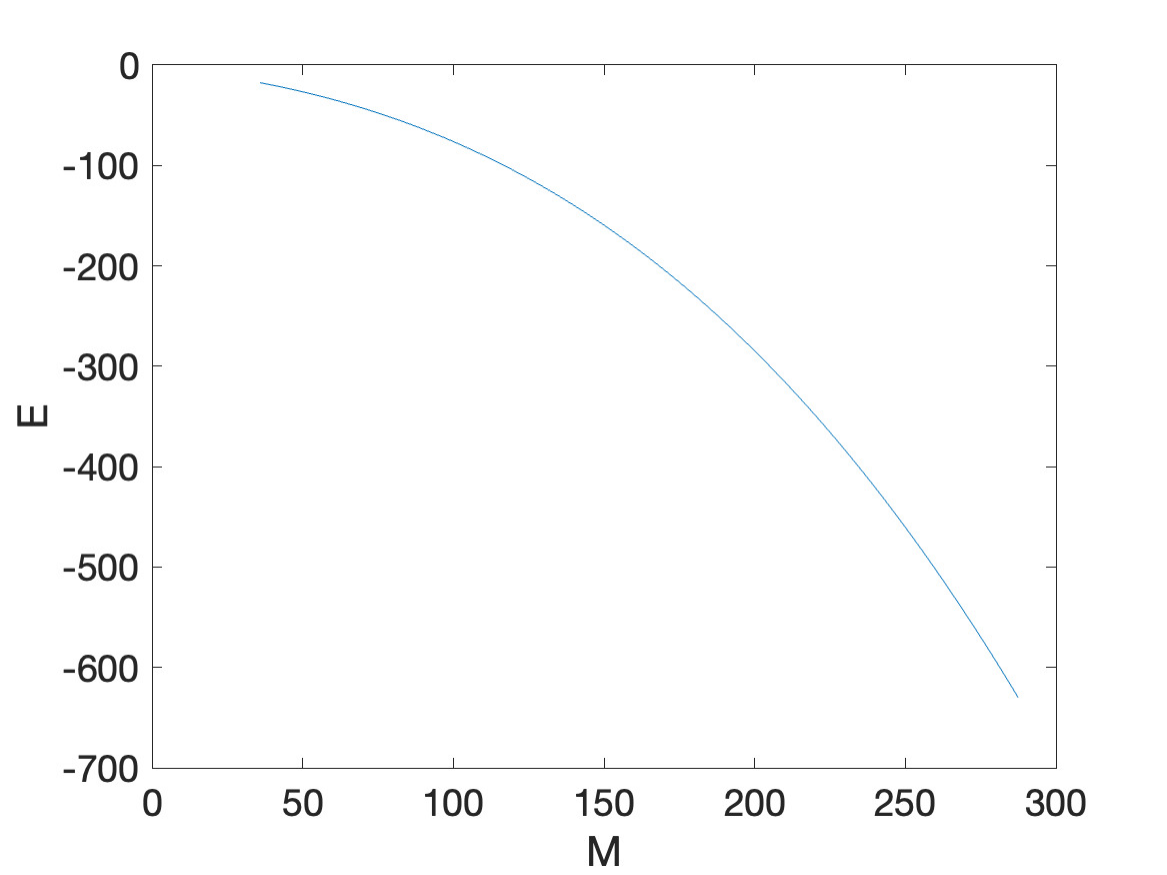}
\subcaption[]{{\footnotesize $E = E(M)$.}}
\end{subfigure}
\caption{\small Solutions to the cubic equation \eqref{E:2dGS} with angular mode $m=3$: profiles for different $b$ (top row). Dependence of mass $M(Q)$ and energy $E(Q)$ on $b$ (bottom left, middle), and energy as a function of mass $E=E(M)$ (bottom right).}
\label{figm3}
\end{figure}

For $b \to 1$, it becomes more and more difficult to obtain the 
solution with a given symmetry ($m=3$), since the solutions come very close to 
other branches, and the iteration converges to the corresponding 
solutions instead of the wanted $m=3$ branch. We consider the 
intervals $b \in [1.4,1.5]$ and $b \in [1.12,1.4]$. It was not possible 
to reach lower values for $b$ on this branch with our approach. 
The solution for $b=1.12$ can be seen in the top row of Figure~\ref{figm3}.

Similarly to the previous cases, the mass is increasing monotonically and the energy is decreasing.

\subsubsection{The case $m=4$}
For larger values of $m$, the numerical solutions becomes more and 
more problematic. The reason is that for the initial guess 
\eqref{init}, the iteration either converges to zero (which is an 
obvious solution) or does not converge at all. Thus, we have to use a 
relaxation ($Q_{new}=\mu Q_{new}+(1-\mu)Q_{old}$ with $0<\mu<1$) 
to stabilize the iteration. For $m=4$, we use $\mu=0.998$ and 
$\lambda=8.85$ in \eqref{init} for $b=10$.  The 
solution for $b=10$, $b=1.1$, $b=1.01$ can be seen in the top row of Figure~\ref{figm4}. 

\begin{figure}[!htb]
\begin{subfigure}{.32\textwidth}
\includegraphics[width=1\linewidth,height=0.75\linewidth]{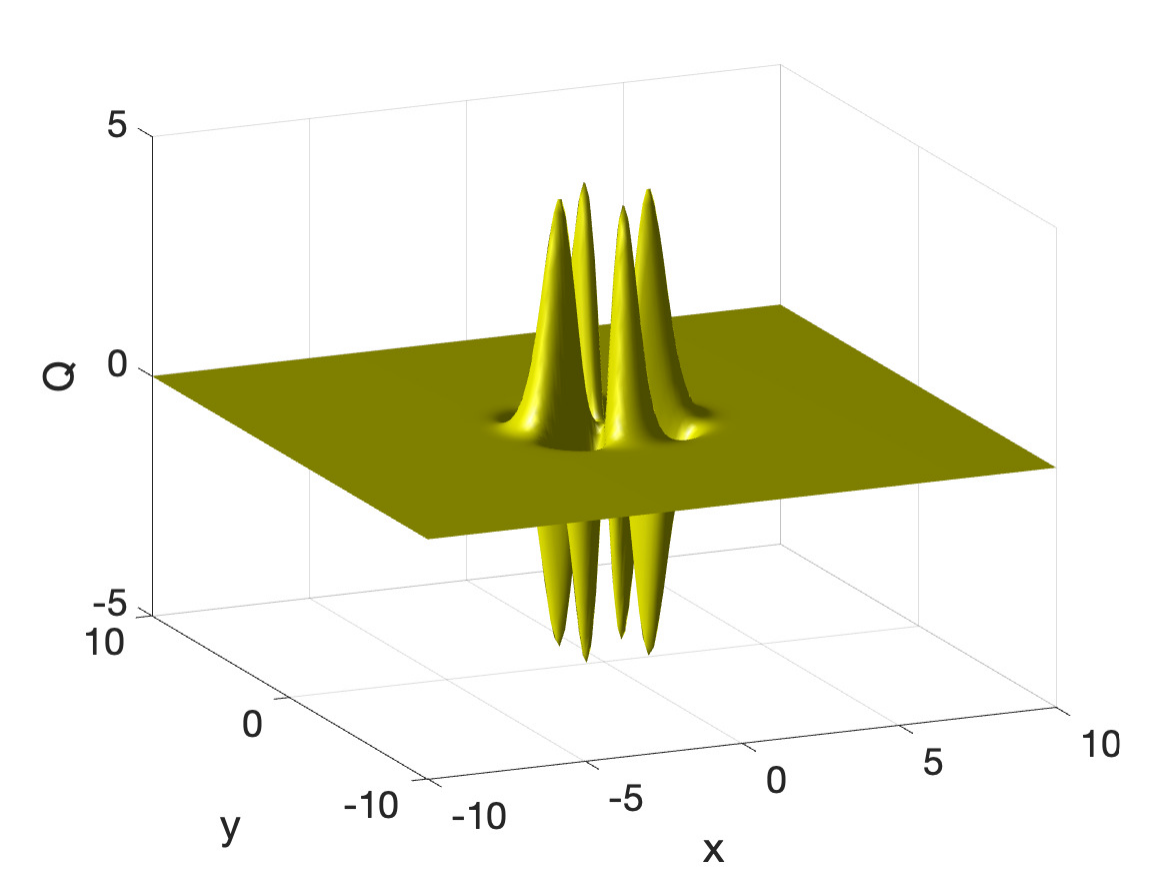}
\subcaption[]{{\footnotesize $b=10$.}}
\end{subfigure}
\begin{subfigure}{.32\textwidth}
\includegraphics[width=1\linewidth,height=0.75\linewidth]{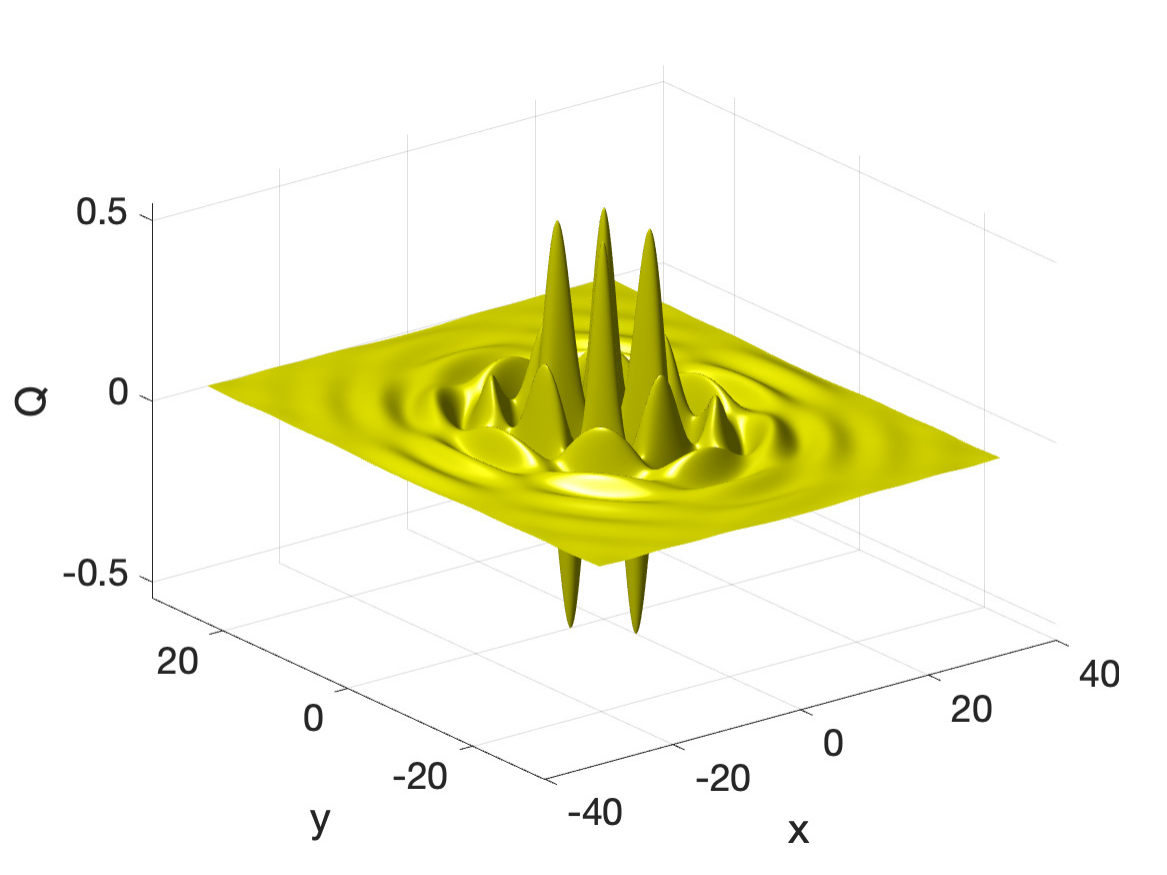}
\subcaption[]{{\footnotesize $b=1.1$.}}
\end{subfigure}
\begin{subfigure}{.32\textwidth}
\includegraphics[width=1\linewidth,height=0.75\linewidth]{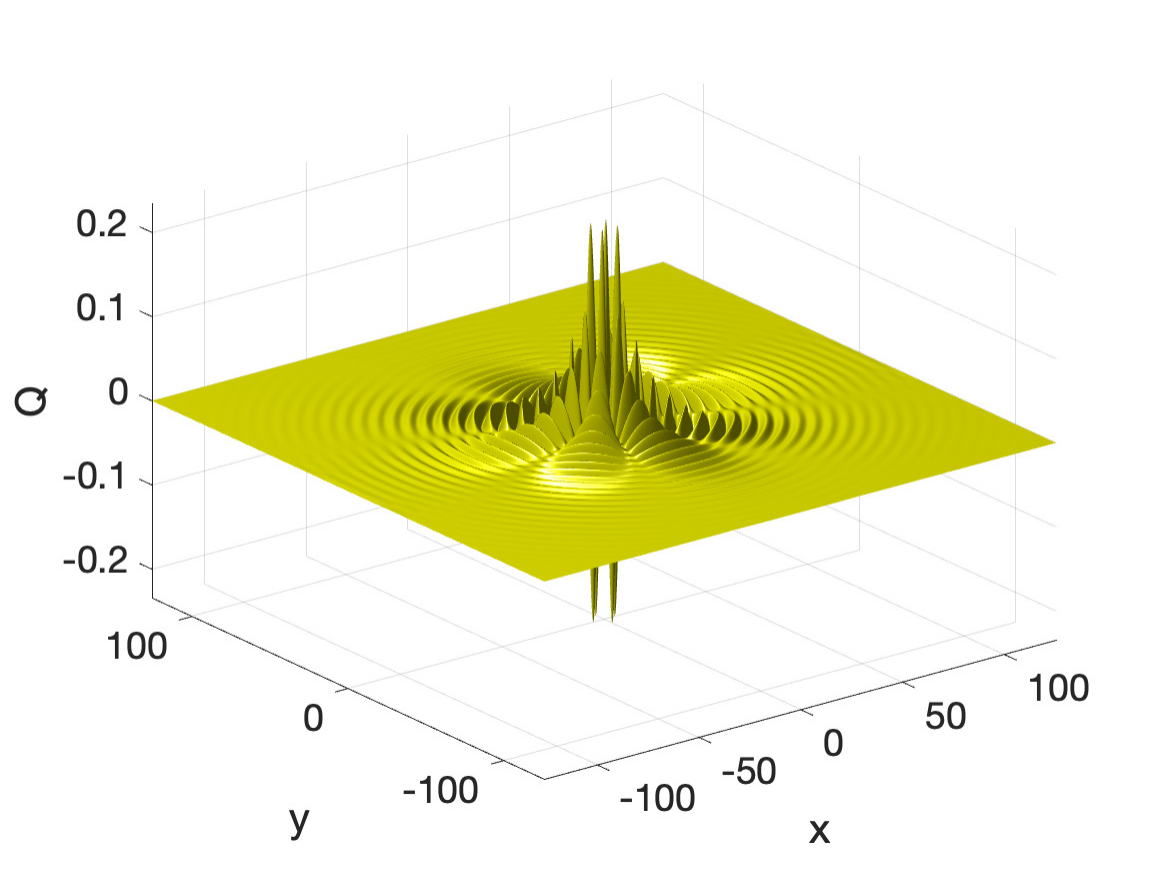}
\subcaption[]{{\footnotesize $b=1.01$.}}
\end{subfigure}\\
\begin{subfigure}{.32\textwidth}
\includegraphics[width=1\linewidth,height=0.65\linewidth]{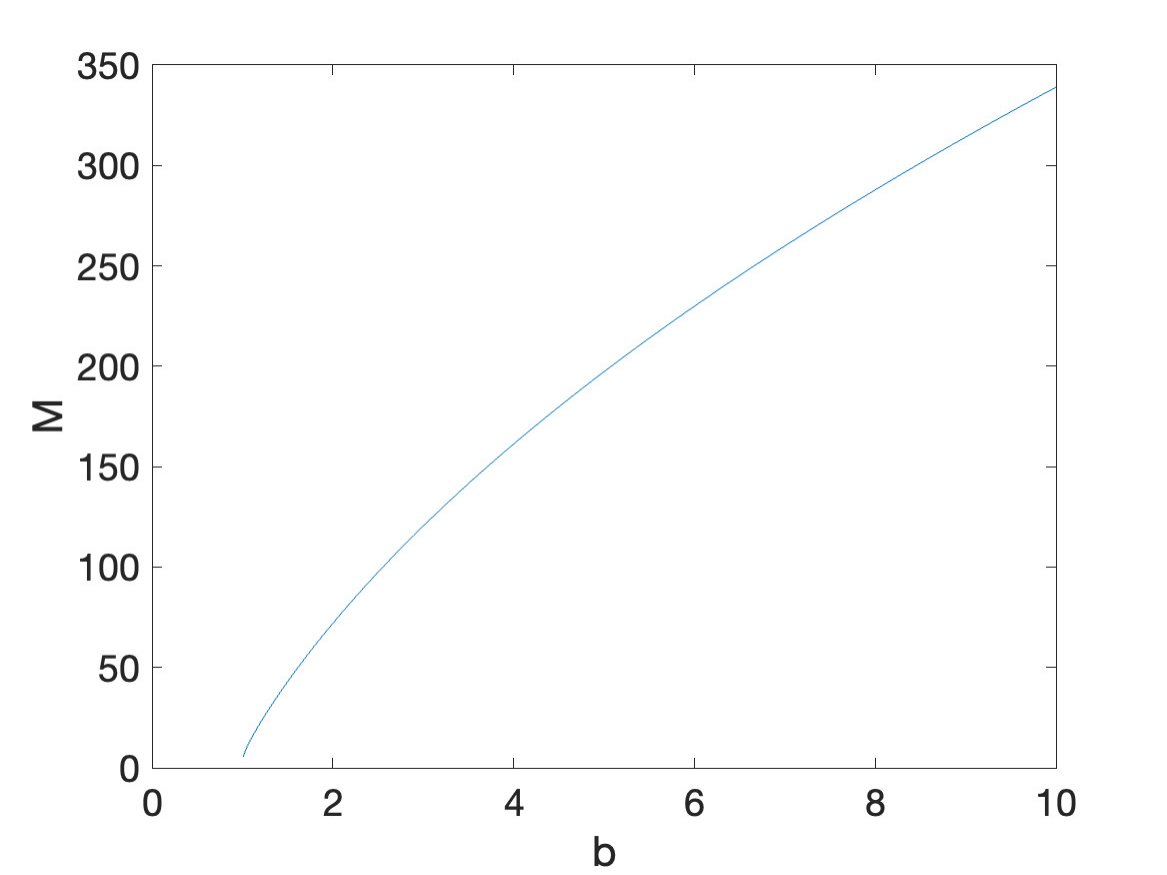}
\subcaption[]{{\footnotesize $M(Q) = M(b)$.}}
\end{subfigure}
\begin{subfigure}{.32\textwidth}
\includegraphics[width=1\linewidth,height=0.65\linewidth]{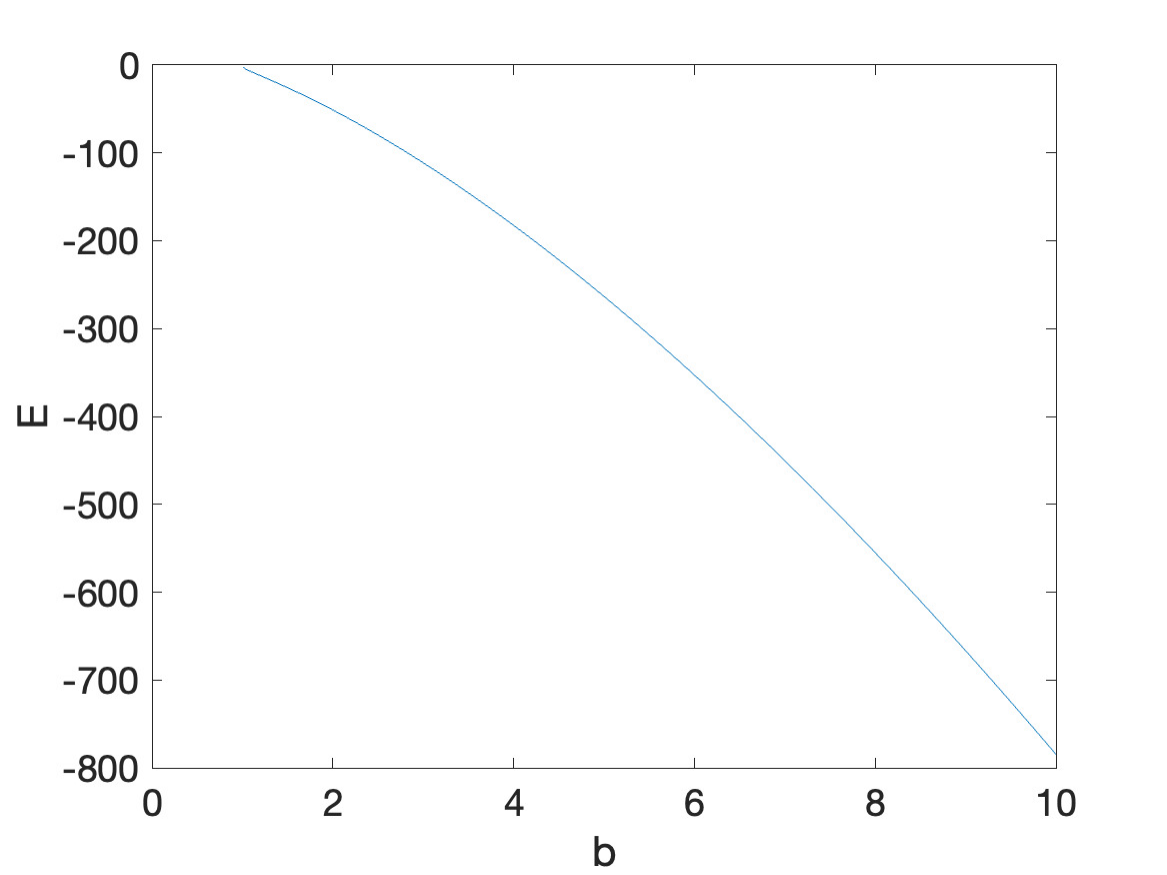}
\subcaption[]{{\footnotesize $E(Q)$ as function of $b$.}}
\end{subfigure}
\begin{subfigure}{.32\textwidth}
\includegraphics[width=1\linewidth,height=0.65\linewidth]{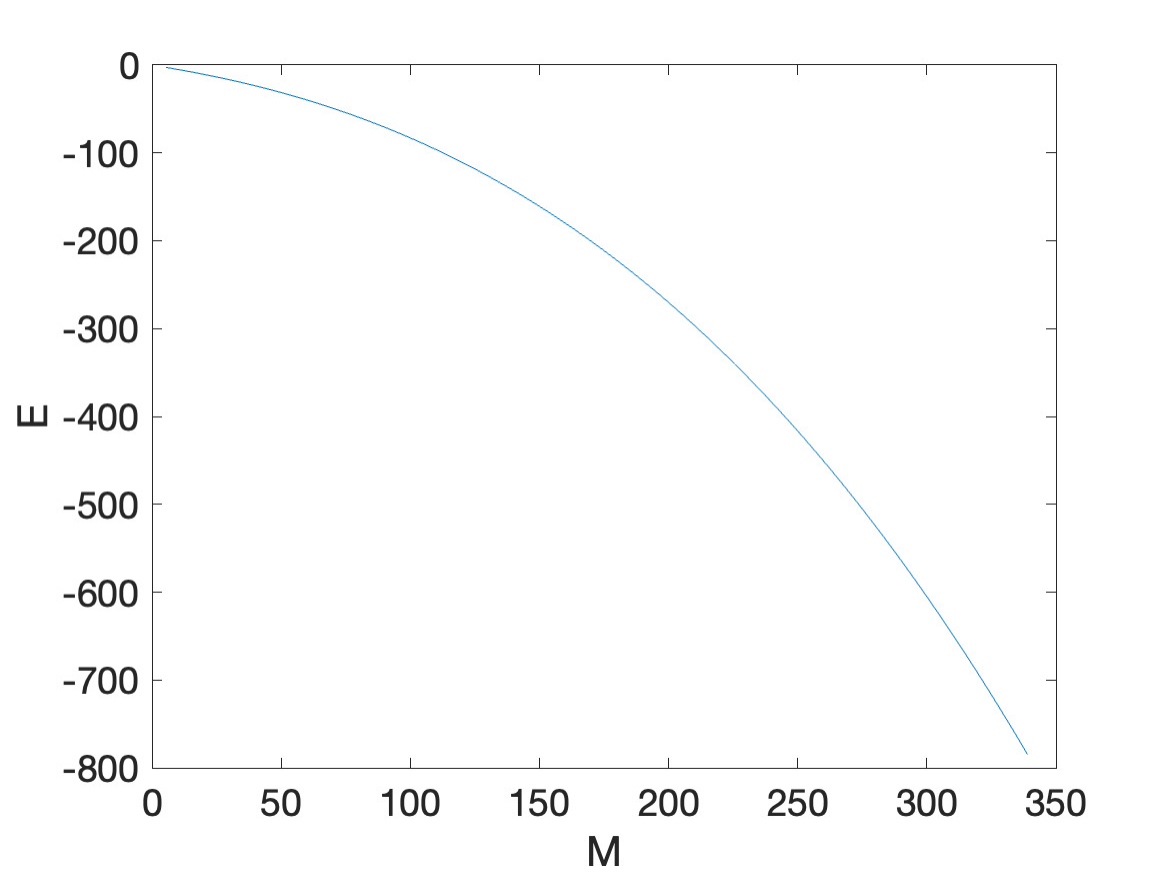}
\subcaption[]{{\footnotesize $E = E(M)$.}}
\end{subfigure}
\caption{\small Solutions to the cubic equation \eqref{E:2dGS} with angular mode $m=4$: profiles for different $b$ (top row). Dependence of mass $M(Q)$ and energy $E(Q)$ on $b$ (bottom left, middle), and energy as a function of mass $E=E(M)$ (bottom right).}
\label{figm4}
\end{figure}

In the bottom row of Figure~\ref{figm4}, we show the mass, the energy and their interdependence, observing once more the mass is monotonically increasing with $b$ whereas the energy is decreasing.

\begin{figure}[!htb]
\begin{subfigure}{.32\textwidth}
\includegraphics[width=1\linewidth,height=0.75\linewidth]{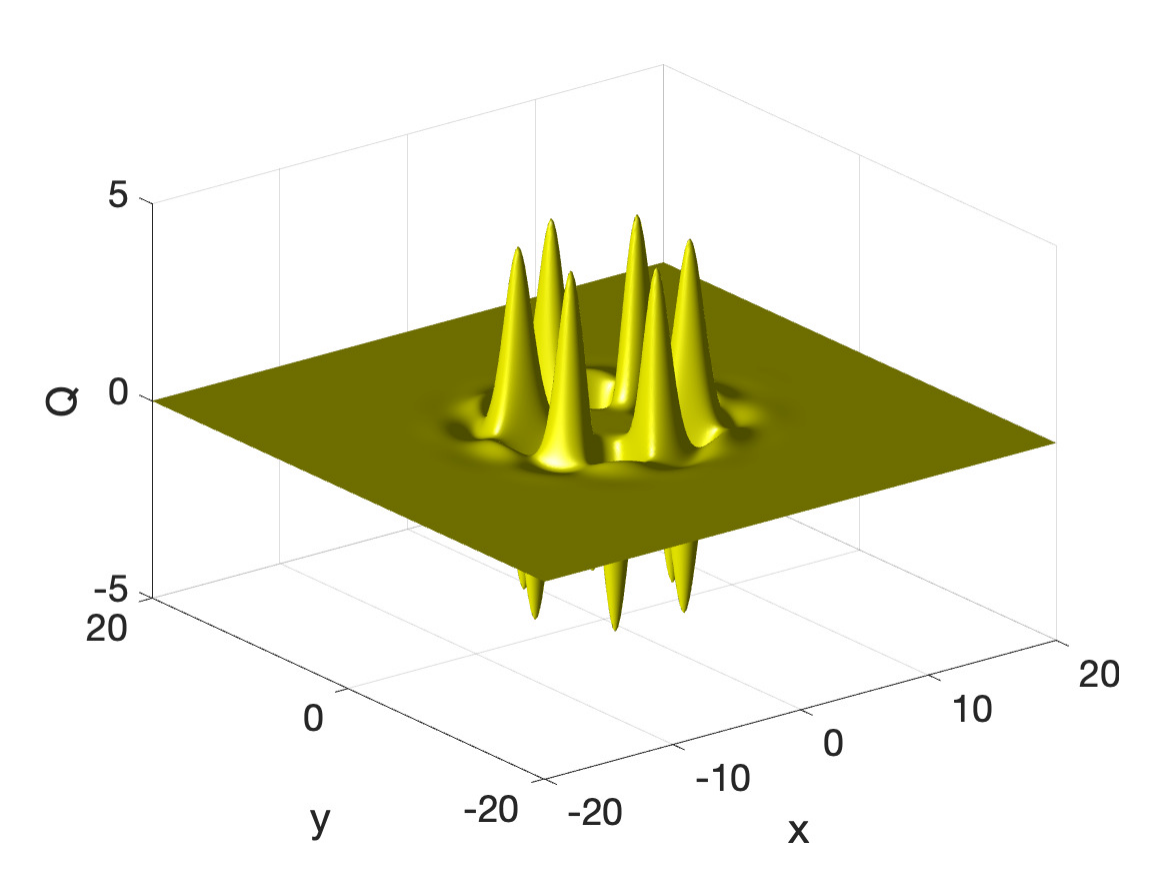}
\subcaption[]{{\footnotesize $b=10$.}}
\end{subfigure}
\begin{subfigure}{.32\textwidth}
\includegraphics[width=1\linewidth,height=0.75\linewidth]{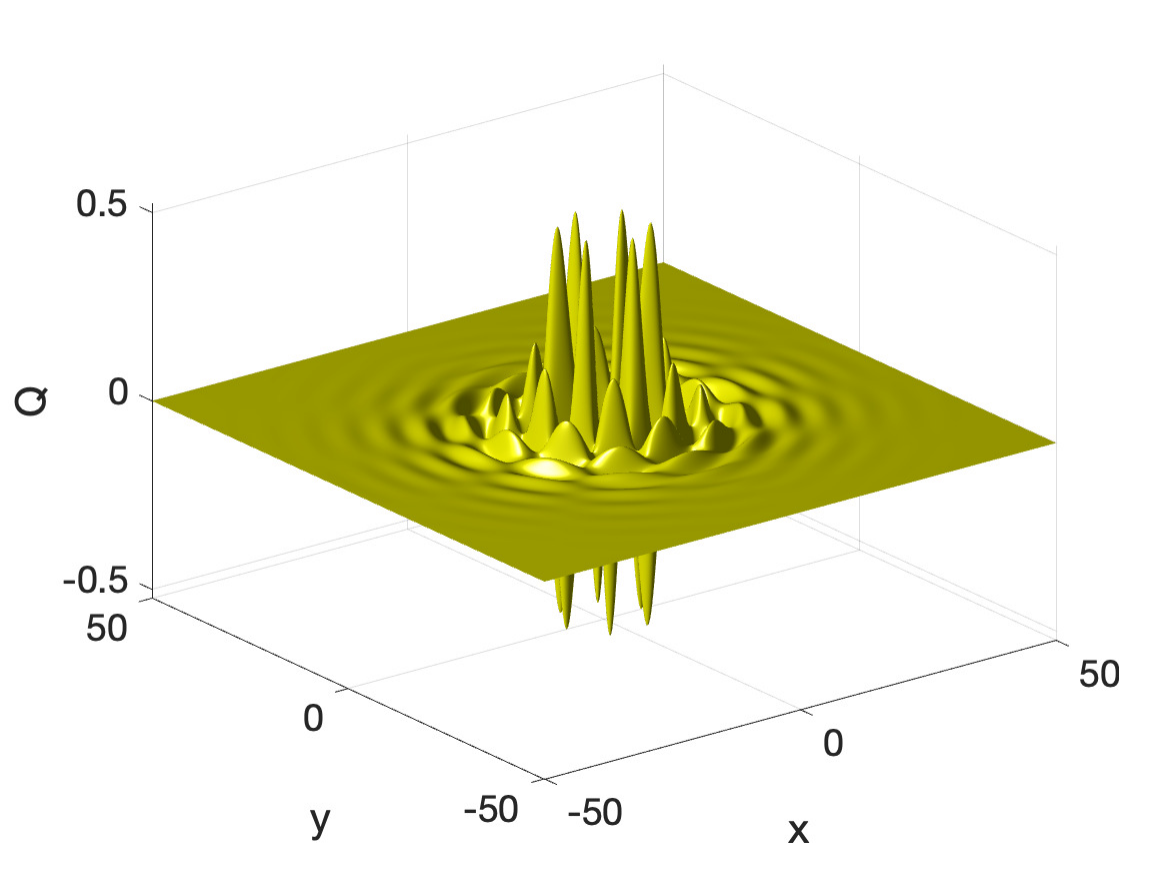}
\subcaption[]{{\footnotesize $b=1.1$.}}
\end{subfigure}
\begin{subfigure}{.32\textwidth}
\includegraphics[width=1\linewidth,height=0.75\linewidth]{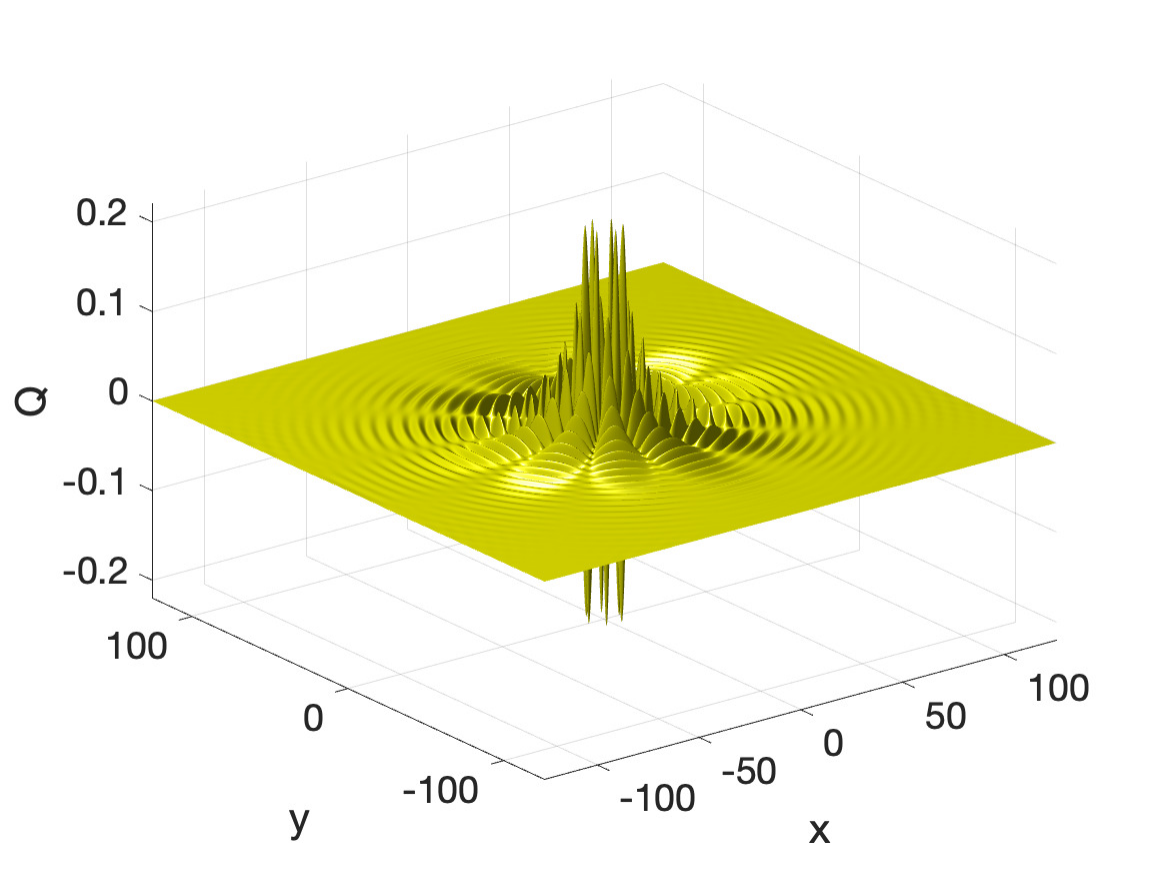}
\subcaption[]{{\footnotesize $b=1.01$.}}
\end{subfigure}\\
\begin{subfigure}{.32\textwidth}
\includegraphics[width=1\linewidth,height=0.65\linewidth]{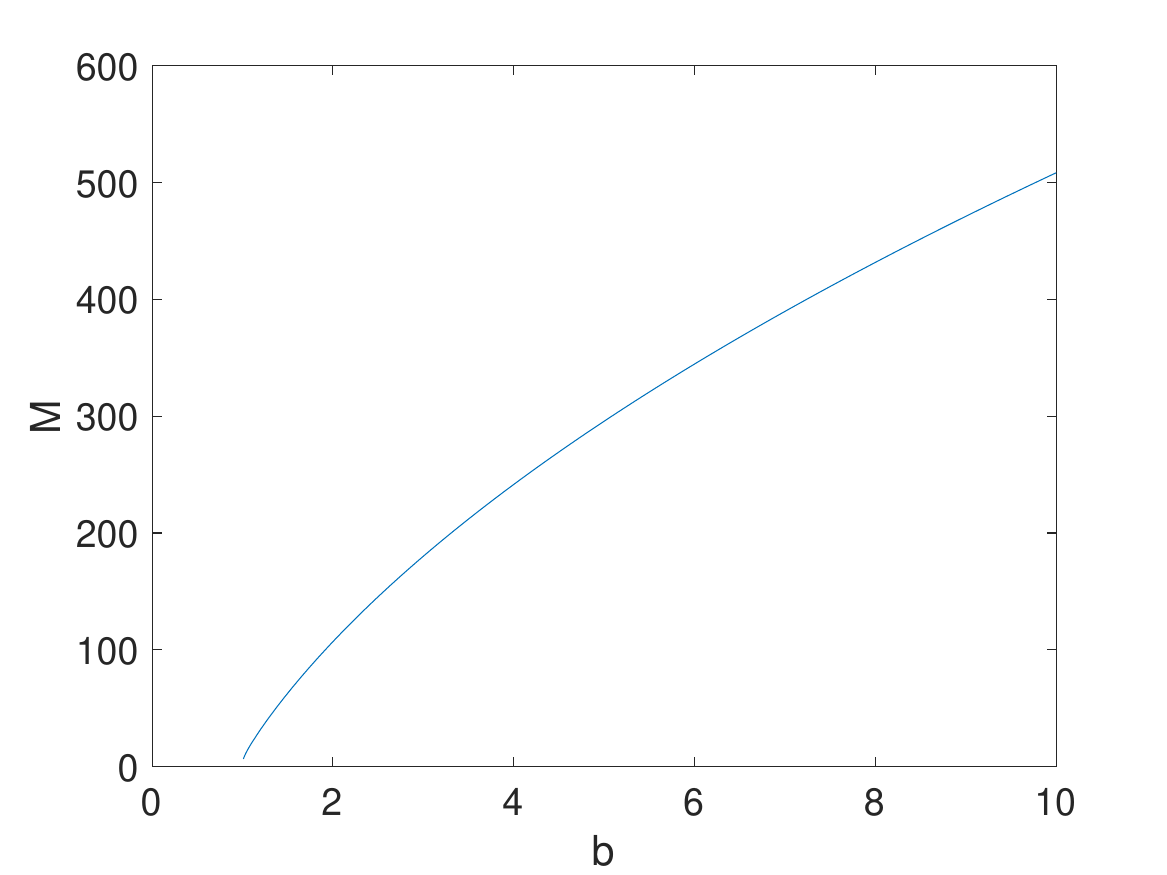}
\subcaption[]{{\footnotesize $M(Q) = M(b)$.}}
\end{subfigure}
\begin{subfigure}{.32\textwidth}
\includegraphics[width=1\linewidth,height=0.65\linewidth]{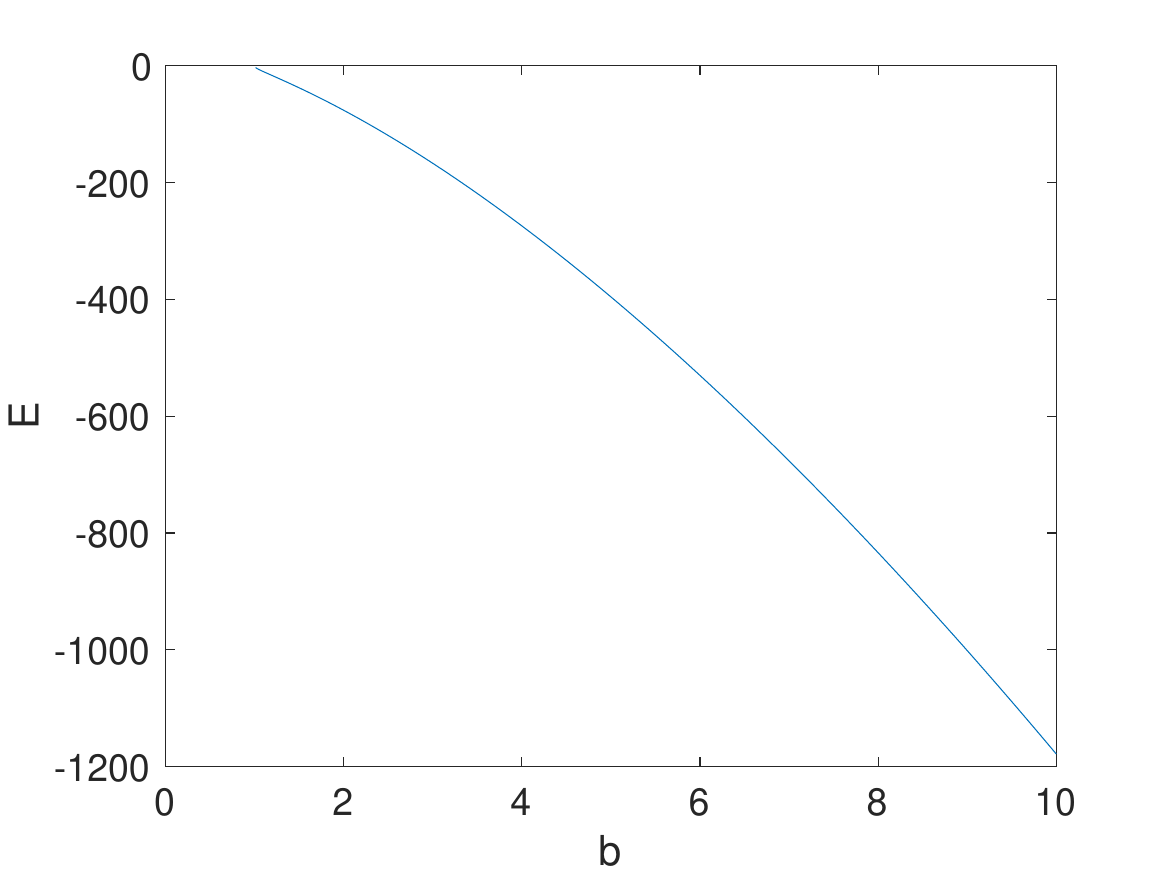}
\subcaption[]{{\footnotesize $E(Q)$ as function of $b$.}}
\end{subfigure}
\begin{subfigure}{.32\textwidth}
\includegraphics[width=1\linewidth,height=0.65\linewidth]{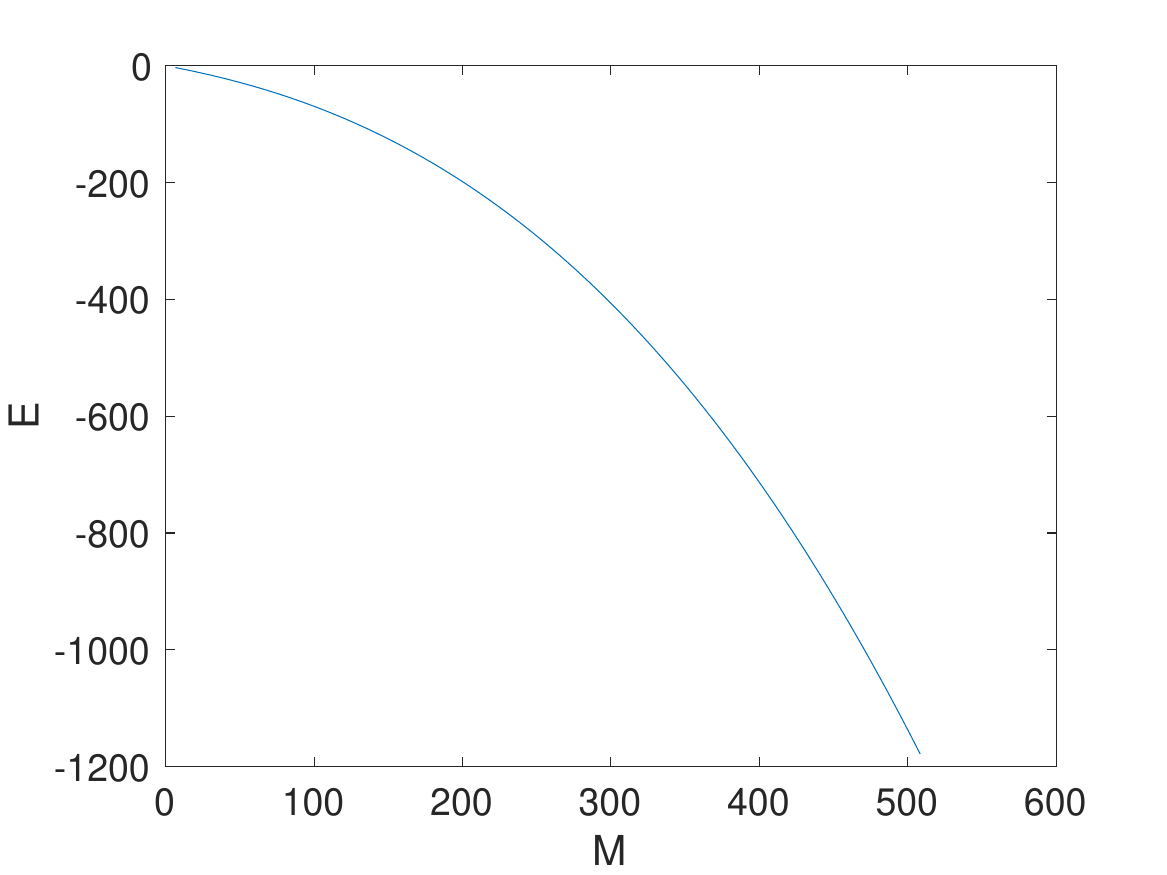}
\subcaption[]{{\footnotesize $E = E(M)$.}}
\end{subfigure}
\caption{\footnotesize Solutions to the cubic equation \eqref{E:2dGS} with angular mode $m=6$: profiles for different $b$ (top row). Dependence of mass $M(Q)$ and energy $E(Q)$ on $b$ (bottom left, middle), and energy as a function of mass $E=E(M)$ (bottom right).}
\label{figm6}
\end{figure}

\subsubsection{Higher values of $m$}
As mentioned before, it becomes increasingly difficult to numerically 
construct solutions of equation \eqref{E:2dGS} for larger values of 
the parameter $m$ with the initial condition \eqref{init} in order to investigate the solutions with a given symmetry. The reason is that there is always the trivial solution for small values of 
$\lambda$, whereas for larger values, there appear to be solutions 
with two or more annular arrangements of peaks with the imposed 
symmetry that could be interpreted as excited states (they have 
larger energy than the solution we are interested in). Even a 
relaxation of the iteration as above does not solve this problem.

\begin{figure}[!htb]
\begin{subfigure}{.32\textwidth}
\includegraphics[width=1\linewidth,height=0.75\linewidth]{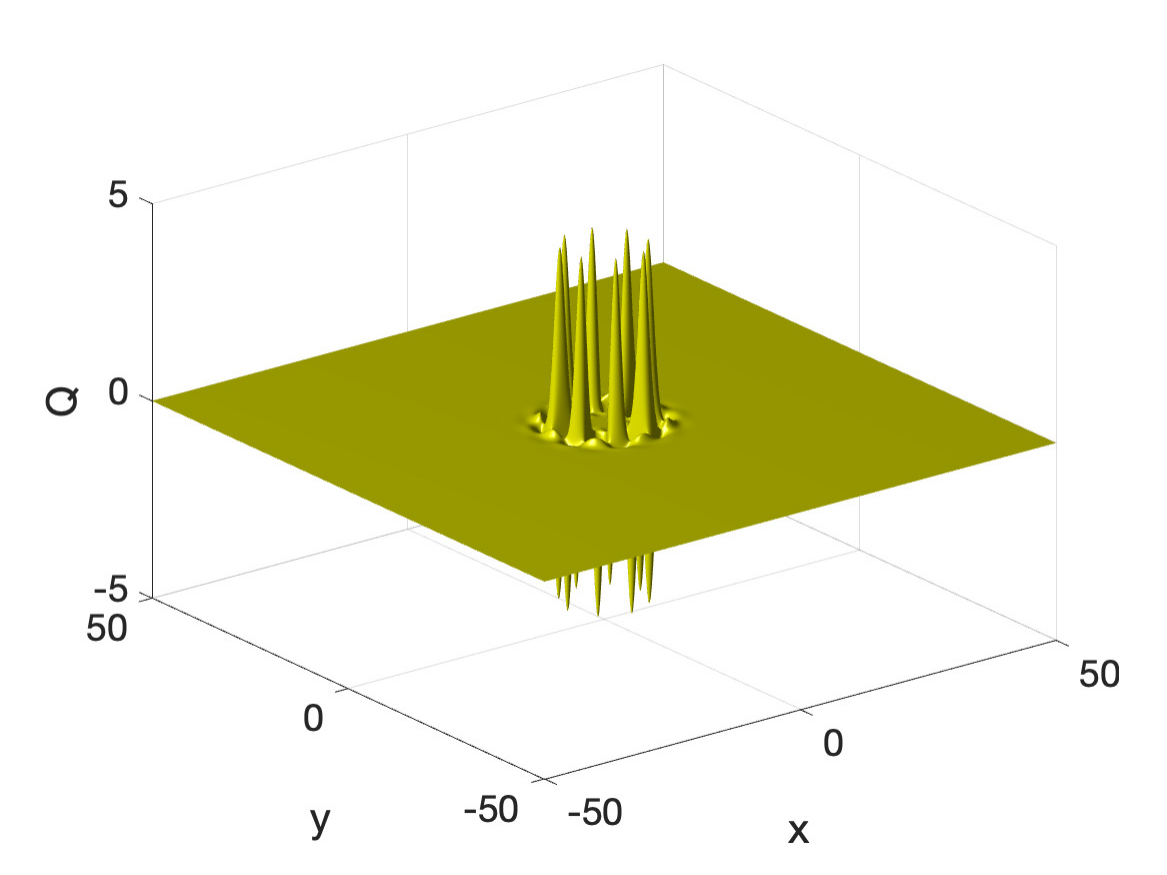}
\subcaption[]{{\footnotesize $b=10$}}
\end{subfigure}
\begin{subfigure}{.32\textwidth}
\includegraphics[width=1\linewidth,height=0.75\linewidth]{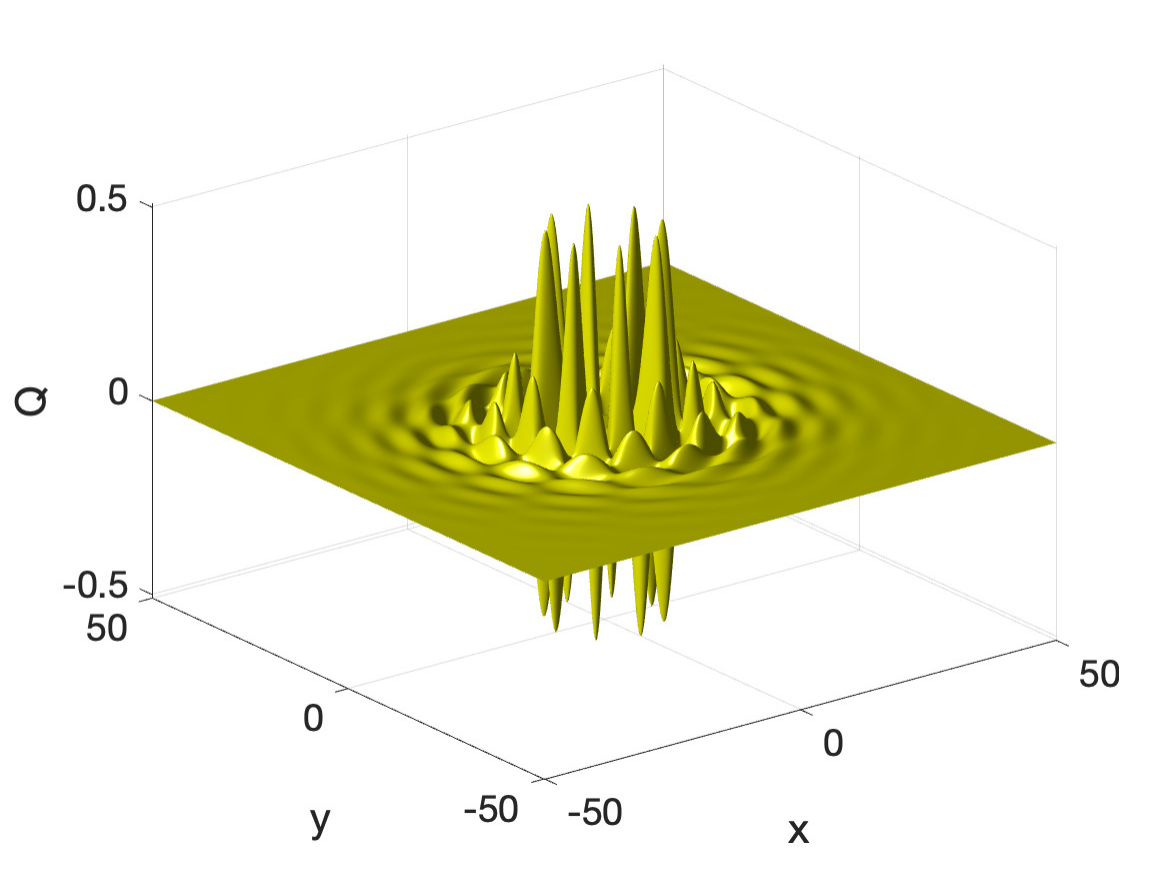}
\subcaption[]{{\footnotesize $b=1.1$}}
\end{subfigure}
\begin{subfigure}{.32\textwidth}
\includegraphics[width=1\linewidth,height=0.75\linewidth]{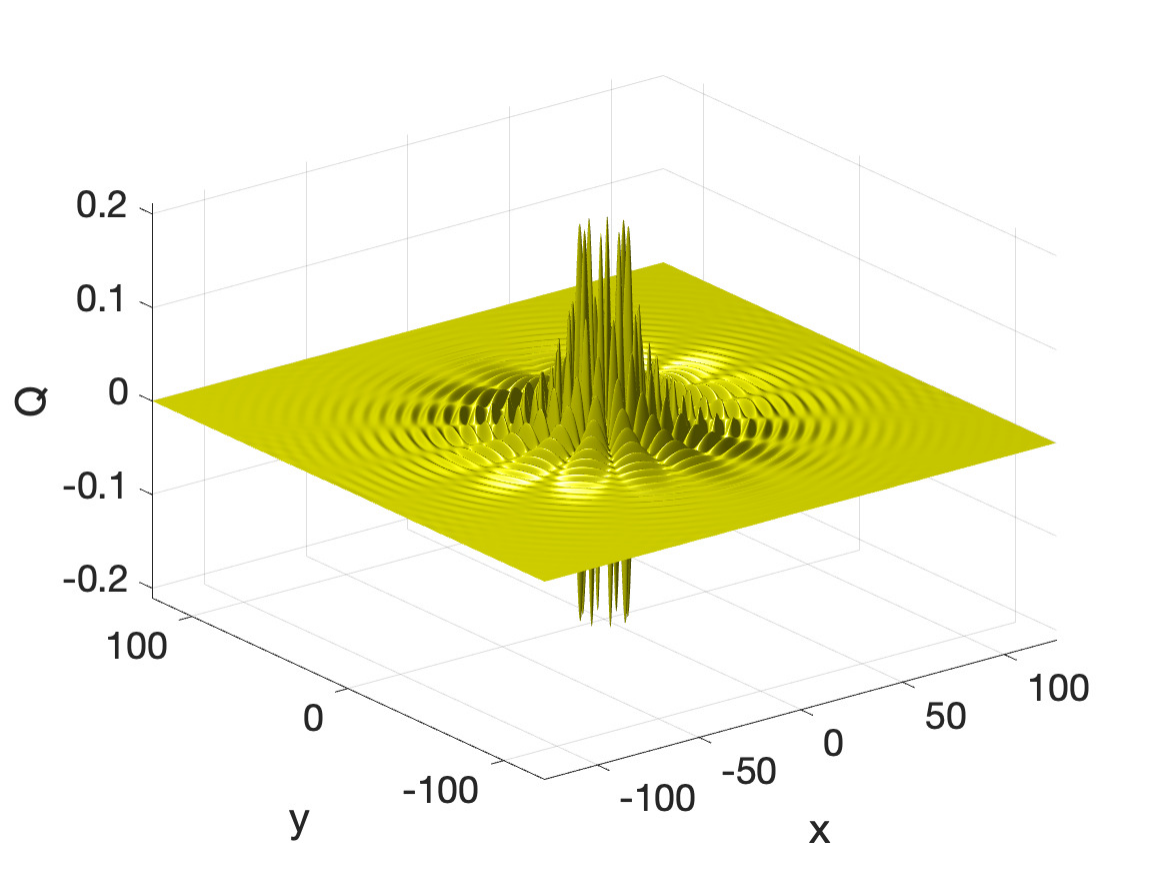}
\subcaption[]{{\footnotesize $b=1.01$}}
\end{subfigure}\\
\begin{subfigure}{.32\textwidth}
\includegraphics[width=1\linewidth,height=0.65\linewidth]{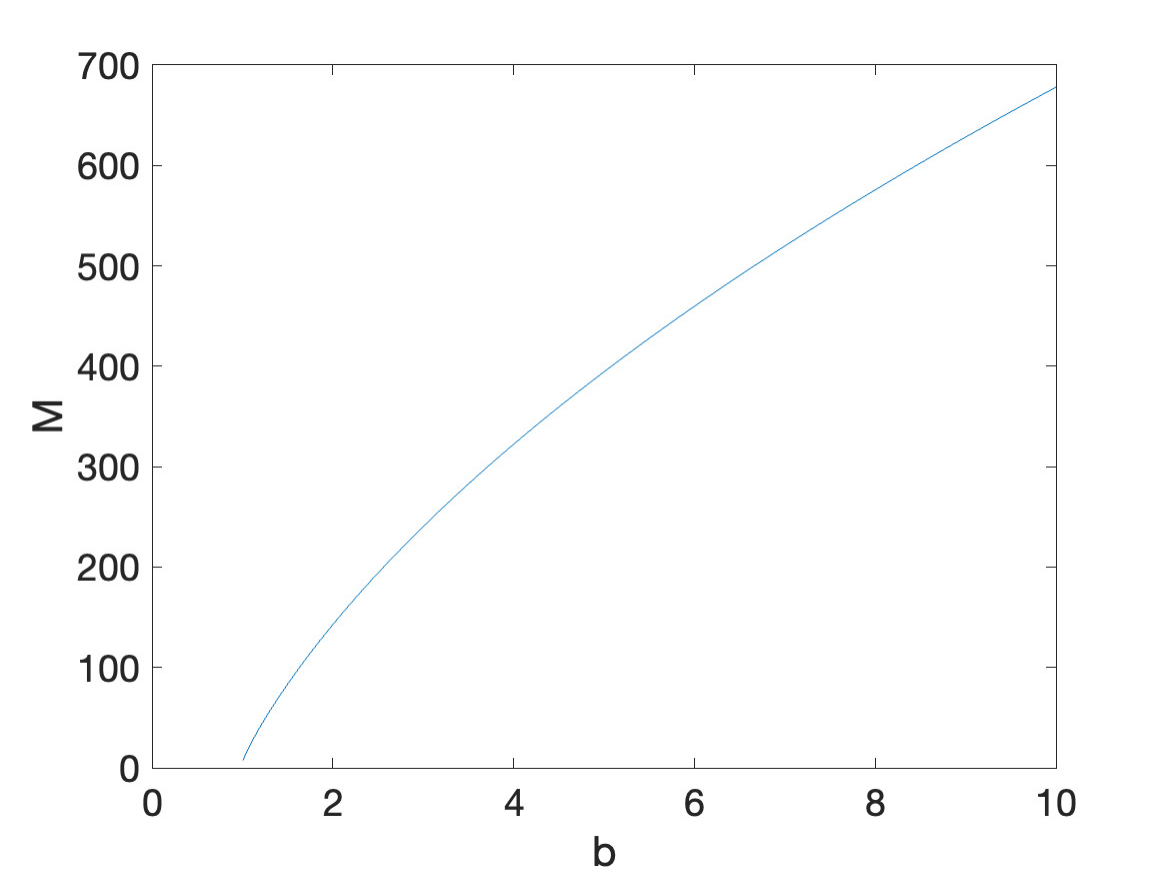}
\subcaption[]{{\footnotesize $M(Q) = M(b)$}}
\end{subfigure}
\begin{subfigure}{.32\textwidth}
\includegraphics[width=1\linewidth,height=0.65\linewidth]{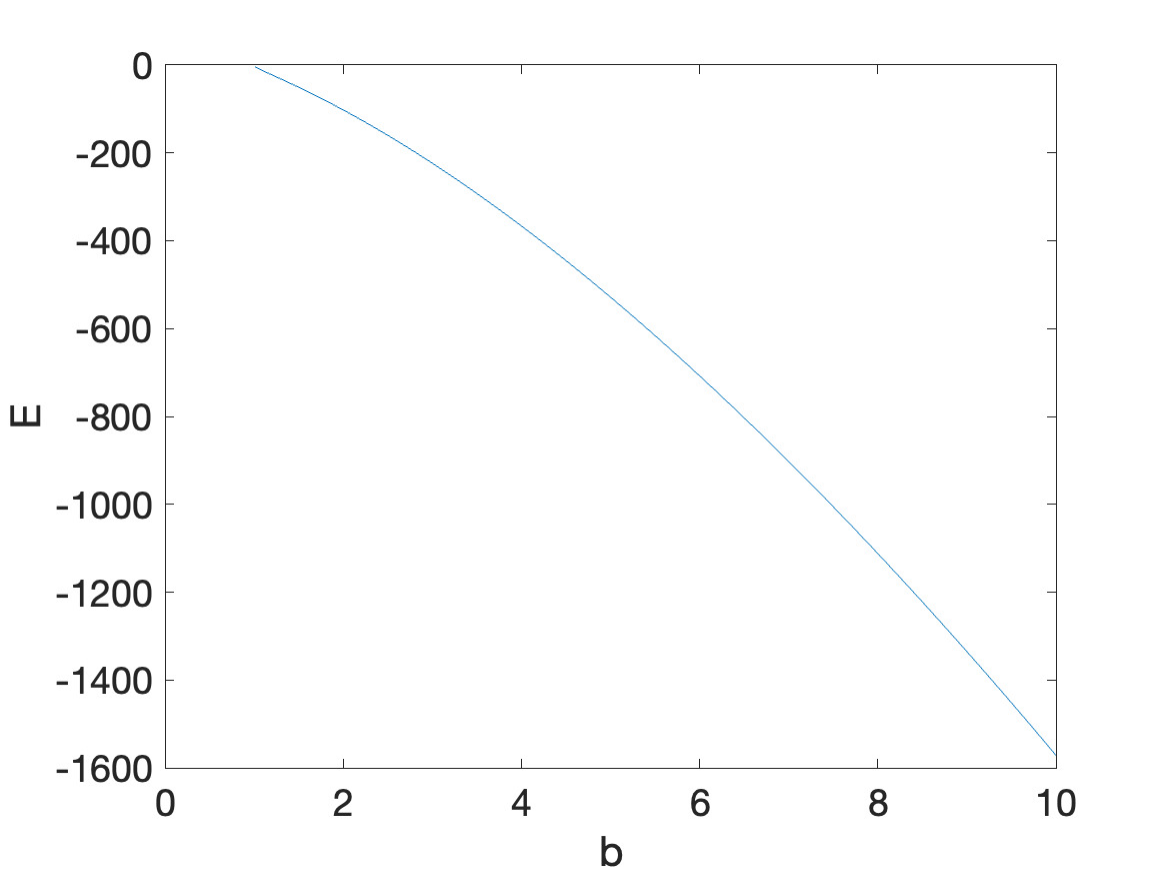}
\subcaption[]{{\footnotesize $E(Q)$ as function of $b$}}
\end{subfigure}
\begin{subfigure}{.32\textwidth}
\includegraphics[width=1\linewidth,height=0.65\linewidth]{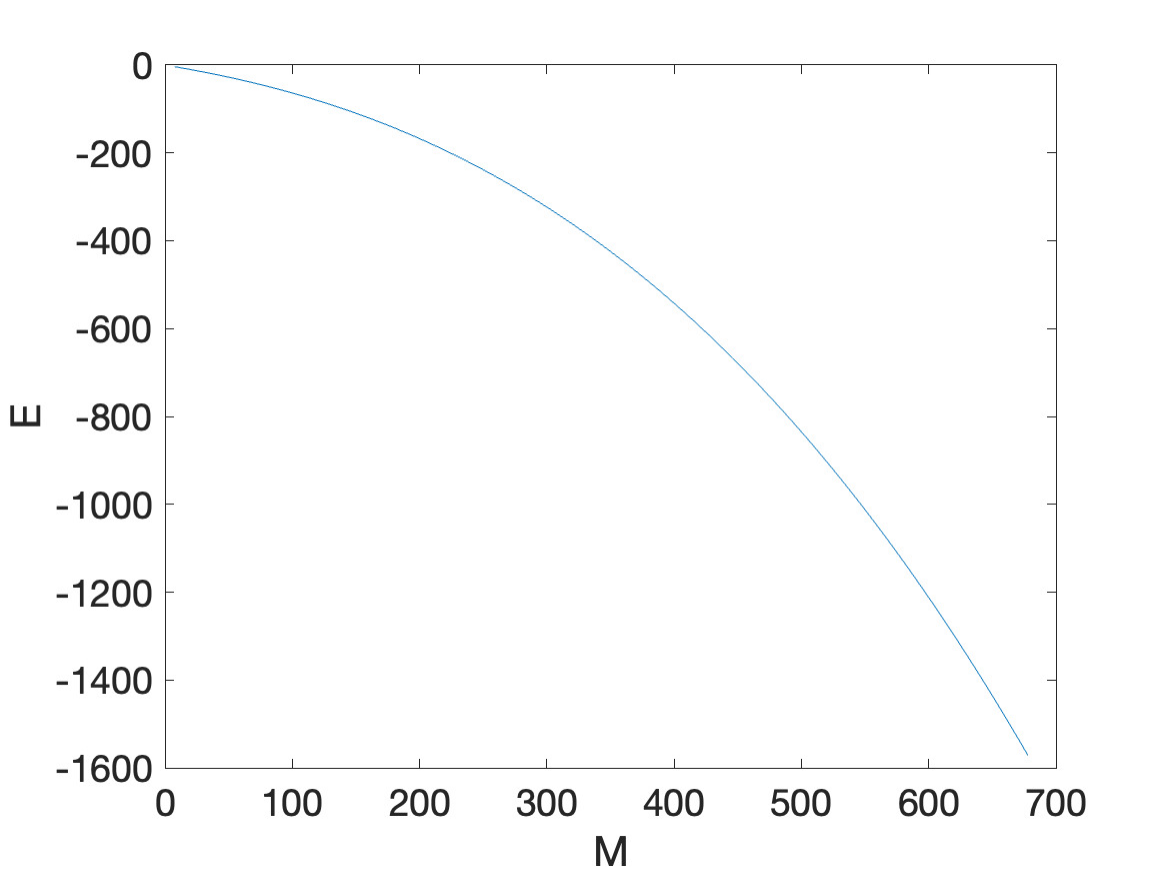}
\subcaption[]{{\footnotesize $E = E(M)$}}
\end{subfigure}
\caption{\footnotesize {Solution to \eqref{E:groundstate} with angular mode $m=8$: profiles for different $b$ (top row). Dependence of mass $M(Q)$ and energy $E(Q)$ on $b$ (bottom left, middle), and energy as a function of mass $E=E(M)$ (bottom right).}}
\label{figm8}
\end{figure}
For large values of $b$ as above, we did not find any solution for 
the cases $m=6,8$ with the initial guess \eqref{init}. Therefore, we 
start here with the value $b=1.01$ and the initial iterate 
\eqref{init} with $\lambda=2$. Then we use the tracing technique with 
100 steps up to $b=1.1$, and another 100 steps to $b=10$. 
The solutions for $b=10, b=1.1$ and $b=1.01$ can be seen in the top row of Figure~\ref{figm6}.
\smallskip

For $m=8$, we use the initial guess \eqref{init} with $\lambda=1.2$ 
for $b=1.01$. The resulting solution times a factor $1.2$ is then 
used as an initial iterate for $b=1.1$. The usual tracing procedure 
is applied to reach values of $b$ between $1.1$ and $10$ as well as 
between $1.1$ and $1.01$. The solutions for  $b=10$, $b=1.1$ and $b=1.01$ can be 
seen in the top row of Figure~\ref{figm8}.

In the bottom row of Figure~\ref{figm6} ($m=6$) and Figure~\ref{figm8} ($m=8$), the mass, the energy and the energy as a function mass are shown. Similar to the previous angular modes, the mass is monotonically increasing with $b$ whereas the energy is decreasing, and the energy as a function of mass is a monotone graph.

\subsubsection{Mass-energy relations}\label{S:ME-grand}
The interesting question is which of the above computed ground state branches has the smallest energy for a given mass. To illustrate this, we put the 
graphs for energy in dependence of mass into a single graph shown in 
Figure \ref{figME}.
\begin{figure}[!htb]
\centering
\includegraphics[width=1\hsize,height=.55\hsize]{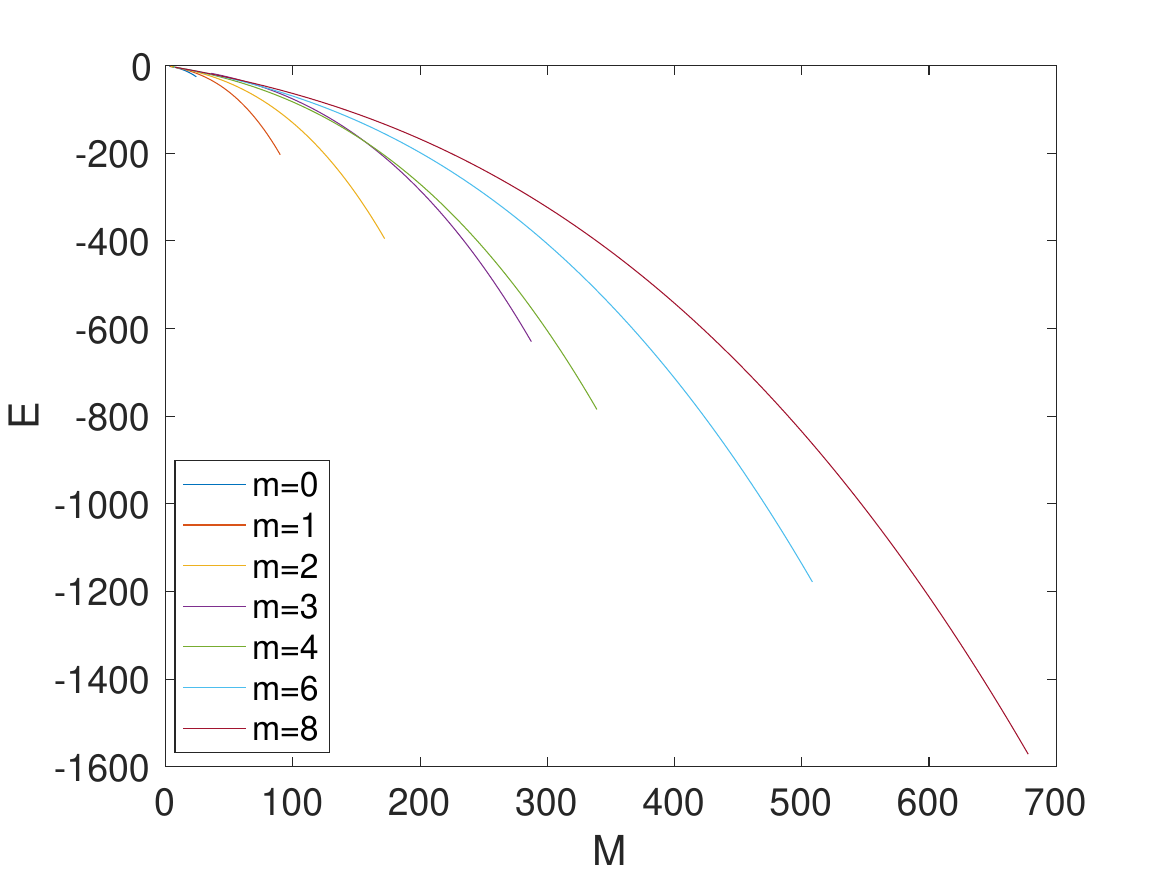}
\caption{\footnotesize Energy as a function of mass for 
solutions to the equation \eqref{E:2dGS} with various angular symmetries $m$.}
\label{figME}
\end{figure}
\begin{figure}[!htb]
\includegraphics[width=1\hsize,height=.5\hsize]{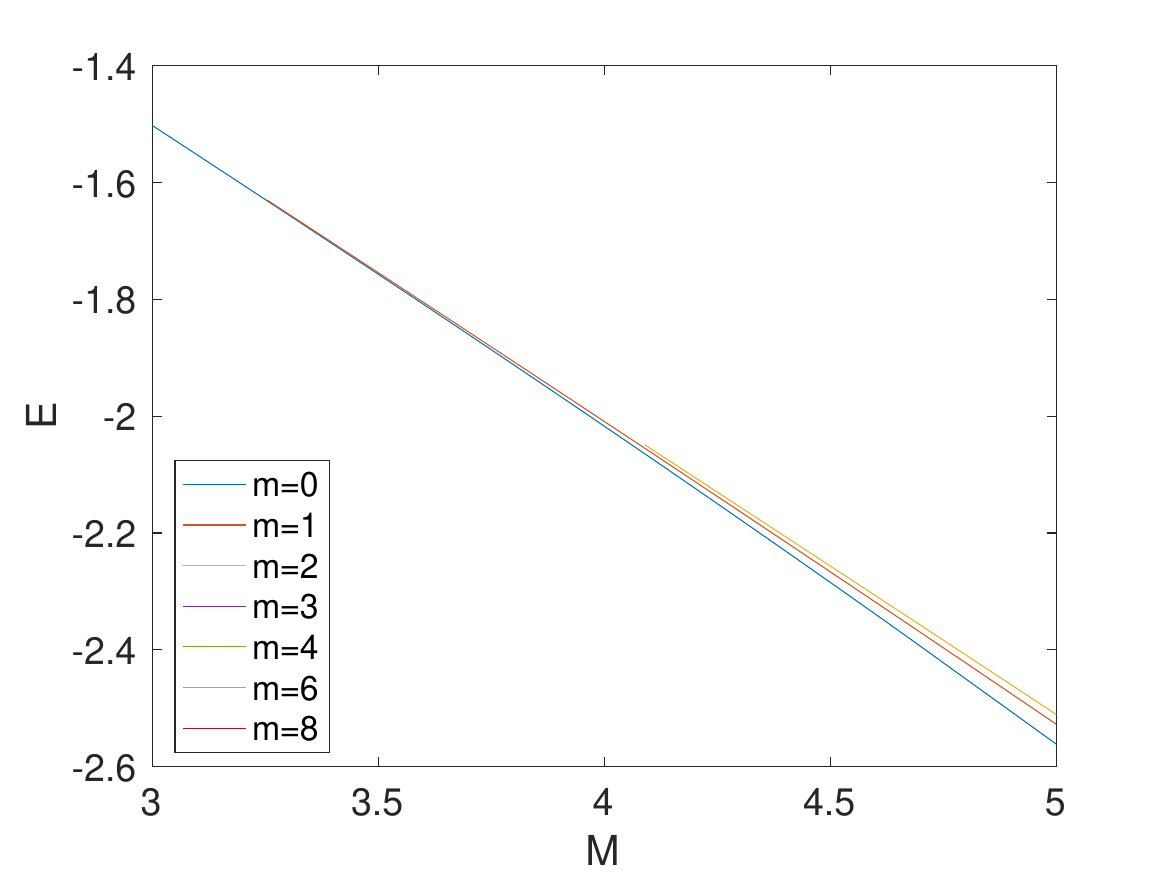}
\caption{\small Close-up of Figure~\ref{figME} for small values of mass.}
\label{figMEcu}
\end{figure}

In the top left corner it can be seen that for the considered values of $m$, the lowest energy is reached in the case with radial symmetry ($m=0$), we enlarge it since for small masses the curves get very close to each other. 
A close-up in Figure \ref{figMEcu} shows, the curves do not cross for the 
values of $b$ studied above.  Thus, the radial case appears to be the 
one of lowest energy in the class of solutions we investigated so far.

To explore even smaller values of $b$, one has to work on larger tori, 
since the solutions are less and less localized for $b\to 1$. Therefore, we 
consider $L_{x}=L_{y}=100$ with otherwise unchanged numerical 
parameters below to reach values of $b=1.001$ (the same tracing 
techniques are applied for 10 values of $b$ between $1.01$ and 
$1.001$ for the case $m=0,1,2,4$). The resulting relation of energy 
in dependence of mass is shown in Figure~\ref{figME1001} on the left. 
In a close-up on the right of the same figure (with even further zoom in of the top part and the bottom part to show the branches of the computed solutions), 
it can be seen that the solutions with radial symmetry always have 
the smallest energy among these. 

\begin{figure}[!htb]
\includegraphics[width=.49\hsize,height=.3\hsize]{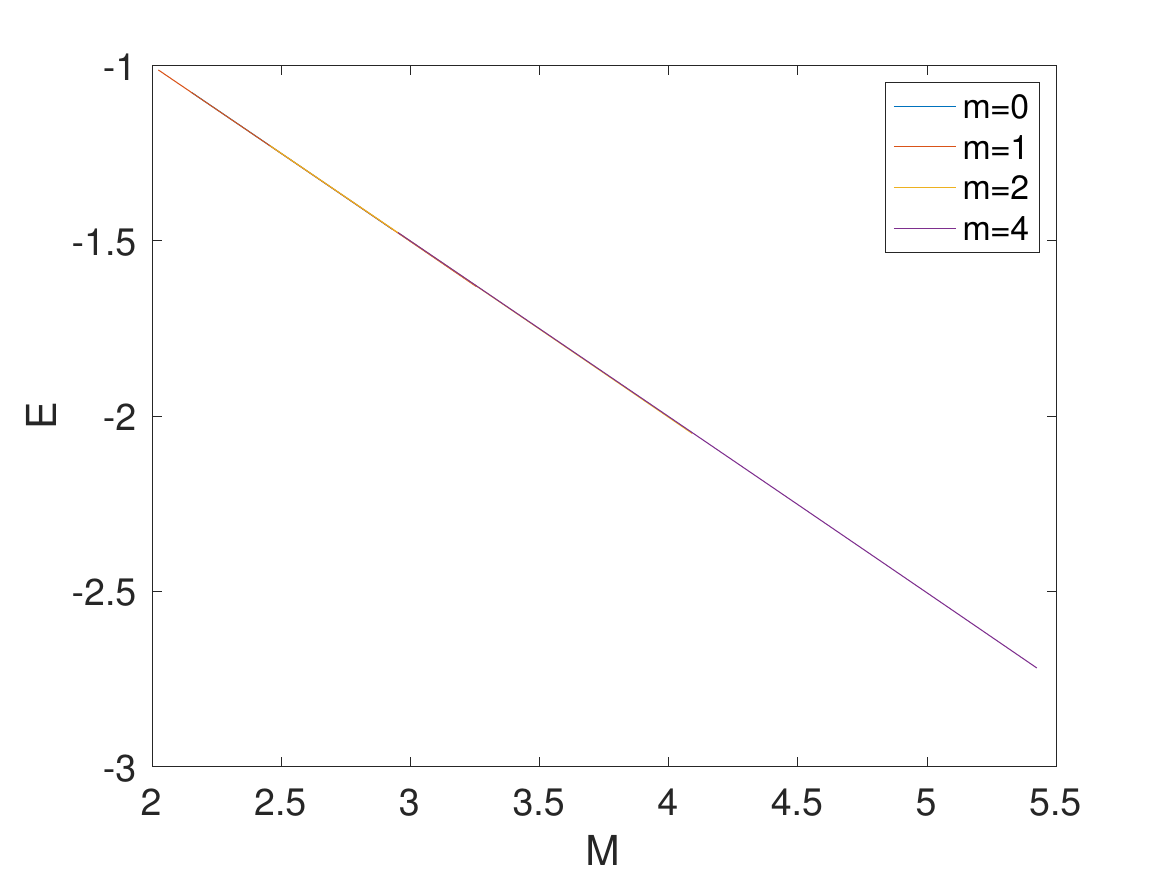}
\includegraphics[width=.49\hsize,height=.3\hsize]{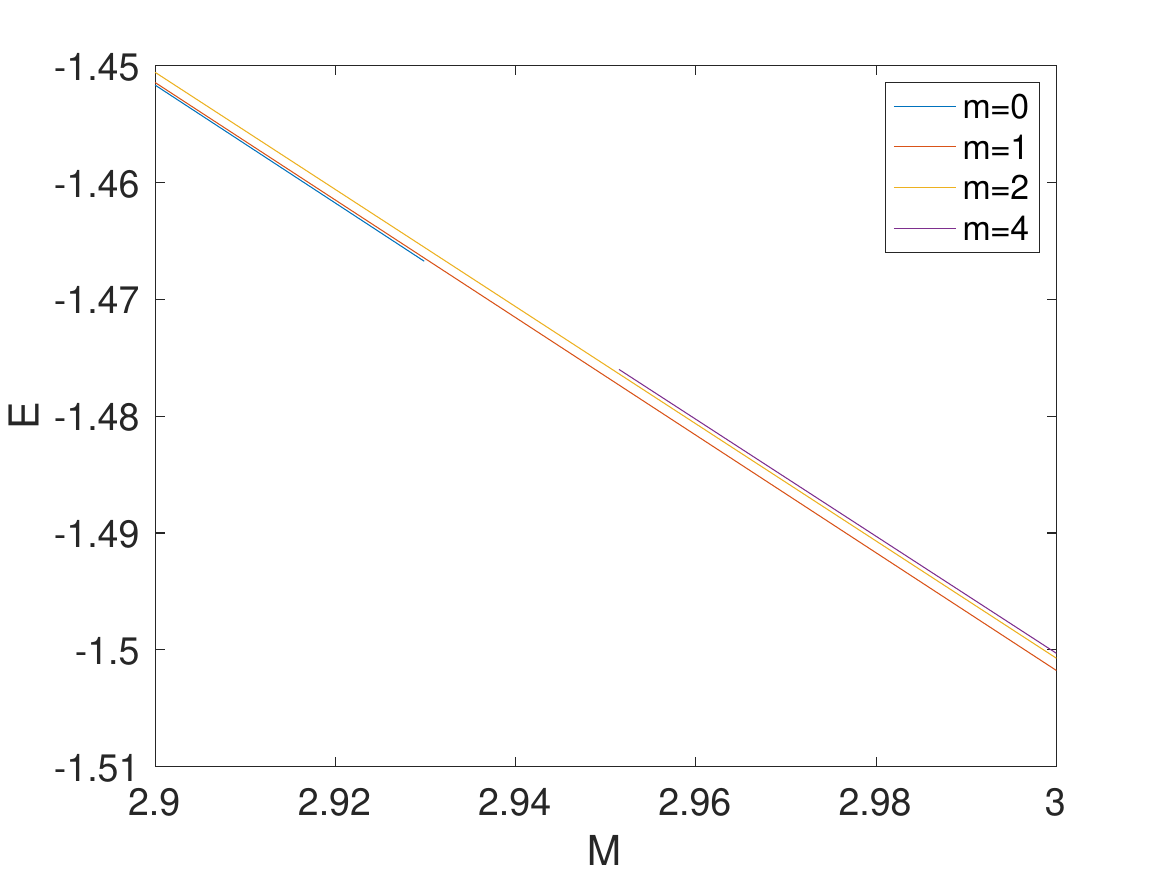}
\includegraphics[width=.49\hsize,height=.3\hsize]{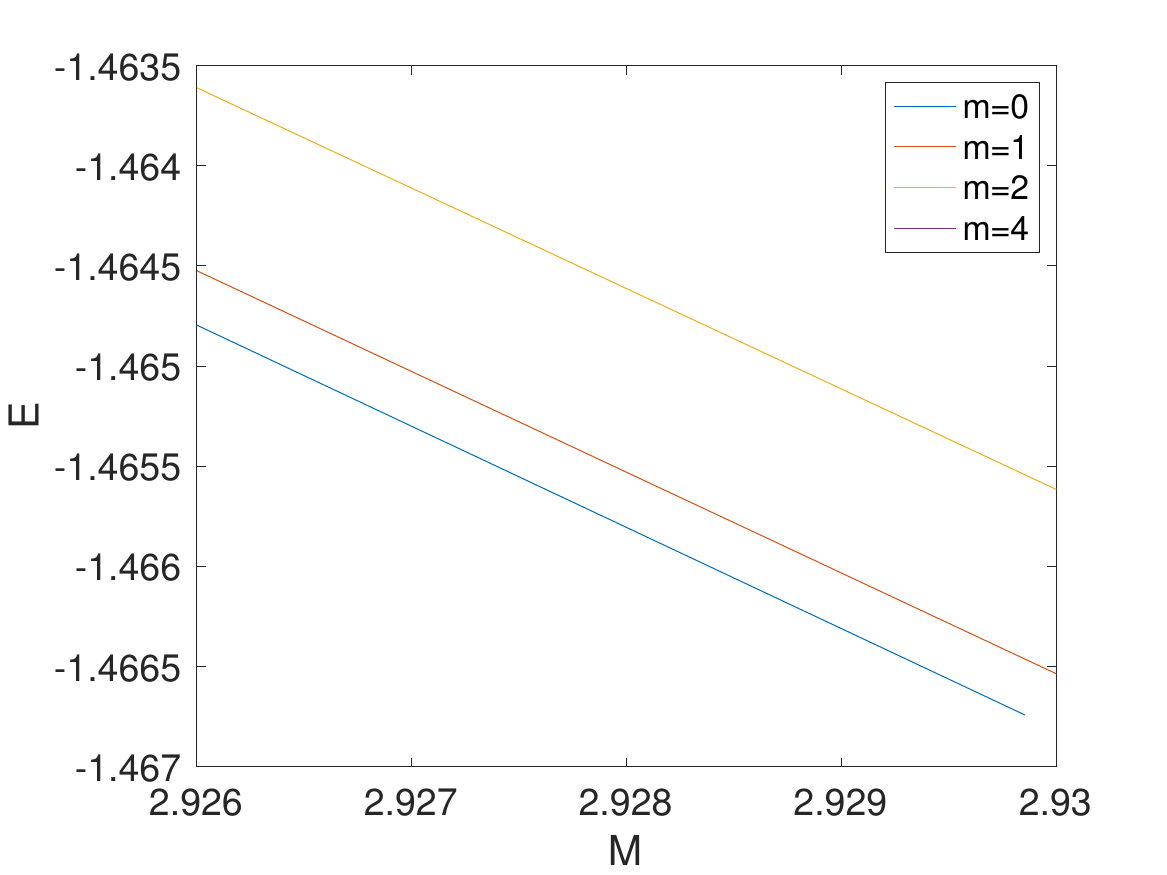}
\includegraphics[width=.49\hsize,height=.3\hsize]{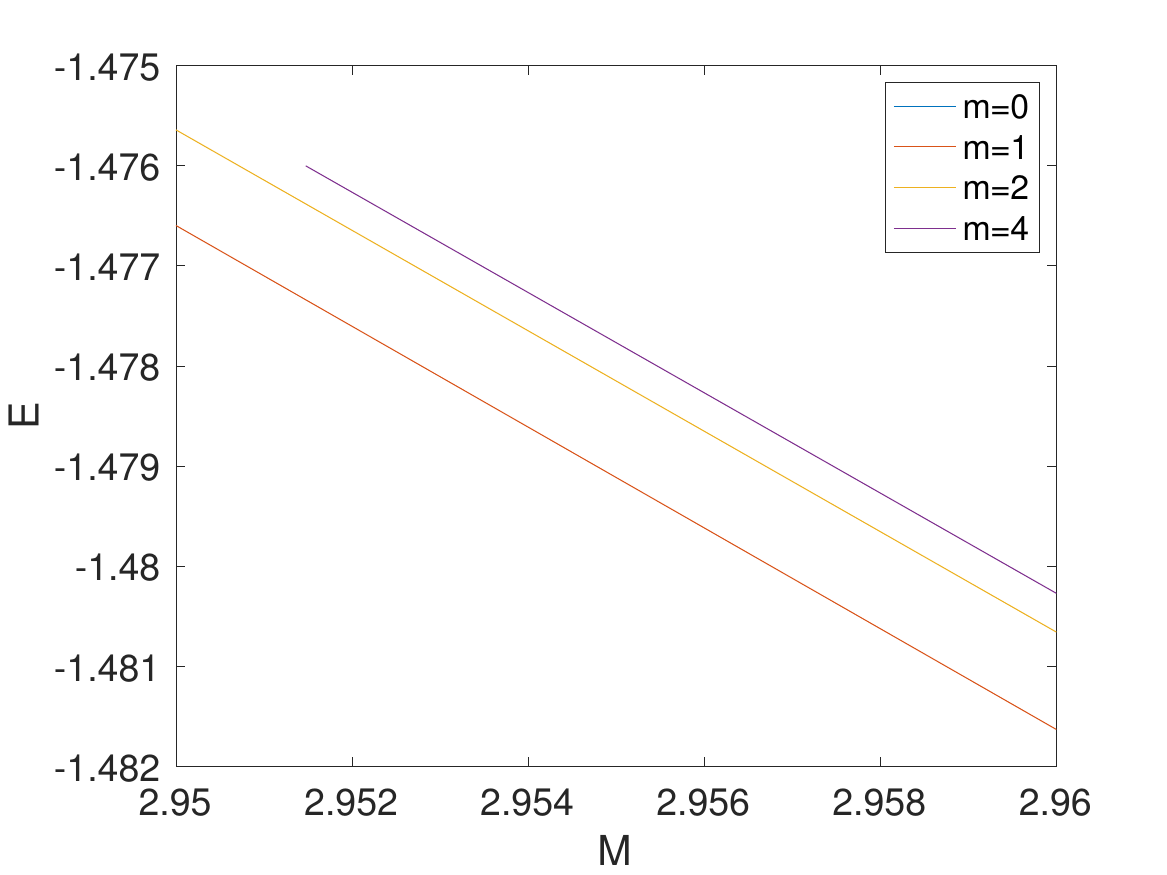}
\caption{\footnotesize Energy as a function of mass for solutions to \eqref{E:2dGS}, $\alpha=2$, with angular modes $m$ and $b \in [1.001,1.01]$ (top left); zoom-in for mass in $[2.9,3]$ (top right); further enlargements $M \in [2.926,2.93]$ (bottom left): blue, $m=0$, radial solution has the lowest energy for a given mass, followed by red, $m=1$, and then yellow, $m=2$; $M\in[2.95,2.96]$ (bottom right): the highest energy branch is $m=4$ (purple).}
\label{figME1001}
\end{figure}

The nearly parallel curves in all plots of Figure \ref{figME1001} could also be understood from the action $S_b$ and its characterization of ground state solutions. Differentiating \eqref{E:action} with respect to $b$ along a ground state branch and simplification, we get 
$$
\frac{d}{db}E(Q_b) = -\frac{b}2 \,\frac{d}{db}M(Q_b), 
$$
and hence, $dE/dM = -b/2$ (when the mass derivative is nonzero). Since $b$ is around $1.001 \sim 1.01$ in this figure, the slopes are basically $-1/2$ for all displayed branches. 
\smallskip

We therefore conclude that the solutions that are obtained from the angular modes initial guess \eqref{init} are candidates for higher energy branches (excited states) in this problem, or if restricted to the related symmetry subspace, then  symmetry-constrained ground states with initialized angular or mode $m$ symmetry. We note that the odd branches are more challenging to find, which is in agreement that the ground states (if not restricted to that specific subspace) are even.

\subsection{Nonradial ground state}\label{S:nonradial}
Since it does not appear that there are solutions in the above angular class 
that have lower energy for a given mass than the solution with radial 
symmetry, even as we take $\ep$ sufficiently small, we next study solutions obtained from an initial guess of the Knapp-type cap-concentration form \eqref{E:cap1}. 

We use $L_{x}=L_{y}=200$ and other numerical parameters as before. 
In addition, we take $\sigma_{x}=0.8$ and 
$\sigma_{y}=1$ as well as $\lambda\ep^{1/2}=0.05$  for 
$\epsilon=10^{-3}$. With this ansatz we find a solution, which is 
different from the ones discussed above as can be seen in 
Figure~\ref{figflat1001}. This solution is somewhat reminiscent of the solutions 
in 1D with just a minor extension of the axis of main support (here 
chosen to be the $x$-axis). 
\begin{figure}[!htb]
\includegraphics[width=1\hsize,height=0.62\hsize]{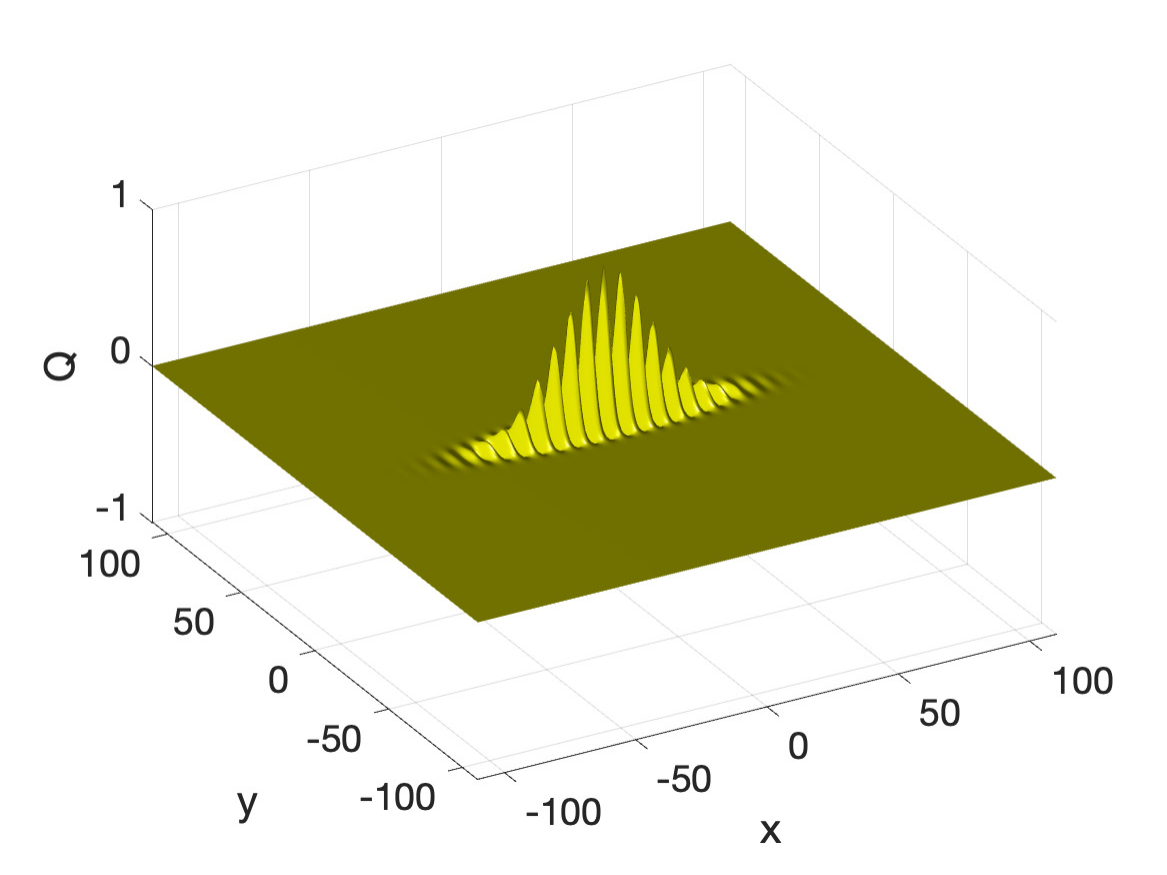}
\caption{\small Nonradial ground state solution to equation \eqref{E:2dGS}, $\alpha=2$, for $b=1.001$.}
\label{figflat1001}
\end{figure}
\begin{figure}[!htb]
\includegraphics[width=1\hsize,height=0.62\hsize]{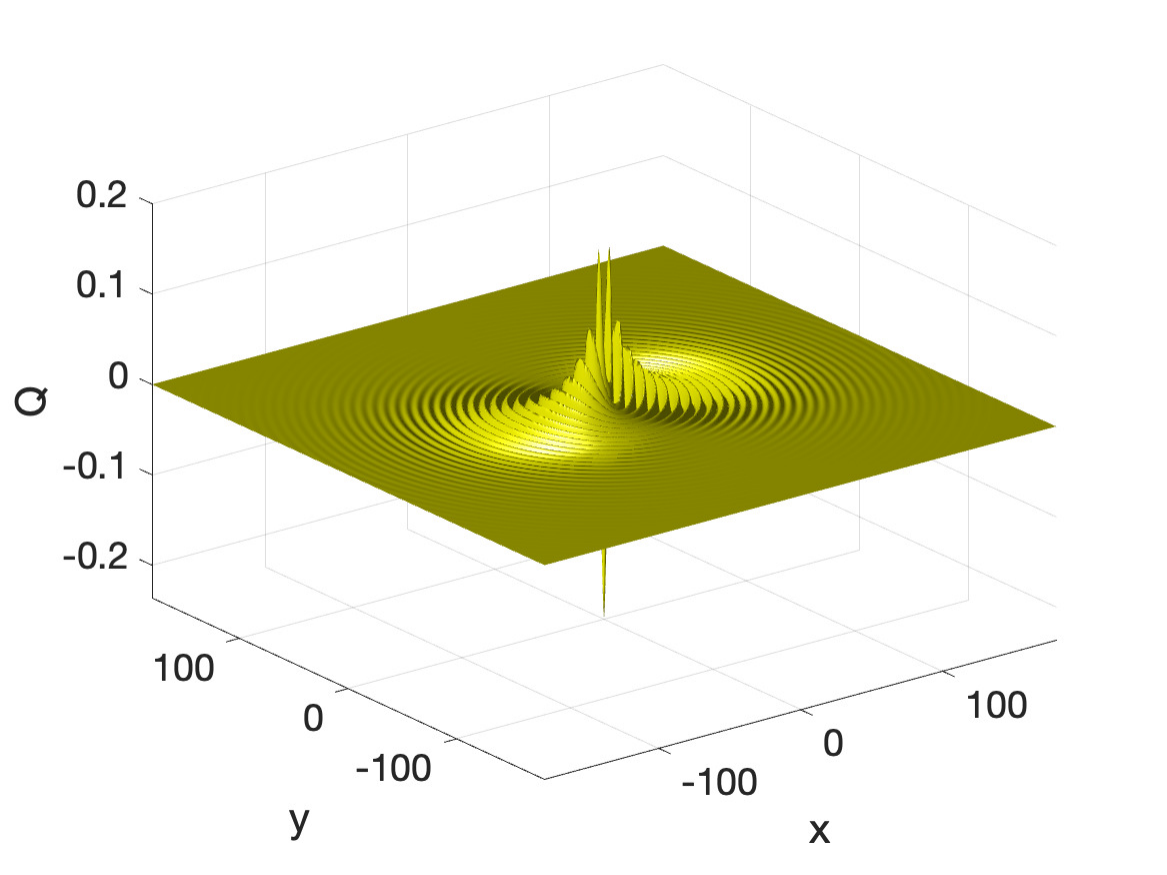}
\caption{\small Nonradial solution to equation \eqref{E:2dGS}, $\alpha=2$, for $b=1.005$.}
\label{figflat1005}
\end{figure}

Tracing this solution to larger values of $b$, we find that the 
iteration ceases to converge for values of $b>1.005$. The solution 
for the value $b=1.005$ can be seen in Figure~\ref{figflat1005}. It is visibly 
less localized around the $x$-axis compared to the solution for $b=1.001$, it has more prominent radial type oscillations, and approaches solutions of the radial/angular 
type studied in the previous subsection. This explains, to some 
extent, why we were not able to trace this branch of solutions for 
larger values of $b$ that we have considered in the previous section. 

In Figure~\ref{figflatME}, we show mass and energy in dependence of $b$, and also the energy as a function of mass $E = E(M)$. Similar to the angle mode solutions the mass is monotonically increasing with $b$ whereas the energy is decreasing as well as the $E(M)$ curve.  
\begin{figure}[!htb]
\includegraphics[width=0.32\hsize,height=.23\hsize]{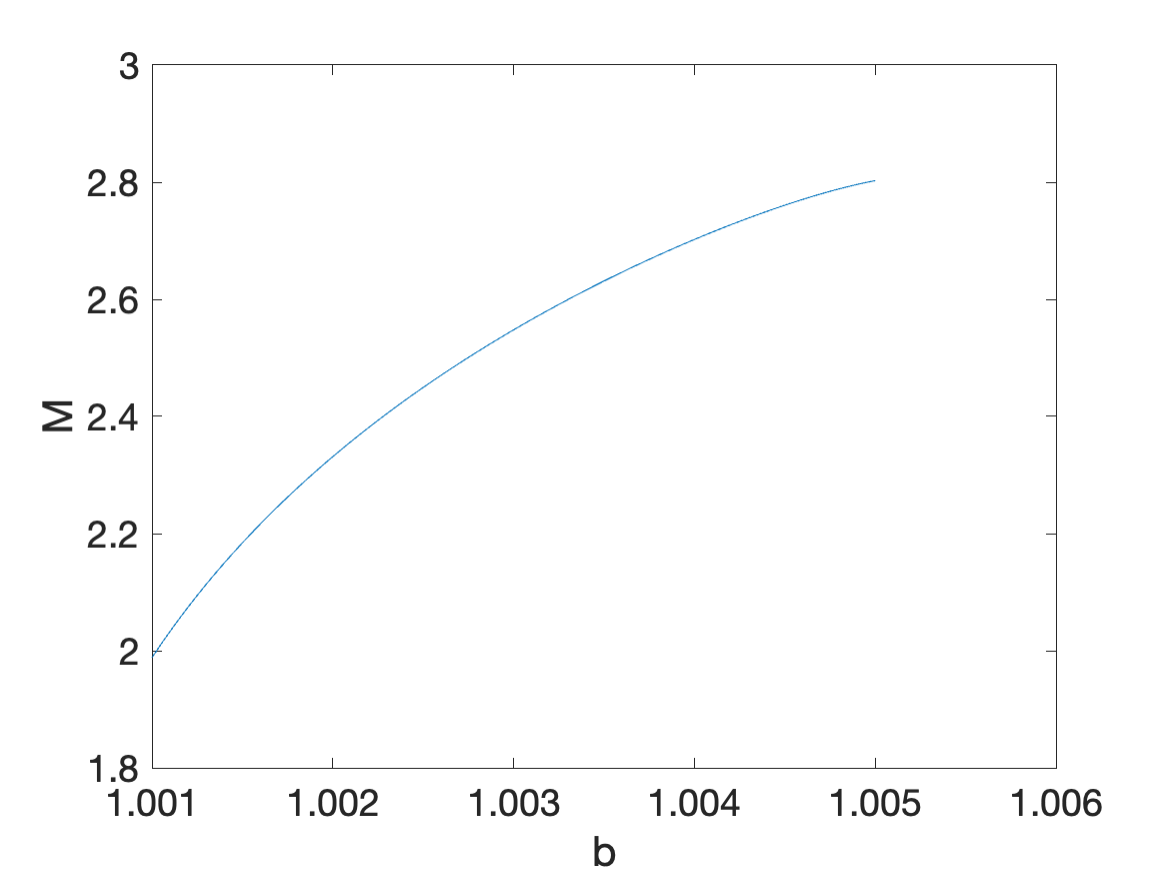}
\includegraphics[width=0.32\hsize,height=.23\hsize]{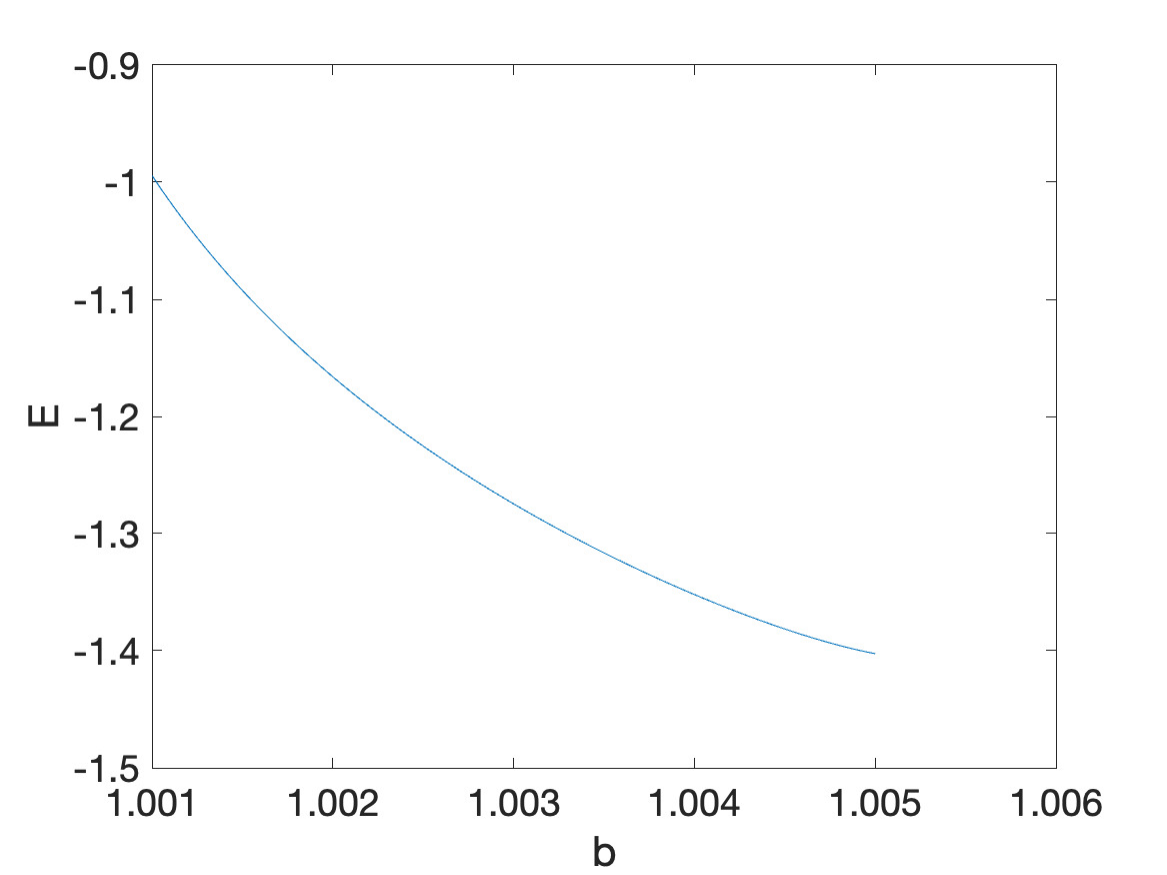}
\includegraphics[width=0.32\hsize,height=.23\hsize]{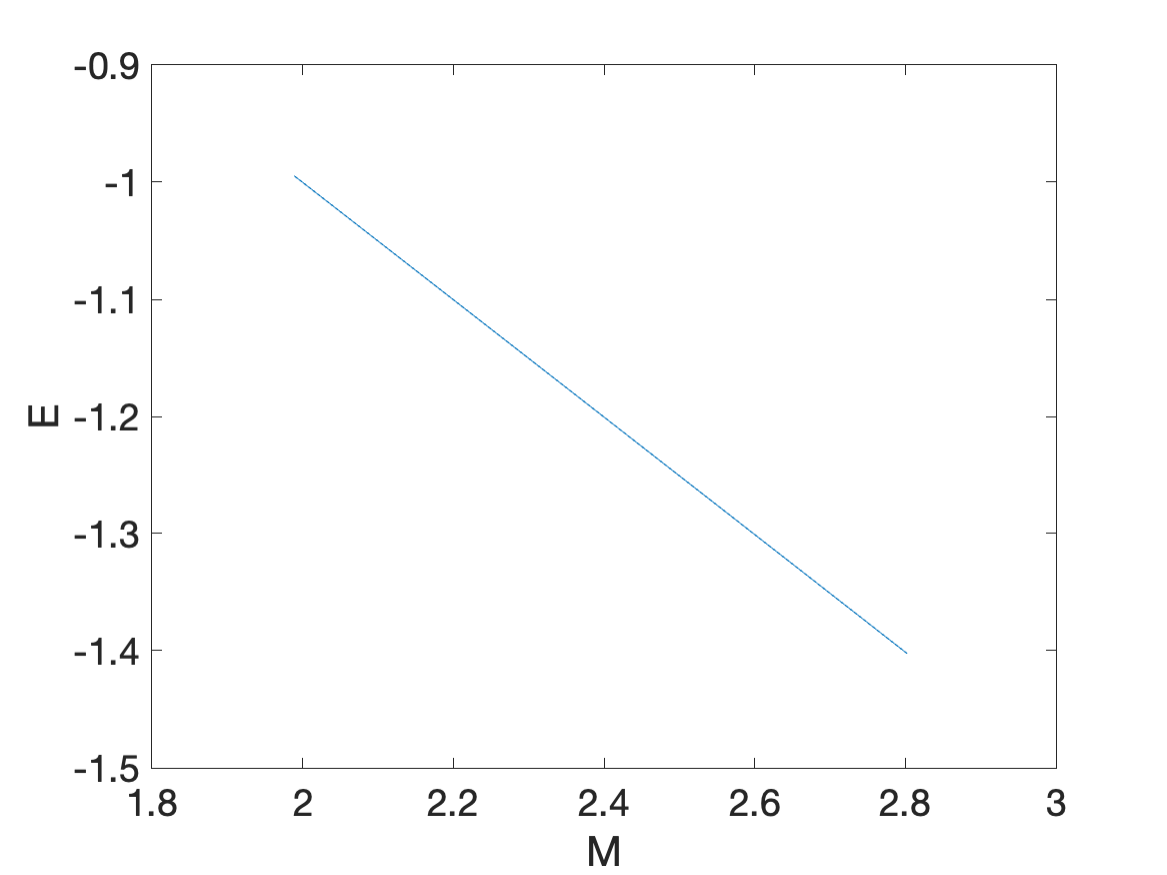}
\caption{\footnotesize Nonradial solutions to the equation \eqref{E:groundstate}, $\alpha=2$: mass and energy in dependence of $b \in [1.001,1.005]$ (left, middle) and energy as a function of mass $E=E(M)$ (right).}
\label{figflatME}
\end{figure}

The decisive question is whether the energy of these solutions is smaller than the energy of solutions with radial symmetry, therefore, we compare these nonradial solutions mass-energy interdependence with that of the radial solutions (of a similar mass). 
Since the mass of the above nonradial solutions is rather small, we 
compute solutions with radial symmetry to even smaller values (than in the previous section) of 
$\epsilon=10^{-4}$.

\begin{figure}[!htb]
\includegraphics[width=0.32\textwidth]{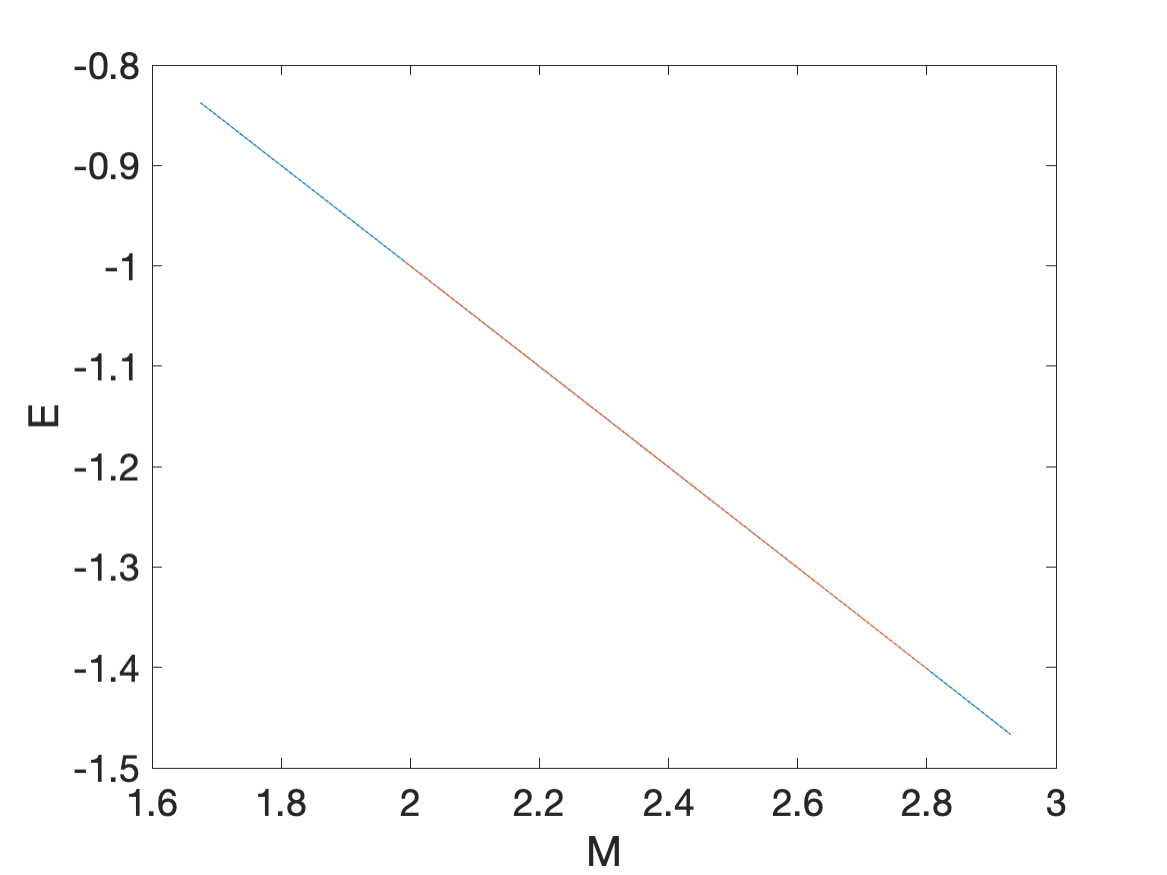}
\includegraphics[width=0.32\textwidth]{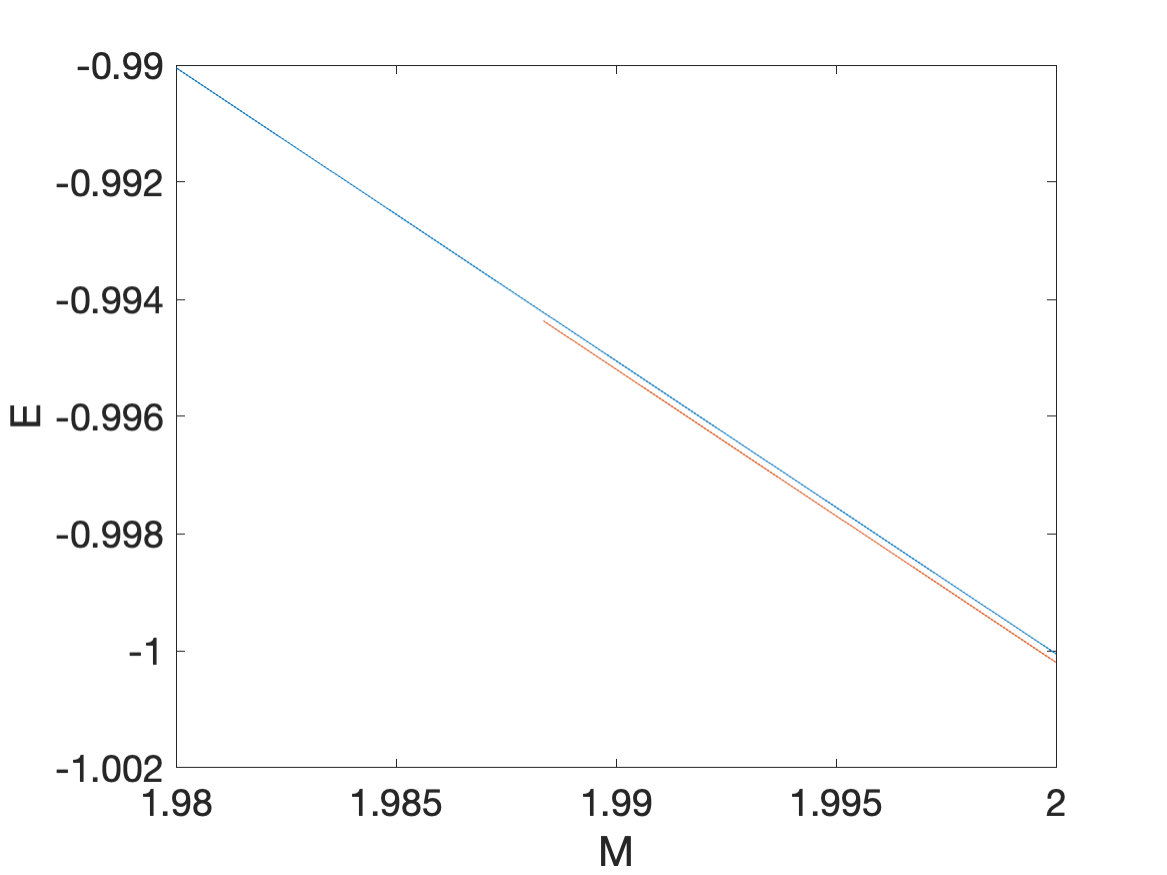}
\includegraphics[width=0.32\textwidth]{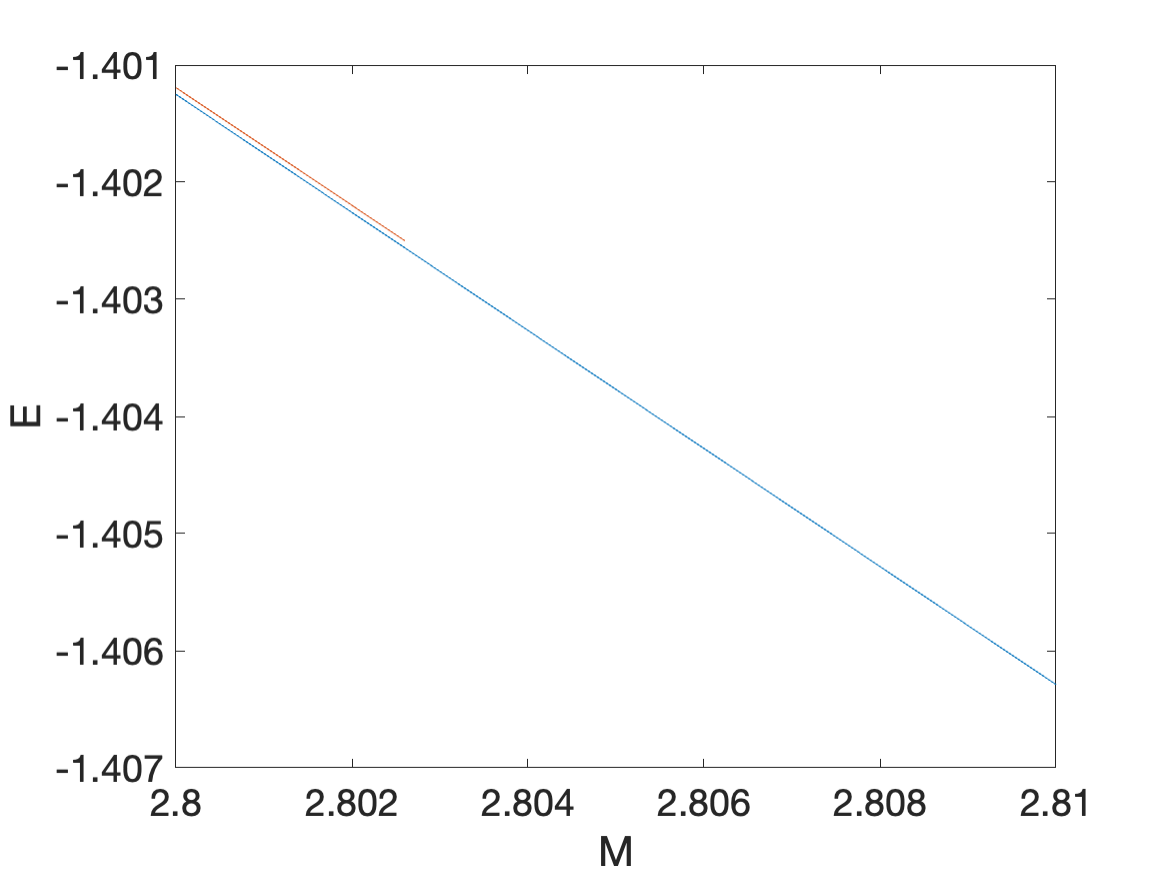}
\caption{\footnotesize Nonradial (red) vs. radial (blue) solutions to 
\eqref{E:2dGS}, $\alpha=2$. Left: energy as a function of mass. Middle: close-up for mass values in $[1.98,2]$: the red line (nonradial) is below the blue line (radial). Right: close-up for mass values in $[2.8,2.81]$:  the red line (nonradial) is above the blue line (radial). Thus, an intersection of branches occurs for some mass in the interval $[2,2.8]$.}  
\label{figflatradial}
\end{figure}
Figure~\ref{figflatradial} displays the mass-energy dependence for the radially symmetric solutions in blue and the nonradial solutions in red. The left plot shows that they merely coincide, but in a close-up near the smaller mass region, $M \in [1.98,2]$ (see the middle plot in the same figure), one can see that 
the energy of the nonradial solutions is in fact lower than the energy of the radial solution (red line is below the blue line). On the other hand, zooming-in into the mass values $M \in [2.8,2.81]$ shows that the nonradial solutions have higher energy than radial (red line above the blue line). Thus, an intersection for mass values between $2$ and $2.8$ has occurred, providing numerical evidence that for very small $\ep$ the ground state solutions are indeed nonradial. As $\ep$ becomes larger, the ground state solutions gain the radial symmetry. Thus, we conclude that the solution in Figure~\ref{figflat1001} belongs to the {\it nonradial ground state} solutions branch of the cubic equation \eqref{E:2dGS}, with $\ep=0.001$. On the other hand, for larger $\epsilon$ values (from about $0.005$, or $b=1.005$) ground state solutions are radial, for example, as shown in Figure~\ref{figm0} top row.    

We can also explain why different branches as $\ep \to 0^+$ (or $b \to 1^+$) can lie close in the mass-energy diagrams. For the unrestricted or radial
least-action families considered here, the Pokhozhaev identity
\eqref{EM-relation} gives
$$
\frac{E(Q_\ep)}{M(Q_\ep)} = -\frac{1+\ep}{2}
+\frac{\alpha}{2(\alpha+2)} \frac{\|Q_\ep\|_{L^{\alpha+2}}^{\alpha+2}}{M(Q_\ep)}.
$$
By \eqref{E:potential-R} and the quotient-to-mass relation in
Theorem \ref{T:quotient-mass-general}(ii), we have
$\|Q_\ep\|_{L^{\alpha+2}}^{\alpha+2}\approx\ep M(Q_\ep)$.
Therefore, the last term is $O(\ep)$, and hence,
$$
\frac{E(Q_\ep)}{M(Q_\ep)} = -\frac12+O(\ep) \qquad \mbox{as } \ep \to 0^+.
$$
Thus, these branches have the same leading energy-mass relation
$E\sim-M/2$, as can be clearly seen in Figure \ref{figflatradial}.

To further confirm the nonradial branch of ground state solutions as $\ep \to 0^+$, we study the asymptotic behavior in $\ep$ given in \eqref{E:nr-3} of the least-action functional $S_{1+\ep}(Q)$ and compare it with the radial branch asymptotics in \eqref{E:r-3}. This can be easily checked, since for solutions of \eqref{E:2dGS} the functional $S_{1+\ep}$ is equal to the quartic power of the potential $L^{4}$-norm $\|Q\|^4_{L^4}$ up to a constant $\frac14$, 
see \eqref{E:S-Potential}. 
\begin{figure}[!htb]
\includegraphics[width=.32\hsize]{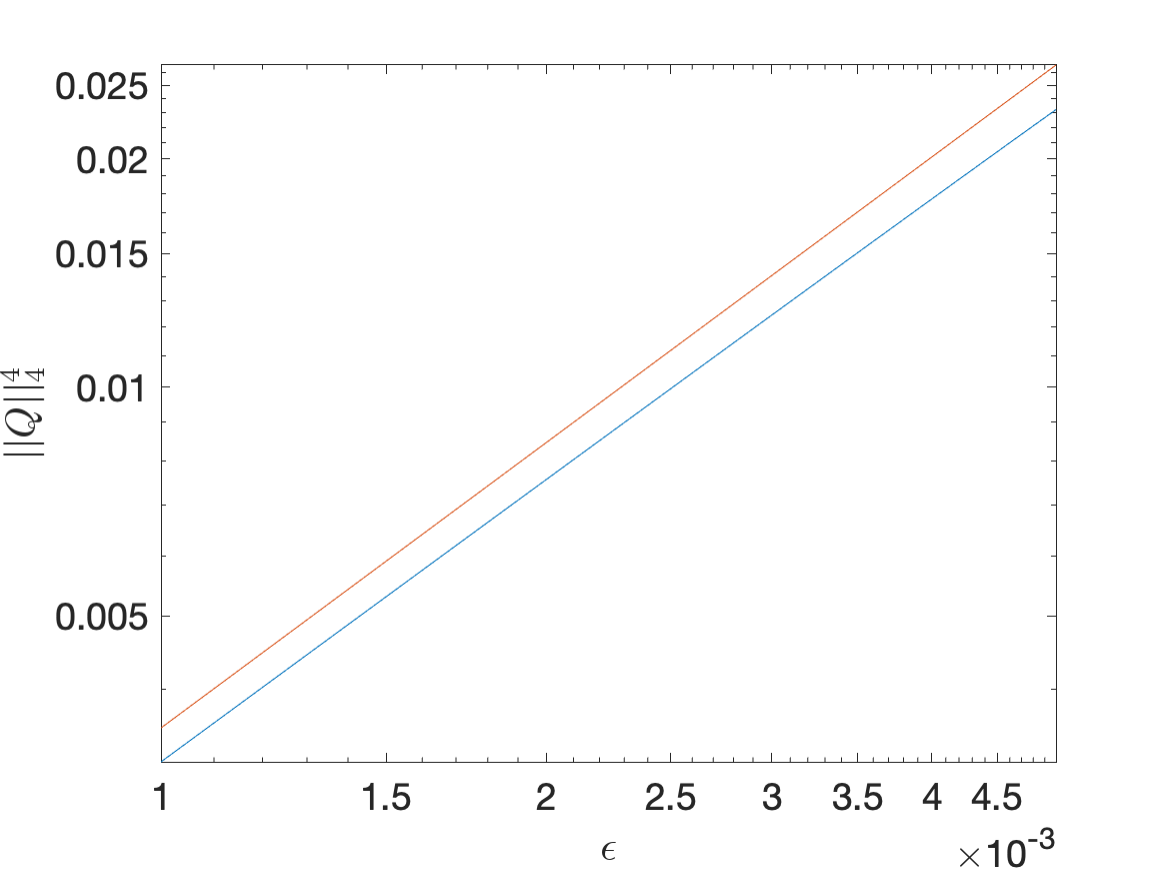}
\includegraphics[width=.32\hsize]{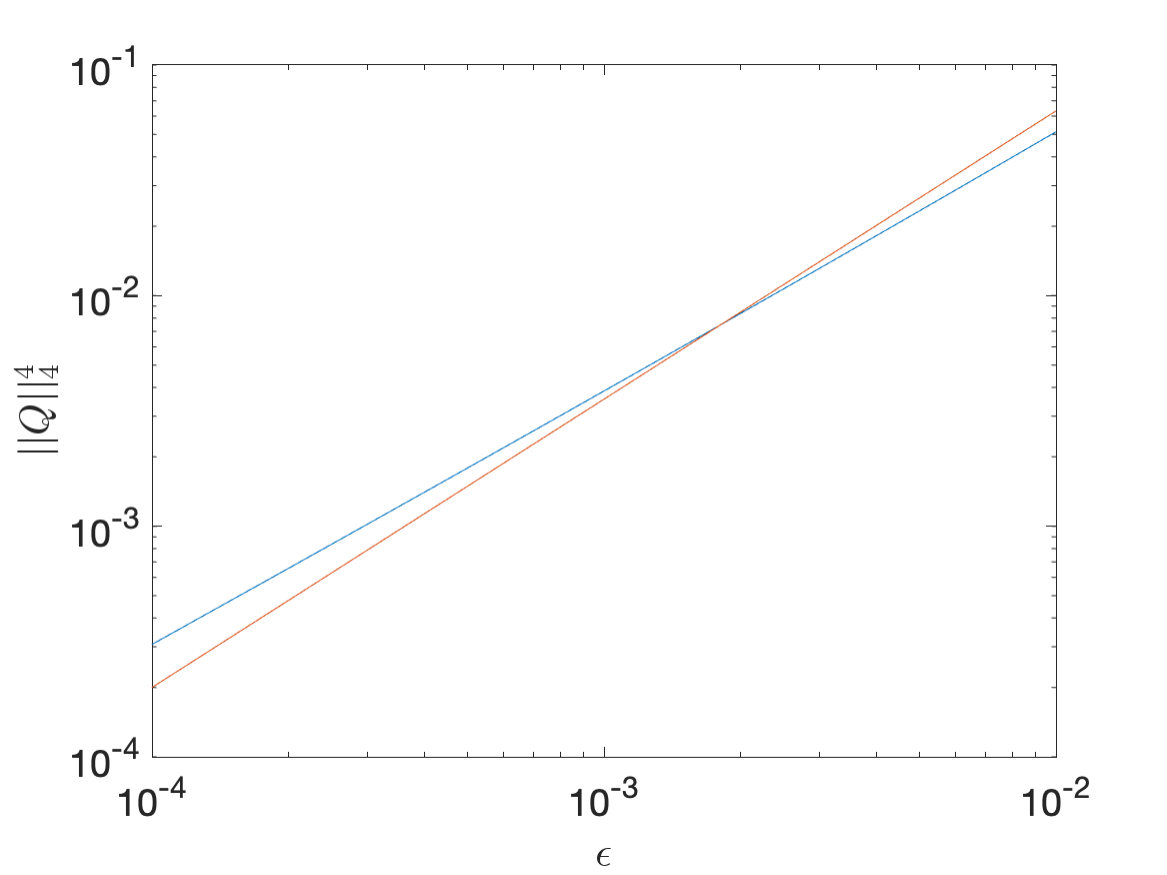}
\includegraphics[width=.32\hsize]{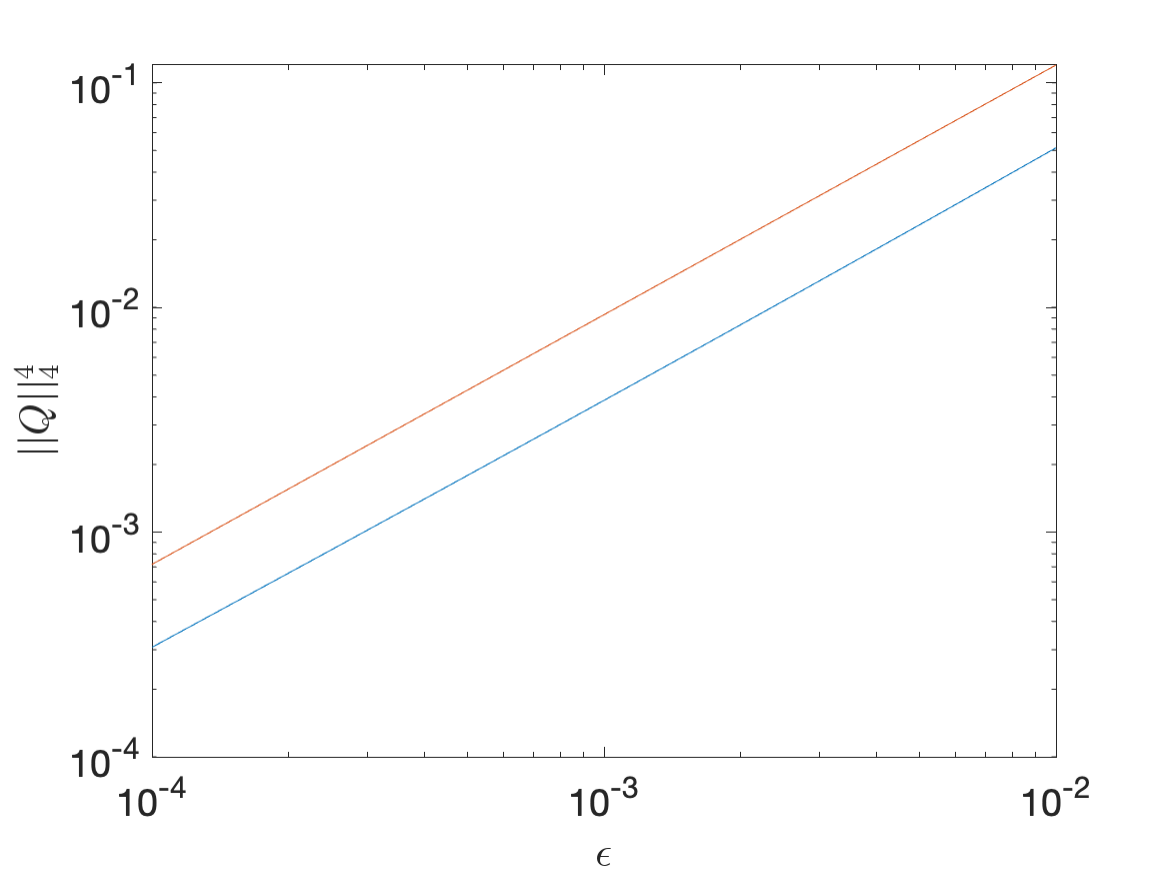}
\caption{\footnotesize Logarithmic scale comparison of $\|Q\|_{L^4}^{4}$ (blue) and $C\,\epsilon^{\frac54}$ (red) for the nonradial (left) and radial (middle) solutions to the equation \eqref{E:2dGS}, $\alpha=2$; comparison of $\|Q\|_{L^4}^{4}$ (blue) and $C\,\epsilon^{\sigma}$, $\sigma =1+\frac{1}{9}$, 
for the radial (red) solutions (right); here, $C=20$ is used for visualization.
}
\label{F:loglog-1}
\end{figure}

Figure~\ref{F:loglog-1} shows comparison on the logarithmic scale of the values $\|Q_\ep\|^4_{L^4} \equiv 4 S_{1+\ep}(Q_\ep)$ in dependence of $\ep$. On the left plot the action of the nonradial branch is compared to the asymptotic law $\ep^{\frac54}$ (for $\ep \to 0^+$) as proved in \cite[Theorem 1.3]{LW2021} and reformulated for the action $S$ in \eqref{E:nr-3}, there we take $\ep \in [0.001,0.005]$. The two lines appear to be parallel, indicating the correct power of the $\ep$-dependence, $\ep^{5/4}$. On the middle plot, the action of the radial branch is compared to the same asymptotic law $\ep^{\frac54}$ for $\ep \in [0.0001,0.01]$, where we had to take smaller $\ep$ to obtain the radial solutions of comparable action. Observe that the lines intersect, in other words, have different slopes, indicating that the asymptotic behavior of the radial branch for small $\ep$ is different from the nonradial rate with power $\frac54$. On the right plot of Figure \ref{F:loglog-1} we compare the radial branch action with $\ep^{1+\frac19}$, obtaining visually parallel lines, thus, indicating a different rate of convergence for radial solutions as $\ep \to 0^+$. We note that at these values of $\epsilon$, this rate is computationally comparable to a possible logarithmic correction rate, 
see Remark \ref{R:rates} and Appendix \ref{A:sub-formal-rad}, especially \eqref{E:S-asym-rad-2d}.

Thus, for $\alpha=2$, the two diagnostics agree: the mass-energy diagram comparison selects the nonradial branch only in the small-$\epsilon$ regime, while the action (equivalently, the potential norm) follows the predicted nonradial $\epsilon^{5/4}$ rate. 


\section{Quartic nonlinearity}
\label{S:quartic}

We continue the numerical construction of ground states in 2D to the 
equation \eqref{E:2dGS}, and in this section we consider the quartic nonlinearity, $\alpha=3$. This power also belongs to the range covered by  Theorem \ref{Thm1} (or \cite[Theorem 1.2]{LW2021}), which predicts nonradial least-action ground states for sufficiently small $\epsilon>0$. 
In our construction of the ground states, as in the cubic case, we track energy vs. mass to identify the lower energy branches (and connect that with least-action solutions). In this case of quartic nonlinearity we find that there can be {\it unstable} and {\it stable} branches in the mass-energy diagrams. 
This is consistent with the appearance of branching observed numerically in 1D in our previous work \cite{KPRS} for $\alpha=4, 6, 8$, where two branches of ground states appear in the mass-energy plots; see also the related discussion of normalized ground states for the bi-NLS in \cite{SP2020, BCGJ2019, FJMM2022}. This in fact influences what should be termed by a ground state solution, which we discuss in more detail in \S \ref{S:alpha3-nonradial}.

As in the cubic case, we first investigate angular mode solutions as it gives a testbed for our computations and then try to find nonradial solutions to the equation \eqref{E:2dGS} by testing Knapp-type examples or using the known solutions for the $\alpha=2$ nonlinearity as an initial guess or continuously tracing the nonradial branch from $\alpha=2$.

\subsection{Radial and angular mode solutions}\label{S:quartic-radial}
To compute radial solutions and other angular mode solutions, we use the ansatz \eqref{init} for the initial guess.

\begin{figure}[!htb]
\begin{subfigure}{.32\textwidth}
\includegraphics[width=1\linewidth]{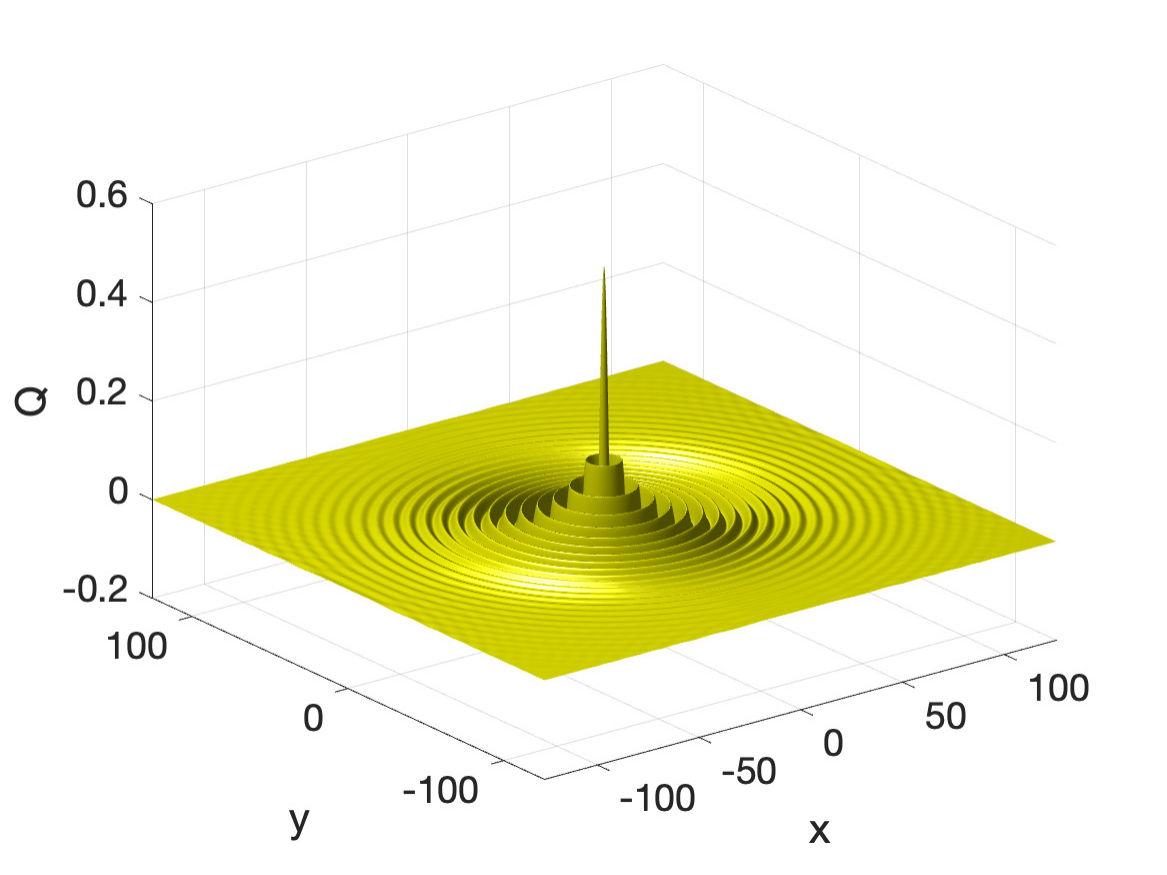}
\subcaption[]{{\footnotesize $m=0$, $b=1.005$.}}
\end{subfigure}
\begin{subfigure}{.32\textwidth}
\includegraphics[width=1\linewidth]{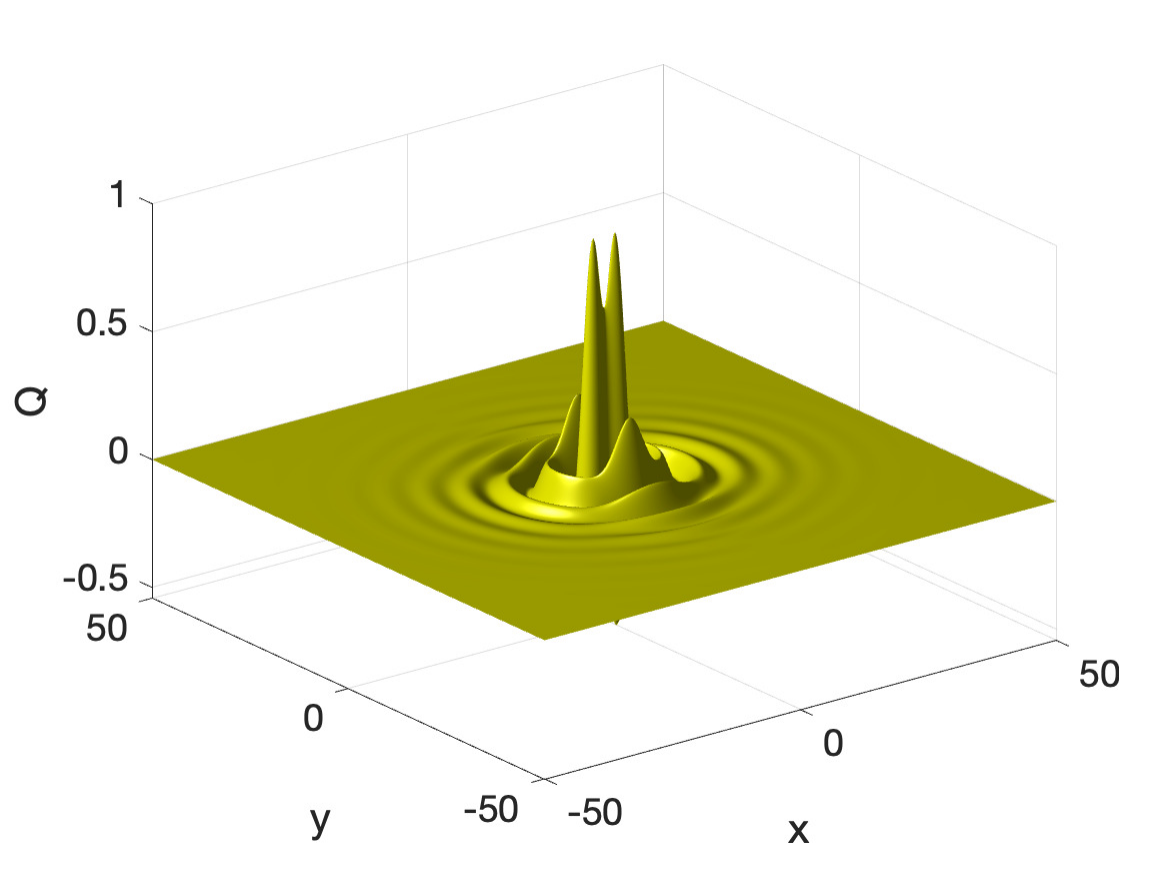}
\subcaption[]{{\footnotesize $m=2$, $b=1.02$.}}
\end{subfigure}
\begin{subfigure}{.32\textwidth}
\includegraphics[width=1\linewidth]{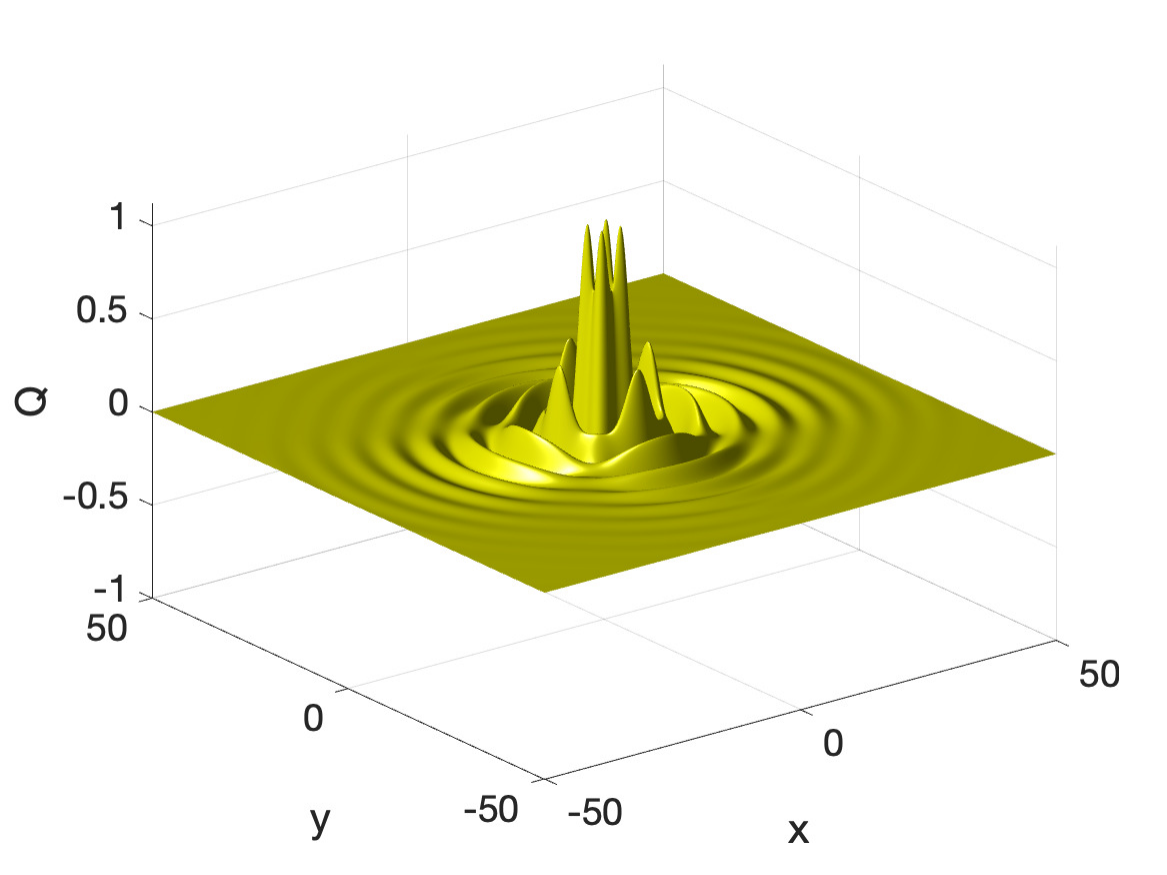}
\subcaption[]{{\footnotesize $m=4$, $b=1.05$.}}
\end{subfigure}
\caption{\footnotesize {Solutions to the equation \eqref{E:2dGS}, $\alpha=3$, with angular modes $m=0, 2, 4$.}}
\label{F:4ab1005}
\end{figure}
\begin{figure}[!htb]
\begin{subfigure}{.32\textwidth}
\includegraphics[width=1\linewidth,height=0.67\linewidth]{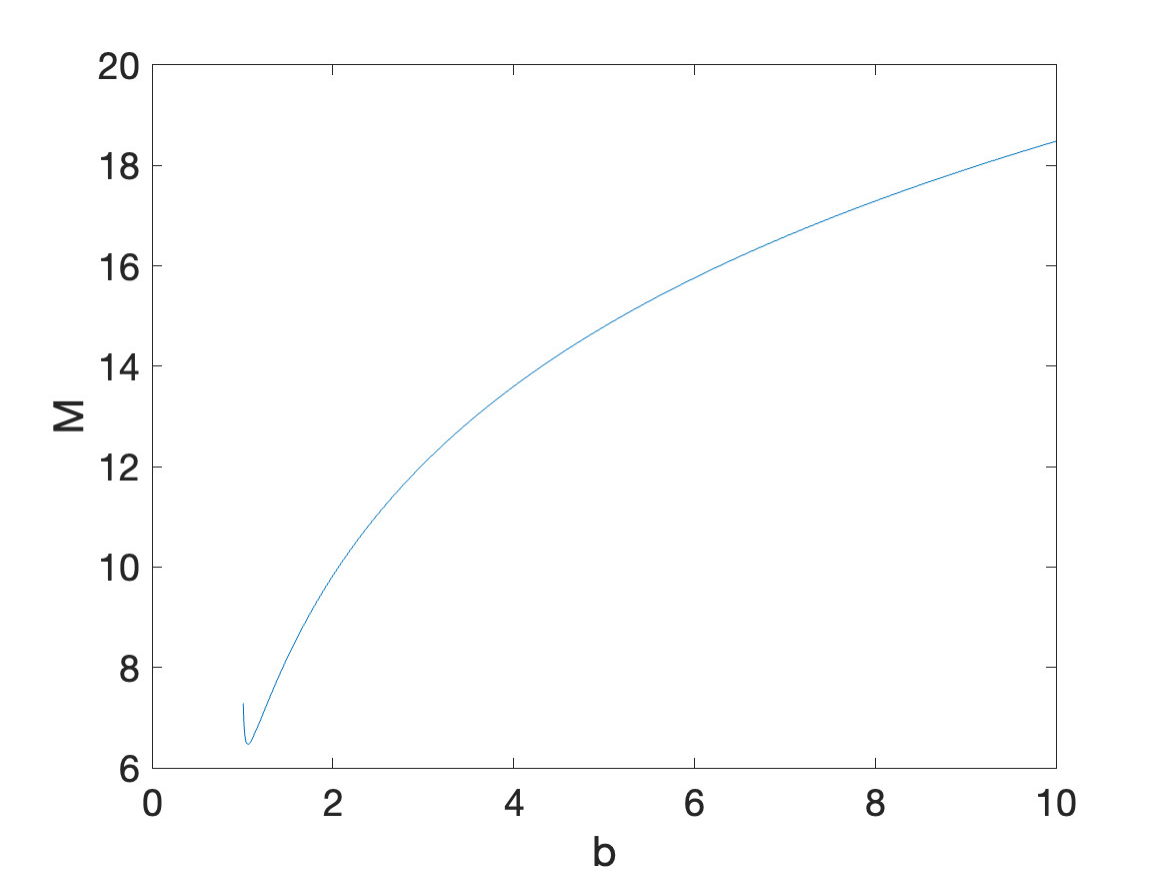}
\subcaption[]{{\footnotesize {$m=0$, $M(Q_\ep) = M(b)$}}}
\end{subfigure}
\begin{subfigure}{.32\textwidth}
\includegraphics[width=1\linewidth,height=0.67\linewidth]{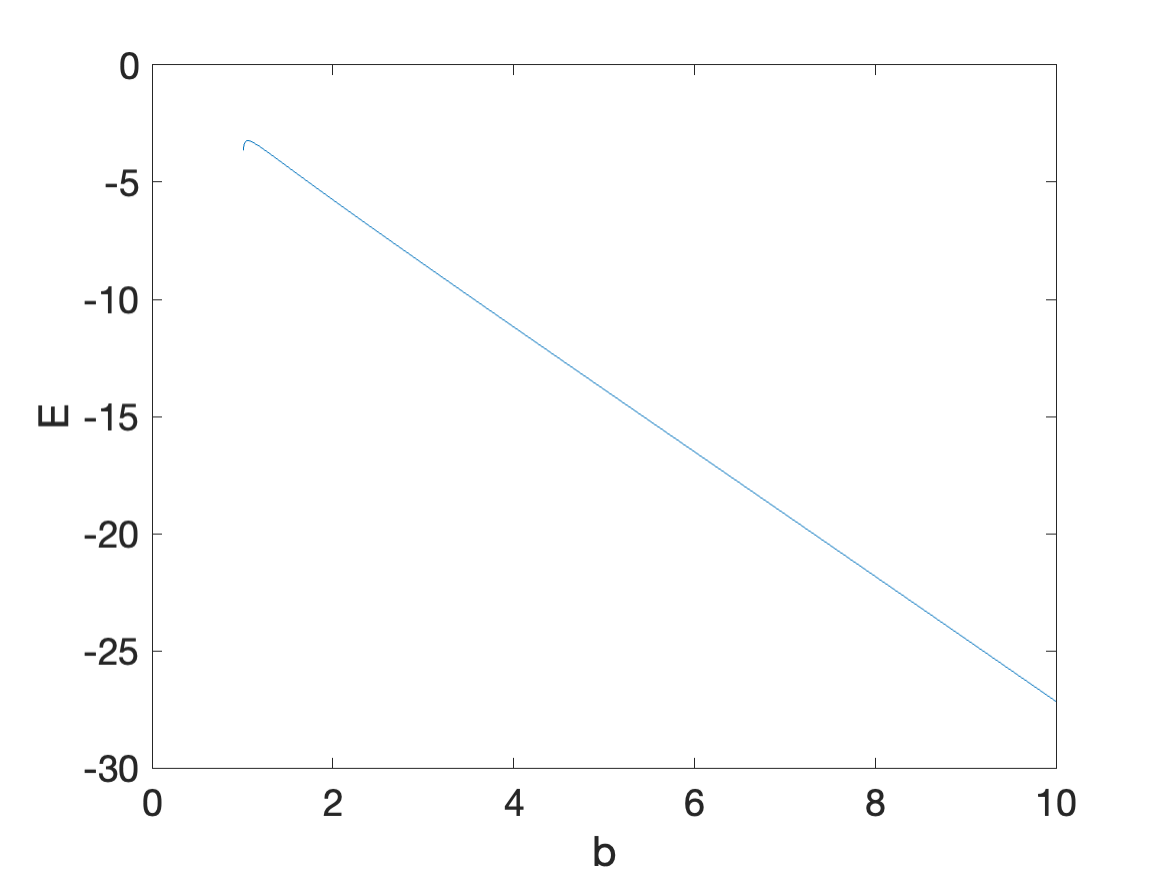}
\subcaption[]{{\footnotesize $E(Q_\ep)$ as function of $b$}}
\end{subfigure}
\begin{subfigure}{.32\textwidth}
\includegraphics[width=1\linewidth,height=0.67\linewidth]{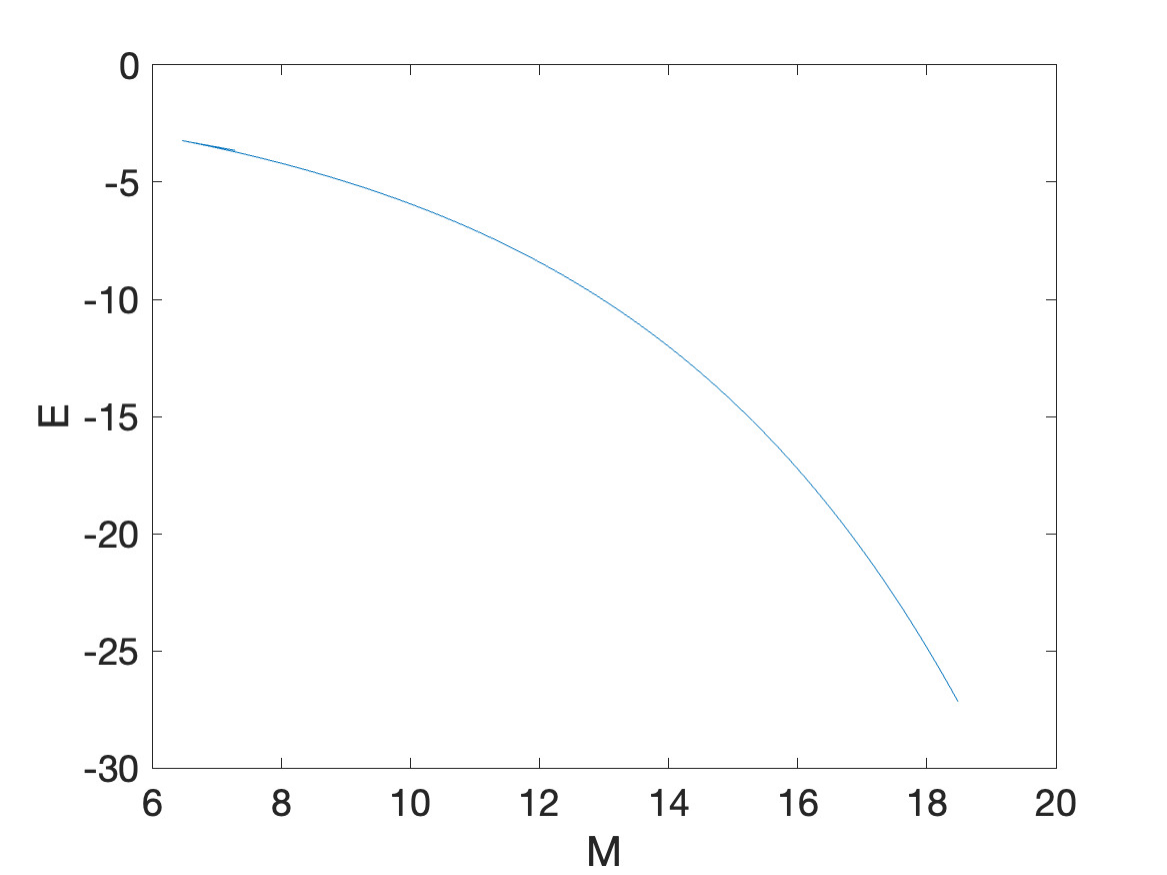}
\subcaption[]{{\footnotesize $E = E(M)$}}
\end{subfigure}\\
\begin{subfigure}{.32\textwidth}
\includegraphics[width=1\linewidth,height=0.67\linewidth]{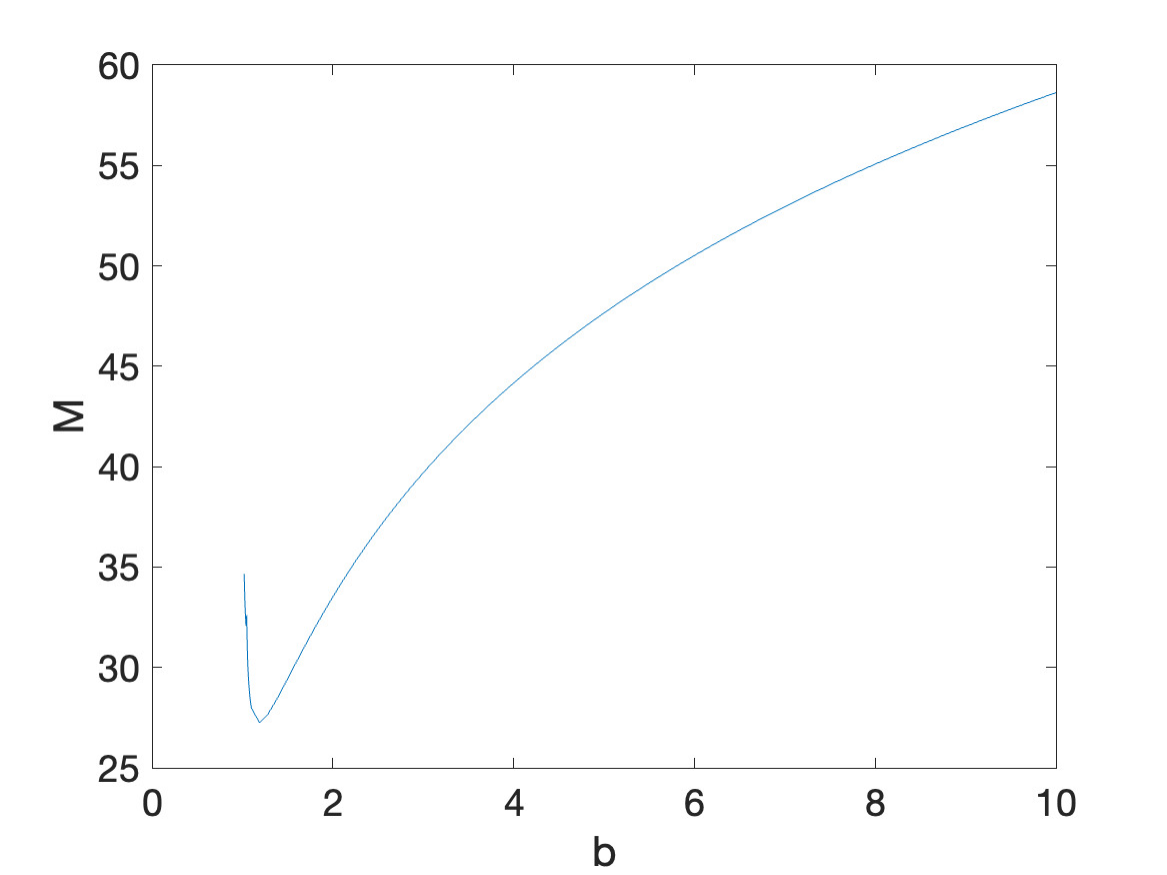}
\subcaption[]{{\footnotesize {$m=2$, $M(Q_\ep) = M(b)$}}}
\end{subfigure}
\begin{subfigure}{.32\textwidth}
\includegraphics[width=1\linewidth,height=0.67\linewidth]{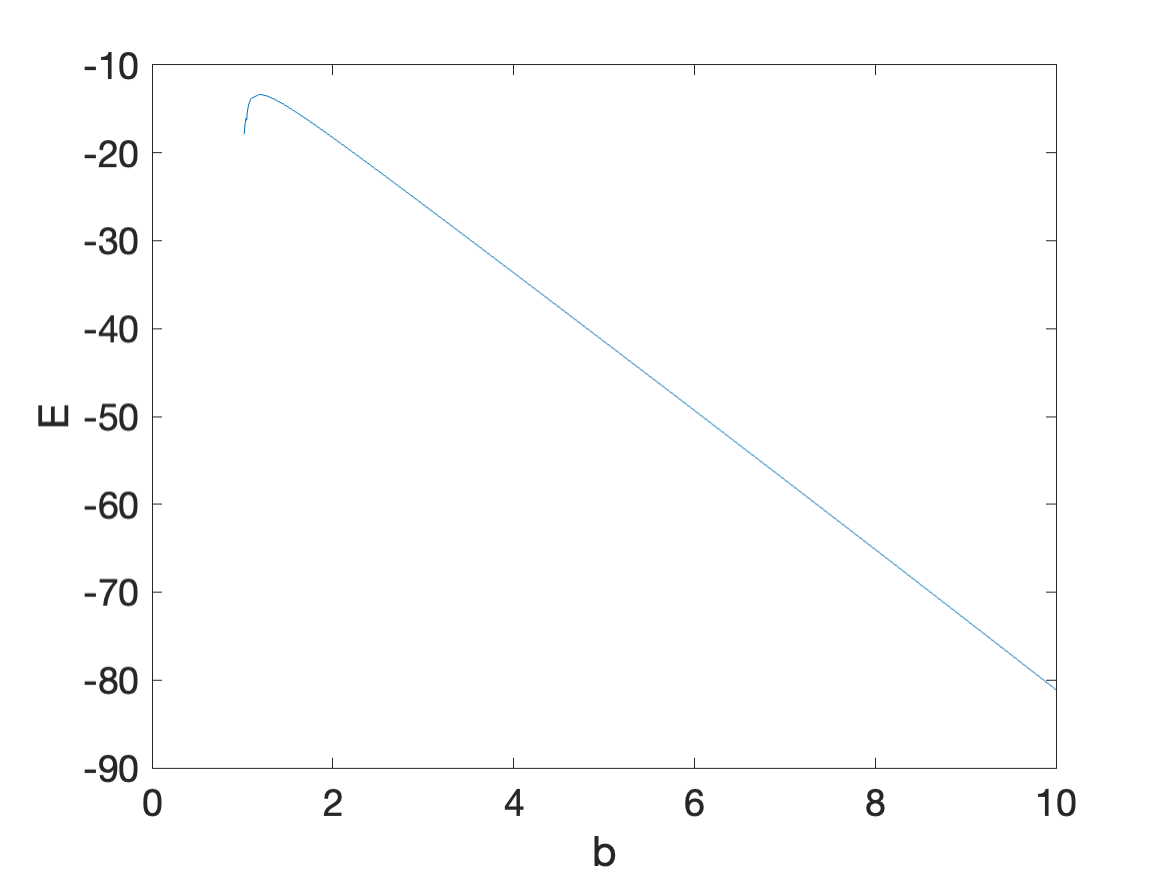}
\subcaption[]{{\footnotesize $E(Q_\ep)$ as function of $b$}}
\end{subfigure}
\begin{subfigure}{.32\textwidth}
\includegraphics[width=1\linewidth,height=0.67\linewidth]{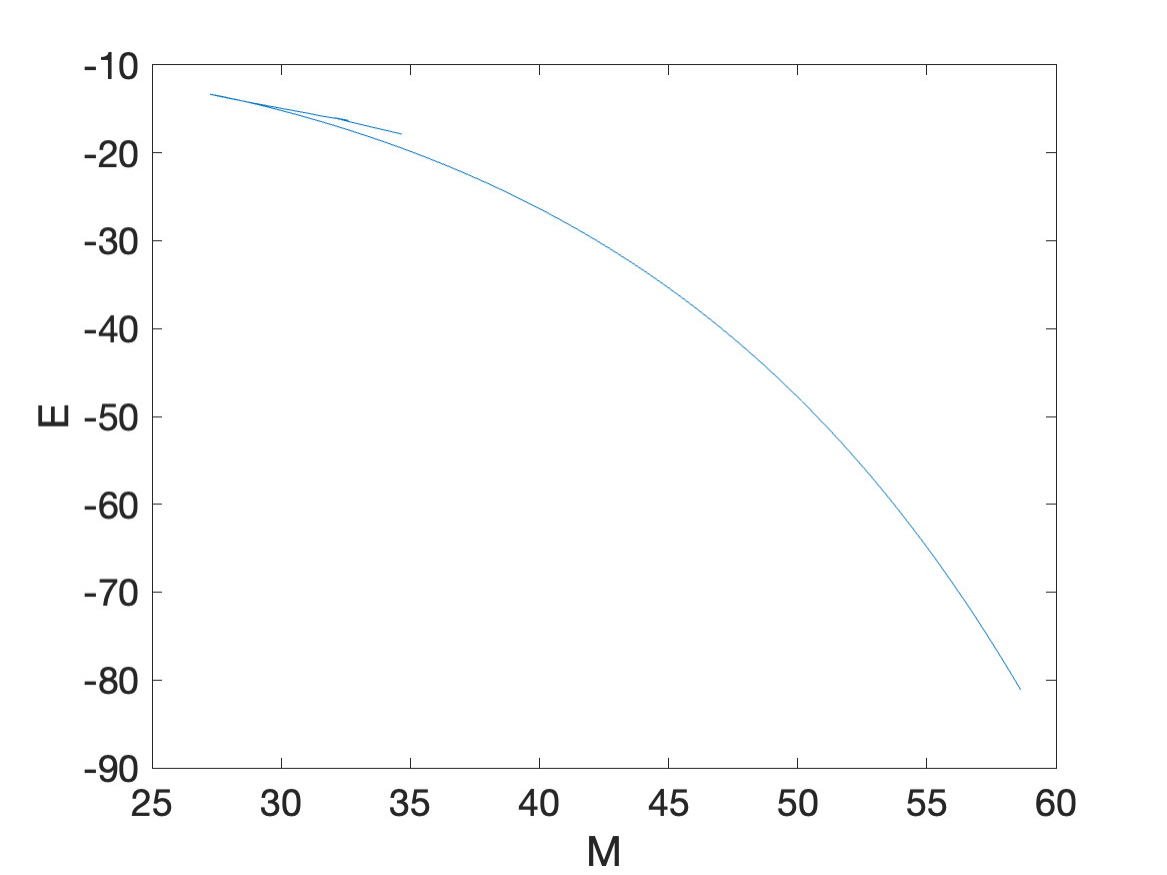}
\subcaption[]{{\footnotesize $E = E(M)$}}
\end{subfigure}\\
\begin{subfigure}{.32\textwidth}
\includegraphics[width=1\linewidth,height=0.67\linewidth]{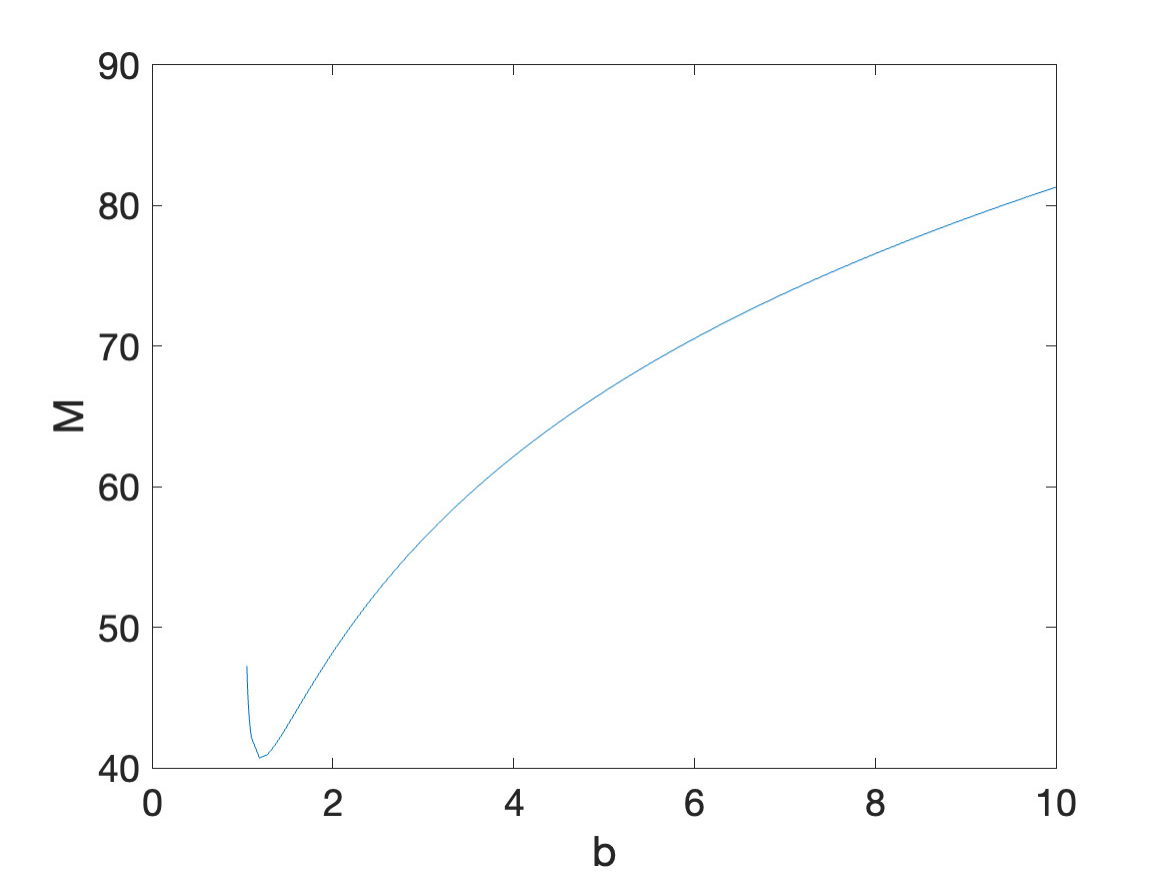}
\subcaption[]{{\footnotesize {$m=4$, $M(Q_\ep) = M(b)$}}}
\end{subfigure}
\begin{subfigure}{.32\textwidth}
\includegraphics[width=1\linewidth,height=0.67\linewidth]{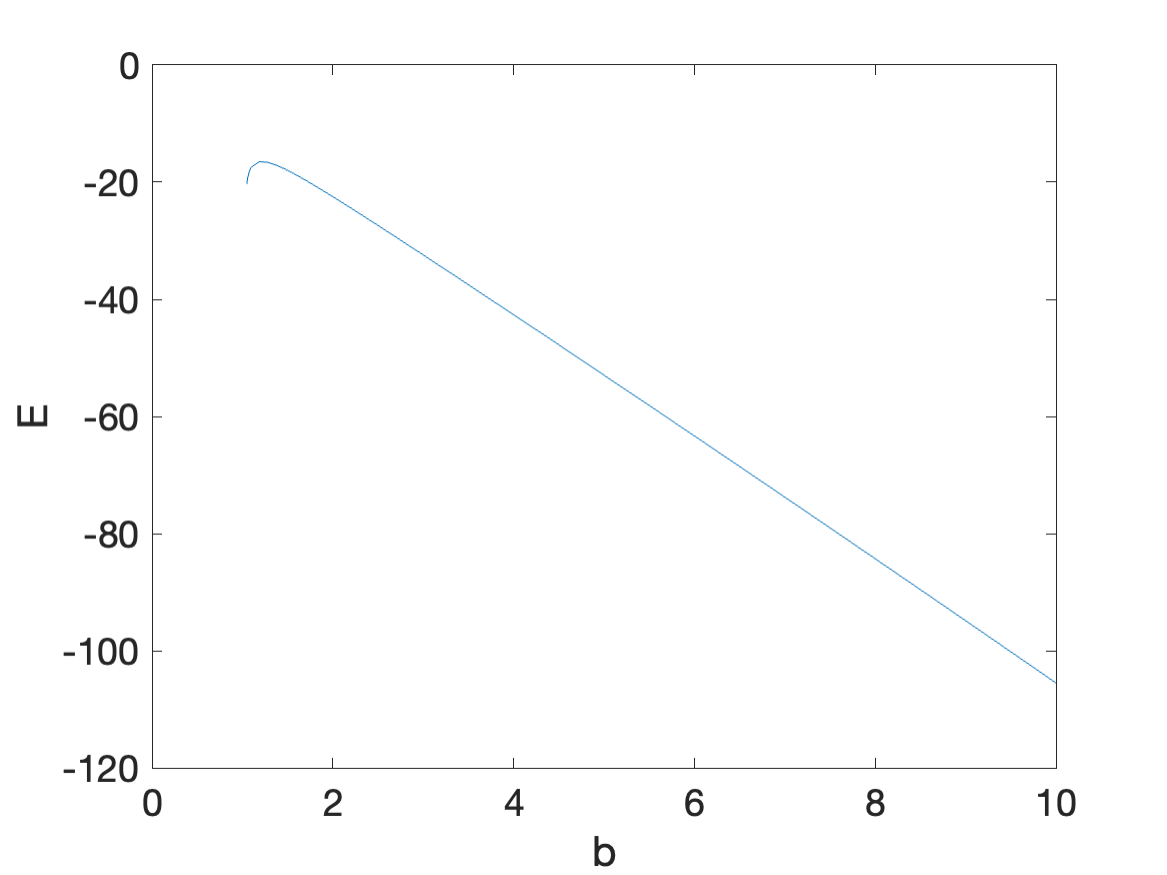}
\subcaption[]{{\footnotesize $E(Q_\ep)$ as function of $b$}}
\end{subfigure}
\begin{subfigure}{.32\textwidth}
\includegraphics[width=1\linewidth,height=0.67\linewidth]{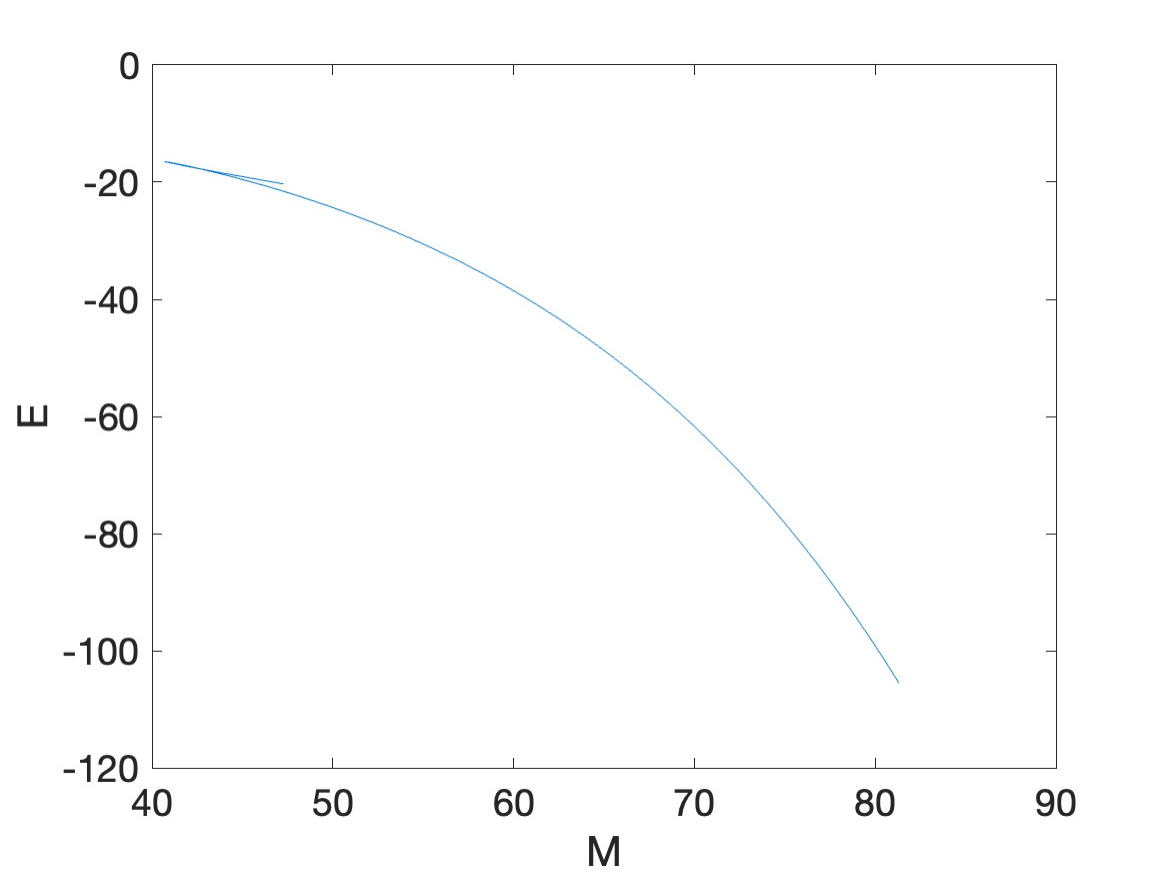}
\subcaption[]{{\footnotesize $E = E(M)$}}
\end{subfigure}
\caption{\footnotesize {Dependence in the quartic case $\alpha=3$ for $m=0$ (top), $m=2$ (middle), $m=4$ (bottom), of $M(Q_\ep)$ and $E(Q_\ep)$ on the parameter $b$ (left and middle columns). Dependence of energy as a function of mass, $E = E(M)$ (right column).}}
\label{F:4a}
\end{figure}

In the radially symmetric case, $m=0$, we show the solution for $b=1.005$ on the left of Figure \ref{F:4ab1005}. In the case $m=2$, we can reach values of $b=1.02$, this solution is shown in the middle of the same figure.
The solution with the angular mode $m=4$ for $b=1.05$ is shown on the right of Figure \ref{F:4ab1005}. 

The mass, energy and their interdependence of the angular solutions are shown in Figure \ref{F:4a}. 
In all of these cases due to non-monotonicity of the mass (and also energy),  
there is a stable and an unstable branch in the mass-energy dependence as can be seen in the right plots of Figure \ref{F:4a}.

\smallskip

\begin{figure}[!htb]
\includegraphics[width=0.95\hsize,height=.55\linewidth]{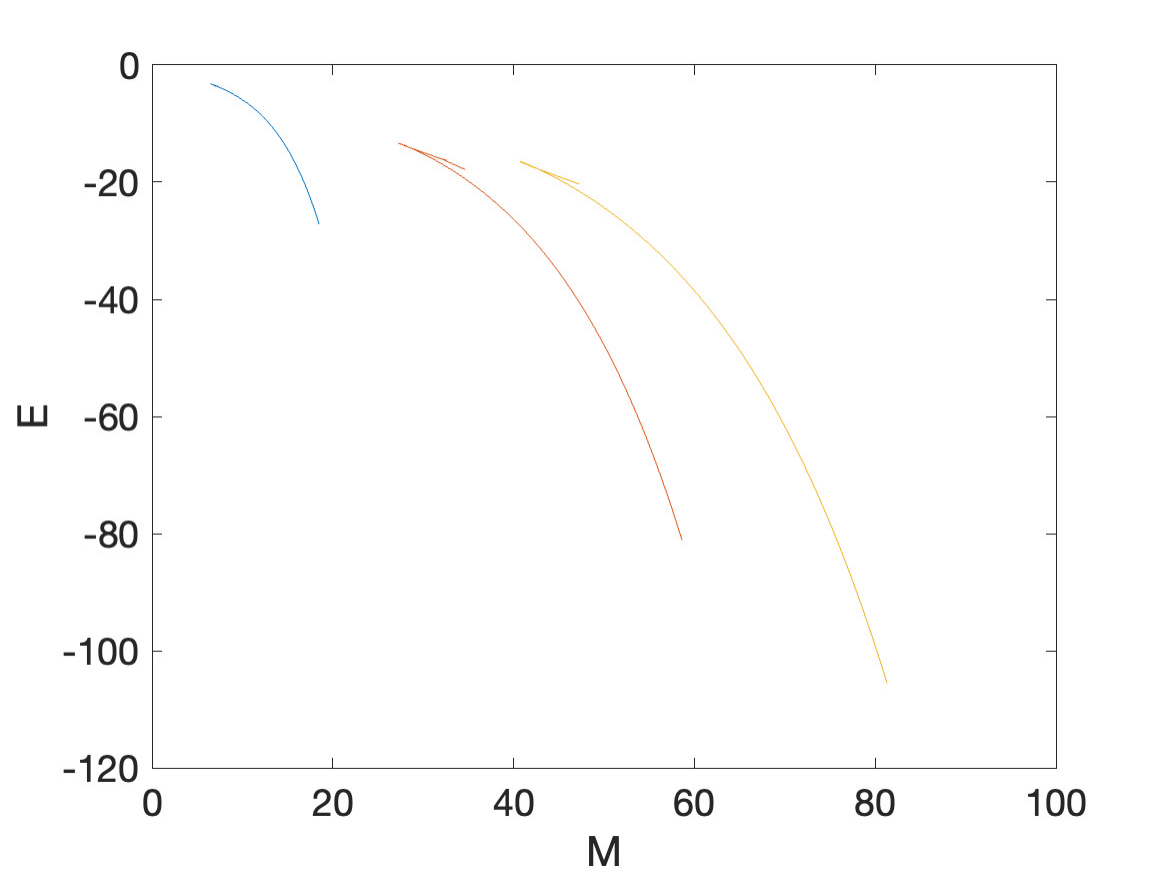}
\caption{\footnotesize Dependence of energy on mass for angular mode solutions to \eqref{E:2dGS}, $\alpha=3$: angular modes: $m=0$ radial (blue), $m=2$ (red), $m=4$ (yellow).} 
\label{F:alpha3angularME}
\end{figure}

In Figure \ref{F:alpha3angularME} for a better comparison, we show the dependence of the energy on the mass for the cases $m=0,2,4$ from Figure \ref{F:4a} (C), (F), (I) in a single plot. Observe that each curve has a lower- and an upper-energy branch. By analogy with the one-dimensional dynamics we call these a stable and an unstable branch (though we do not study the stability of branches in this work as it is beyond the scope of this paper). Among the angular-mode branches, the radial solutions seem to have the lowest energy in the computed range. 


\begin{figure}[!htb]
\begin{subfigure}{.32\textwidth}
\includegraphics[width=1\linewidth,height=0.72\linewidth]{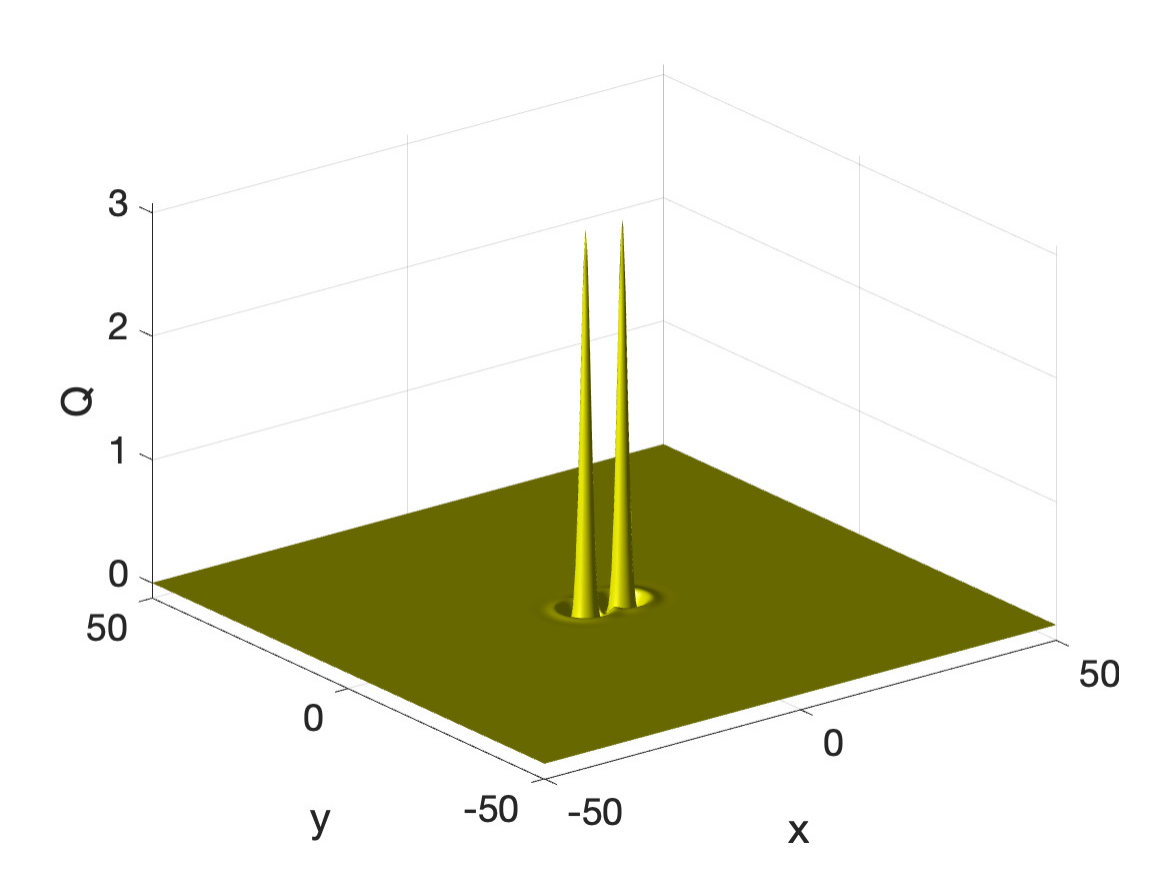}
\subcaption[]{{\footnotesize $b=10$}}
\end{subfigure}
\begin{subfigure}{.32\textwidth}
\includegraphics[width=1\linewidth,height=0.72\linewidth]{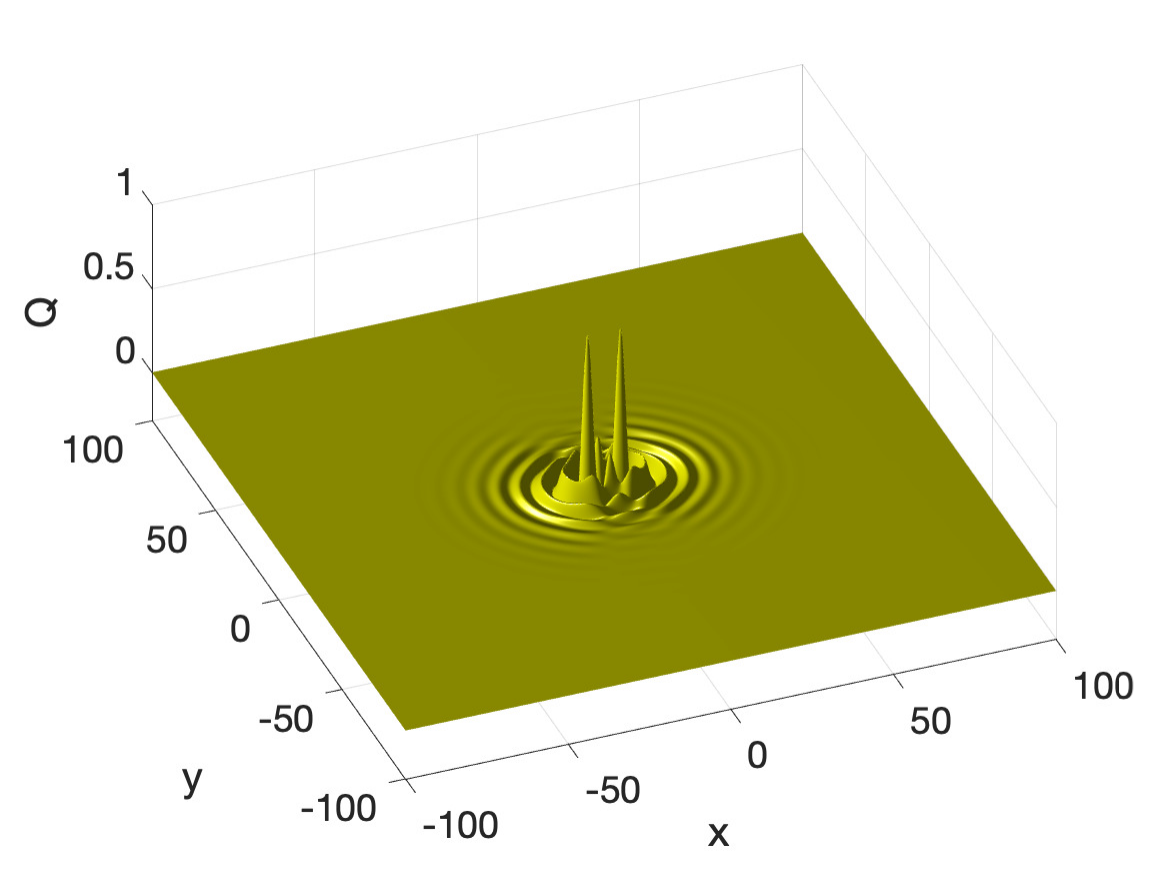}
\subcaption[]{{\footnotesize $b=1.1$}}
\end{subfigure}
\begin{subfigure}{.32\textwidth}
\includegraphics[width=1\linewidth,height=0.72\linewidth]{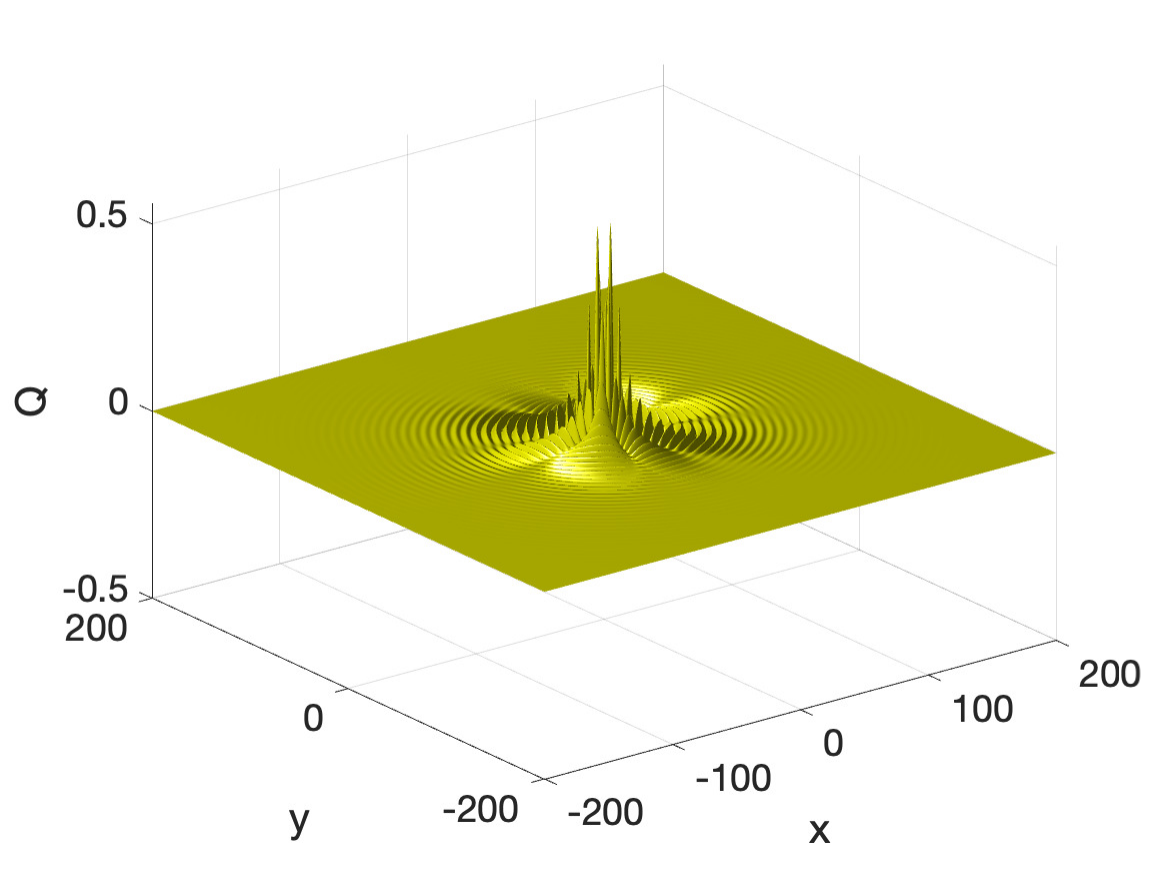}
\subcaption[]{{\footnotesize $b=1.007$.}}
\end{subfigure}\\
\begin{subfigure}{.32\textwidth}
\includegraphics[width=1\linewidth,height=0.72\linewidth]{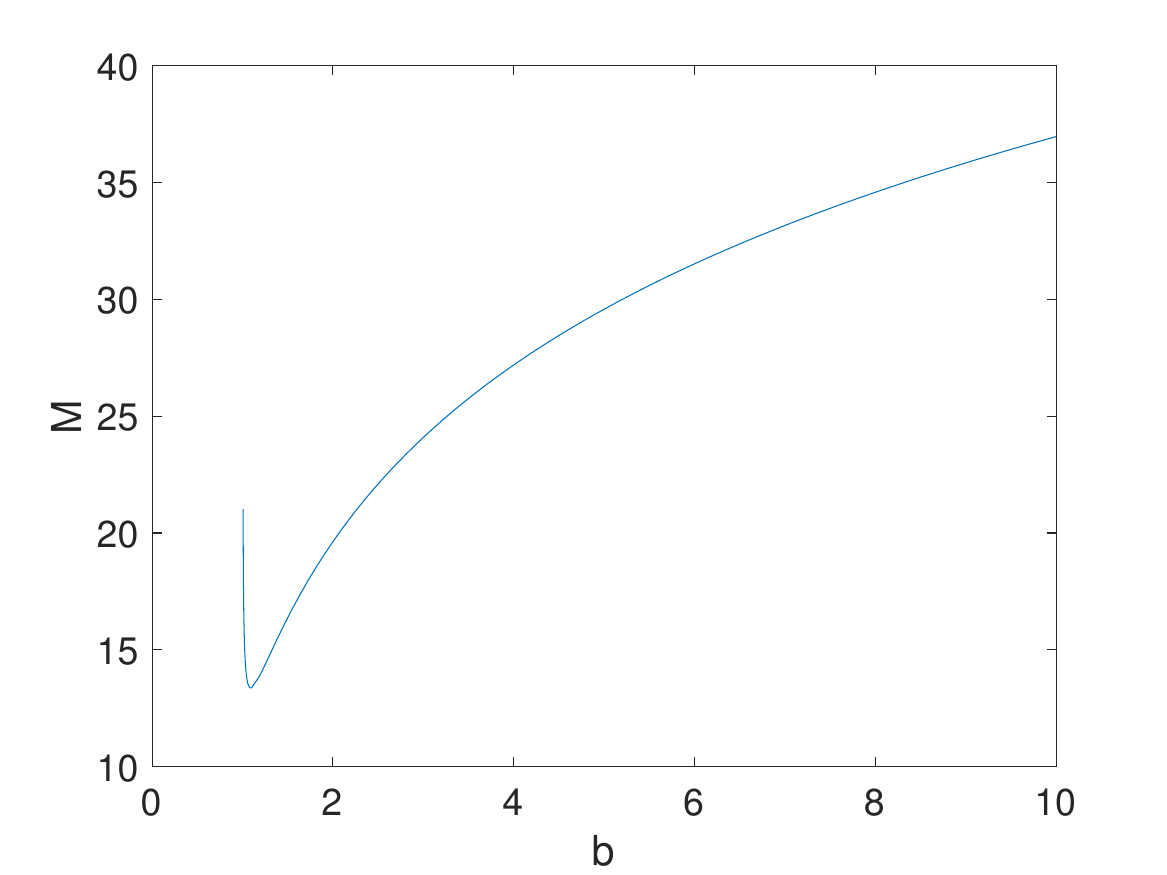}
\subcaption[]{{\footnotesize $M(Q_\ep) = M(b)$.}}
\end{subfigure}
\begin{subfigure}{.32\textwidth}
\includegraphics[width=1\linewidth,height=0.72\linewidth]{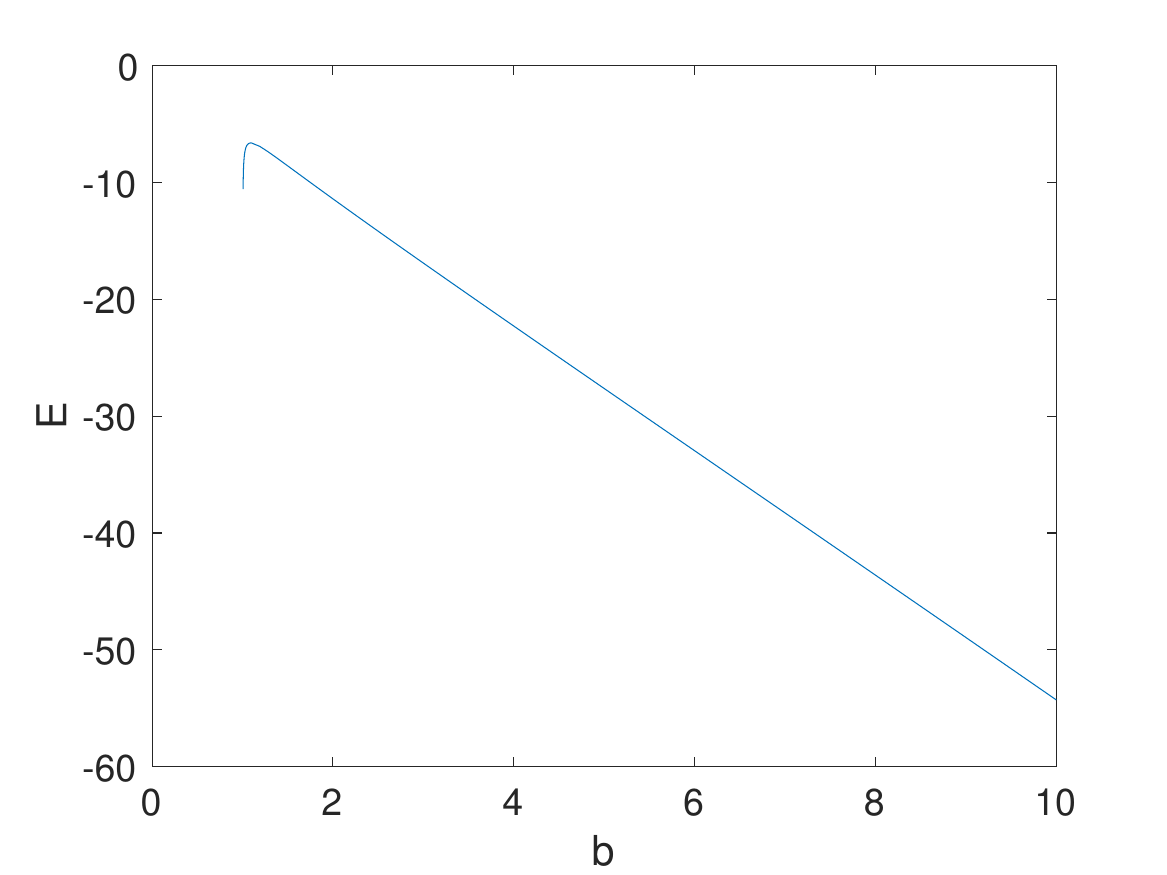}
\subcaption[]{{\footnotesize $E(Q_\ep)$ as function of $b$.}}
\end{subfigure}
\begin{subfigure}{.32\textwidth}
\includegraphics[width=1\linewidth,height=0.72\linewidth]{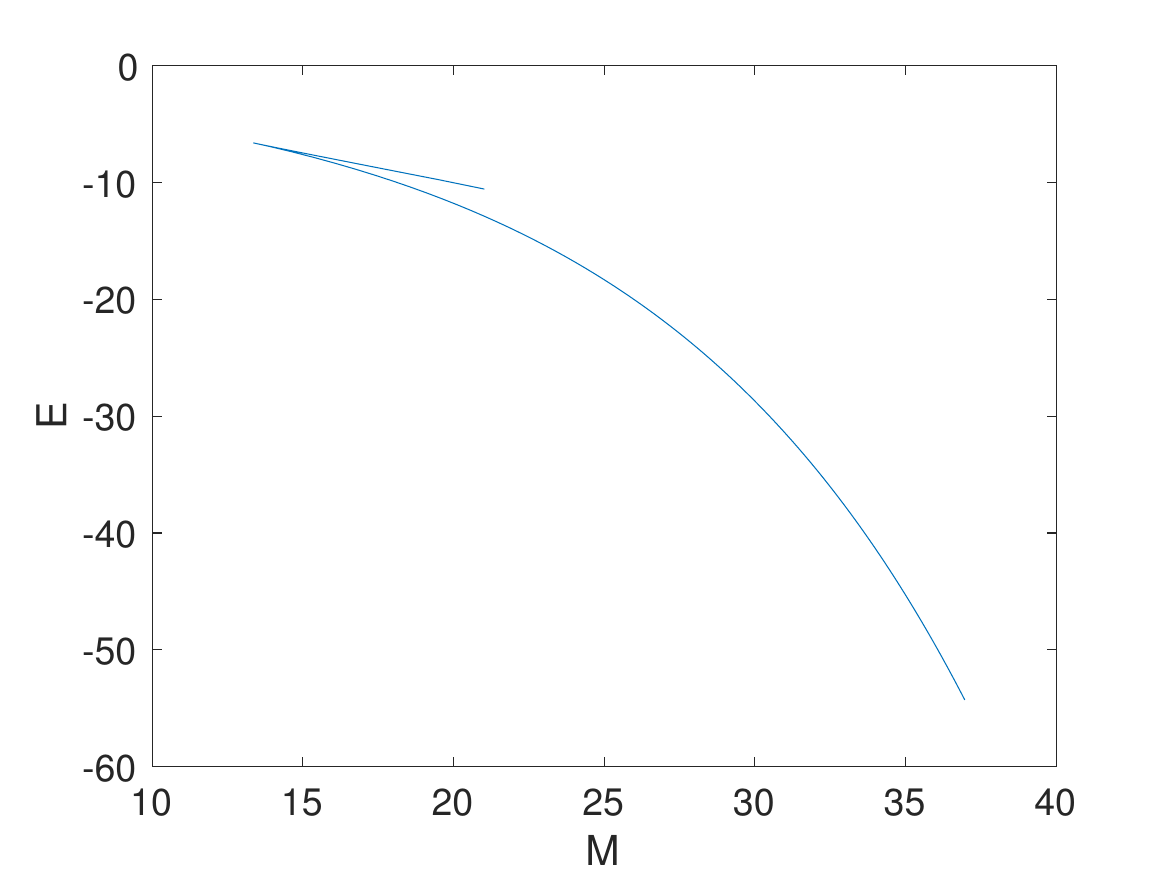}
\subcaption[]{{\footnotesize $E = E(M)$.}}
\end{subfigure}
\caption{\footnotesize {Nonradial two-peak solutions to the equation \eqref{E:2dGS} with $\alpha=3$: 
profiles for different $b$ 
(top row). Dependence of mass $M(Q_\ep)$ and energy $E(Q_\ep)$ on $b \in [1.007,10]$ (bottom left, middle), and energy as a function of mass $E=E(M)$ (bottom right).}}
\label{F:alpha3-nonradial}
\end{figure}

\smallskip

The top row of Figure ~\ref{F:alpha3-nonradial} shows the two-peak 
nonradial solutions for $b=10, 1.1$ and $1.007$. One can observe that 
the smaller $\ep$ becomes, the more features of radial symmetry it acquires (the profile develops more pronounced annular oscillations).

\subsection{Nonradial solutions}\label{S:alpha3-nonradial}
To construct possible nonradial solutions, we need to have a good 
initial guess. For this we either take a nonradial solution from the 
$\alpha=2$ case for sufficiently small $\ep$ (for instance, $b=1.1$ 
or $1.01$) and use it (after multiplication with a factor smaller 
than 1) as an initial iterate, or we consider a Knapp-type example. 

\subsubsection{Two-peak nonradial solutions}\label{S:2-peak}
We take $b=1.01$ and use the nonradial solution (which is an excited 
state, but provides a sufficiently close to the searched branch 
initial start) from the cubic $\alpha=2$ case as an initial iterate. 
The standard tracing technique towards smaller values of $b$ applied 
before does not really work in this example. Therefore we have to 
compute the solution for different values of $b$ from the 
corresponding solution for $\alpha=2$ as an initial iterate (after 
multiplication with a factor that essentially has to be chosen by 
hand). 

The bottom row of Figure ~\ref{F:alpha3-nonradial} shows the behavior of mass and energy as they depend on $b$ and also energy as a function of mass. Observe that monotonicity is gone (unlike the cubic cases in Section \ref{S:2d-cubic}). In fact, the mass as $\ep \to 0^+$ seems to grow unboundedly, which is the case for the quartic nonlinearity (see \eqref{E:rates-2D-nr} and Section \ref{S:rate-consequences}). This non-monotonicity creates  minimum or maximum points in the graphs of mass or energy, respectively, and a turning point in the $E(M)$ graph, thus, creating two branches of ground states. We term them as a stable (lower energy) and an unstable (higher energy) branches, consistent with our 1D findings in \cite{KPRS}. 

\smallskip

In order to determine if these nonradial solutions of the quartic equation \eqref{E:2dGS} are indeed the lowest-energy ground states, we compare them with radial solutions. 
\begin{figure}[!htb]
\includegraphics[width=0.49\hsize,height=.4\linewidth]{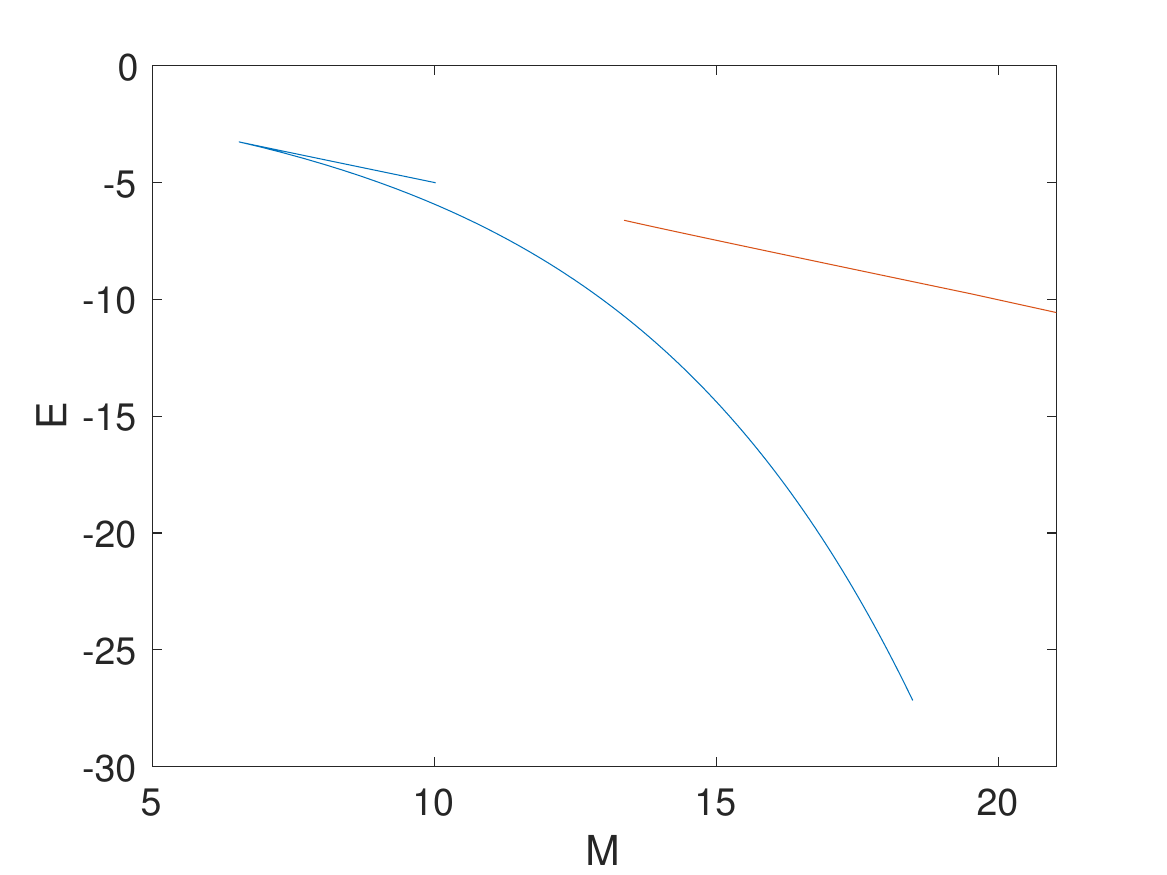}
\includegraphics[width=0.50\hsize,height=.4\linewidth]{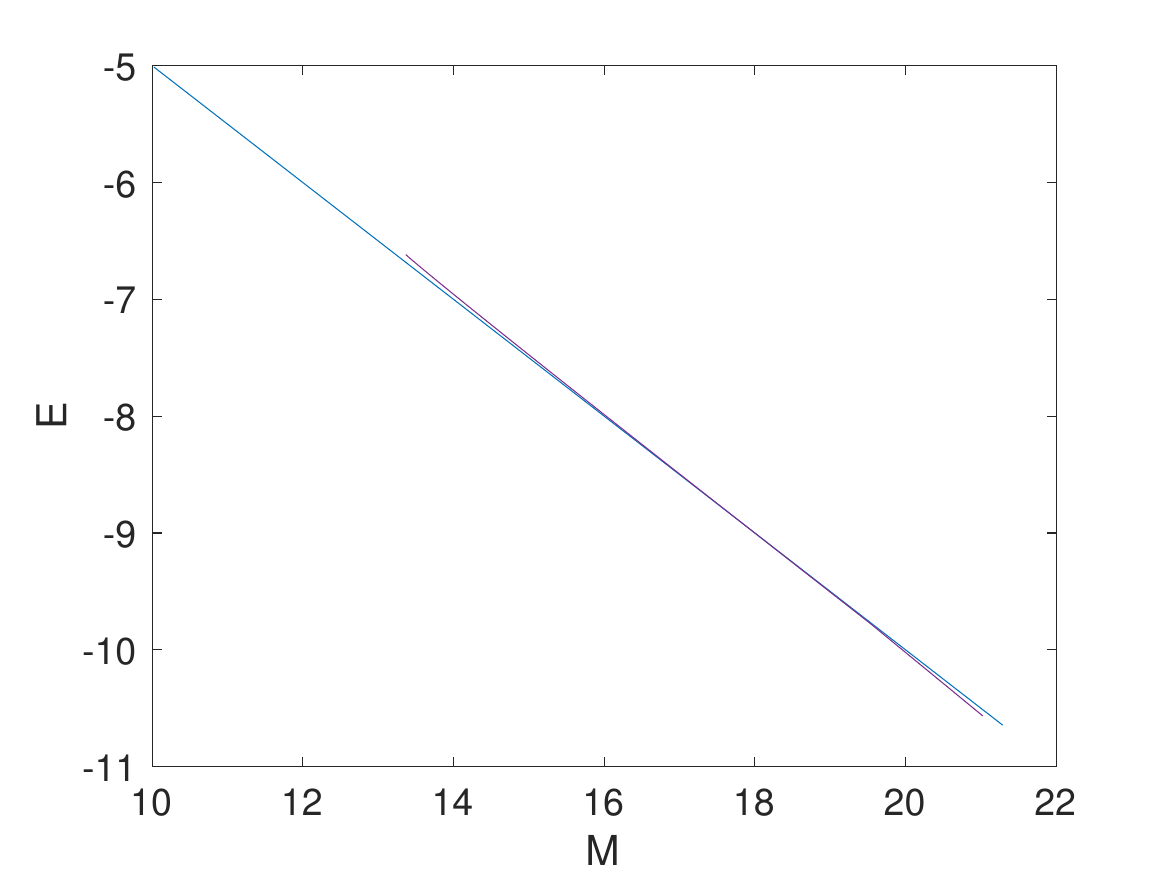}
\caption{\footnotesize Dependence of energy on mass: comparison of radial vs. two-peak nonradial solutions to \eqref{E:2dGS}, $\alpha=3$. Left: radial (blue), part of curve in (C) in Figure \ref{F:4a},  
and two-peak nonradial (red), part of curve in (F) in Figure \ref{F:alpha3-nonradial}. 
Right: Zoom-in with continuation of (unstable branch) radial 
solutions (blue) $b \in [1.00001,1.001]$ vs. nonradial (red) for $b 
\in [1.007,2]$. Note the intersection of lines:  as $\ep \to 0^+$ the 
red curve (nonradial solutions) has eventually lower energy than the 
radial solutions. Comparing with the {\it stable} branch on the left (part of the blue curve below the turning point), there are radial solutions with the same mass and lower energy (but larger $b$).} 
\label{F:alpha3-ME}
\end{figure}

On the left of Figure ~\ref{F:alpha3-ME} we plot the energy vs. mass dependence of the radial solution that we described in Section \ref{S:quartic-radial}, namely, plot (C) of Figure \ref{F:4a}, and a part of energy vs. mass dependence (relevant for the considered values of mass) of the two-peak solution from the graph in (F) of Figure \ref{F:alpha3-nonradial}. Observe the following:
\begin{itemize}
\item
The energy of the stable branch of radial solutions (blue curve in the left plot) are lower for values of mass (from around 13 to 21) than the energy of nonradial solutions (red line) with corresponding mass values.
We note that the $b$-values are significantly different on the 
displayed blue and red curves: the blue curve has $\ep (\equiv b-1)$ 
of order 1 (more precisely, it approaches 10 as the energy decreases 
down) and the red line has $\ep$ on the order of $10^{-1} \div 10^{-3}$. Nevertheless, for a fixed mass, the radial solutions from the stable branch have lower energy.  

\item
To continue the unstable branch of radial solutions (top part of the blue curve in the left plot), radial solutions had to be computed for much smaller values of $\epsilon$ to see the disposition of the blue and red curves, so that they exist close to each other. 
Thus, to compare, we compute the radial solutions down to $\ep= 
10^{-5}$ and plot them on the right of Figure \ref{F:alpha3-ME}: the 
blue line representing the unstable branch of radial solutions and 
the red line representing the nonradial two-peak solutions on its lower branch (since that curve also has lower and upper branches as can be seen in the right bottom plot (F) of Figure \ref{F:alpha3-nonradial}).  

\item 
Observe that the lines intersect in the right plot of Figure~\ref{F:alpha3-ME} with the red line becoming lower than the blue line (around the mass $M \sim 18$ and higher), implying that when $\ep \to 0^+$ the nonradial solutions have lower energy than the radial solutions on the unstable branch. This is in correspondence with the statement of Theorem \ref{Thm1} (or \cite[Theorem 1.2]{LW2021}) in the regime when $\ep \to 0^+$. 
On the other hand, 
in a global view, 
we also have radial solutions of the same mass (but different, much higher, values of $b$) with 
the lower energy, on the {\it stable} branch of radial solutions, the lower part of blue line on the left plot of Figure \ref{F:alpha3-ME}, which is decreasing down.   
Thus, this is quite an intricate situation, indicating that the definition of a {\it ground state} has to be considered carefully with the underlying context in mind. We discuss it in the next point. 

\item
For the considered quartic case $\alpha=3$, one has to distinguish (at least) two comparisons: by {\it action} and by {\it mass normalization}. 
A least-action comparison is at a fixed $b=1+\epsilon$ and the relevant quantity is the action
$$
S_b(Q)=E(Q)+\frac{b}2 M(Q).
$$
On the other hand, the normalized problem fixes the mass $M(Q)$ and compares the energy
$E(Q)$. Therefore, when two states have the same mass but correspond to
different values of $b$, the appropriate comparison is by energy, instead of by their respective actions. 

In our computations, shown in Figure \ref{F:alpha3-ME}, the two-peak nonradial
branch lies below the small-$\ep$ radial unstable branch as can be seen on the right plot for $\ep$ approaching $0^+$, thus, it is appropriate to compare actions $S_b(Q^{r})$ and $S_b(Q^{nr})$.

We note that since the mass appears to be in a similar range (near the intersection of curves) and $b = 1+ \ep \approx 1$, the energy values indicate that the least-action ground state may be nonradial (consistent with the result in \cite{LW2021}).
However, the stable radial branch, which occurs at larger values of $b$,
has lower energy at the same mass. 
\end{itemize}

Thus, our numerical evidence supports two distinct conclusions: the
{\it least-action} ground states for sufficiently small $\epsilon$ could be {\it nonradial} (supporting the results of Lenzmann-Weth \cite{LW2021}), 
whereas the normalized comparison selects the lower radial branch at the masses for which it exists, and hence, a {\it normalized} ground state at the corresponding mass (if exists) is {\it radial}.

\subsubsection{Four-peak nonradial solutions}  
The Knapp-type profile \eqref{E:Knapp-1}, which adjusted to $\alpha=3$, has the form 
\begin{equation}\label{E:Knapp-3}
Q_\epsilon(x,y)
\approx \epsilon^{1/3} W(\epsilon^{1/2} x,\epsilon^{1/4} y )\cos x,
\end{equation}
where the localized profile function can be taken to be Gaussian as before, and thus, a possible initial guess  
\begin{equation}\label{E:Knapp-3gauss}
Q^{(0)}_\epsilon (x,y) =\lambda \, \epsilon^{1/3} 
e^{ -\frac12 \left( \frac{(\epsilon^{1/2} x)^{2}}{\sigma_x^{2}} 
+\frac{(\epsilon^{1/4}y)^{2}}{\sigma_y^{2}}
\right)} 
\cos x.  
\end{equation}

For these simulations we take $L=100$ and the parameters $\sigma_x =0.5 $, $\sigma_y= 1$, $\lambda = 1$.
\begin{figure}[!htb]
\begin{subfigure}{.32\textwidth}
\includegraphics[width=1\linewidth]{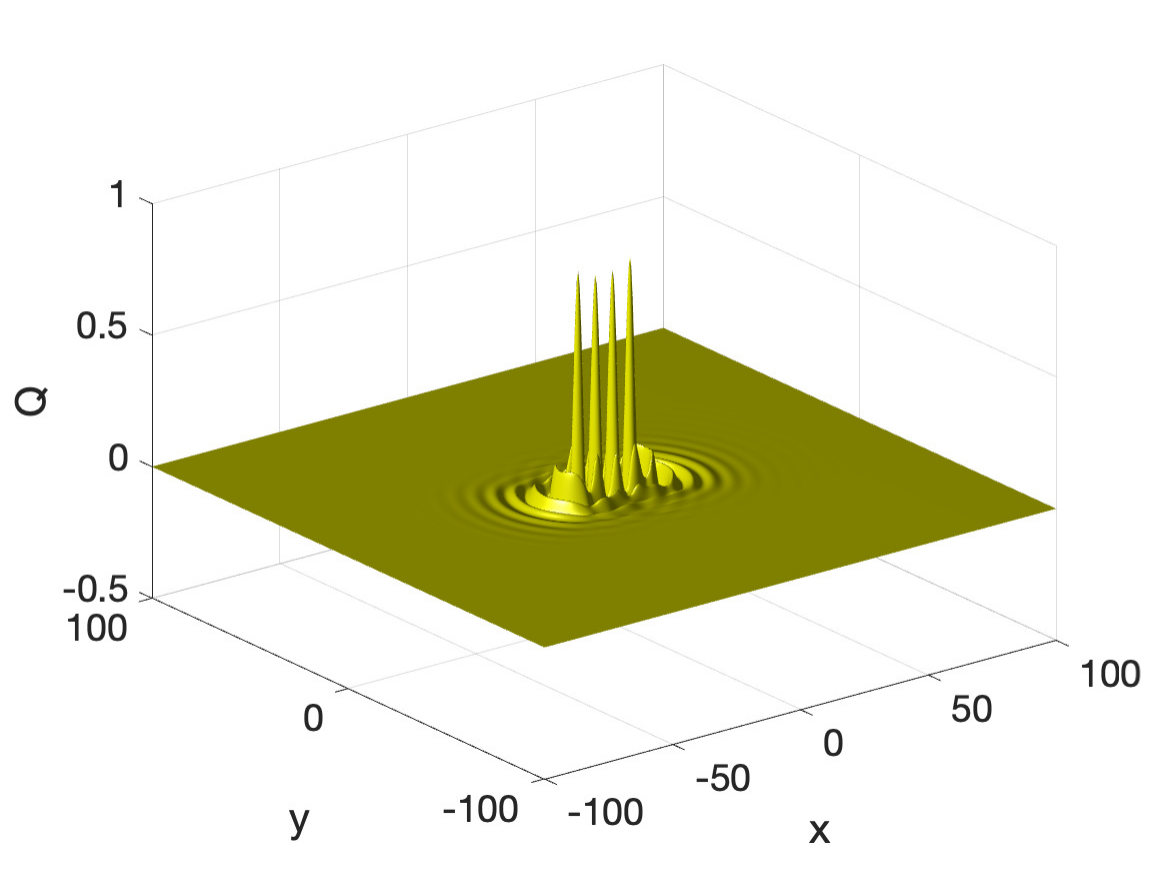}
\subcaption[]{{\footnotesize $b=1.1$.}}
\end{subfigure}
\begin{subfigure}{.32\textwidth}
\includegraphics[width=1\linewidth]{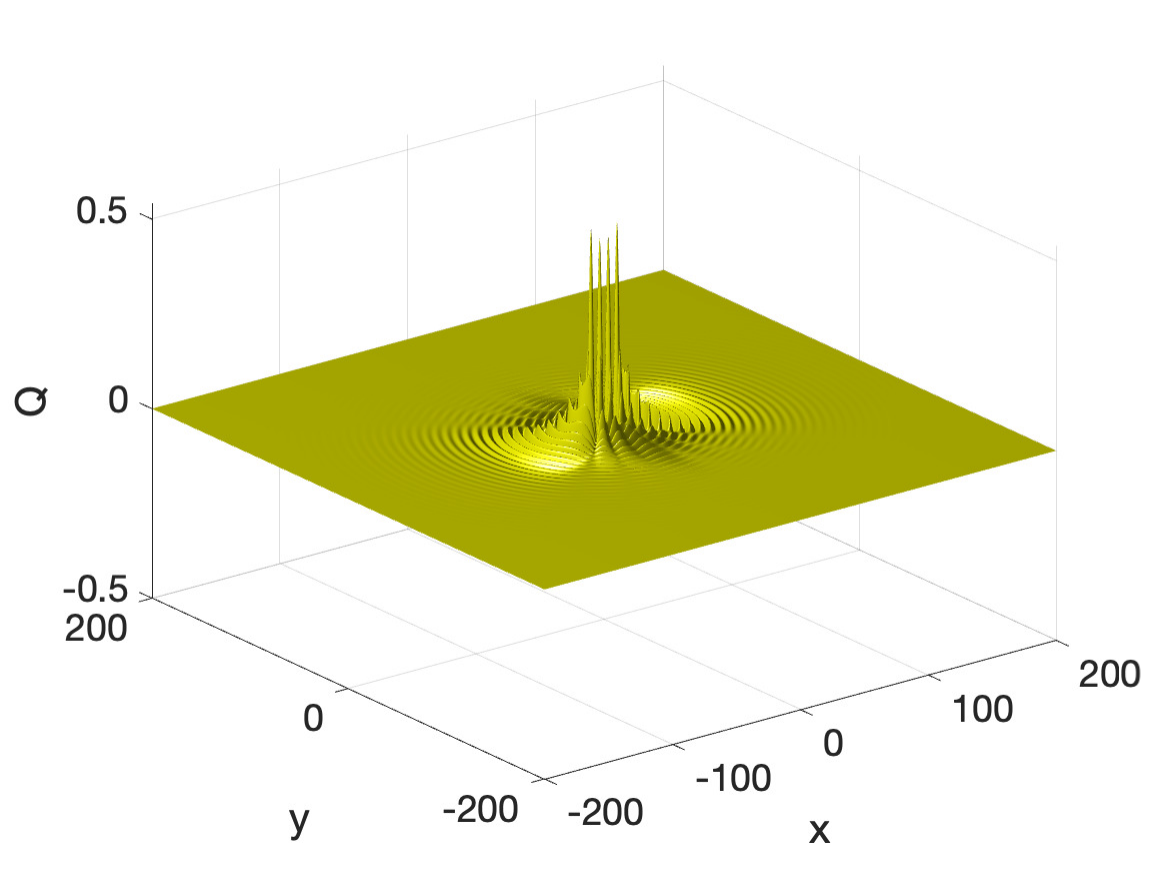}
\subcaption[]{{\footnotesize $b=1.01$.}}
\end{subfigure}
\begin{subfigure}{.32\textwidth}
\includegraphics[width=1\linewidth]{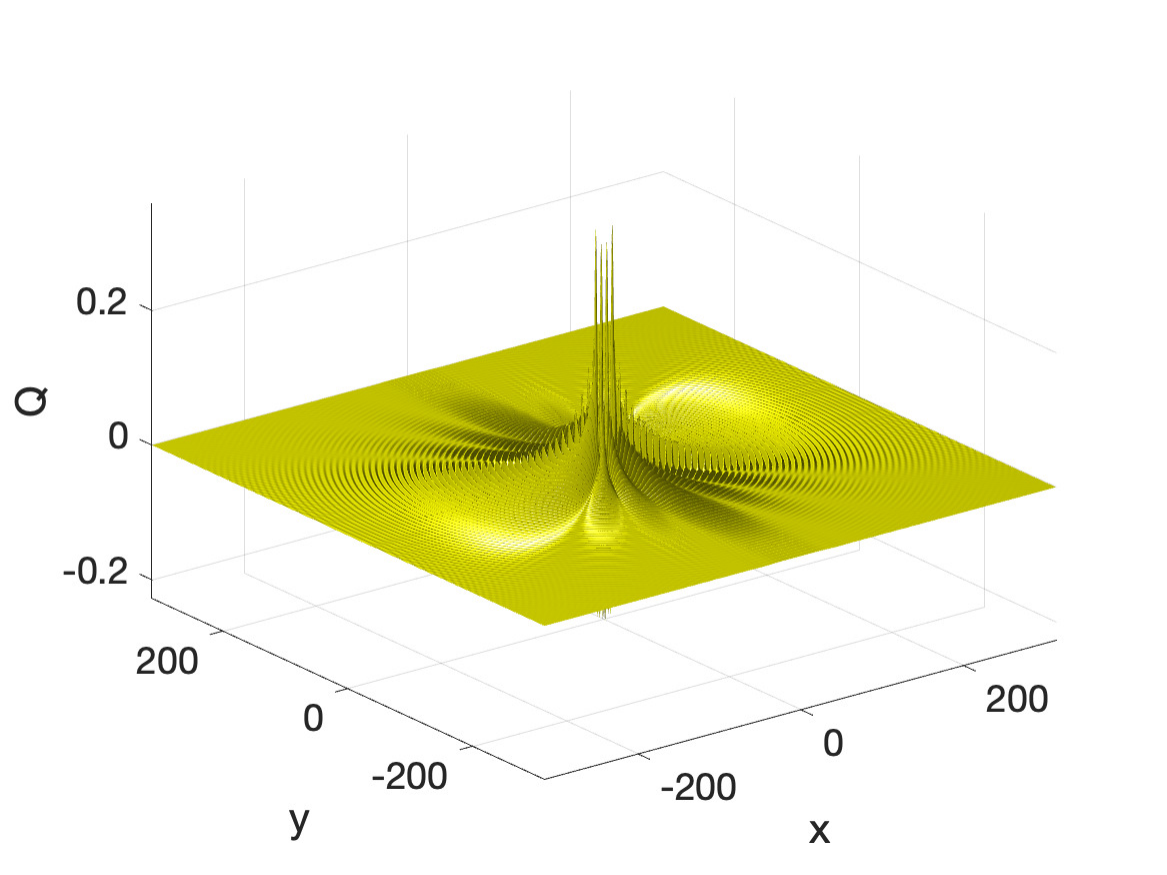}
\subcaption[]{{\footnotesize $b=1.001$.}}
\end{subfigure}\\
\begin{subfigure}{.32\textwidth}
\includegraphics[width=1\linewidth,height=0.65\linewidth]{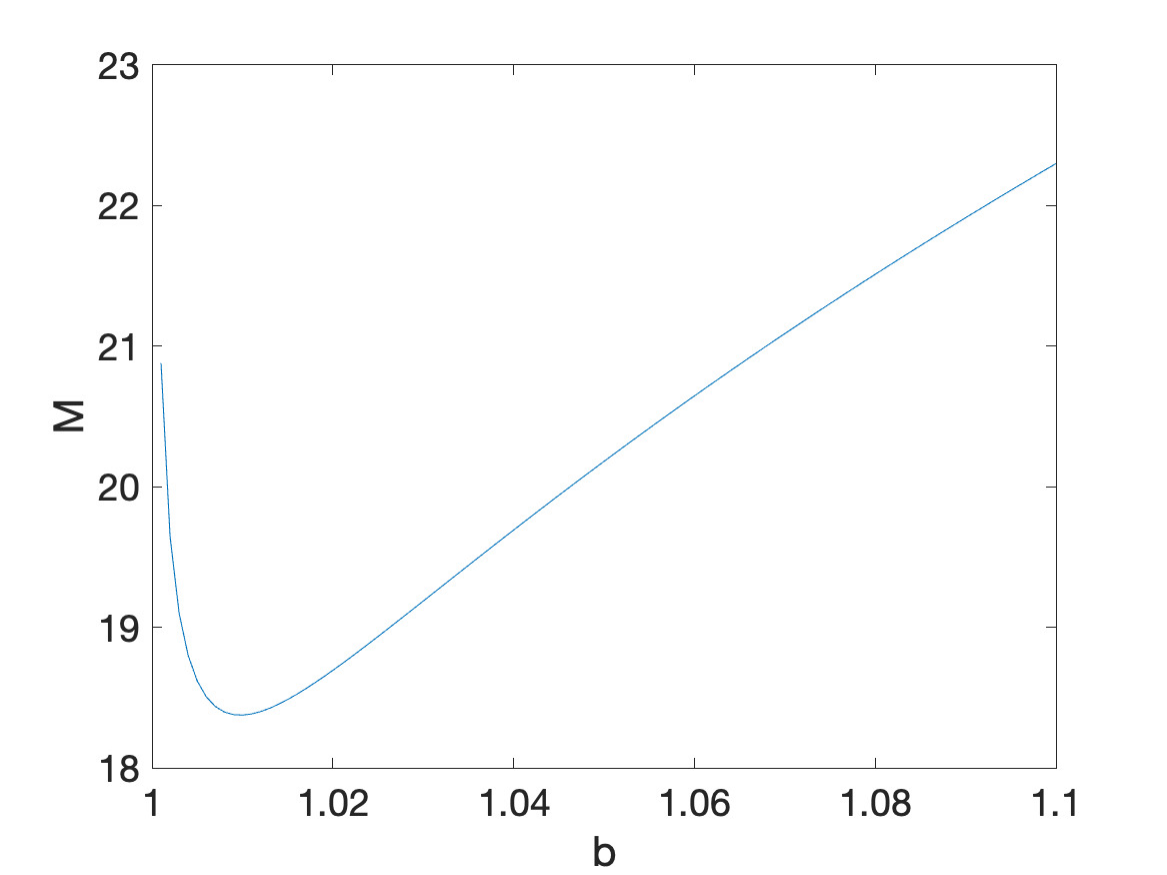}
\subcaption[]{{\footnotesize $M(Q_\ep) = M(b)$.}}
\end{subfigure}
\begin{subfigure}{.32\textwidth}
\includegraphics[width=1\linewidth,height=0.65\linewidth]{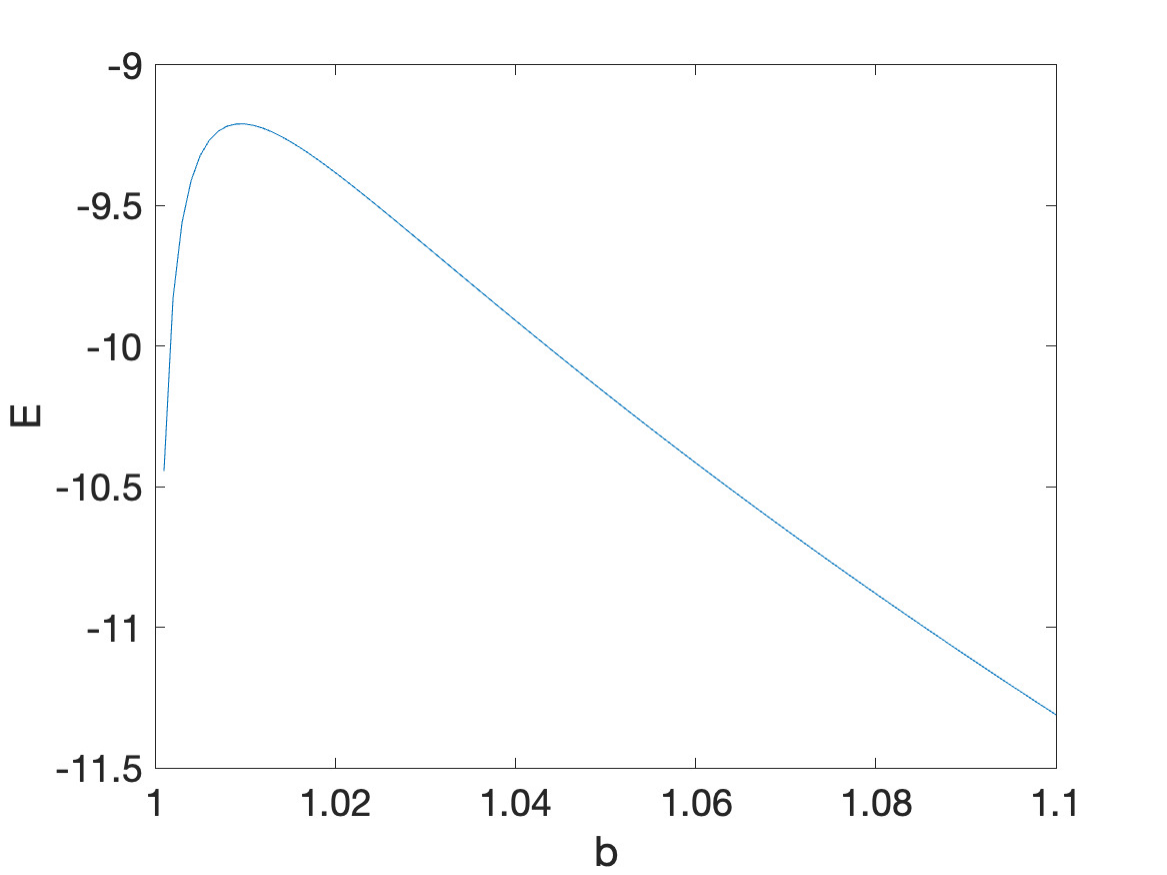}
\subcaption[]{{\footnotesize $E(Q_\ep)$ as function of $b$.}}
\end{subfigure}
\begin{subfigure}{.32\textwidth}
\includegraphics[width=1\linewidth,height=0.65\linewidth]{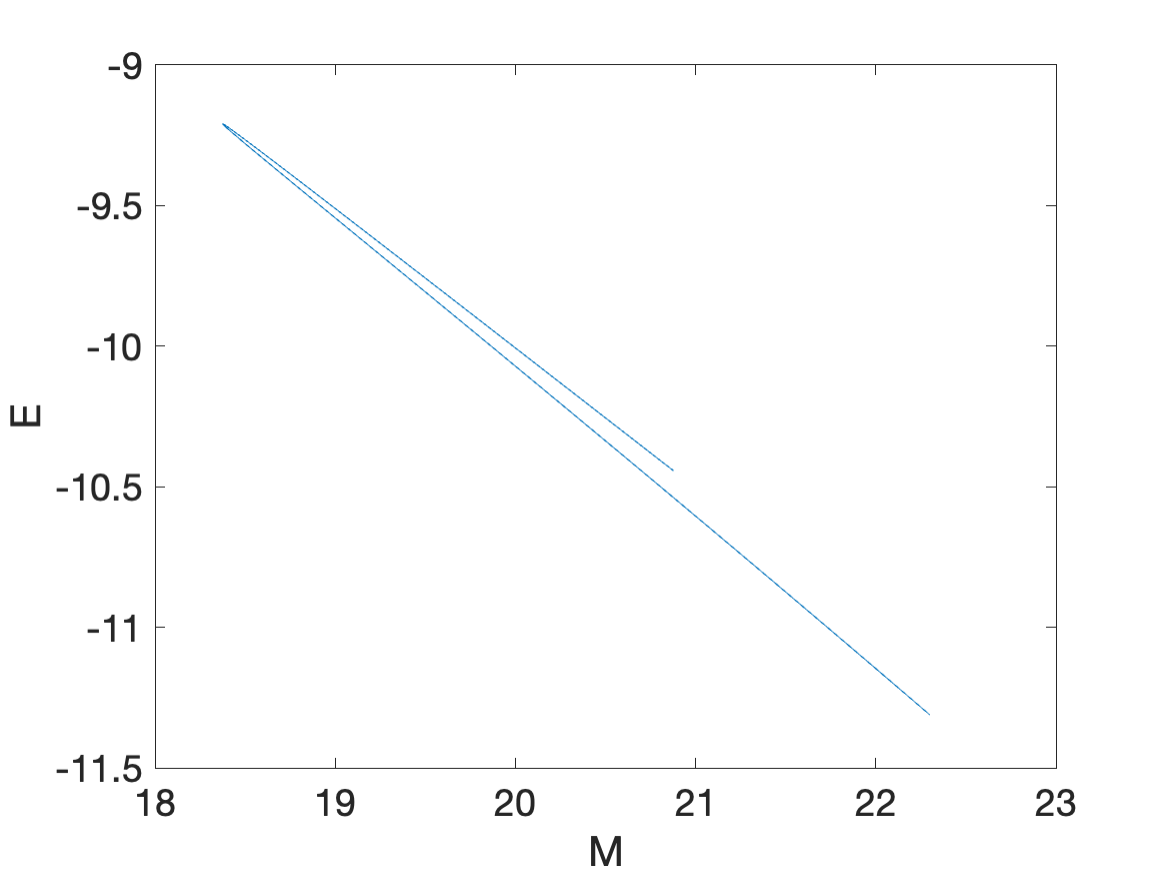}
\subcaption[]{{\footnotesize $E = E(M)$.}}
\end{subfigure}
\caption{\footnotesize {Nonradial four-peak (excited state) solutions to \eqref{E:2dGS} with $\alpha=3$:  
profiles for different $b$ (top row). Dependence of mass $M(Q_\ep)$ and energy $E(Q_\ep)$ on $b \in [1.001,1.1]$ (bottom left, middle), and energy as a function of mass $E=E(M)$ (bottom right).}}
\label{F:alpha3-4peak}
\end{figure}

\smallskip

In Figure \ref{F:alpha3-4peak}
we show computed, from this initial guess, nonradial solutions  
for $b=1.1$, $b=1.01$, and $b=1.001$ (we computed this branch down to $b=1.0001$). 
From this initial guess, with  relaxation methods in our numerical scheme, we obtain the nonradial 4-peak solutions (compare this to the two-peak ones in the previous section).  
We have compared the energy vs. mass curves for the 2-peak and 4-peak nonradial solutions and found that the 4-peak energy is higher than that of the 2-peak ones, 
which indicates that these solutions are candidates for the nonradial excited states in the quartic equation.

\section{Higher nonlinearities}\label{S:higher}

For nonlinearities with $\alpha \geq 4$, Theorem \ref{Thm1} (or \cite[Theorem 1.2]{LW2021}) is inconclusive (meaning that there is no longer separation of radial and nonradial branches via asymptotic behavior of the quotient) and one may expect that the ground states are radially symmetric. 
More precisely, by Corollary \ref{C:sharp-mass} the radial and unrestricted masses have the same rate $\ep^{1/\alpha-1/2}$ in this range, and their leading constants coincide if and only if constant functions optimize the Stein - Tomas inequality on $\mathbb S^1$, see Remark~\ref{R:constants-radiality}. For $\alpha=4$ this is conjectured, and a strict inequality would force ground states to be nonradial in this power range.

We tried angular-mode initial iterates of the form \eqref{init} and 
variations thereof, as well as Knapp-type cap concentration initial 
iterates of the form 
\eqref{E:Knapp-1}.  In the cases $\alpha=4, 5, 6$ considered here, the 
iterations (if they converged at all) either converged to radial profiles or did not converge to a nontrivial nonradial branch.
Since we use an iterative approach, this does not mean that nonradial solutions cannot  exist and be numerically constructed with more general initial 
iterates  (perhaps as excited states). Our results are compatible with the absence of such solutions for higher nonlinearities. In the view of Remark \ref{R:constants-radiality}, they are compatible with equality of the leading mass constants in Corollary \ref{C:sharp-mass}.

The computed ground state solutions, which are radially symmetric, for the quintic, sextic and septic nonlinearities ($\alpha=4,5,6$) for $b=1.01$ are shown in Figure~\ref{figm0Q101}.  
\begin{figure}[!htb]
\includegraphics[width=0.32\hsize]{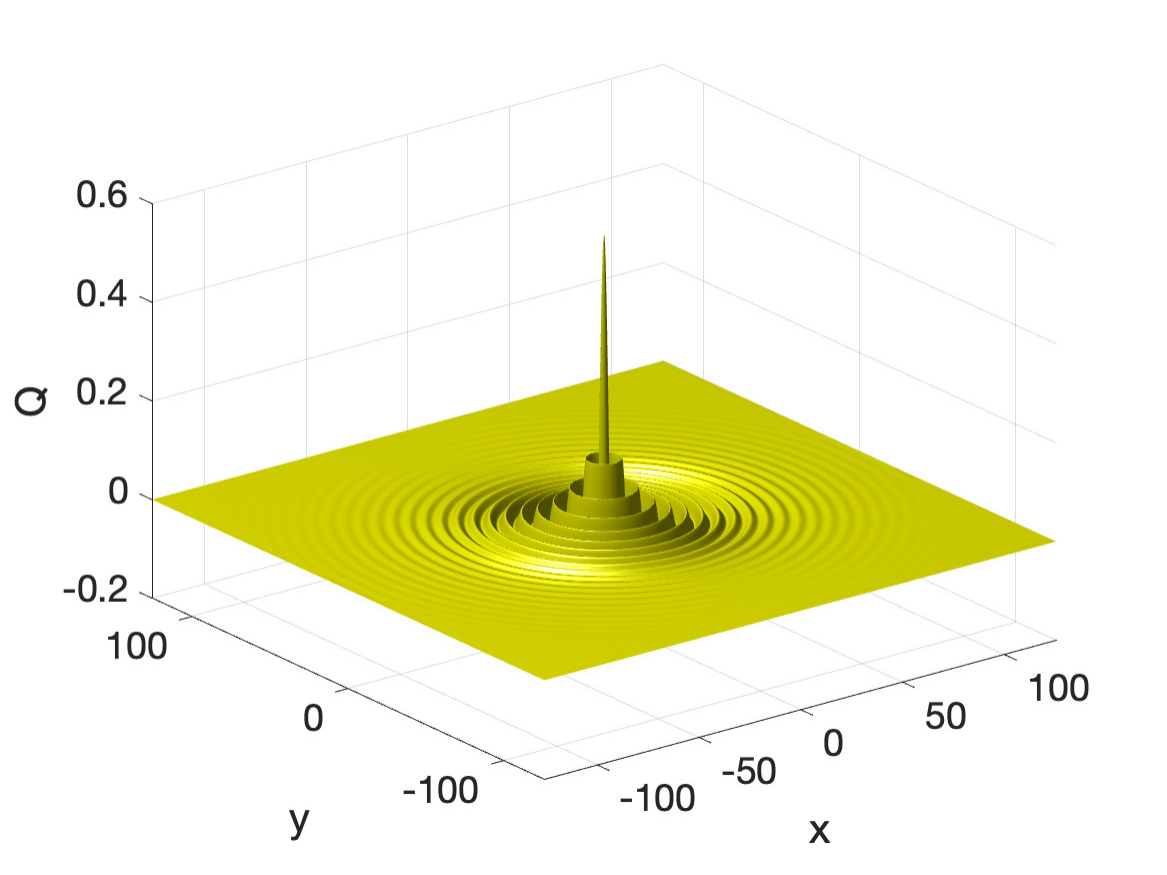}
\includegraphics[width=0.32\hsize]{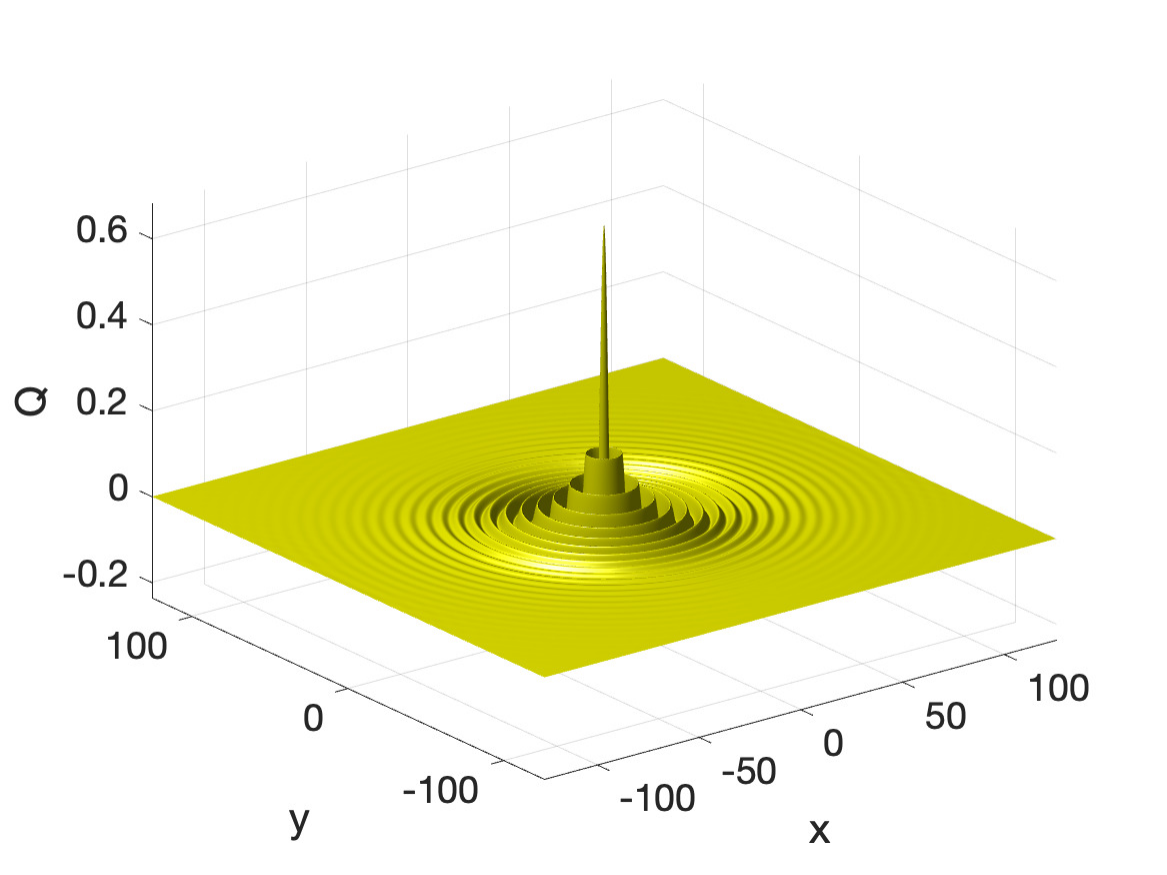}
\includegraphics[width=0.32\hsize]{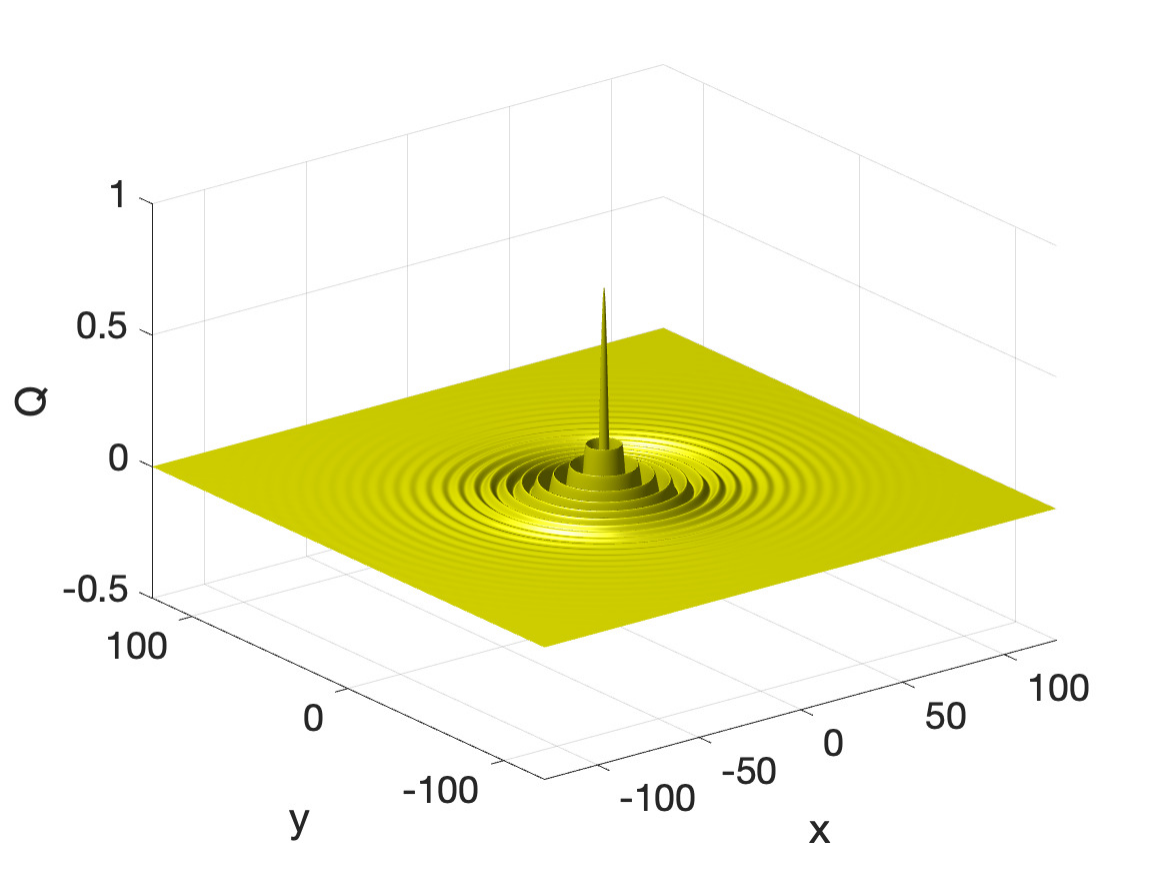}
\caption{Solution to the equation \eqref{E:2dGS} with radial 
symmetry for $b=1.01$ and $\alpha=4,5,6$.}
\label{figm0Q101}
\end{figure}

\begin{figure}[!htb]
\includegraphics[width=0.32\hsize]{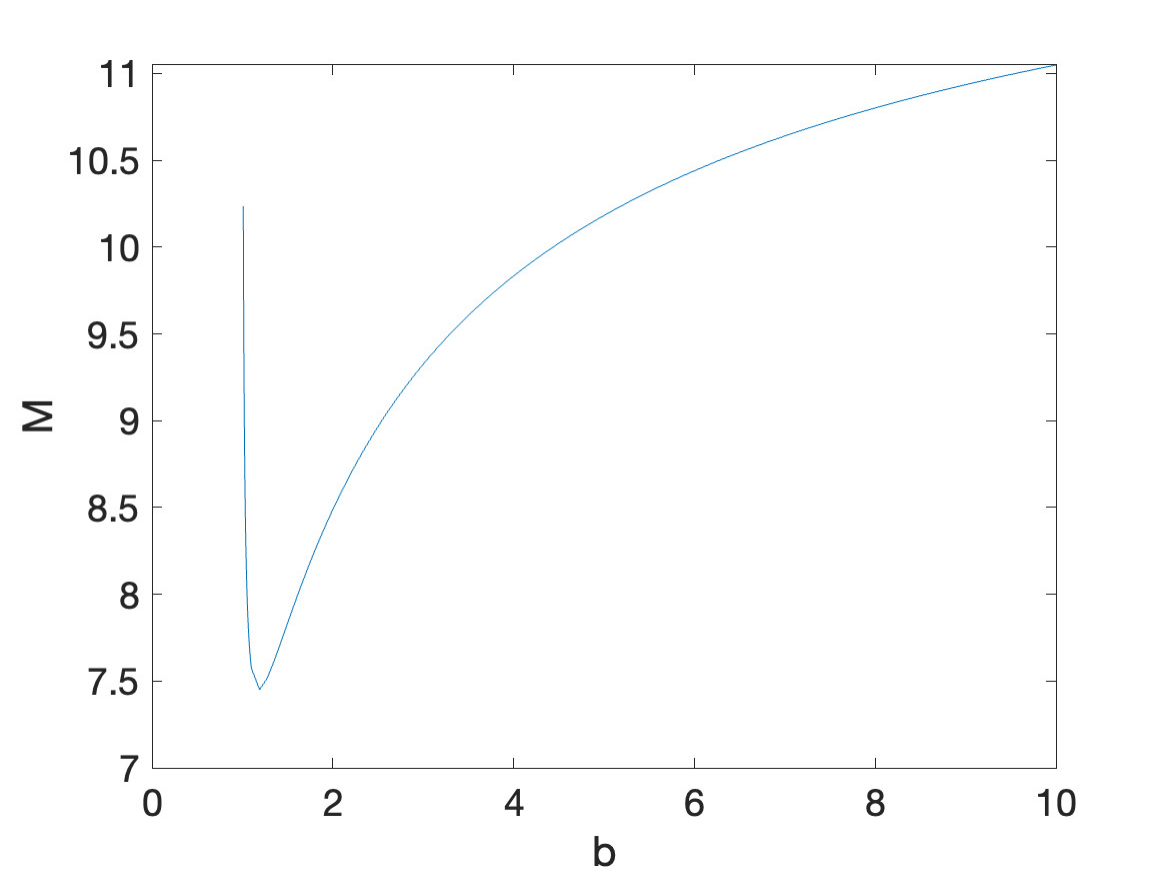}
\includegraphics[width=0.32\hsize]{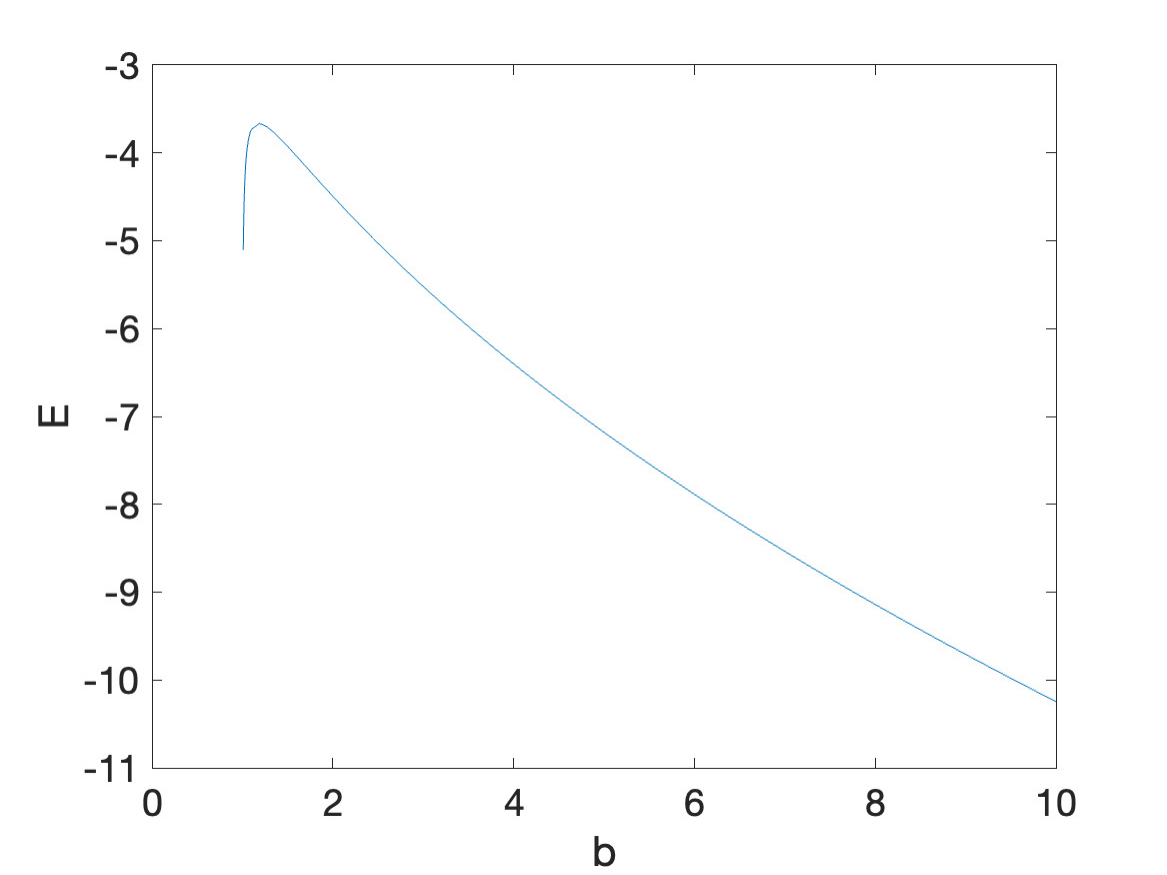}
\includegraphics[width=0.32\hsize]{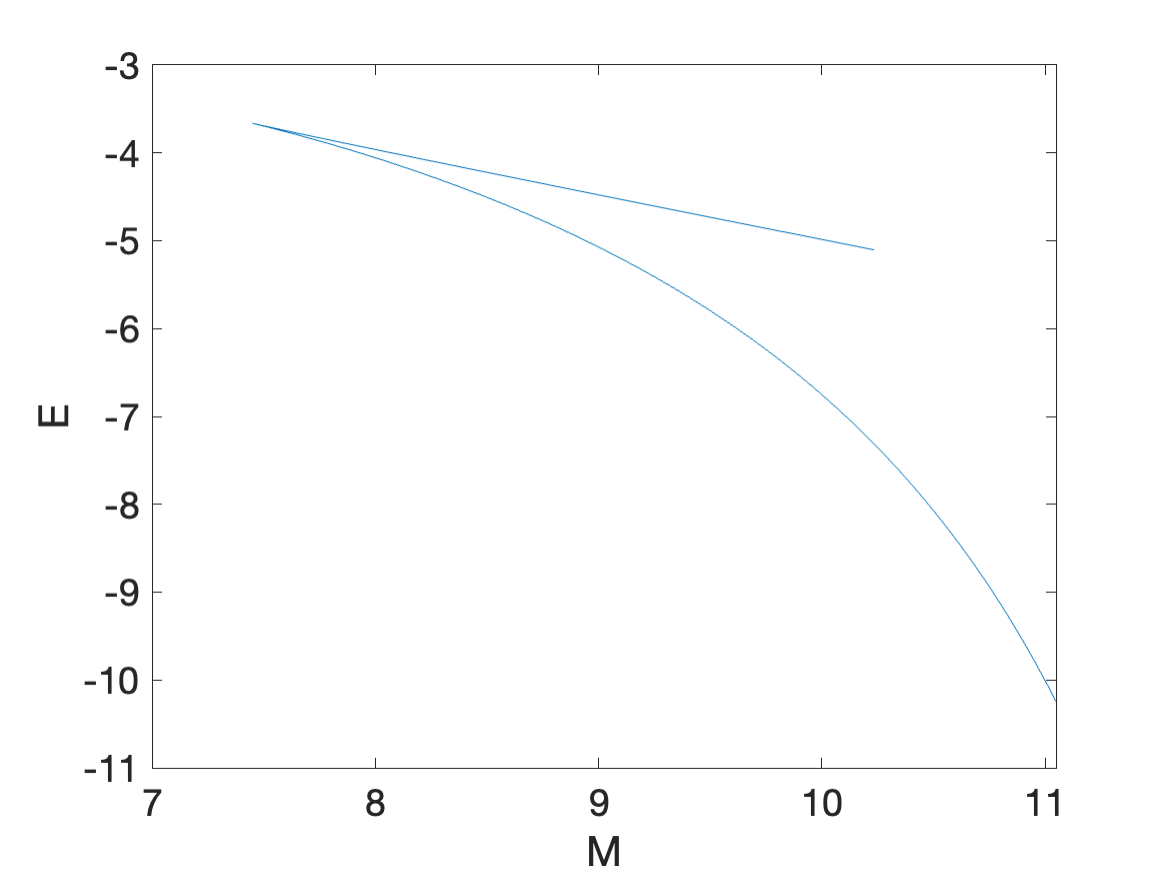}
\caption{\small Radial solutions to the quintic equation \eqref{E:2dGS}, $\alpha=4$, in dependence of $b$: mass $M(Q_\ep)$, energy $E(Q_\ep)$, and $E=E(M)$.} 
\label{F:5a}
\end{figure}

The same behavior for mass and energy as in the radially symmetric solutions in the quartic case can be observed in the higher nonlinearities, Figure~\ref{F:5a} shows the quintic case $\alpha=4$, where on the right plot one can see that the branching occurs. 
A similar situation occurs in other computed nonlinearities.

\section{Branching and normalized ground states}
\label{S:branching}

We next relate the branching (or the turning points in a sense) observed in the numerical mass-energy $E(M)$ diagrams to the normalized ground-state problem from Definition \ref{D:norm-GS}. 

As we pointed out in Section \ref{S:2-peak}, two comparisons must be kept separate.  The {\it least-action} problem fixes $b$ and compares $S_b$ among critical points at that same value of $b$.  The {\it normalized} problem fixes the mass and compares the Hamiltonian energy $E$, the corresponding value of $b$ is then the Lagrange multiplier.  Hence, when several solutions have the same mass, the normalized minimizer would be selected from the lower branch in the mass-energy $E=E(M)$ diagram.

This distinction becomes essential once the mass curve changes slope, or equivalently, develops a minimum or a turning point. If $b \mapsto M(Q_b)$ is strictly monotone (for $b>1$), then the unrestricted least-action branch of ground states can be parametrized by mass, and, provided normalized ground states exist at the corresponding mass, the least-action and normalized ground states correspond to the same family under this parametriation by \cite[Theorem 1.3]{FJMM2022}. If $M(Q_b)$ is not monotone anymore, for example, first decreases and then increases, two (or more) stationary states can have the same mass. The upper branch in the $E=E(M)$ diagram is then the one, which we call {\it unstable} branch, while the lower branch (as it has the lower energy) provides candidates for the normalized minimizers. This is exactly what we observed in the one-dimensional numerical study of the bi-harmonic NLS in \cite{KPRS}.

The one-dimensional connection between the mass threshold
$\alpha_{K}=4$ and normalized ground states was discussed in
Remark \ref{R:normGS}. We next consider the corresponding
two-dimensional question, using the mass asymptotic
\eqref{E:rates-2D-nr}.

\begin{remark}\label{R:Knapp-threshold}
Our 2D Knapp mass threshold
$\alpha_{\rm K}=\frac83$ 
is related to the normalized minimization problem studied in \cite{FJMM2022}. 
In our notation the result of their 
\cite[Theorem~1.2]{FJMM2022} 
shows that in 2D normalized minimizers exist for every prescribed mass 
when ~$0 < \alpha < \frac83$, whereas for
~$\frac83 \leq \alpha<4$~ a positive mass threshold appears. Thus, although the two statements study different properties - our $\alpha_K$ describes the small-$\ep$ mass behavior of a near-threshold least-action branch, while the second value from \cite[Theorem~1.2]{FJMM2022} concerns global minimization of the energy at fixed mass  - the same exponent $\frac83$ appears in both problems. This agreement is natural, since both results are related to the same Fourier cap-concentration, or Knapp, geometry.

The connection with the shape of the mass curve can be understood as
follows. For $\alpha<\frac83$, the mass of the near-threshold
nonradial branch tends to zero as $\ep \to 0^+$, whereas at
$\alpha = \frac83$ it remains of order one, and for
$\frac83 < \alpha < 4$ it diverges. In the latter range, if the mass
depends continuously on $b$ and also diverges as $b\to\infty$ (which we discuss below), this behavior implies the presence of a minimum along the branch. This
branch interpretation is consistent with 
the global fixed-mass variational result of
\cite{FJMM2022}.

If we consider the mass asymptotics restricted to the {\it radial} 
branch, then a different threshold appears. Our quotient-to-mass relation in \S \ref{S:M-to-R} combined with the radial quotient asymptotics of \cite{LW2021,MO2023} yields the rates \eqref{E:rates-2D-r}, and hence, the transition between vanishing and diverging radial mass occurs at $\alpha_{\rm rad}=2$: 
the radial mass vanishes for $0<\alpha<2$, vanishes
logarithmically at $\alpha=2$, and diverges for $\alpha>2$.

We note that in \cite{STCK2022} a threshold was also formulated for the same
(isotropic) normalized problem and in \cite[Theorem 1]{STCK2022} the authors reported $p_*(2) \in (3,1+8/3)$ with the numerical value $p_* \in (3.2,3.4)$, possibly in the vicinity $p_*(2)\approx 3.3$, which corresponds in our notation ($p=\alpha+1$) to  $\alpha \in (2.2,2.4)$ and $\alpha \approx 2.3$. 
These values are between the unrestricted threshold  $\alpha=\frac83 \approx 2.66$ obtained in \cite{FJMM2022} and the radial threshold $\alpha=2$ as shown above. 
\end{remark}

\smallskip

{\it Large $b$.} The behavior at large $b$ can clarify why the {\it lower-energy}  branch occurs at much higher values of $b$ (whenever branching does take place).  Recall the scaling \eqref{Qscaling} and set
$$
Q_b(x)=b^{1/\alpha}P_b(b^{1/4}x),
$$
then substituting into \eqref{E:groundstate} with $a=1$ gives
$$
\Delta^2 P_b + 2b^{-1/2} \Delta P_b +P_b -|P_b|^\alpha P_b=0.
$$
Note on a heuristic level that  as $b \to \infty$, the second term vanishes and $P_b$ approaches a pure biharmonic profile $P$ (which would be independent of $b$, also Pokhozhaev gives $M(P_b) \approx 1$).  
Computing the mass of $Q_b$ in two dimensions, we obtain $M(Q_b) = b^{\frac2{\alpha}-\frac12} M(P_b)$, which at least heuristically (as $P_b$ approaches $P$) yields that for $b \to \infty$
\begin{equation}\label{E:large-b-mass}
M(Q_b)\approx b^{\frac2{\alpha} - \frac12},
\end{equation}
see also \cite[Corollary 3.15]{FJMM2022}.

For $\alpha=3$, the mass diverges on both ends: as $b \to 1+$, by \eqref{E:rates-2D-nr} or \eqref{E:rates-2D-r}, and as $b \to \infty$, by \eqref{E:large-b-mass}. Thus, the mass curve must have a minimum (provided that it is continuous), as well as (at least) two values of $b$ corresponding to the same mass, as one can see, for example, in the left plot ($M = M(b)$) of Figure \ref{F:4a}(A). Then the solutions on the larger-$b$ branch may have lower energy at that fixed mass without contradicting the nonradial least-action result near $b=1$ (because these are different variational comparisons) as one can see in the left plot of Figure \ref{F:alpha3-ME}.

\smallskip

The radial two-dimensional near-threshold branch has a different mass scale. In the range $2<\alpha<4$, we prove via Lemma~\ref{L:quotient-to-mass} (and Section \ref{S:rate-consequences}) that
$M(Q_\epsilon^{\rm rad})\approx \epsilon^{\frac1{\alpha} - \frac12}$.
For $\alpha=3$, this is $M(Q_\epsilon^{\rm rad}) \approx \epsilon^{-1/6}$, which diverges faster than the nonradial law $\epsilon^{-1/12}$.  This explains why, at the same value of $b$, radial and nonradial branches may have substantially different masses. 

The comparison (as we discussed in Section \ref{S:2-peak}) should be then of $S_b$ at fixed $b$ for least action minimizers and of the energy $E$ at fixed mass for normalized minimization.
(We note that in our quartic computations, the two-peak nonradial branch lies below the four-peak branch on the overlapping mass interval, so the four-peak branch should be interpreted as an excited nonradial branch.  However, both these nonradial (small-$\epsilon$) branches may still lie above the lower-energy (stable) branch occurring at larger $b$ but with the same mass.)

\smallskip

This also explains the difference in solutions behavior between the cubic case $\alpha=2$ and the quartic case $\alpha=3$ (and other higher $\alpha$).  For $\alpha=2$, the nonradial branch has monotone behavior and has vanishing mass near $b=1$ (see left plot in Figure \ref{figflatME}), thus, no branching is observed in our computed nonradial solutions.  For $\alpha=3$, the asymptotic law gives
$M(Q_\epsilon^{\rm nr}) \approx \epsilon^{-1/12}$,
so the mass increases as $\epsilon \to 0^+$.  This is consistent with mass behavior on the left plot of Figure~\ref{F:alpha3-nonradial}, and the branching can be seen in the mass-energy diagram on the right of the same figure.

We summarize pointing out that the obtained rates for mass asymptotics explain when branching could appear, while the mass-energy $E = E(M)$ diagrams help determine which branch is selected by the normalized variational problem (and possibly the dynamics of those states as we have seen in one dimensional setting in \cite{KPRS}).


\appendix
\section{Small-$\epsilon$ asymptotics}\label{A:formal}

This appendix contains two parts: in 
Section \ref{S:appendix-2} we show the quotient upper bound calculations via trial-functions (Knapp-type wave-packet in 1D, Knapps caps in 2D and a radial Bessel construction) that explain how the concentration geometry produces the powers of $\ep$ and the radial endpoint logarithm. We summarize the rates in Table \ref{T:rates-summary}.

We mention that the trial-function method below is analogous in spirit to asymptotic profile arguments for vanishing parameters, as in \cite{MorozMuratov2014}, but here the minimum set is $|\xi|=1$ and the cap geometry is specific to the mixed fourth-order symbol. 

\subsection{Upper bounds via trial-functions} 
\label{S:appendix-2} 

As in Section \ref{S:positive}, we set $a=1$, $b=1+\epsilon$ with $0<\epsilon\ll 1$ and consider a trial function that is on the Fourier side  mostly concentrated near the minimum set of the Fourier multiplier $m_{1,1+\epsilon}(\xi)=(|\xi|^2-1)^2+\epsilon$, which is  the unit sphere $\{|\xi|=1\}$ as we previously discussed. It reduces in 1D to the two points $\xi=\pm 1$ and in 2D to the full unit circle.

In 1D we take the trial function to be real and even, which is the natural choice given that the minimum set $\{\pm 1\}$ is symmetric, so we consider only one case. In 2D, there is more flexibility for location of the minimum set, for example, trial functions could concentrate (on the Fourier side) on a cap (non-radial) or on the full annulus (radial), so we consider these two cases (it is possible to concentrate on other sets, but to have real-valued examples, we have to choose the symmetric geometry of the two opposite caps).

\subsubsection{One-dimensional trial-function}\label{A:sub-formal-1d}
Since in 1D the minimum set is $\{ \xi=\pm 1\}$ (two points), we make the following trial-function ansatz $\tilde Q_\ep$ for the (even) ground state:
\begin{equation}\label{E:ansatz-1d}
\tilde{Q_\ep}(x) = A \, W(\epsilon^{1/2} x)\,\cos(x),
\end{equation}
where $A = A(\ep)$ is an amplitude to be determined and $W \in \mathcal{S}(\R)$ is a real, even, slowly varying profile function (we find it later). The Fourier transform $\widehat{\tilde Q_\ep}(\xi) = \frac{A}{2} \big( \widehat W_\epsilon(\xi - 1)+ \widehat W_\epsilon(\xi+1) \big)$ with 
$\widehat W_\epsilon(\xi) =\epsilon^{-1/2} \widehat W(\epsilon^{-1/2}\xi)$ is concentrated in two intervals of width $\epsilon^{1/2}$ around $\xi = \pm 1$ (the cross-term is negligible due to separation). In a sense, this is a 1D analog of the Knapp two-cap concentration example, since in 1D the ``caps'' are just two points.
\smallskip

We estimate the mass $M(\tilde{Q_\ep})$, the potential norm $\|\tilde Q_\ep\|_{L^{\alpha+2}(\R)}^{\alpha+2}$, and the quadratic form $q_{1,1+\ep}(\tilde Q_\ep)$. Then we choose the amplitude $A$ so  that the trial function  satisfies the first Pokhozhaev identity and after that we discuss what  $W$ is. 
\smallskip

To compute the mass of $\tilde Q_\ep$, 
we need to deal with the integrand $(W(\epsilon^{1/2}x)\cos(x))^2$. 
Applying  $\cos^2 x = \tfrac12 + \tfrac12\cos(2x)$, we split the integral into two parts. 
The first (constant) term gives the main contribution, 
the second term has the fast-oscillating factor $\cos (2x)$ multiplied by the slowly-varying function $(W(\epsilon^{1/2}x))^2$. 
We substitute $y = \ep^{1/2}x$ in the second term, which turns the fast-oscillating factor into a Fourier coefficient of $W^2$ at the large frequency $2 \epsilon^{-1/2}$. Since $W \in \mathcal S(\R)$, so is $W^2$, and so we get $\widehat{W^2}(2\epsilon^{-1/2})=O(\epsilon^n)$ for any $n \in \mathbb N$.
Hence, we obtain
\begin{align*}
M(\tilde Q_\ep) = \|\tilde Q_\epsilon\|_{L^2(\R)}^2
& = A^2 \int_\R \big(W(\epsilon^{1/2}x)\cos(x) \big)^2 \,dx
 = \frac{A^2}{2} \int_\R \big( W(\ep^{1/2}x) \big)^2\,dx \big(1 + O(\epsilon^{n})\big)\\
& = \frac{A^2}{2} \,\epsilon^{-1/2}\,\|W\|_{L^2(\R)}^{2}\,\big(1+O(\ep^{n})\big),
\quad \mbox{for ~~any} ~~ n\in\mathbb N.
\end{align*}
In particular, the oscillatory term does not affect the power of $\epsilon$, so we can write
the leading-order behavior as 
$$
M(\tilde Q_\ep)=\frac12 A^2\epsilon^{-1/2} \|W\|_{L^2}^2(1 +o(1)).
$$

To compute the potential norm, we have to deal with the term $|\cos x|^{\alpha+2}$, which we expand into a cosine Fourier series $|\cos x|^{\alpha+2} = c_{\alpha} + \sum_{n=1}^{\infty} c_n(\alpha) \cos(2nx)$,
and, in a similar fashion as for the $\cos(2x)$ above, integrate out the rest of the terms except for the first one (as the leading order). 
The first term is an average over a full period, we denote it by 
\begin{equation}\label{E:c-alpha}
c_{\alpha}:= \frac{1}{\pi}\int_0^{\pi}|\cos x |^{\alpha+2} \,dx = {\Gamma\big(\tfrac{\alpha+3}{2}\big)}/({\sqrt\pi\,\Gamma\big(\tfrac{\alpha+4}{2}\big)})>0
\end{equation}
(this constant is also used in computing the limiting value in \eqref{E:C4-constant}),
and hence, we have 
$$
\|\tilde Q_\epsilon\|_{L^{\alpha+2}(\R)}^{\alpha+2} = c_{\alpha} \, A^{\alpha+2}\,\epsilon^{-1/2}\,\|W\|_{L^{\alpha+2}(\R)}^{\alpha+2} (1+o(1)).
$$

Next, we estimate the quadratic form $q_{1,1+\ep}(\tilde Q_\ep)$ and use that $\tilde Q_\ep$ has concentration near $\xi = \pm 1$. So, on the Fourier side (to be near $\xi = 1$ of width $\ep^{1/2})$, we set ~$\xi=1+\eta$~ and write $(|\xi|^2-1)^2 = (2\eta+\eta^2)^2 = 4\eta^2 (1+O(\eta))$ for $\eta \to 0$. Substituting $\zeta = \epsilon^{-1/2}\eta$, so that $\eta = O(\ep^{1/2})$ on the bump and the relative error becomes $O(\ep^{1/2})$, we obtain
$$
\int_\R (|\xi|^2-1)^2|\widehat{\tilde Q_\ep}|^2\,d\xi =  2 A^2 \ep^{1/2} \int_\R \zeta^2 |\widehat{W}(\zeta)|^2 \, d\zeta = 
2 A^2\,\epsilon^{1/2}\,\|W'\|_{L^2(\R)}^2 (1+O(\ep^{1/2})),
$$
by using Plancherel for the derivative (the constant 2 comes from the same contribution from $\xi=-1$ by symmetry).
From the above mass estimate, we have $\epsilon\|\tilde Q_\epsilon\|_{L^2}^2 = \frac{A^2}{2}\epsilon^{1/2} \|W\|_{L^2}^2$.  Putting two terms together, we deduce
$$
q_{1,1+\epsilon}(\tilde Q_\ep) = \int_\R ( (|\xi|^2-1)^2 + \ep )|\widehat{\tilde Q_\ep}|^2\, d\xi = A^2 \,\ep^{1/2}\, 
(2\|W'\|_{L^2(\R)}^2 + \tfrac{1}{2}\|W\|_{L^2(\R)}^2) (1+ O(\ep^{1/2})).
$$
Since the error terms in the mass, the potential norm and the quadratic form expansion above are all $o(1)$ relative to the corresponding leading terms, we simply write $(1+o(1))$.

Recalling the first Pokhozhaev identity \eqref{E:Pokh1-d}, we have $q_{1,1+\epsilon}(\tilde Q_\epsilon) = \|\tilde Q_\epsilon\|_{L^{\alpha+2}}^{\alpha+2}$, and substituting our estimates above into each side, we get 
$$
A^2\,\epsilon^{1/2}\, \big(2\|W'\|_{L^2(\R)}^2 + \tfrac{1}{2}\|W\|_{L^2(\R)}^2 \big) = c_{\alpha}\,A^{\alpha+2}\,\epsilon^{-1/2}\,\|W\|_{L^{\alpha+2}(\R)}^{\alpha+2}(1+o(1)),
$$
which implies 
\begin{equation}\label{E:1d-A}
A^\alpha = \frac{2\|W'\|_{L^2(\R)}^2 + \tfrac{1}{2}\|W\|_{L^2(\R)}^2}{c_{\alpha}\,\|W\|_{L^{\alpha+2}(\R)}^{\alpha+2}} \, \ep\, (1+o(1)) 
\quad \Rightarrow \quad A =C_*(\alpha)\,\epsilon^{1/\alpha}(1+o(1)),
\end{equation}
which justifies the amplitude scaling we wrote in \eqref{E:Knapp-1}.

Substituting $A$ into the mass, we obtain the following asymptotic rate 
\begin{equation}\label{E:M-1d-app}
M(\tilde Q_\epsilon) = \frac{A^2}{2}\, \epsilon^{-1/2} \,\|W\|_{L^2(\R)}^2  (1+o(1)) =  C(\alpha)\, \epsilon^{\frac2{\alpha} - \frac12} (1+o(1)),
\end{equation}
where $C(\alpha)$ is a constant, which depends on $\alpha$ and a localized profile function $W$.  
This is consistent with \eqref{E:1d-rates} and is proved in \S \ref{S:rate-consequences}.  For now we note that the true ground state $Q_\ep$ (that verifies Pokhozhaev) has the upper bound, since 
$\|Q_\ep\|_{L^{\alpha+2}}^\alpha = \mathcal R_\ep(\alpha) 
\leq ({q_{1,1+\ep}(\tilde Q_\ep)})/({\|\tilde Q_\ep\|_{L^{\alpha+2}}^2}) 
= \|\tilde Q_\ep\|_{L^{\alpha+2}}^\alpha$, 
and together with $\ep M(Q_\ep) \leq \|Q_\ep\|_{L^{\alpha+2}}^{\alpha+2}$ 
yields 
$$
M(Q_\ep) \leq \ep^{-1}\|\tilde Q_\ep\|_{L^{\alpha+2}}^{\alpha+2} \lesssim \ep^{\frac2\alpha-\frac12}.
$$
(To obtain the matching lower bound, we need the quotient-to-mass Lemma~\ref{L:quotient-to-mass}.)
\smallskip

We remark that the optimal localized profile function $W$ minimizes the ratio 
\begin{equation}\label{E:W-ratio}
\frac{2\|W'\|_{L^2(\R)}^2 + \tfrac{1}{2}\|W\|_{L^2(\R)}^2}{\|W\|_{L^{\alpha+2}(\R)}^2}.
\end{equation}
This is a (rescaled) Weinstein quotient for the second-order NLS, and thus, its Euler-Lagrange equation, after rescaling, reduces to the well-known ground state equation for the NLS, ~$-U'' + U = |U|^\alpha U$, which has the unique (up to translation) positive solution $U(x) = (\frac{\alpha+2}{2})^{1/\alpha} \sech^{2/\alpha}(\frac{\alpha}2 x)$. Note that the Euler-Lagrange equation of the above ratio in \eqref{E:W-ratio} is $-4W''+W = c_1 |W|^\alpha W$, and setting $W(x) = c_2 U(x/2)$ reduces it to the $U$ equation. So $W$, up to constants, is a dilated $\sech^{2/\alpha}$ profile.

Finally, since the quotient is invariant under multiplication by the amplitude $A$, the above trial function together with the estimates for the quadratic form and the potential norm, yield the rigorous upper bound 
\begin{equation}\label{E:R-upper-1d}
\mathcal R_\epsilon(\alpha) \leq \frac{q_{1,1+\epsilon}(\tilde Q_\epsilon)}{\|\tilde Q_\epsilon\|_{L^{\alpha+2}(\R)}^2} \lesssim \epsilon^{\frac{\alpha+4}{2(\alpha+2)}}. 
\end{equation}

Note that together with the matching lower bound proved in Section  \ref{A:sub-rigorous} (with either Theorem \ref{Thm:B1} or Theorem \ref{Thm:GN-lower}), this gives the rate formula \eqref{E:R_sharp}.

\subsubsection{Two-dimensional case: nonradial Knapp ansatz}\label{A:sub-formal-2d-nr}

In 2D, as we discussed, the minimum set of $m_{1,1+\epsilon}$ is the unit circle $|\xi|=1$. 
A Knapp-type nonradial ground state on the Fourier side concentrates on two opposite caps, say, near $\xi=(\pm 1,0)$. The cap has the width $\epsilon^{1/2}$ in the radial direction (along $\xi_1$) and $\epsilon^{1/4}$ in the tangential direction (along $\xi_2$). As we mentioned in \eqref{E:Knapp-1}, the corresponding form in the physical space is
\begin{equation}\label{E:ansatz-2d}
Q_\epsilon^{\rm nr}(x,y) = A \,W(\epsilon^{1/2} x, \epsilon^{1/4} y) \,\cos(x),
\end{equation}
with $W \in \mathcal S(\R^2)$ being a real, even, localized profile function.
As in 1D, the two caps are separated by a distance $\approx 2$, so the cross-term between them is negligible. 

First, we compute the mass (but now with $X = \ep^{1/2}x$ and $Y=\ep^{1/4}y$). Similar to the 1D case, writing $\cos^2 x = \frac12 + \frac12 \cos(2x)$ and substituting $X$ and $Y$, we integrate the (second) oscillatory part in $x$-variable, which produces the Fourier coefficient of $W^2$ in its first variable at the large frequency $2\ep^{-1/2}$, which is $O(\ep^n)$ for any $n\in \mathbb N$ uniformly in the second variable, since $W \in \mathcal{S}(\R^2)$ (and so is $W^2$). Thus, we have
$$
\|Q_\epsilon^{\rm nr}\|_{L^2(\R^2)}^2 = \frac{A^2}{2}\, \epsilon^{-1/2} \epsilon^{-1/4}\,\|W\|_{L^2(\R^2)}^2 (1+O(\ep^n)) = \frac{A^2}{2}\,\epsilon^{-3/4}\,\|W\|_{L^2(\R^2)}^2(1+O(\ep^n)).
$$

In a similar fashion, we obtain the potential norm estimate
$$
\|Q_\epsilon^{\rm nr}\|_{L^{\alpha+2}(\R^2)}^{\alpha+2} = c_{\alpha} \,A^{\alpha+2} \,\epsilon^{-3/4}\,\|W \|_{L^{\alpha+2}(\R^2) }^{\alpha+2}(1+o(1)).
$$

To estimate the quadratic form, we use the caps concentration near $(1,0)$ and set $\xi=(1+\eta_1, \eta_2)$, writing  
$$
|\xi|^2-1 = 2 \eta_1 + \eta_1^2 + \eta_2^2.
$$
Note that $|\widehat{Q_\ep}|^2 = \frac{A^2}4 \ep^{-3/2} | \widehat{W}(\ep^{-1/2}\eta_1,\ep^{-1/4}\eta_2)|^2$. A similar contribution will come from the concentration near $(-1,0)$.
For $\eta_1 \sim \epsilon^{1/2}$ and 
$\eta_2 \sim \epsilon^{1/4}$, 
we have $\eta_1 \sim \eta_2^2 \sim \ep^{1/2}$ and $\eta_1^2 \sim \epsilon \ll \epsilon^{1/2}$, so $|\xi|^2-1 \approx 2\eta_1 + \eta_2^2 = O(\epsilon^{1/2})$, 
giving $(|\xi|^2-1)^2 = O(\epsilon)$, 
the same order as the second term $\ep$ in $m_{1,1+\ep}$. 
More precisely, after the change of variables 
$\zeta_1=\epsilon^{-1/2}\eta_1$, $\zeta_2 = \epsilon^{-1/4}\eta_2$, we have
$$
|\xi|^2-1 = \ep^{1/2}(2\zeta_1+\zeta_2^2+\ep^{1/2}\zeta_1^2), \qquad \mbox{so} \quad (|\xi|^2-1)^2 = \ep [(2\zeta_1 + \zeta_2^2)^2 + O(\ep^{1/2}) ].
$$
Then
\begin{align*}
q_{1,1+\epsilon}(Q_\epsilon^{\rm nr}) 
&=  \int_{\R^2} \big((|\xi|^2-1)^2+\epsilon\big)| \widehat Q_\epsilon|^2\, d\xi \\
& = 2\cdot\frac{A^2}{4} \, \ep^{-3/2} \cdot \ep \cdot 
  \epsilon^{3/4}\, \int_{\R^2}  \big[(2\zeta_1 +\zeta_2^2)^2 +1 + O(\epsilon^{1/2})\big]\,
  |\widehat W (\zeta_1,\zeta_2)|^2\,d\zeta_1\, d\zeta_2 \\ 
&= \frac{A^2}{2}\,\epsilon^{1/4}\, \Lambda(W)\, \big(1 + O(\ep^{1/2}) \big),
\end{align*}
where $\Lambda(W)= \int_{\R^2}((2\zeta_1+\zeta_2^2)^2+1)|\widehat W(\zeta_1,\zeta_2)|^2\,d\zeta_1 d\zeta_2>0$. 

As in the one-dimensional case, 
since the error terms in the mass, the potential norm and the quadratic form expansion above are all $o(1)$ relative to their respective leading terms, we just write $(1+o(1))$ below. 

Similarly to the 1D case, from the first Pokhozhaev identity \eqref{E:q-equals-Lp-new}, $q_\ep(Q) = \|Q\|_{L^{\alpha+2}(\R^2)}^{\alpha+2}$,
we estimate the amplitude
$$
\frac12 A^2\,\epsilon^{1/4}\,\Lambda(W) = c_{\alpha}\,A^{\alpha+2}\,\epsilon^{-3/4}\,\|W\|_{L^{\alpha+2}(\R^2)}^{\alpha+2}(1+o(1)),
$$
which gives 
$$
A^\alpha = \frac{\Lambda(W)}{2c_{\alpha}\,\|W\|_{L^{\alpha+2}(\R^2)}^{\alpha+2}}   
\epsilon (1+o(1)), \quad \mbox{and ~~ so} \quad  
A  = C(\alpha)\,\epsilon^{1/\alpha}(1+o(1)).
$$
Finally, substituting the amplitude in terms of $\ep$ into the mass, we obtain
\begin{equation}\label{E:M-2d-nr-app}
M(Q_\epsilon^{\rm nr}) = \frac{A^2}{2}\,\epsilon^{-3/4}\,\| W\|_{L^2(\R^2)}^2 (1+o(1)) = C(\alpha)\,\epsilon^{2/\alpha-3/4} (1+o(1)),
\end{equation}
which is the rate claimed in \eqref{E:rates-2D-nr} in 2D for the nonradial ground state mass. As in 1D, this is the mass of the trial function \eqref{E:ansatz-2d}, the corresponding statement for the true ground state is obtained in \S\ref{S:rate-consequences}.
 
For the localized profile $W$, a fixed nonzero Schwartz function, we obtain the quotient upper bound
$$
\mathcal R_\epsilon(\alpha) 
\leq \frac{q_{1,1+\ep}(Q_\epsilon^{\rm nr})}{\|Q_\epsilon^{\rm nr}\|_{L^{\alpha+2}(\R^2)}^2} = \frac{A^2\epsilon^{1/4}}{A^2\,\epsilon^{-3/(2(\alpha+2))}} C(\alpha,W) (1+o(1)) 
=C(\alpha,W)\epsilon^{\frac{8+\alpha}{4(\alpha+2)}} (1+o(1)).
$$
For $0 < \alpha \leq 4$, the matching lower bound follows from Theorem \ref{Thm:GN-lower} (and agrees with \cite[Theorem 1.3]{LW2021}, so our trial-function geometry recovers precisely the two-sided quotient  exponent $\gamma_{2,\alpha}$ from \eqref{E:R-asym}.

\subsubsection{Two-dimensional case: radial ansatz}\label{A:sub-formal-rad}

For  radial ground states in 2D, the Fourier mass should be spread evenly on the unit circle (uniformly in angle). 
Before constructing the trial function, we point out one elementary observation that explains why the radial case is governed by the quotient-to-mass relation. 
Let $Q_\epsilon^{\rm rad}$ be a radial ground state with $\widehat {Q_\epsilon^{\rm rad}}$ concentrated on the annulus $\mathcal A_\epsilon = \{\xi \in \R^2 : ||\xi|-1| \leq \epsilon^{1/2}\}$. Then on $\mathcal A_\ep$, we have 
$(|\xi|^2-1)^2 + \epsilon = O(\epsilon)$, and so
\begin{equation}\label{E:disp-vs-mass-rad}
q_{1,1+\epsilon}(Q_\epsilon^{\rm rad}) = \int_{\R^2}\big((|\xi|^2-1)^2+\epsilon\big)\,|\widehat Q_\epsilon^{\rm rad}|^2\,d\xi 
\approx  \epsilon \, M(Q_\epsilon^{\rm rad}).
\end{equation}
Together with the first Pokhozhaev identity 
$q_{1,1+\ep}(Q_\ep^{\rm rad}) =\|Q_\ep^{\rm rad} \|_{L^{\alpha+2}}^{\alpha+2}$, \eqref{E:disp-vs-mass-rad} is precisely the heuristic of the quotient-to-mass Lemma~\ref{L:quotient-to-mass}, which is proved in \S\ref{S:M-to-R} (the same linear relation $q \sim \epsilon M$ holds in the nonradial case, where $M \sim A^2\epsilon^{-3/4}$ 
and $q \sim A^2 \ep^{1/4}$). 
We therefore do not repeat that argument here, and instead construct an explicit radial trial function, which yields an upper bound on $\mathcal R_\ep^{\rm rad}$ and, in addition, explains where the three regimes and the endpoint logarithm come from.

\subsubsection*{Bessel-function approach}

Here we use the Bessel function representation of the Fourier transform of a radial function and develop more precise asymptotic rates in $\ep$.

Since $Q$ and $\hat Q$ are both radial ($r=|x|$), the Fourier inversion reduces to 
the zeroth-order Hankel transform with the Bessel function $J_0$ (see \cite[Appendix~B.5]{Grafakos2014}) in our normalization:
$$
Q(r) = \int_0^\infty \hat Q(\rho)\,J_0(\rho r)\,\rho\,d\rho,
$$
and the large-$r$ behavior of $Q(r)$ is governed by the Bessel asymptotic (see \cite[Appendix~B.8]{Grafakos2014}) $J_0(r) = (\frac2{\pi r})^{1/2}\cos(r-\pi/4) + O(r^{-3/2})$.

If the ground state $Q_\ep^{\rm rad}$ concentrates on the Fourier side near $\rho=1$, then writing $\rho = 1+\eta$ with $|\eta| \lesssim \sqrt \ep$, we heuristically expect 
$$
Q(r) \approx \int_{|\eta| \lesssim \sqrt \ep} \widehat{Q}(1+\eta)\, J_0((1+\eta)r)\, d\eta \approx
J_0(r) \int_{|\eta| \lesssim \sqrt \ep} \widehat{Q}(1+\eta) \cos(\eta r)\, d\eta  
=: J_0(r)\, F_\ep(r),
$$
where the sine term is dropped by taking $\hat Q$ symmetric around $\rho=1$ and $\rho\,d\rho \approx d\eta$.
Then, with $\widehat{Q}(1+\eta)$ having width $\sqrt \ep$ in $\eta$, so $\cos(\eta r)$ varies on the reciprocal scale $r \sim (\sqrt \ep)^{-1}$, we have that $F_\ep$ depends on $r$ only via the slow variable $\sqrt \ep\, r$, so we write $F_\ep(r) = A_\ep\, F(\sqrt \ep\, r)$. This gives us a motivation to define the radial trial function as
\begin{equation}\label{E:Bessel}
Q_\epsilon^{\rm rad}(r) := A_\epsilon\, F(\sqrt\epsilon\, r)\, J_0(r), \qquad \mbox{with} \quad F\in\mathcal S(\R)~~ \mbox{even} \quad \mbox{and} ~~ F(0) \neq 0,
\end{equation}
so even $F$ gives $F'(0) = 0$, which ensures the required regularity at the origin and $Q^{\rm rad}_\ep \in H^2(\R^2)$, and $F(0) \neq 0$ is needed for  integration for $\alpha \geq 2$ later.
(Recall that in 1D, the support near $\pm 1$ with width $\sqrt \ep$ produced $Q(x) \sim W(\sqrt \ep x) \cos x$.)

To estimate the mass $M(Q_\ep^{\rm rad})$, we use the $J_0$ large-radius asymptotic $J_0(r)^2 = \frac{2}{\pi r}\, \cos^2(r-\frac{\pi}{4}) + O(r^{-2})$, with the localized function $F$ cutting off at $r\sim\epsilon^{-1/2}$. 
Multiplying by the Jacobian $r$ (in polar coordinates) and using $\cos^2(r-\tfrac{\pi}4)=\tfrac12 + \tfrac12 \cos(2r - \tfrac\pi2)$ gives $|J_0(r)|^2r=\tfrac1{\pi}+\tfrac1{\pi}\cos(2r-\tfrac{\pi}2) + O(r^{-1})$. 
Integrating $|F(\sqrt\ep\,r)|^2$ with the oscillatory ($\cos$) middle term in the region $r \geq 1$ gives $O(1)$, and integrating $O(r^{-1})$ term contributes $O(|\ln\ep|)$, the region $0<r<1$ also gives only a constant, thus, all terms are of lower order than the main term $\ep^{-1/2}$. Therefore, we deduce
\begin{align}
M(Q_\ep^{\rm rad}) = 2\pi A_\ep^2 \int_0^\infty |F(\sqrt \ep\, r)|^2\, |J_0(r)|^2\, r\,dr 
&=2 A_\ep^2 \int_0^\infty |F(\sqrt\ep\,r)|^2\,dr \, (1+o(1)) \notag
\\
&= 2 A_\ep^2\, \ep^{-1/2} \|F\|^2_{L^2(0,\infty)} (1+o(1)), \label{E:M-F}
\end{align}
where the factor $2\pi$ from the angular integration and the averaged value $\tfrac1\pi$ of $|J_0(r)|^2r$ combine into the constant $2$.

Since $\widehat{Q_\ep^{\rm rad}}$ is concentrated in the annulus $\big||\xi|-1\big|\lesssim\sqrt\ep$,
we have (with $|\xi|=1+\sqrt\ep\,\zeta$) that
$(|\xi|^2-1)^2+\ep = \ep (1+4\zeta^2+O(\sqrt\ep) )$, so
$$
q_{1,1+\ep}(Q_\epsilon^{\rm rad}) = \int_{\R^2}\big((|\xi|^2-1)^2+\ep\big)\,|\widehat{Q_\ep^{\rm rad}}(\xi)|^2\,d\xi
= C(F)\,\ep\,M(Q_\ep^{\rm rad})\,(1+o(1)),
$$
where by Plancherel 
$$
C(F)=\frac{\int_{\R^2} (1+4\zeta^2 )\,|\widehat{Q_\ep^{\rm rad}}(\xi)|^2\,d\xi}
{\int_{\R^2}|\widehat{Q_\ep^{\rm rad}}(\xi)|^2\,d\xi}
=1+ \frac{4\|F'\|_{L^2(0,\infty)}^2}{\|F\|_{L^2(0,\infty)}^2} + o(1),
$$
and using \eqref{E:M-F}, we get
\begin{equation}\label{E:q-rad}
q_{1,1+\ep}(Q_\ep^{\rm rad}) = 2\,A_\ep^2\,\ep^{1/2} 
(4 \|F'\|_{L^2(0,\infty)}^2 + \|F\|^2_{L^2(0,\infty)}) (1+o(1)).
\end{equation}

In a similar manner as we estimated the potential norm before, using the $J_0$ large-$r$ decay together with the period average $c_\alpha=\frac1\pi\int_0^\pi|\cos x|^{\alpha+2}dx$ of $|\cos(r-\tfrac\pi4)|^{\alpha+2}$, 
we obtain
\begin{align}
\|Q_\epsilon^{\rm rad}\|_{L^{\alpha+2}(\mathbb R^2)}^{\alpha+2}
&= 2\pi\Big(\tfrac{2}{\pi}\Big)^{\frac{\alpha+2}{2}} c_\alpha \, A_\epsilon^{\alpha+2}
   \int_1^\infty |F(\sqrt\ep\,r)|^{\alpha+2}\, r^{-\alpha/2}\,dr\,(1+o(1)) \label{E:C-I-a} \\
&= C(F, \alpha) \, A_\epsilon^{\alpha+2} \int_1^{\ep^{-1/2}} r^{-\alpha/2}\,dr (1+o(1)). \label{E:C-I}
\end{align}

The reason for separating the integral on $r$ in the last line \eqref{E:C-I} is because 
it is exactly where the power of $\alpha$ plays the key role. For $\alpha=2$ the integral evaluates exactly to $\int_1^{\ep^{-1/2}} r^{-1}\,dr = -\tfrac{1}{2}\ln \ep$, thus, giving a $\log$ correction in this case. The integral 
diverges for $0<\alpha<2$ and remains bounded for $\alpha>2$. 
To be precise, 
\begin{equation}\label{E:I-alpha}
I(\alpha,\ep) := \int_1^{\ep^{-1/2}} r^{-\alpha/2}\,dr =
\left\{
\begin{array}{ll}
\frac{2}{2-\alpha}\, (\ep^{-(2-\alpha)/4}-1), & 0<\alpha<2,\\
\tfrac{1}{2}\, |\ln \ep|, & ~\alpha=2,\\
\frac{2}{\alpha-2} (1-\ep^{(\alpha-2)/4}), & \alpha>2.
\end{array}
\right.
\end{equation}
Coming back to \eqref{E:C-I-a}, we note that  (i) when $0<\alpha<2$ the integral is dominated by large $r$, so the large-$r$ asymptotic of $J_0$ applies, (ii) when $\alpha=2$ all the scales $1\lesssim r\lesssim \ep^{-1/2}$ contribute equally (which produces the logarithm), and on that range
$F(\sqrt\ep\,r)\approx F(0)$, so its coefficient is $|F(0)|^{4}$, and thus, we need $F(0)\neq0$, 
and (iii) when $\alpha>2$ the integral converges at the upper limit, so it is concentrated at bounded $r$, so in a sense the large-$r$ asymptotic is inaccurate (but we wrote it for the clarity of explanation), so in the last case the constant is not really $\tfrac{2}{\alpha-2}$, however, it is still a positive constant independent of $\ep$, which is all that we need 
($\|Q_\ep\|^{\alpha+2}_{L^{\alpha+2}} \to 2\pi \, A_{\ep}^{\alpha+2} |F(0)|^{\alpha+2} \int_0^\infty |J_0(r)|^{\alpha+2}\,r\, dr$.) 
Collecting \eqref{E:C-I} and \eqref{E:I-alpha}, we obtain
\begin{equation}\label{E:pot-rad}
\|Q_\epsilon^{\rm rad}\|_{L^{\alpha+2}(\R^2)}^{\alpha+2}
= C(F,\alpha)\, A_\epsilon^{\alpha+2}
\left\{
\begin{array}{ll}
\epsilon^{-\frac{2-\alpha}{4}}\,(1+o(1)), & 0<\alpha<2,\\
|\ln\epsilon|\,(1+o(1)), & ~\alpha=2,\\
1+o(1), & ~\alpha>2,
\end{array}
\right.
\end{equation}
where the constant factors of \eqref{E:I-alpha} have been absorbed into $C(F,\alpha)$.

Choosing $A_\ep$, so that our trial function \eqref{E:Bessel} satisfies the first Pokhozhaev identity  gives 
$q_{1,1+\ep}(Q_\ep^{\rm rad}) = \|Q_\ep^{\rm rad}\|_{L^{\alpha+2}(\R^2)}^{\alpha+2}$. Substituting \eqref{E:q-rad} on the left and \eqref{E:pot-rad}
on the right, we obtain
$$
A_\ep^2\, \ep^{1/2} = C\, A_\ep^{\alpha+2}\, I(\alpha,\ep)(1+o(1)),
$$
and thus, 
\begin{equation}\label{E:A-eps}
A_\ep^\alpha = C \, \frac{\ep^{1/2}}{I(\alpha,\ep)}(1+o(1)).
\end{equation}
Substituting \eqref{E:I-alpha}, we have 
\begin{equation}\label{E:A-alpha}
A_\ep^\alpha =
\left\{
\begin{array}{ll}
C_\alpha\, \ep^{(4-\alpha)/4}(1+o(1)), & 0<\alpha<2,\\
C\, \ep^{1/2}\, |\ln \ep|^{-1}(1+o(1)), & \alpha=2,\\
C_\alpha\, \ep^{1/2}(1+o(1)), & ~\alpha>2.
\end{array}
\right. ,
\end{equation}
and inserting \eqref{E:A-alpha} into $M(Q_\ep^{\rm rad}) = C\, A_\ep^2\, \ep^{-1/2}(1+o(1))$, 
gives the radial mass rate
\begin{equation}\label{E:M-rad-Bessel}
M(Q_\epsilon^{\rm rad}) = \left\{
\begin{array}{ll}
C_\alpha \,\epsilon^{\frac2\alpha-1}(1+o(1)), & 0<\alpha<2,\\
C \, |\ln \epsilon|^{-1} (1+o(1)), & ~\alpha=2,\\
C_\alpha \, \epsilon^{\frac1\alpha-\frac12}(1+o(1)), & ~\alpha>2.
\end{array}
\right.
\end{equation}

As in the previous cases, we point out that \eqref{E:M-rad-Bessel} is the mass of the trial function \eqref{E:Bessel} and it agrees with the sharp radial rate \eqref{E:R-asym-rad-MO} of \cite{MO2023}. The corresponding two-sided statement for the mass of the true radial ground state follows from \eqref{E:R-asym-rad-MO} and Lemma \ref{L:quotient-to-mass}, see \S \ref{A:sub-rig-2d}.
The estimates for the corresponding potential norm and quotient follow from $\|Q_\ep^{\rm rad}\|_{L^{\alpha+2}(\R^2)}^{\alpha+2}= C(F) \,\ep\, M(Q_\ep^{\rm rad})(1+o(1))$ and $\mathcal R_\ep^{\rm rad}(\alpha) \leq q_{1,1+\ep}(Q_\ep^{\rm rad})/\|Q_\ep^{\rm rad}\|_{L^{\alpha+2}(\R^2)}^{2}$, and agree with \eqref{E:R-asym-rad-MO} and \eqref{E:S-asym-rad-2d}.

\begin{remark}
An angular mode $J_m(r)\cos(m\theta)$ also solves $(\Delta+1)u=0$ and has the same large-$r$ Bessel decay $r^{-1/2}$, so for the angular mode trial functions $A_\ep F(\sqrt \ep r) J_m(r) \cos(m\theta)$ with a fixed $m$, the powers in $\ep$ asymptotics are unchanged. 
\end{remark}

\begin{remark}[Higher dimensions]\label{R:Bessel-d}
The above computation can be easily generalized to $d >2 $. In $\R^d$ the radial solution of
$(\Delta+1)u=0$ that is regular at the origin is (e.g., as in \cite{MO2023})
$$
\mathcal J_d(r):=r^{-\frac{d-2}{2}} \, J_{\frac{d-2}{2}}(r),
\qquad \mbox{so that} \quad \mathcal J_2=J_0,
\quad \mathcal J_3(r) = \sqrt{\frac{2}{\pi}} \, \frac{\sin r}{r},
$$
and, the Bessel asymptotic gives 
$\mathcal J_d(r) = \big(\tfrac2{\pi}\big)^{1/2} r^{-\frac{d-1}{2}} \cos \big(r-\tfrac{(d-1)\pi}{4} \big) + O\big(r^{-\frac{d+1}{2}}\big)$. 
Taking
$Q_\ep^{\rm rad}(r):=A_\ep F(\sqrt\ep\,r)\,\mathcal J_d(r)$ with $F$ as in \eqref{E:Bessel},
every step above goes through verbatim, with $r^{d-1}dr$ in place of $r\,dr$ and the surface measure $|\mathbb S^{d-1}|$ instead of $2\pi$.

When computing mass, the mass integrand has
$|\mathcal J_d(r)|^2r^{d-1} \sim r^{-(d-1)}r^{d-1} = O(1)$, so the mass
estimate does not change. But when integrating the potential norm, that integrand hass
$$
|\mathcal J_d(r)|^{\alpha+2}\,r^{d-1} \sim r^{-\frac{(d-1)\alpha}{2}},
$$
so that \eqref{E:I-alpha} is replaced by
$I(\alpha,\ep)=\int_1^{\ep^{-1/2}}r^{-\frac{(d-1)\alpha}{2}}\,dr$. 
Depending on the exponent $\frac{(d-1)\alpha}{2}$ this integral diverges, or logarithmic, or converges with the threshold value $\alpha=\frac{2}{d-1}$ (which is $\alpha=2$ when $d=2$). Proceeding with the rest in the same manner, we obtain the upper bounds
$$
\mathcal R_\ep^{\rm rad}(\alpha) \lesssim
\left\{
\begin{array}{ll}
\ep^{r_{d,\alpha}}, & 0 < \alpha < \frac{2}{d-1}, \\
\ep^{\frac12}\,|\ln\ep|^{-\frac{d-1}{d}}, & \alpha=\frac{2}{d-1},\\
\ep^{\frac12}, & \alpha>\frac{2}{d-1},
\end{array}
\right.
$$
with $r_{d,\alpha}$ as in \eqref{E:R-asym-rad}, and the corresponding mass will be
$\ep^{\frac2{\alpha} - \frac{d}2}$, $\ep^{\frac{d-2}{2}}|\ln \ep|^{-(d-1)}$ and
$\ep^{\frac1{\alpha}-\frac12}$, which agrees with the sharp radial rates
\eqref{E:M-rad-d} in \S \ref{S:asym}. 

\end{remark}

\begin{remark}[about the trial functions]\label{R:trial-scope}
In each computation of this subsection we showed two types of results: (1) the proof of the upper bound $\mathcal R_\ep\lesssim\ep^{\gamma}$, since the quotient is scale-invariant and the trial function is an admissible function in the infimum, (2) the mass that was obtained in each computation is the mass \emph{of the trial function}: it predicts the rate for the {\it true} ground state mass (that result  is obtained only after combining two-sided quotient bounds with Lemma~\ref{L:quotient-to-mass}). We felt that providing explicit trial functions examples helps one to see where the asymptotic rates are coming from.
\end{remark}

\subsubsection{Rate table}\label{A:table}

We summarize all the small-$\ep$ rates established in the paper in Table \ref{T:rates-summary}. While the rates for $d=2$ are derived from the same formulas as for $d \geq 3$ (in the $H^2$-subcritical range), we write them separately for the purpose of this paper. 

\begin{table}[h]
\centering
\renewcommand{\arraystretch}{1.5}
\begin{tabular}{l|c|c|c|c}
& range of $\alpha$ & $\gamma$ (and $L$)
& $\mathcal R_\ep$ & $M(Q_\ep)$\\
\hline \hline
$d=1$ & $\alpha>0$ & $\frac{\alpha+4}{2(\alpha+2)}$
& $\ds \ep^{\frac{\alpha+4}{2(\alpha+2)}}$ & $\ep^{\frac2\alpha-\frac12}$\\
\hline
$d=2$, unrestricted & $0<\alpha \leq 4$ & $\frac{8+\alpha}{4(\alpha+2)}$
& $\ds \ep^{\frac{8+\alpha}{4(\alpha+2)}}$ & $\ds\ep^{\frac2\alpha-\frac34}$\\
$d=2$, unrestricted & $\alpha>4$ & $\frac12$ & $\ds\ep^{\frac12}$
& $\ds \ep^{\frac1\alpha-\frac12}$ \\
$d=2$, radial & $0<\alpha<2$ & $\frac{2}{\alpha+2}$
& $\ds \ep^{\frac{2}{\alpha+2}}$ & $\ds\ep^{\frac2\alpha-1}$\\
$d=2$, radial & $\alpha=2$ & ~~$\frac12$, \, $L=|\ln\ep|^{-\frac12}$
& $\ds \ep^{\frac12}|\ln\ep|^{-\frac12}$ & $\ds|\ln\ep|^{-1}$\\
$d=2$, radial & $\alpha>2$ & $\frac12$
& $\ds \ep^{\frac12}$ & $\ds \ep^{\frac1\alpha-\frac12}$\\
\hline
$d \geq 3$, unrestricted & $0 < \alpha \leq \frac4{d-1}$
& $\frac{8+(3-d)\alpha}{4(\alpha+2)}$
& $\ds \ep^{\frac{8+(3-d)\alpha}{4(\alpha+2)}}$
& $\ds \ep^{\frac2\alpha -\frac{d+1}{4}}$\\
$d \geq 3$, unrestricted & $\alpha>\frac4{d-1}$ & $\frac12$ & 
$\ds\ep^{\frac12}$ & $\ds \ep^{\frac1\alpha-\frac12}$ \\
$d\geq3$, radial & $0<\alpha<\frac{2}{d-1}$
& $\frac{4+(2-d)\alpha}{2(\alpha+2)}$
& $\ds \ep^{\frac{4+(2-d)\alpha}{2(\alpha+2)}}$
& $\ds \ep^{\frac2\alpha-\frac d2}$ \\
$d\geq3$, radial & $\alpha=\frac{2}{d-1}$
& ~~$\frac12$, $L=|\ln\ep|^{-\frac{d-1}{d}}$
& $\ds\ep^{\frac12}|\ln\ep|^{-\frac{d-1}{d}}$
& $\ds\ep^{\frac{d-2}{2}}|\ln\ep|^{-(d-1)}$\\
$d\geq3$, radial & $\alpha>\frac{2}{d-1}$ & $\frac12$
& $\ds\ep^{\frac12}$ & $\ds\ep^{\frac1\alpha-\frac12}$\\
\end{tabular}
\caption{Small-$\ep$ rates for $a=1$, $b=1+\ep$. Here, $\mathcal R_\ep \approx \ep^\gamma L(\ep)$, with $L \equiv 1$ except in the radial threshold rows.
Rows 1, 2 and 7 follow from Part 2 of Theorem \ref{ThmA}. 
Rows 3 and 8 follow from \eqref{E:R-unrestr-sharp}
and Corollary \ref{C:sharp-mass}.
Rows 4-6, 9-11 use Part 1 of Theorem \ref{ThmA} and 
\eqref{E:R-asym-rad-MO} and then more generally \eqref{E:R-radial-d}  from
\cite{LW2021,MO2023}.}
\label{T:rates-summary}
\end{table}

\smallskip



\bibliographystyle{abbrv}
\bibliography{references}

\end{document}